\newtheorem{lemma}{Lemma}[section]
\newtheorem{proposition}[lemma]{Proposition}
\newtheorem{theorem}[lemma]{Theorem}
\newtheorem{corollary}[lemma]{Corollary}
\newtheorem{conjecture}[lemma]{Conjecture}
\theoremstyle{definition}
\newtheorem{definition}[lemma]{Definition}
\newtheorem{remark}[lemma]{Remark}
\newcommand{\mfk}[1]{\mathfrak{#1}}
\newcommand{\mbb}[1]{\mathbb{#1}}
\newcommand{\mcl}[1]{\mathcal{#1}}
\newcommand{\mrm}[1]{\mathrm{#1}}
\newcommand{\msc}[1]{\mathscr{#1}}
\newcommand{\mbf}[1]{\mathbf{#1}}
\DeclareMathOperator{\Hom}{Hom}
\DeclareMathOperator{\End}{End}
\DeclareMathOperator{\Aut}{Aut}
\DeclareMathOperator{\Fun}{Fun}
\DeclareMathOperator{\rep}{rep}
\DeclareMathOperator{\Rep}{Rep}
\DeclareMathOperator{\corep}{corep}
\DeclareMathOperator{\Corep}{Corep}
\DeclareMathOperator{\Ind}{Ind}
\DeclareMathOperator{\Spec}{Spec}
\DeclareMathOperator{\Fr}{Fr}
\DeclareMathOperator{\SL}{SL}
\DeclareMathOperator{\PSL}{PSL}
\DeclareMathOperator{\Sp}{Sp}
\DeclareMathOperator{\ord}{ord}
\DeclareMathOperator{\can}{can}
\DeclareMathOperator{\M}{M}
\newcommand{\ot}{\otimes}
\newcommand{\dotU}{\dot{\mathbf{U}}}
\newcommand{\hatU}{\widehat{\mathbf{U}}}
\newcommand{\dotu}{\dot{\mathbf{u}}}
\newcommand{\hatu}{\widehat{\mathbf{u}}}
\newcommand{\doto}{\dot{\mathfrak{o}}}
\renewcommand{\1}{\mathbf{1}}
\renewcommand{\O}{\mathscr{O}}
\renewcommand{\binom}[2]{{\Small\left[\begin{matrix}\ #1\ \\ #2 \end{matrix} \right]}}
\title[]{Log-modular quantum groups at even roots of unity and the quantum Frobenius I}
\date{\today}
\author{Cris Negron}
\thanks{This work was supported by NSF Postdoctoral Research Fellowship DMS-1503147}
\email{cnegron@email.unc.edu}
\address{Department of Mathematics, University of North Carolina, Chapel Hill, NC 27599}
\begin{document}
\maketitle

\begin{abstract}
We construct log-modular quantum groups at even order roots of unity, both as finite-dimensional ribbon quasi-Hopf algebras and as finite ribbon tensor categories, via a de-equivariantization procedure.  The existence of such quantum groups had been predicted by certain conformal field theory considerations, but constructions had not appeared until recently.  We show that our quantum groups can be identified with those of Creutzig-Gainutdinov-Runkel in type $A_1$, and Gainutdinov-Lentner-Ohrmann in arbitrary Dynkin type.  We discuss conjectural relations with vertex operator algebras at $(1,p)$-central charge.  For example, we explain how one can (conjecturally) employ known linear equivalences between the triplet vertex algebra and quantum $\mfk{sl}_2$, in conjunction with a natural $\PSL_2$-action on quantum $\mfk{sl}_2$ provided by our de-equivariantization construction, in order to deduce linear equivalences between ``extended" quantum groups, the singlet vertex operator algebra, and the $(1,p)$-Virasoro logarithmic minimal model.  We assume some restrictions on the order of our root of unity outside of type $A_1$, which we intend to eliminate in a subsequent paper.
\end{abstract}

\section{Introduction}

This paper concerns the production of certain non-semisimple ``non-degenerate" quantum groups at even order roots of unity.  In order to highlight the issues we mean to address in this work, let us consider the case of quantum $\mfk{sl}_2$.
\par

We have the standard small quantum group, or quantum Frobenius kernel, $u_q(\mfk{sl}_2)$ in Lusztig's divided power algebra $U_q(\mfk{sl}_2)$~\cite{lusztig90,lusztig90II}, i.e.\ the Hopf subalgebra generated by $E$, $F$, and $K$.  It has been shown that, at arbitrary even order $q$, the Hopf algebra $u_q(\mfk{sl}_2)$ admits no quasitriangular structure~\cite{kondosaito11,gainutdinovrunkel17}.  This is in contrast to the odd order case, where the small quantum group is always quasitriangular.  Indeed, this quasitriangular property holds, in a certain sense, at all parameters {\it except} for even order roots of unity.
\par

From another perspective, it is known that there is a linear equivalence between representations of the small quantum group $u_q(\mfk{sl}_2)$ and representations of a certain strongly-finite vertex operator algebra--the triplet VOA~\cite{kausch91,fjelstadfuchshwangsemikhatovtipunin02,gainutdinovetal06,adamovicmilas08,nagatomotsuchiya11}.  Hence $\rep u_q(\mfk{sl}_2)$ apparently admits \emph{some} braided tensor structure, via the logarithmic tensor theory of Huang, Lepowsky, and Zhang~\cite{hlz14,hlz} (cf.~\cite[Conjecture 5.7]{gainutdinovrunkel19}).  So, one may conclude that there is some error in the definition of the Hopf structure on quantum $\mfk{sl}_2$ at an even order root of unity which, after it has been remedied, will reproduce the CFT-inspired tensor structure as the natural tensor structure on $\rep u_q(\mfk{sl}_2)$ induced by the coproduct on $u_q(\mfk{sl}_2)$ (see e.g.~\cite{gainutdinovetal06,feigintupin,gainutdinovrunkel17,creutzigetal}).
\par

This slippage between representation theory and conformal field theory is not unique to type $A_1$, although the corresponding conformal field theories are not well-developed outside of type $A_1$.  One expects, in the conclusion, that there is an appropriate correction to the definition of the small quantum group $u_q(\mfk{g})$, for an arbitrary simple Lie algebra $\mfk{g}$ over $\mbb{C}$ and even order $q$, under which the category $\rep u_q(\mfk{g})$ is braided, and even log-modular (cf.~\cite[Conjecture 3.2]{adamovicmilas14}).  To be clear about our terminology:

\begin{definition}[\cite{creutziggannon17}]
A log-modular tensor category $\msc{C}$ is a finite, non-degenerate, ribbon tensor category.
\end{definition}

One could refer to such categories simply as modular tensor (as opposed to fusion) categories, although we would like to draw a distinction between our quantum group categories and those of, say,~\cite{andersenparadowski95,rowell06}.  By non-degenerate we mean that $\msc{C}$ is braided and maximally non-symmetric, in the precise sense of Definition~\ref{def:ndg} below.
\par

In the present work we examine the issues discussed above from a representation theoretic, and tensor categorical, perspective.  In particular, we clarify how one can correct the apparent ``singular" behaviors of quantum groups at even order roots of unity by employing representation theoretic techniques.  We discuss the relevance of our findings from a conformal field theory perspective in Section~\ref{sect:(1,p)intro} below, and discuss other recent constructions of log-modular quantum groups in Section~\ref{sect:CFTintro}.
\par

Let us consider an almost simple algebraic group $G$, over $\mbb{C}$, and the associated category of quantum group representations
\[
\rep G_q=\left\{\begin{array}{c}
\text{Finite-dimensional representations of Lusztig's divided power}\\
\text{algebra $U_q(\mfk{g})$ which are graded by the character lattice $X$ of }G
\end{array}\right\}.
\]
In the above expression $\mfk{g}$ is the Lie algebra of $G$, and $q$ is always an {\it even} order root of unity.  The category $\rep G_q$ admits a canonical ribbon (braided) structure, and Lusztig's quantum Frobenius yields a braided tensor embedding $\Fr:\rep G^\vee\to \rep G_q$ which has M\"uger central image, where $G^\vee$ is a specific almost simple dual group to $G$ (see Section~\ref{sect:qfrob}).

We focus in the introduction on the simply-connected case, as results become sporadic away from the weight lattice.  However, in the body of the text we deal with arbitrary almost simple $G$.

\begin{theorem}[{\ref{cor:ff},\ref{lem:ribbon},\ref{cor:nondegen}}]
Let $G$ be simply-connected and suppose that the character lattice for $G$ is strongly admissible at (even order) $q$.  Then the de-equivariantization 
\[
(\rep G_q)_{G^\vee}:=\left\{\begin{array}{c}
\text{\rm Finitely presented $\Fr\O(G^\vee)$-modules in }\rep G_q
\end{array}\right\}
\]
has the canonical structure of a finite, non-degenerate, ribbon tensor category.  That is to say, $(\rep G_q)_{G^\vee}$ is a log-modular tensor category.
\end{theorem}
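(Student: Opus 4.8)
The plan is to establish the three asserted properties—finiteness, ribbon structure, and non-degeneracy—separately, leveraging the de-equivariantization formalism and the quantum Frobenius $\Fr:\rep G^\vee\to\rep G_q$ with M\"uger central image. First I would recall that $\Fr\O(G^\vee)$ is a commutative algebra object in the symmetric (M\"uger) center of $\rep G_q$, so the category of $\Fr\O(G^\vee)$-modules is itself a tensor category, with tensor product given by the relative tensor product $\ot_{\Fr\O(G^\vee)}$ over the algebra. The restriction to \emph{finitely presented} modules is what should cut this down to a \emph{finite} tensor category: one expects $(\rep G_q)_{G^\vee}$ to be equivalent to modules over a finite-dimensional quasi-Hopf algebra, or at least to have finitely many simples, finite-dimensional Homs, and enough projectives. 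I would prove finiteness by exhibiting $\Fr\O(G^\vee)$ as a filtered colimit of its finite-dimensional subcoalgebras (equivalently, $\O(G^\vee)=\bigcup \O(G^\vee)_{\leq\lambda}$ via a good filtration on the dual group), checking that finitely presented modules are precisely those supported in bounded degree, and identifying the resulting category as $\rep$ of an explicit finite-dimensional algebra; here the hypothesis that the character lattice is \emph{strongly admissible at $q$} should be exactly what guarantees the relevant lattice/root-datum arithmetic works out so that this de-equivariantization is finite rather than merely locally finite.

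Next I would transport the ribbon structure. Since $\rep G_q$ is ribbon and $\Fr\O(G^\vee)$ lies in the M\"uger center, the braiding on $\rep G_q$ descends to a braiding on $\Fr\O(G^\vee)$-modules: for $M,N$ with their $\Fr\O(G^\vee)$-actions, the braiding $c_{M,N}$ of the ambient category is a module map $M\ot_{\Fr\O(G^\vee)}N\to N\ot_{\Fr\O(G^\vee)}N$ precisely because the symmetric-center condition makes the two induced actions agree after braiding—this is the standard fact that de-equivariantization by a central (indeed symmetric) subcategory produces a braided category. The ribbon twist $\theta$ on $\rep G_q$ acts by module automorphisms on any $\Fr\O(G^\vee)$-module (again using centrality, so that $\theta$ commutes with the algebra action up to the identity), hence descends; compatibility of $\theta$ with the descended braiding and duality is inherited. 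I would cite Lemma~\ref{lem:ribbon} for the clean statement. Duals: $\Fr\O(G^\vee)$-modules that are finitely presented (= rigid objects here) have duals computed as $\uHom_{\Fr\O(G^\vee)}(M,\Fr\O(G^\vee))$, and one checks rigidity survives de-equivariantization of a rigid category over a commutative separable-enough (here, over $\mbb{C}$, smooth) algebra object.

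For non-degeneracy—the heart of the matter—I would invoke Corollary~\ref{cor:nondegen} but sketch its mechanism: the M\"uger center of a de-equivariantization $(\rep G_q)_{G^\vee}$ is controlled by the M\"uger center of $\rep G_q$ together with the chosen central subcategory. Concretely, $\rep G_q$ is \emph{not} non-degenerate—its symmetric center is exactly $\rep G^\vee$ embedded via $\Fr$ (this is the content of the quantum Frobenius statement from Section~\ref{sect:qfrob}, that $\Fr$ has M\"uger central image, and in the strongly admissible simply-connected case this image is the \emph{entire} center). De-equivariantizing by $\O(G^\vee)$ is the canonical operation that ``kills'' precisely this symmetric center: an object of $(\rep G_q)_{G^\vee}$ that is M\"uger central must, after forgetting the module structure, lie in the symmetric center $\Fr(\rep G^\vee)$ of $\rep G_q$, but such an object equipped with a compatible $\Fr\O(G^\vee)$-action is forced to be a sum of copies of the unit $\Fr\O(G^\vee)$ viewed as the free module—i.e., trivial in the de-equivariantization. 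So the symmetric center collapses to $\mathrm{Vec}$.

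The main obstacle I anticipate is the finiteness claim, and its dependence on \emph{strong admissibility}: one must control finitely presented $\Fr\O(G^\vee)$-modules well enough to see that there are only finitely many simple objects and that projective covers exist inside the de-equivariantization, which is not automatic for de-equivariantization by an infinite-dimensional (coordinate-ring) algebra object. This is where the arithmetic of the root datum at the even root of unity $q$ enters: strong admissibility of the character lattice should ensure that the dual group $G^\vee$ and the pairing between the kernel of quantum Frobenius and $G^\vee$ behave as in the generic simply-connected picture, so that $(\rep G_q)_{G^\vee}$ is genuinely finite—this is presumably why the theorem, as stated in the introduction, is restricted to the simply-connected case with this hypothesis, and why the general almost-simple case is handled more delicately in the body. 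The braiding/ribbon descent and the non-degeneracy, by contrast, are formal consequences of the M\"uger-central image of $\Fr$ once finiteness is in hand, so I would spend the bulk of the argument on the categorical finiteness input and cite \ref{cor:ff} for it.
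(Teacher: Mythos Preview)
Your overall three-part structure matches the paper's, but the content of each part diverges from the paper in ways that matter, and there is one genuine gap.

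\textbf{Finiteness.} Your proposed route---filtering $\O(G^\vee)$ by finite-dimensional subcoalgebras and arguing that finitely presented modules are ``supported in bounded degree''---is not what the paper does and is not obviously correct (a finitely presented $\O(G^\vee)$-module like $\O(G^\vee)$ itself is certainly not bounded in any $X^{\M}$-degree). The paper instead invokes faithful flatness: by the Masuoka--Wigner criterion (Lemma~\ref{lem:487}), rigidity of $(\rep G_q)_{G^\vee}$ is equivalent to faithful flatness of $\O_q(G)$ over $\O(G^\vee)$, and in that case taking the fiber at $1\in G^\vee$ is a $\mbb{C}$-linear equivalence onto $\rep u^{\M}_q(G)$ (this is Arkhipov--Gaitsgory, Theorem~\ref{thm:ag}). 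Finiteness then falls out immediately because $u^{\M}_q(G)$ is finite-dimensional. You also misattribute the role of strong admissibility: it is \emph{not} what makes the de-equivariantization finite; it is what makes the braiding on the M\"uger center of $\rep G_q$ literally trivial (so that the quantum Frobenius lands in a Tannakian, not super-Tannakian, center) and what makes Theorem~\ref{thm:calculatingZ} go through.

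\textbf{Ribbon.} Here is the real gap. You treat descent of the twist as a formality (``$\theta$ commutes with the algebra action up to the identity''), but this is precisely where the \emph{simply-connected} hypothesis enters, and your argument does not use it. The paper's pivotal structure on $\rep G_q$ is given by the grouplike $K_\rho$, and the induced map $\operatorname{piv}'_M:M\to M^{\vee\vee}$, $m\mapsto K_\rho\cdot\mathrm{ev}_m$, is $\O(G^\vee)$-linear if and only if $K_\rho$ acts trivially on $\O(G^\vee)$, i.e.\ $K_\rho|_{X^{\M}}\equiv 1$. This holds when $X$ is the weight lattice (since then $X^{\M}=lQ$ and $(\rho,l\alpha_i)=ld_i\langle\alpha_i,\rho\rangle\in l\mbb{Z}$ for all $i$), but can fail otherwise---the paper gives the example of $\PSL_2$ with $4\mid l$. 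So ``ribbon descends because the algebra is central'' is false in general, and your sketch does not explain why the simply-connected hypothesis saves it.

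\textbf{Non-degeneracy.} Your mechanism is morally right but the step ``a M\"uger-central object of $(\rep G_q)_{G^\vee}$ must, after forgetting the module structure, lie in the M\"uger center of $\rep G_q$'' is not justified as stated: forgetting lands in $\Ind(\rep G_q)$, and centrality in the quotient does not obviously pull back. The paper instead proves a general poset bijection (Proposition~\ref{prop:885}) between intermediate tensor subcategories $\rep\Pi\subset\msc{K}\subset\msc{C}$ and $\Pi$-stable subcategories of $\msc{C}_\Pi$, compatible with M\"uger-centrality, via the mutually inverse equivalences $\can^!$ and $\can_!$ between equivariantization and de-equivariantization. Combined with the computation that $\Fr$ is an equivalence onto the \emph{entire} M\"uger center (Theorem~\ref{thm:calculatingZ}), this forces the M\"uger center of the de-equivariantization to be $Vect$.
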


We note that outside of the simply-connected setting the de-equivariantization $(\rep G_q)_{G^\vee}$ may fail to be ribbon, although it is always finite and non-degenerate.  We explain our ``strongly admissible" condition in detail below.  Let us say for now that $\SL_2$ has strongly admissible character lattice at arbitrary $q$, and that outside of type $A_1$ this basically means that $4$ divides the order of $q$.  (See Section~\ref{sect:admissible}.)  We call $(\rep G_q)_{G^\vee}$ the log-modular quantum Frobenius kernel for $\rep G_q$, at even order $q$, or simply the {\it log-modular kernel}.
\par

From the perspective of this work, the de-equivariantization $(\rep G_q)_{G^\vee}$ is {\it the} canonical form for the small quantum group at even order $q$.  However, we show at Proposition~\ref{prop:qhuM_exists} that $(\rep G_q)_{G^\vee}$ admits an algebraic incarnation as the representation category of a ribbon quasi-Hopf algebra $u^{\mrm{M}}_q(G)$.  As a consequence of Proposition~\ref{prop:qhuM_exists} below, and non-degeneracy of the de-equivariantization, we find that $u^{\M}_q(G)$ is in fact log-modular.
\par

We describe the quasi-Hopf algebras $u^{\M}_q(G)$ in detail in Section~\ref{sect:uM}.  The formula for the comultiplication in particular is given in Lemma~\ref{lem:468}.  To identify with the above discussion, one should take the simply-connected form $u_q^{\M}(G_\mrm{sc})$ specifically as the error-corrected version of $u_q(\mfk{g})$.
\par

The $u^{\M}_q(G)$ arrive to us as subalgebras in (a completion of) the corresponding divided power algebra $U_q(G)$.  It is precisely the subalgebra generated by the elements $\mathsf{E}_\alpha:=K_\alpha E_\alpha$ and $F_\alpha$, and the character group $Z^\vee$ for the quotient $Z$ of the weight lattice by the $\ord(q)/2$-scaling of the root lattice.  For the standard nilpotent subalgebras $u^+_q,\ u^-_q\subset U_q(G)$, we provide in Lemma~\ref{lem:triangle} a triangular decomposition
\[
u^-_q\ot\mbb{C}[Z^\vee]\ot u^+_q\overset{\cong}\to u^{\M}_q(G).
\]

The quasi-Hopf structure on $u^{\M}_q(G)$ is not canonical, but depends on a choice of function $\omega:X\times X\to \mbb{C}^\times$, which essentially quantifies the failure of the algebra $\Fr\O(G^\vee)$ to be central in the quantum function algebra $\O_q(G)$.  We call $\omega$ a balancing function, and its precise properties are described in Section~\ref{sect:balancing}.  At the categorical level, however, the tensor structure on $\rep u^{\M}_q(G)$ is unique up to isomorphism, via the identification with the canonical form $(\rep G_q)_{G^\vee}$.

\begin{theorem}[{\S \ref{sect:uM}, \ref{prop:qhuM_exists}}]\label{thm:158}
Let $G$ be simply-connected with strongly admissible character lattice at (even order) $q$.  There is a log-modular quasi-Hopf algebra $u^{\M}_q(G)$ which admits a ribbon equivalence
\[
fib^\omega:(\rep G_q)_{G^\vee}\overset{\sim}\to \rep u^{\M}_q(G).
\]
The comultiplication and $R$-matrix for $u^{\M}_q(G)$ depend on a choice of balancing function $\omega$ for $G$, but are unique up to braided tensor equivalence.  The ribbon element for $u^{\M}_q(G)$ is independent of the choice of balancing function.
\end{theorem}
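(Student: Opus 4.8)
The plan is to obtain the statement by combining the explicit construction of $u^{\M}_q(G)$ recorded in Section~\ref{sect:uM} with the general de-equivariantization analysis that produces Proposition~\ref{prop:qhuM_exists}. First, one fixes a balancing function $\omega$ and equips the subalgebra $u^{\M}_q(G)\subset U_q(G)$ from Lemma~\ref{lem:triangle}, generated by the $\mathsf{E}_\alpha$, $F_\alpha$ and $Z^\vee$, with the coproduct of Lemma~\ref{lem:468}, together with the evident counit and antipode, an associator $\Phi$ built from $\omega$, and the truncated $R$-matrix of $U_q(G)$ corrected by $\omega$. Direct verification shows these data satisfy the ribbon quasi-Hopf axioms; here $\Phi$ is precisely the $3$-cochain measuring the noncentrality of $\Fr\O(G^\vee)$ inside $\O_q(G)$, so the associator is trivial exactly when $\Fr\O(G^\vee)$ is central.

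Next I would construct the functor $fib^\omega$. Given a finitely presented $\Fr\O(G^\vee)$-module $M$ in $\rep G_q$, set $fib^\omega(M):=\mbb{C}_\epsilon\ot_{\Fr\O(G^\vee)}M$, the reduction of $M$ along the counit of $\O(G^\vee)$, i.e.\ evaluation at the identity of $G^\vee$. Because $\mathsf{E}_\alpha=K_\alpha E_\alpha$ and $F_\alpha$ move the $X$-grading by amounts that are trivial on the quotient $Z$, their action together with that of $Z^\vee$ descends to $fib^\omega(M)$, making it a $u^{\M}_q(G)$-module; the triangular decomposition of Lemma~\ref{lem:triangle} and a weight count then show that $fib^\omega$ is an exact, faithful, essentially surjective and fully faithful $\mbb{C}$-linear functor, hence an equivalence of abelian categories $(\rep G_q)_{G^\vee}\simeq\rep u^{\M}_q(G)$. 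The tensor constraint $fib^\omega(M)\ot fib^\omega(N)\to fib^\omega(M\ot_{\Fr\O(G^\vee)}N)$ is the obvious identification rescaled by $\omega$ on weight components, and the defining cocycle-type properties of a balancing function from Section~\ref{sect:balancing} are exactly what make this a natural isomorphism compatible with $\Phi$ (pentagon) and with the unit constraints; this is the content of Proposition~\ref{prop:qhuM_exists}.

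It then remains to transport structure. The canonical braiding on $(\rep G_q)_{G^\vee}$ pushes forward through $fib^\omega$ to the braiding implemented by the $\omega$-corrected $R$-matrix, with the hexagon following from the braided structure on $\rep G_q$ and the balancing identities, giving a braided equivalence. The ribbon structure on $(\rep G_q)_{G^\vee}$ is inherited from that on $\rep G_q$, implemented by the usual ribbon element — a product of the Drinfeld element with the inverse balancing element $K_{2\rho}^{-1}$ — which already lies in the Cartan-plus-truncated subalgebra and is insensitive to $\omega$; hence it descends to a ribbon element of $u^{\M}_q(G)$ independent of the balancing function, which is the last sentence of the theorem. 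Log-modularity of $u^{\M}_q(G)$ is then immediate, since $fib^\omega$ is a ribbon equivalence onto $\rep u^{\M}_q(G)$ and $(\rep G_q)_{G^\vee}$ is log-modular by the preceding theorem. For uniqueness up to braided tensor equivalence, two balancing functions differ by a term whose associated $2$-cochain is a coboundary within the balancing formalism, and the corresponding gauge transformation exhibits the two quasi-Hopf algebras as twist-equivalent — equivalently, both representation categories are braided-tensor-equivalent to the intrinsic category $(\rep G_q)_{G^\vee}$.

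I expect the main obstacle to be the coherence check underlying Proposition~\ref{prop:qhuM_exists}: proving that the $\omega$-twisted tensor constraint together with the associator $\Phi$ satisfies the pentagon, and that the $\omega$-corrected $R$-matrix satisfies the quasi-hexagon identities and the ribbon compatibility. These are genuine computations with the balancing-function identities carried out in Section~\ref{sect:uM}; once they are in hand, the remaining assertions — exactness and faithfulness of $fib^\omega$, log-modularity, and $\omega$-independence of the ribbon element — are formal consequences of the de-equivariantization picture.
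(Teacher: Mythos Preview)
Your outline is broadly correct, but the paper's route is quite different from yours in a way that matters for effort. You propose to verify the quasi-Hopf, quasi-triangular, and ribbon axioms for $u^{\M}_q(G)$ directly from the balancing identities, and likewise to check the pentagon and hexagon for $fib^\omega$ by hand; you correctly flag these coherence checks as the main obstacle. The paper sidesteps all of them. It first \emph{twists} the ambient topological Hopf algebra $\hatU_q$ by $\omega$ to obtain a quasi-Hopf algebra $\hatU_q^\omega$ (so all axioms are automatic), and then shows by explicit computation (Lemmas~\ref{lem:461},~\ref{lem:468}) that $u^{\M}_q(G)$ is closed under $\nabla$, $S^\omega$, and contains $\phi$, $\beta$, $R^\omega$, $v$; this is Proposition~\ref{prop:437} together with Section~3.4. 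For the equivalence $fib^\omega$, the paper does not verify the tensor constraint against $\phi$ directly: instead it uses the commutative square
\[
\xymatrix{
\rep\hatU_q\ar[r]^{dE}\ar[d]_{\{id,\ \omega\cdot-\}} & (\rep G_q)_{G^\vee}\ar[d]^{fib^\omega}\\
\rep\hatU_q^\omega\ar[r]^{\text{restrict}} & \rep u^{\M}_q(G)
}
\]
in which three of the four arrows are known braided (ribbon) tensor functors, and concludes by surjectivity of $dE$ that $fib^\omega$ is one as well (Proposition~\ref{prop:qhuM_exists}). The $\mbb{C}$-linear equivalence underneath is not obtained by a weight count but by quoting Arkhipov--Gaitsgory (Theorem~\ref{thm:ag}) via the faithful-flatness criterion of Lemma~\ref{lem:487}. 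Your direct approach would work, but the twist-plus-diagram argument is shorter and avoids every coherence computation you anticipate. Two small corrections: the ribbon/pivotal element here is $K_\rho$ with $\rho=\sum_{\gamma\in\Phi^+}\gamma$ (not $K_{2\rho}$), and your description of $\Phi$ as ``trivial exactly when $\Fr\O(G^\vee)$ is central'' is not how the paper reads it---$\phi$ is simply the coboundary $\phi(z,z',z'')=\omega(z,z'')\omega^{-1}(z,z'+z'')\omega(z,z')$ of the chosen $\omega$, living in $\mbb{C}[Z^\vee]^{\ot 3}$ (Lemma~\ref{lem:461}).
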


For $\mfk{sl}_2$, for example, the dual group to $\SL_2$ is $\SL_2^\vee=\PSL_2$.  In this case one finds that $u_q^{\M}(\SL_2)$ is in fact the standard small quantum group $u_q(\SL_2)\subset U_q(\SL_2)$, with some alternate choice of quasi-Hopf structure induced by its identification with the categorical kernel $(\rep(\SL_2)_q)_{\PSL_2}$.  We discuss this example in Section~\ref{sect:sl2}.
\par

We note that Theorem~\ref{thm:158} was obtained at the $\mbb{C}$-linear level, i.e.\ as a $\mbb{C}$-linear equivalence, in earlier work of Arkhipov and Gaitsgory~\cite{arkhipovgaitsgory03}.  In particular, the definition of the {\it algebra} $u^{\M}_q(G)$ was observed already in~\cite{arkhipovgaitsgory03} (see also~\cite[\S 3.11]{andersenparadowski95}).

\subsection{Identifications with the log-modular quantum groups of Creutzig et al.~\cite{creutzigetal} and Gainutdinov et al.~\cite{gainutdinovlentnerohrmann}}\label{sect:CFTintro}

Independent of the present paper, constructions of log-modular quantum groups at even order roots of unity have appeared in work of Creutzig, Gainutdinov, and Runkel~\cite{gainutdinovrunkel17,creutzigetal}, in type $A_1$, and in work of Gainutdinov, Lentner, and Ohrmann~\cite{gainutdinovlentnerohrmann} in arbitrary Dynkin type.
\par

In~\cite{creutzigetal} a quasi-Hopf algebra $u^\phi_q(\mfk{sl}_2)$ was produced via a local module construction.  The local module construction of~\cite{creutzigetal} is motivated by certain CFT considerations and, from our perspective, is essentially a de-equivariantization (see Section~\ref{sect:id}).  We note that the results of~\cite{creutzigetal} followed earlier work of Gainutdinov and Runkel~\cite{gainutdinovrunkel17} in which the authors produced the quasi-Hopf algebra $u^\phi_i(\mfk{sl}_2)$ for $\mfk{sl}_2$ at parameter $q=i$, essentially by hand.
\par

In~\cite{gainutdinovlentnerohrmann} the authors proceed via an Andruskiewitch-Schneider like approach (cf.\ \cite{andruskiewitschschneider10,AAsurvey}), where the quantum groups $u_q(G)$ are produced as quotients of Drinfeld doubles of Nichols algebras $B(V)$, with $V$ an object in the braided category of representations of a cocycle perturbed group algebra.  So, $V$ lives in a braided category which does not admit a fiber functor in general, and the construction of $B(V)$ takes place in this category as well.
\par

As remarked in~\cite{gainutdinovlentnerohrmann}, all of the constructions of quantum groups from~\cite{gainutdinovrunkel17,creutzigetal,gainutdinovlentnerohrmann} agree, when appropriate.  We prove in Section~\ref{sect:id} that our quantum groups $u^{\M}_q(G)$ agree with those of Creutzig, Gainutdinov, Runkel~\cite{gainutdinovrunkel17,creutzigetal} and Gainutdinov, Lentner, Ohrmann~\cite{gainutdinovlentnerohrmann}, at the ribbon categorical level.

\begin{remark}
In addition to the production of certain small quantum groups, much of the labors of~\cite{gainutdinovrunkel17,creutzigetal,lentner,gainutdinovlentnerohrmann} are directed towards producing and refining relationships between quantum groups and vertex operator algebras/CFTs.
\end{remark}

\begin{remark}
One point which is consistent across all of the references discussed above, as well as the present work, is that the failure of the na\"ive quantum group $u_q(\mfk{g})$ to admit an $R$-matrix, in general, has to do with some defect in the Cartan part $\mbb{C}[Z^\vee]$.  So, the na\"ive quantum group and (what we call) the log-modular quantum group only differ due to some alteration in the Cartan part.
\end{remark}

\subsection{Relevance for the ``logarithmic Kazhdan-Lusztig equivalence" at $(1,p)$-central charge}\label{sect:(1,p)intro}
Take $u^{\M}_q(\mfk{sl}_2)$ the simply-connected form $u^{\M}_q(\SL_2)$.  We discuss here the situation in type $A_1$, and fix $q$ of order $2p$.  Some aspects of the story in arbitrary Dynkin type are recalled in the concluding paragraphs.
\par

As we alluded to earlier, there is a conjectured equivalence of ribbon tensor categories
\[
f_p:\rep u_q^{\M}(\mfk{sl}_2)\overset{\sim}\to \rep \mcl{W}_p,
\]
where $\mcl{W}_p$ is the triplet vertex operator algebra~\cite{kausch91,fuchsetal04,adamovicmilas08}.  This conjecture was first proposed in the paper~\cite{gainutdinovetal06}, and it has been shown that such an equivalence $f_p$ exists at the level of {\it $\mbb{C}$-linear} categories~\cite{gainutdinovetal06,nagatomotsuchiya11}.  (So, without the tensor product.)  It is conjectured that the equivalence $f_p$ for the triplet algebra lifts to additional equivalences
\[
\rep_{wt} u^H_q(\mfk{sl}_2)\overset{\sim}\to \rep_{\langle s\rangle} \mcl{M}_p,\ \ \rep G_q\overset{\sim}\to \rep \mcl{LM}(1,p),
\]
where $u^H_q(\mfk{sl}_2)$ is the so-called unrolled quantum group, $\mcl{M}_p$ is the singlet VOA, and $\rep \mcl{LM}(1,p)$ is a certain subcategory of the representations of the $(1,p)$-Virasoro which we leave unspecified for the moment~\cite{bgt12,creutzigmilas14,cgp15}.  (See Section~\ref{sect:(1,p)}.)
\par

Here we are concerned with means of obtaining equivalences for the singlet and Virasoro from the known additive equivalence $f_p$ for the triplet algebra.  As we argue in Section~\ref{sect:(1,p)}, this problem may be approached via considerations of certain natural $\PSL_2$ actions on $\rep u^{\M}_q(\mfk{sl}_2)$ and $\rep \mcl{W}_p$.  The action of $\PSL_2$ on $\rep \mcl{W}_p$ is well-established in the CFT literature~\cite{adamoviclinmilas13}, while the action on $\rep u^{\M}_q(\mfk{sl}_2)$ is deduced from our construction of the log-modular quantum group as a $\PSL_2$ de-equivariantization of $\rep(\SL_2)_q$.

\begin{conjecture}[\ref{conj:1064}]
The linear equivalence $f_p:\rep u^{\M}_q(\mfk{sl}_2)\overset{\sim}\to \rep \mcl{W}_p$ is $\PSL_2$-equivariant, or can be chosen to be $\PSL_2$-equivariant.
\end{conjecture}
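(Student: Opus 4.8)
The plan is to reduce the conjecture to a comparison of the two categorical $\PSL_2$-actions under $f_p$, i.e.\ to the ``equivariantized'' layer of the whole Kazhdan--Lusztig picture. First I would make both actions explicit. On the quantum group side, $\rep u^{\M}_q(\mfk{sl}_2)=(\rep(\SL_2)_q)_{\PSL_2}$ is by construction the category of finitely presented $\Fr\O(\PSL_2)$-modules in $\rep(\SL_2)_q$, with $\PSL_2$ acting by twisting the $\Fr\O(\PSL_2)$-module structure along the translation action of $\PSL_2$ on $\O(\PSL_2)$; by the general de-equivariantization formalism this yields a canonical $\mbb{C}$-linear equivalence $(\rep u^{\M}_q(\mfk{sl}_2))^{\PSL_2}\simeq \rep(\SL_2)_q$. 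On the vertex algebra side, $\PSL_2$ acts on $\rep\mcl{W}_p$ through the group of vertex operator algebra automorphisms of $\mcl{W}_p$, and one records the equivariantization $\msc{E}_p:=(\rep\mcl{W}_p)^{\PSL_2}$, a category of modules over the invariant subalgebra $\mcl{W}_p^{\PSL_2}$ --- conjecturally the $(1,p)$ Virasoro vertex operator algebra, with $\msc{E}_p\simeq \rep\mcl{LM}(1,p)$. In this language a $\PSL_2$-equivariant enhancement of $f_p$ is precisely a $\mbb{C}$-linear equivalence $\rep(\SL_2)_q\overset{\sim}\to \msc{E}_p$ that is compatible with the two free-module de-equivariantization functors and induces $f_p$ after descent; so proving the conjecture amounts to producing such a compatible top-layer equivalence, which is a sharpened form of the conjectured $\rep(\SL_2)_q\simeq \rep\mcl{LM}(1,p)$.

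The route I would then take is to transport the $\PSL_2$-action on $\rep u^{\M}_q(\mfk{sl}_2)$ to $\rep\mcl{W}_p$ along $f_p$ and show that it agrees, as a categorical action, with the automorphism action. As an organizing device I would use the common block decomposition: both $\rep u^{\M}_q(\mfk{sl}_2)$ and $\rep\mcl{W}_p$ split, as abelian categories, into two semisimple (``Steinberg'') blocks and $p-1$ blocks, each of the latter equivalent to the module category of a fixed basic algebra $\Lambda$; and since $\PSL_2$ is connected, any categorical $\PSL_2$-action preserves each block. Matching the two actions thus breaks up into matching $\PSL_2$-actions block by block. This is not a finite problem --- the $\PSL_2$-action on a non-semisimple block is genuinely nontrivial, as it must be, since equivariantization resolves each such block into infinitely many blocks of $\rep(\SL_2)_q$ --- but on a basic algebra $\Lambda$ with only two simple modules a $\PSL_2$-action can plausibly be classified in terms of the algebraic group $\mrm{Out}(\Lambda)$ together with low-degree cohomology of $\PSL_2$. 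A further handle is the known middle-layer equivalence $\rep_{wt}u^H_q(\mfk{sl}_2)\simeq \rep_{\langle s\rangle}\mcl{M}_p$, which should be the restriction of the whole picture to a maximal torus $U(1)\subset \PSL_2$ and which any equivariant enhancement of $f_p$ must refine; verifying such compatibility, and then comparing the descent of the resulting top-layer equivalence with $f_p$ on simple objects and projective covers, would complete the argument.

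The main obstacle is that $f_p$ is not given by any functorial construction: it is produced by matching Zhu-algebra data and projective covers in the sense of Nagatomo--Tsuchiya, so it carries no a priori compatibility with the de-equivariantization functors or with the $\PSL_2$-actions. Hence the hard part, in any of these approaches, is the coherence data --- showing not merely that the transported $\PSL_2$-action on $\rep\mcl{W}_p$ agrees object by object with the automorphism action, but that the two agree as categorical $\PSL_2$-actions, tensorators and all --- together with the fact that pinning down $\msc{E}_p$ and the induction functor $\msc{E}_p\to \rep\mcl{W}_p$ precisely enough is tantamount to the orbifold analysis of $\mcl{W}_p$ over $\mcl{W}_p^{\PSL_2}$, i.e.\ to the still-open equivalence $\rep(\SL_2)_q\simeq \rep\mcl{LM}(1,p)$; so a complete proof of the present conjecture is entangled with that one. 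A realistic unconditional first step is to show that $f_p$ intertwines the two $\PSL_2$-actions on the level of $K_0$ and of the classification of simple objects, thereby reducing the conjecture to matching the coherence data alone.
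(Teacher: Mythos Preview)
The statement is a \emph{conjecture} in the paper, not a theorem: the paper offers no proof, only the remark that it ``can seemingly `just be checked''' together with the suggestion that one should first lift $f_p$ to an equivalence $F_p:(\rep(\SL_2)_q)_{\PSL_2}\overset{\sim}\to\rep\mcl{W}_p$ from the canonical form, where the $\PSL_2$-action is transparent on both sides. So there is no proof in the paper to compare your proposal against.

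Your proposal is likewise not a proof, and you are candid about this: you identify the main obstruction correctly (that $f_p$ is built by matching Zhu-algebra and projective-cover data, with no a priori functoriality or compatibility with the $\PSL_2$-actions), and you note that resolving the coherence data is entangled with the still-open equivalence $\rep(\SL_2)_q\simeq\rep\mcl{LM}(1,p)$. Your reformulation --- that a $\PSL_2$-equivariant enhancement of $f_p$ is the same data as a compatible equivalence $\rep(\SL_2)_q\overset{\sim}\to(\rep\mcl{W}_p)^{\PSL_2}$ at the equivariantized level --- is exactly the paper's own suggested angle of attack, phrased on the other side of the $\can^!$ equivalence. The block-by-block strategy and the proposed first step at the level of $K_0$ are reasonable, but they remain a program rather than an argument; as the paper itself indicates, the conjecture is open.
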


A positive solution to Conjecture~\ref{conj:1064} would produce explicit functors
\[
\operatorname{A}:\rep_\mbb{Z}u^H_q(\mfk{sl}_2)\to \rep \mcl{M}_p,\ \ \operatorname{B}:\rep (\SL_2)_q\to \rep \mcl{LM}(1,p)
\]
via the triplet equivalence $f_p$.
\par

Let us conclude with a short discussion of the situation in other Dynkin types.  We again take $u^{\M}_q(\mfk{g})=u^{\M}_q(G_{sc})$ the simply-connected form.  Analogs $\mcl{W}_p(\mfk{g})$ of the triplet algebra in arbitrary Dynkin type were introduced in work of Feigin and Tipunin~\cite{feigintupin}, with the triplet $\mcl{W}_p=\mcl{W}_p(\mfk{sl}_2)$ recovered in type $A_1$.  These vertex operator algebras are conjectured to be strongly finite~\cite{adamovicmilas14}--and in particular $C_2$-cofinite--although outside of types $A_1$ this conjecture remains completely open.  One can see~\cite{flandolilentner18} for a specific discussion of type $B$.
\par

Supposing strong finiteness of the algebras $\mcl{W}_p(\mfk{g})$, it is additionally conjectured that there is an equivalence of braided tensor categories $\rep u_q^{\M}(\mfk{g})\to \rep\mcl{W}_p(\mfk{g})$~\cite{lentner,gainutdinovlentnerohrmann}.  Lentner proposed~\cite[Conjecture 6.8 \& 6.9]{lentner} that the dual group $G^\vee$ acts naturally on $\mcl{W}_p(\mfk{g})$ so that the invariants $\mcl{W}_p(\mfk{g})^{G^\vee}$ are the associated $W$-algebra $\msc{W}_k(\mfk{g})$ \cite{feiginfrenkel90} at a corresponding level $k$.  Although we have clearly stacked up quite a few conjectures at this point, we would suggest that the proposed $G^\vee$ action on $\mcl{W}_p(\mfk{g})$ should correspond to our action of $G^\vee$ on $\rep u_q^{\M}(\mfk{g})$, and that the representations of the big quantum group $\rep G_q$ should be identified with a distinguished tensor subcategory in $\rep\msc{W}_k(\mfk{g})$, just as in the type $A_1$ case.

\subsection*{Acknowledgements}
This work has benefited from numerous conversations with Pavel Etingof, Azat Gainutdinov, Dennis Gaitsgory, Simon Lentner, and Ingo Runkel.  Section~\ref{sect:remPSL2} was developed in conversation with Etingof.  I thank Runkel and Gainutdinov, and also Ehud Meir, for their hospitality during a visit to Universit\"{a}t Hamburg.  Thanks also to the referees for many helpful comments, recommendations, and astute observations regarding quantum groups at low order parameters.

\tableofcontents

\section{Preliminaries}

All algebraic structures (algebras, schemes, algebraic groups, categories, etc.) are over $\mbb{C}$.  An algebraic group is an affine group scheme of finite type over $\mbb{C}$.  The standing conditions for this document are that {\it $q$ is a root of unity of even order $2l$, with $l$ positive,} and that {\it $G$ is an almost simple algebraic group with strongly admissible character lattice at $q$} (defined in Section~\ref{sect:admissible} below).
\par

For any algebra $A$, we let $\rep A$ denote the category of finite-dimensional $A$-modules.  We let $\Rep A$ denote the category of $A$-modules which are the union of their finite-dimensional submodules.  We adopt a similar notation $\corep A$ and $\Corep A$ for comodules over a coalgebra, but note that $\Corep A$ happens to be equal to the category of arbitrary comodules here.  For a $\mbb{C}$-linear category $\msc{C}$ we let $\Ind\msc{C}$ denote the corresponding $\Ind$-category, i.e.\ the completion of $\msc{C}$ with respect to filtered colimits, so that $\Ind(\rep A)=\Rep A$ (resp. $\Ind(\corep A)=\Corep A$) for example.

\subsection{Basics on (braided) tensor categories}

We refer the reader to~\cite{egno15}, and in particular \cite[\S 4.1 \& \S 8.1]{egno15}, for basics on tensor categories.  Concisely, a tensor category (over $\mbb{C}$) is a $\mbb{C}$-linear, abelian monoidal category which has duals, has a simple unit object $\1$, and satisfies certain local finiteness conditions.  Following~\cite{etingofostrik04}, we call a tensor category $\msc{C}$ \emph{finite} if $\msc{C}$ has finitely many simples and enough projectives.  This implies that $\msc{C}$ is equivalent to the representation category of a finite-dimensional algebra, as a $\mbb{C}$-linear abelian category.
\par

A tensor functor between tensor categories is an exact $\mbb{C}$-linear monoidal functor.  A fiber functor for a tensor category $\msc{C}$ is a faithful tensor functor to $Vect$, $F:\msc{C}\to Vect$.  By an \emph{embedding} $F:\msc{D}\to \msc{C}$ of tensor categories we mean a fully faithful tensor functor for which $F(\msc{D})$ is closed under taking subobjects in $\msc{C}$.  When $\msc{D}$ is a finite tensor category this subobject closure property is a consequence of fully faithfulness~\cite[\S 6.3]{egno15}.  In the infinite setting there are fully faithful tensor functors which are not embeddings.
\par

A braided tensor category is a tensor category $\msc{C}$ equipped with a family of natural isomorphisms $c_{V,W}:V\ot W\to W\ot V$, at all $V$ and $W$ in $\msc{C}$, which satisfies the braid relations~\cite[Definition 8.1.1]{egno15}.  A braided tensor functor $F:\msc{C}\to \msc{D}$ is a tensor functor which respects the braiding, in the sense that braidings from $\msc{C}$ and $\msc{D}$ induce the same maps $F(V)\ot F(W)\to F(W)\ot F(V)$.  We write $c^2_{V,W}$ for the double braiding $c_{W,V}c_{V,W}:V\ot W\to V\ot W$.

\begin{definition}\label{def:ndg}
The M\"uger center of a braided tensor category $\msc{C}$ is the full tensor subcategory of $\msc{C}$ consisting of all objects $V$ for which the double braiding transformation $c^2_{V,-}:V\ot -\to V\ot -$ is the identity.  We call a braided tensor category $\msc{C}$ non-degenerate if its M\"uger center is trivial, i.e.\ if any M\"uger central $V$ is isomorphic to a sum of the unit object $V\cong \1^{\oplus r}$.
\end{definition}

When $\msc{C}$ is finite, our definition of non-degeneracy, in terms of the M\"uger center, is equivalent to all other reasonable notions of non-degeneracy~\cite{shimizu19}.
\par

We recall that a symmetric tensor category is one for which the double braiding $c^2_{-,-}$ is the identity, globally, and a \emph{Tannakian} category is a braided tensor category $\msc{C}$ which admits a braided fiber functor to $Vect$.  Note that a Tannakian category must be symmetric, although not all symmetric tensor categories are Tannakian.  (For example, the category $sVect$ of super vector spaces is non-Tannakian, as it has objects with self-braiding $-id_{V\ot V}$.)

\begin{definition}
A ribbon structure on a braided tensor category $\msc{C}$ is a choice of a family of natural endomorphisms $\theta_V:V\to V$ which satisfy $(\theta_V)^\ast=\theta_{V^\ast}$ and $\theta_{V\ot W}=(\theta_V\ot \theta_W)c^2_{V,W}$, for all $V$ and $W$.
\end{definition}

\subsection{Almost simple algebraic groups}

Let $G$ be an almost simple algebraic group over $\mbb{C}$, with root lattice $Q$ and weight lattice $P$.  Recall that $G$ is specified, up to isomorphism, by its Lie algebra $\mfk{g}=\mrm{Lie}(G)$ and choice of character lattice $X$ between $Q$ and $P$.  The character lattice appears abstractly as the group of maps from a maximal torus $T\subset G$ to $\mbb{G}_m$, $X=\Hom_\mrm{AlgGrp}(T,\mbb{G}_m)$.  (By $\mbb{G}_m$ we mean the multiplicative group $\mbb{C}^\ast$ with its standard algebraic group structure.)  For $G$ of adjoint type we have $X=Q$, and for $G$ simply-connected $X=P$.
\par

We let $\Delta=\{\alpha_1,\dots,\alpha_n\}$ denote the simple roots in $X$, and $\Phi\subset X$ denote the collection of all roots.  For each simple $\alpha_i$ we have an associated integer $d_i=d_{\alpha_i}\in \{1,2,3\}$ and diagonal matrix $D=\mrm{diag}\{d_1,\dots, d_n\}$ for which $D[a_{ij}]$ is symmetric, where the $a_{ij}$ are the Cartan integers for $G$.

We have the Cartan pairing $\langle\ ,\ \rangle:Q\times Q\to \mbb{Z}$, defined by the Cartan integers $\langle \alpha_i,\alpha_j\rangle=a_{ij}$.  If we take $r$ to be the group exponent of the quotient $X/Q$, then this form extends to a unique $\mbb{Z}[\frac{1}{r}]$-valued form on $X$.  We have a unique symmetrization $(\ ,\ ):X\times X\to \mbb{Z}[\frac{1}{r}]$ of the Cartan form on $X$ defined by
\[
(\alpha_i,\alpha_j)=d_i\langle a_i,a_j\rangle=d_ia_{ij}.
\]
We call this symmetrized form the {\it(normalized) Killing form} on $X$, since the induced form on the complexification $X_\mbb{C}$ is identified with the standard Killing form on the dual $\mfk{h}^\ast$ of the Cartan subalgebra $\mfk{h}$ in $\mfk{g}$, up to scaling.

\begin{remark}
Note that the Cartan integer conventions for Lusztig~\cite{lusztig90II,lusztig93} are transposed relative to those of, say, Humphreys~\cite{humphreys12}.  We follow Lusztig's convention here, in order to produce a consistency between our presentation and the works of Lusztig, so that $\langle a_i,a_j\rangle=2(a_i,a_j)/(a_i,a_i)$ \cite[Definition 2.2.1]{lusztig93}.
\end{remark}

\subsection{Exponentiation of the Killing form on $X$}

Take again $r$ to be the exponent of the quotient $X/Q$, so that the Killing form on $X$ takes values in $\mbb{Z}[\frac{1}{r}]$.  For $q$ an arbitrary root of unity in $\mbb{C}$, with argument $\theta$, we may take the $r$-th root $\sqrt[r]{q}=\exp(2\pi i\theta/r)$.  We exponentiate the Killing form to arrive at the multiplicative form
\[
\Omega:X\times X\to \mbb{C}^\ast,\ \ \Omega(x,y):=(\sqrt[r]{q})^{r(x,y)}.
\]
Since $r(x,y)$ is an integer this form is well-defined.  Having established this point, we abuse notation throughout and write simply $\Omega(x,y)=q^{(x,y)}$.

\subsection{Representations of the quantum group $\rep G_q$ and the divided power algebra $U_q(\mfk{g})$}
\label{sect:repGq}

Take $q$ a root of unity of order $2l$, let $\mfk{g}$ be a simple Lie algebra over $\mbb{C}$, and for each root $\gamma\in\Phi$ take
\[
l_\gamma:=\text{the minimal positive integer such that }d_\gamma l_\gamma\in l\mbb{N},
\]
where $d_\gamma$ is the relative length $\lvert \gamma\lvert^2/\lvert\text{short root}\lvert^2$.  Following \cite[Chapter 35]{lusztig93}, \emph{we assume additionally that $l_\alpha>-\langle \alpha,\beta\rangle$ at all pairs of distinct simple roots $\alpha$, $\beta$}.  This condition is always satisfied in the simply-laced case, provided $l$ is positive, but requires that $l$ is not \emph{too} small outside of the simply-laced case.  

\begin{remark}
One can require that the comparison $l_\alpha>-\langle \alpha,\beta\rangle$ holds only at those $\alpha$ for which $l_\alpha>1$.  However, in applying this relaxation one should alter the definition of $u^M_q(G)$ (Section \ref{sect:uM}) in accordance with \cite[\S 35.4.1]{lusztig93}.
\end{remark}

Let $U_q=U_q(\mfk{g})$ be Lusztig's divided power quantum group specialized at $q$~\cite{lusztig90,lusztig90II}, with standard generators 
\[
E_\alpha,\ F_\alpha,\ K_\alpha,\ E_\alpha^{(l_\alpha)},\ F_\alpha^{(l_\alpha)},\ \binom{K_\alpha;0}{l_\alpha},\ \ \text{for all }\alpha\in \Delta.
\]
Here the $K_\alpha$ are grouplike, the $E_\alpha$ are $(K_\alpha,1)$-skew primitive, and the $F_\alpha$ are $(1,K_\alpha^{-1})$-skew primitive.  We let $\rep G_q$ denote the tensor subcategory in $\rep U_q(\mfk{g})$ consisting of objects $V$ such that:
\begin{enumerate}
\item[(a)] $V$ comes equipped with a grading by the character lattice, $V=\oplus_{\lambda\in X} V_\lambda$,
\item[(b)] For $v\in V_\lambda$ the torus elements in $U_q$ act by the corresponding eigenvalues,
\[
K_\alpha\cdot v=q^{(\alpha,\lambda)}v\ \text{and}\ \binom{K_\alpha; 0}{l_\alpha}\cdot v=\binom{\langle \alpha,\lambda\rangle}{l_\alpha}_{d_\alpha}v,\ \text{where $\binom{a}{b}_{d_\alpha}$ is the $q^{d_\alpha}$-binomial.
}
\]
\end{enumerate}
Morphisms in $\rep G_q$ are $U_q$-linear maps which preserve the $X$-grading.  (Obviously, $U_q=U_q(\mfk{g})$ here.)  For the materials of Section~\ref{sect:(1,p)}, we would like to understand the nature of $\rep G_q$ as a subcategory in $\rep U_q$.

\begin{proposition}\label{prop:273}
The faithful tensor functor $\rep G_q\to \rep U_q$ is a tensor embedding.
\end{proposition}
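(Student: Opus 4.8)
The plan is to exhibit $\rep G_q \to \rep U_q$ as an exact, fully faithful functor whose essential image is closed under subobjects, which is exactly the definition of a tensor embedding given in the preliminaries. Exactness and the tensor structure are immediate: the inclusion is the restriction-of-structure functor along no algebra map (the objects are literally $U_q$-modules with extra data), the forgetful functor simply drops the $X$-grading, and kernels and cokernels in $\rep G_q$ are computed in $\rep U_q$ because the $X$-grading and the torus-eigenvalue conditions (a),(b) are preserved under passing to sub- and quotient modules. So the two real points are full faithfulness and subobject-closure.

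First I would handle full faithfulness. A morphism in $\rep G_q$ is by definition a $U_q$-linear map preserving the $X$-grading, so the functor is automatically faithful, and fullness amounts to showing that every $U_q$-linear map $f: V \to W$ between objects of $\rep G_q$ automatically preserves the $X$-grading. This is the crux of the argument. The idea is that the grading is \emph{intrinsic}: for $V \in \rep G_q$, the graded piece $V_\lambda$ is recovered from the $U_q$-module structure alone as a simultaneous eigenspace for the commuting operators $K_\alpha$ and $\binom{K_\alpha;0}{l_\alpha}$, $\alpha \in \Delta$, with prescribed eigenvalues $q^{(\alpha,\lambda)}$ and $\binom{\langle\alpha,\lambda\rangle}{l_\alpha}_{d_\alpha}$ determined by $\lambda \bmod (\text{something})$. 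Since a $U_q$-linear map commutes with all these torus operators, it carries each such joint eigenspace to the corresponding one in $W$, hence preserves the grading. The one subtlety to check carefully: the pair of functions $\lambda \mapsto \big( (q^{(\alpha,\lambda)})_\alpha, (\binom{\langle\alpha,\lambda\rangle}{l_\alpha}_{d_\alpha})_\alpha \big)$ must separate the weights $\lambda \in X$ — i.e. two distinct characters $\lambda \neq \mu$ in $X$ must give distinct eigenvalue-data — so that distinct graded pieces really do sit in distinct joint eigenspaces and the grading is unambiguously reconstructed. This is where the \emph{strongly admissible} hypothesis on the character lattice (invoked as a standing assumption) should enter: it is precisely the kind of condition guaranteeing that the extra divided-power torus generators $\binom{K_\alpha;0}{l_\alpha}$ supply enough information to pin down $\lambda$ exactly, not just modulo the $l_\alpha$-scaling seen by $K_\alpha$ alone. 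I expect this weight-separation verification to be the main obstacle, and the place where one must cite the definition of strong admissibility and possibly a short lemma from Section~\ref{sect:admissible} or from Lusztig.

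Granting full faithfulness, subobject-closure in the essential image follows cheaply, and in two ways. On one hand, by the general fact quoted from~\cite[\S 6.3]{egno15} for finite tensor categories one could try to invoke it directly, but $\rep G_q$ is not finite, so instead I would argue by hand: if $V \in \rep G_q$ and $W \subseteq V$ is a $U_q$-submodule, then since $W$ is stable under all the torus operators and $V = \bigoplus_\lambda V_\lambda$ decomposes into their joint eigenspaces, $W$ inherits the decomposition $W = \bigoplus_\lambda (W \cap V_\lambda)$; setting $W_\lambda := W \cap V_\lambda$ gives $W$ the structure of an $X$-graded object satisfying (a) and (b), i.e. $W \in \rep G_q$, and the inclusion $W \hookrightarrow V$ is a morphism in $\rep G_q$. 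Hence the image is closed under subobjects. Assembling exactness, full faithfulness, and subobject-closure, the functor $\rep G_q \to \rep U_q$ is a tensor embedding in the sense of Section~2, which is the assertion of Proposition~\ref{prop:273}. $\qed$
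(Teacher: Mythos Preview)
Your strategy matches the paper's exactly: recover the $X$-grading intrinsically from the torus eigenvalues, deduce full faithfulness, and then get subobject-closure because $U_q$-submodules inherit the joint eigenspace decomposition. The paper packages the key weight-separation step as an explicit formula (Lemma~\ref{lem:297}) expressing $\deg(v)$ in terms of the eigenvalues of $K_\alpha$ and $\binom{K_\alpha;0}{l_\alpha}$, proved by reducing to $\SL_2$ and a direct $q$-binomial computation.

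One correction worth flagging: you speculate that strong admissibility is what makes the weight-separation go through, but it plays no role here. The computation in Lemma~\ref{lem:297} is valid for arbitrary $G$ and arbitrary even-order $q$; the divided-power torus element $\binom{K_\alpha;0}{l_\alpha}$ already carries enough information (essentially the integer part $\lfloor \langle\alpha,\lambda\rangle / l_\alpha\rfloor$ up to a sign twist) to reconstruct $\lambda$ from the pair of eigenvalues, with no lattice hypothesis needed. So when you fill in the missing step, look for a direct identity among $q$-binomials rather than an appeal to Section~\ref{sect:admissible}.
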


The proof of the proposition will follow from Lemma~\ref{lem:297} below.

\begin{remark}
The analogous map $\rep G_q\to \rep U_q$ is an equivalence at simply-connected $G$ when $q$ is of \emph{odd order}.  At even order $q$ the functor of Proposition~\ref{prop:273} is not essentially surjective for $G=\SL_2$ (see Section~\ref{sect:(1,p)2}), and thus not an equivalence, and we expect that it is not an equivalence for any $G$ at such $q$.
\end{remark}

For simple $\alpha$ let $f_\alpha\in P$ denote the corresponding fundamental weight in $P$, so that $(f_\alpha,\beta)=d_\beta\delta_{\alpha,\beta}$ at simple $\beta$.  Since $X\subset P$, we may write any element in $X$ uniquely as a linear combination of these $f_\alpha$, with coefficients in $\mbb{Z}$.

Consider $V$ in $\rep G_q$, and take a homogenous nonzero element $v\in V$.  For simple $\alpha\in \Delta$ consider the unique integer $0\leq m'_v(\alpha)<\ord(q^{d_\alpha})$ so that $K_\alpha v=q^{d_\alpha m'_v(\alpha)}v$ and take
\[
m_v(\alpha)=\left\{\begin{array}{ll}
m'_v(\alpha) & \text{if $\ord(q^{d_\alpha})$ is odd or $m'_v(\alpha)<\frac{\ord(q^{d_\alpha})}{2}$}\\
m'_v(\alpha)-\frac{\ord(q^{d_\alpha})}{2} & \text{else}.
\end{array}\right.
\]
Let also $n_v'(\alpha)\in\mbb{Z}$ be such that $v$ lies in the $n_v'(\alpha)$-eigenspace for the action of $\binom{K_\alpha;0}{l_\alpha}$ (cf.~\cite[Corollary 3.3]{lusztig89}) and take 
\[
n_v(\alpha)=\left\{
\begin{array}{ll}
n'_v(\alpha) & \text{if $\ord(q^{d_\alpha})$ is odd}\\
(-1)^{l(n_v'(\alpha)-1)}n_v'(\alpha) &\text{else}.
\end{array}\right.
\]
Finally, define $\ell_\alpha=\mrm{ord}(q^{d_\alpha})$ if the order of $q^{d_\alpha}$ is odd and $\mrm{ord}(q^{d_\alpha})/2$ otherwise

\begin{lemma}\label{lem:297}
Consider homogenous $v\in V$, for $V$ in $\rep G_q$, and take $m_v(\alpha),\ n_v(\alpha)\in\mbb{Z}$ as above.  Then the $X$-degree of $v$ is given by the formula
\begin{equation}\label{eq:deg}
\deg(v)=\sum_{\alpha\in\Delta}(m_v(\alpha)+(-1)^{m_v(\alpha)(\ord(q^{d_\alpha})-1)}\ell_\alpha n_v(\alpha))f_\alpha.
\end{equation}
\end{lemma}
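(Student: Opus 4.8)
Since $X\subseteq P$ and the fundamental weights $\{f_\alpha\}_{\alpha\in\Delta}$ form a $\mbb{Z}$-basis of $P$, the plan is to write $\deg(v)=\sum_\alpha c_\alpha f_\alpha$ with $c_\alpha\in\mbb{Z}$ and handle each coefficient separately; from $(f_\alpha,\beta)=d_\beta\delta_{\alpha\beta}$ one reads off $c_\alpha=\langle\alpha,\deg(v)\rangle$, so it suffices to fix a simple $\alpha$ and show the $f_\alpha$-component of the right side of~\eqref{eq:deg} equals $c_\alpha$. First I would record the two pieces of data attached to $v$: by condition (b) of the definition of $\rep G_q$, the element $K_\alpha$ acts on $v$ by $q^{(\alpha,\deg v)}=(q^{d_\alpha})^{c_\alpha}$, while $\binom{K_\alpha;0}{l_\alpha}$ acts by the $q^{d_\alpha}$-Gaussian binomial $\binom{c_\alpha}{l_\alpha}$; a case check on $d_\alpha\in\{1,2,3\}$ (using $\ord(q)=2l$) gives $l_\alpha=k_\alpha$. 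So the available invariants of $v$ are the residue $m'_v(\alpha)\equiv c_\alpha\pmod{\ord(q^{d_\alpha})}$ and the integer $n'_v(\alpha)=\binom{c_\alpha}{k_\alpha}_{q^{d_\alpha}}$.

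The statement then reduces to a purely combinatorial one: recover $c=c_\alpha$ from the pair $(m'_v(\alpha),n'_v(\alpha))$ and check the answer matches~\eqref{eq:deg}. Setting $\zeta=q^{d_\alpha}$, $N=\ord(\zeta)$ and $k=k_\alpha$ (so $N=k$ if $N$ is odd, $N=2k$ if $N$ is even, and $\zeta^k=\pm1$), the key input is the relevant case of the quantum analogue of Lucas's theorem: specialized at $\zeta$ one has $\binom{c}{k}_\zeta=\varepsilon(c)\,\lfloor c/k\rfloor$, where $\varepsilon(c)\in\{\pm1\}$ depends only on $c\bmod N$ and $\varepsilon\equiv1$ when $N$ is odd. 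I would isolate this as a self-contained sub-lemma — it is classical, cf.~\cite[Corollary 3.3]{lusztig89} — and, should a proof be wanted, deduce it by induction on $\lfloor c/k\rfloor$ from the Pascal-type recursion for $\binom{c}{k}$, starting from $\binom{k}{k}_\zeta=1$ and $\binom{m}{k}_\zeta=0$ for $0\le m<k$ and carrying the sign $\zeta^k$ through the recursion.

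Granting the sub-lemma, $m'_v(\alpha)$ fixes $\varepsilon$ and the coset $c_\alpha+N\mbb{Z}$, and then $n'_v(\alpha)$ fixes $\lfloor c_\alpha/k_\alpha\rfloor$, so $c_\alpha$ is determined by $(m'_v(\alpha),n'_v(\alpha))$; writing $c_\alpha=q_0k_\alpha+r_0$ with $0\le r_0<k_\alpha$, one checks $r_0=m_v(\alpha)$, identifies $q_0$ with $n_v(\alpha)$ once the sign $(-1)^{l(n'_v(\alpha)-1)}$ built into $n_v(\alpha)$ is accounted for, and sees that the residual sign — produced by $\varepsilon$ together with the passage $m'_v(\alpha)\mapsto m_v(\alpha)$ — is exactly the factor $(-1)^{m_v(\alpha)(\ord(q^{d_\alpha})-1)}$ in~\eqref{eq:deg}. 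I expect the hard part to be this last sign reconciliation in the case $\ord(q^{d_\alpha})$ even: this is precisely where the even-order pathologies motivating the paper appear, the calculation should be organized by the parities of $q_0$, $r_0$, $k_\alpha$ and $l$, and it is here that the strongly admissible hypothesis on $X$ enters — it rules out the parity mismatches (among $l$, $k_\alpha$ and $\ord(q^{d_\alpha})$, which can occur for indices with $d_\alpha=2$) that would otherwise contradict~\eqref{eq:deg}.
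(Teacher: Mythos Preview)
Your strategy coincides with the paper's: fix a simple $\alpha$, read off $c_\alpha=\langle\alpha,\deg v\rangle$ as the $f_\alpha$-coefficient, and recover $c_\alpha$ from the eigenvalues of $K_\alpha$ and $\binom{K_\alpha;0}{l_\alpha}$ by inverting the Lucas-type identity for Gaussian binomials. The paper packages this same reduction as restriction along the Hopf embedding $U_{q^{d_\alpha}}(\mfk{sl}_2)\hookrightarrow U_q(\mfk{g})$, so that one is literally doing a single rank-one computation in $\rep(\SL_2)_{q^{d_\alpha}}$; your coordinate-wise formulation is the same argument without naming the restriction functor.

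Where you go off course is the final paragraph. Strong admissibility plays no role in this lemma --- it is a pointwise identity about torus eigenvalues, and the paper's proof never invokes it. In particular, admissibility does \emph{not} eliminate the parity mismatch you anticipate for $d_\alpha=2$: already in type $B$ at $l=6$ one is strongly admissible while $l=6$ and $k_\alpha=3$ have opposite parity. What actually makes the signs close up is that, after passing to the $\alpha$-copy of $\SL_2$, the parameter $l$ appearing in the definitions and in the binomial recursion is the \emph{local} one, namely $l_\alpha=k_\alpha$ (the paper says this explicitly: ``we take $l=\ord(q^{d_\alpha})$ for $\rep(\SL_2)_{q^{d_\alpha}}$ instead''). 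Read the sign $(-1)^{l(n'_v-1)}$ with this local $l$, and the bookkeeping becomes a direct two-case check (local order even versus odd) with no residual obstruction and no need for any lattice hypothesis. So your plan is sound; just replace the admissibility crutch with careful tracking of which $l$ is in play.
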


\begin{proof}
We may assume $G$ is simply-connected, by way of the embedding from $\rep G_q$ to the simply-connected form.  Via the restriction functors $F_\alpha:\rep G_q\to \rep (\SL_2)_{q^{d_\alpha}}$ along the Hopf embeddings $U_{q^{d_\alpha}}(\mfk{sl}_2)\to U_q(\mfk{g})$, which sends $E$, $F$, and $K$ to $E_\alpha$, $F_\alpha$ and $K_\alpha$, it suffices to consider the case $G=\SL_2$.  Here the weight lattice is generated by the single fundamental weight $f=\frac{1}{2}\alpha$.  We note that $\ord(q^{d_\alpha})$ may be odd, in which case $\ord(q^{d_\alpha})=\ell_\alpha$, and make the analogous $\ell_\alpha$-demands as above in the definition of $\rep(\SL_2)_{q^{d_\alpha}}$.  In any case, we take $G=\SL_2$ and allow $q$ to be of possibly odd order.
\par

Take $v\in V$ of degree $cf$, for $V$ in $\rep(\SL_2)_q$, and assume first that $q$ is of even order $2\ell$.  Then we have
\[
\binom{K;0}{\ell}v=\binom{\langle \alpha,cf\rangle }{\ell}v=\binom{c}{\ell}v,
\]
and by definition $n'_v=\binom{c}{\ell}$.
We have directly that $\binom{r}{\ell}=0$ when $0\leq r<\ell$ and $\binom{\ell}{\ell}=1$, and also the general property
\[
\binom{k\ell+a}{\ell}=q^{-a\ell}\binom{k\ell}{\ell}+q^{k\ell^2}\binom{a}{\ell}=(-1)^{a}\binom{k\ell}{\ell}+(-1)^{k\ell}\binom{a}{\ell}
\]
(see~\cite[\S 1.3]{lusztig93}).  This gives $\binom{k\ell}{\ell}=(-1)^{\ell(k-1)}k$ by induction and $\binom{k\ell+r}{\ell}=(-1)^r(-1)^{\ell(k-1)}k$ for $0\leq r<\ell$.  So, in total,
\[
n'_v=\binom{c}{\ell}=(-1)^{c-\ell\lfloor\frac{c}{\ell}\rfloor}(-1)^{\ell(n'_v-1)}\lfloor\frac{c}{\ell}\rfloor.
\]
The difference $c-\ell\lfloor\frac{c}{\ell}\rfloor$ is $m_v$, since $Kv=q^cv$.  Hence
\[
\begin{array}{rl}
c & =c-\ell\lfloor\frac{c}{\ell}\rfloor+\lfloor\frac{c}{\ell}\rfloor\\
 &= m_v+(-1)^{m_v}(-1)^{\ell(n'_v-1)}\ell n_v'= m_v+(-1)^{m_v}\ell n_v.
\end{array}
\]
So we see $\deg(v)=cf=(m_v+(-1)^{m_v}\ell n_v)f$, as claimed.
\par

A similar, but easier, analysis yields the result for $\rep (\SL_2)_q$ when $q$ is of odd order.
\end{proof}

\begin{proof}[Proof of Proposition~\ref{prop:273}]
One sees from Lemma~\ref{lem:297} that the $X$-grading on $V$ in $\rep G_q$ is completely recoverable from the action of the torus elements in $U_q$.  Whence we find that morphisms $V\to W$ in $\rep U_q$ between $X$-graded objects preserve the $X$-grading, implying full faithfulness of the inclusion.  Furthermore, for a $v\in V$ in $X$-graded $V$ we may expand $v$ in terms of the grading $v=\sum_\lambda v_\lambda$ and, by Lemma~\ref{lem:297} we may take for any $\lambda\in X$ a torus element $t_\lambda\in U_q$ so that $t_\lambda v=v_\lambda$.  Hence any subobject $V'\subset V$ in $\rep U_q$ is $X$-graded as well.  Whence the inclusion $\rep G_q\to \rep U_q$ is an embedding.
\end{proof}

\subsection{The $R$-matrix for $\rep G_q$}
\label{sect:R}

Let $q$ be a root of unity of order $2l$, as before.  Recall our notation $\Omega:X\times X\to \mbb{C}^\times$ for the $q$-exponentiated Killing form.  According to~\cite[Chapter 32]{lusztig93} the category $\rep G_q$ is braided by the operator
\[
R=R^+\Omega^{-1}=(\sum_{n:\Phi^+\to \mbb{Z}_{\geq 0}}c_n(q) E_{\gamma_1}^{(n_1)}\dots E_{\gamma_w}^{(n_w)}\ot F_{\gamma_1}^{(n_1)}\dots F_{\gamma_w}^{(n_w)})\Omega^{-1},
\]
where the $c_n(q)$ are polynomials in $q^{\pm 1}$ with integer coefficients, $\{\gamma_1,\dots,\gamma_w\}$ is a normal ordering of the positive roots, and up to first order we have
\[
R=\left(1-(\sum_{\alpha\in \Delta}(q-q^{-1})E_\alpha\ot F_\alpha)+\dots\right)\Omega^{-1}.
\]
This linear term actually specifies $R$ entirely.  The corresponding braiding on $\rep G_q$ is given by
\[
\begin{array}{c}
c_{V,W}:V\ot W\to W\ot V,\\
\begin{array}{l}
c_{V,W}(v\ot w)=\operatorname{swap}(R\cdot v\ot w)\\
=q^{-(\deg v,\deg w)}\operatorname{swap}\left(\sum_{n:\Phi^+\to \mbb{Z}_{\geq 0}}c_n(q) E_{\gamma_1}^{(n_1)}\dots E_{\gamma_w}^{(n_w)}v\ot F_{\gamma_1}^{(n_1)}\dots F_{\gamma_w}^{(n_w)}w\right),
\end{array}\end{array}
\]
where $\operatorname{swap}$ is the standard vector space symmetry, and $v$ and $w$ are taken to be homogeneous in the above expression.  This braiding operation is well-defined as any object in $\rep G_q$ is annihilated by sufficiently high powers of any $E_\gamma$, $F_\gamma$.

\begin{remark}
In~\cite{lusztig93}, Lusztig's ``$R$-matrix" $R'$ is the reverse of our $R$-matrix, $R'=R_{21}$.  This is because the braiding employed in~\cite{lusztig93} is $R'\circ \operatorname{swap}$, which is equal to $\operatorname{swap}\circ R$.  We follow the convention of~\cite{egno15} with regards to $R$-matrices and braidings.
\end{remark}

The following result is well-known, and we omit a formal proof.

\begin{lemma}[{cf.~\cite[\S 8.3C]{charipressley95}}]\label{lem:367}
The coefficients $c_n(q)$ in the expression of the $R$-matrix are such that $c_n(q)=0$ whenever $n_\gamma\geq l_\gamma$ for any $\gamma\in \Phi^+$.
\end{lemma}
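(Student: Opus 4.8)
The plan is to trace where the coefficients $c_n(q)$ come from in Lusztig's construction of the quasi-$R$-matrix and observe that each factor becomes divisible (hence vanishes) once we pass to the root of unity $q$ with $n_\gamma \geq l_\gamma$. Recall that over the generic base field, the quasi-$R$-matrix $\Theta = \sum_\mu \Theta_\mu$ is built root-by-root: for a single positive root $\gamma$ of ``length'' $d_\gamma$, the $\gamma$-component contributes a factor of the form
\[
\sum_{n_\gamma \geq 0} (-1)^{n_\gamma} v_\gamma^{-n_\gamma(n_\gamma-1)/2}\,\frac{(v_\gamma - v_\gamma^{-1})^{n_\gamma}}{[n_\gamma]_{v_\gamma}^{!}}\, E_\gamma^{n_\gamma}\otimes F_\gamma^{n_\gamma},
\]
where $v_\gamma = v^{d_\gamma}$ and $[n]^!_{v_\gamma}$ is the $v_\gamma$-factorial; upon rewriting in divided powers $E_\gamma^{(n_\gamma)} = E_\gamma^{n_\gamma}/[n_\gamma]^!_{v_\gamma}$ one gets the coefficient
\[
c_{n_\gamma}(v) = (-1)^{n_\gamma}\, v_\gamma^{-n_\gamma(n_\gamma-1)/2}\, (v_\gamma - v_\gamma^{-1})^{n_\gamma}\cdot [n_\gamma]^!_{v_\gamma},
\]
and the full coefficient $c_n(v)$ for a multi-index $n:\Phi^+\to\mbb{Z}_{\geq 0}$ is (up to signs and powers of $v$ coming from the normal ordering) a product $\prod_\gamma c_{n_\gamma}(v)$ of such one-variable expressions. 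All of this lives in $\mbb{Z}[v^{\pm 1}]$.

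First I would pin down precisely this formula for $c_n(v)$ as a polynomial in $\mbb{Z}[v^{\pm1}]$, citing~\cite[\S 4.1, Ch.~32]{lusztig93} for the shape of $\Theta$ and the passage to divided powers. Next I would specialize $v \mapsto q$, a root of unity of order $2l$, so that $v_\gamma \mapsto q^{d_\gamma}$, which has order $\ord(q^{d_\gamma})$; note $l_\gamma$ was defined precisely so that $q^{d_\gamma}$ has order dividing (indeed equal to, up to the even/odd subtlety) something making $[l_\gamma]_{q^{d_\gamma}} = 0$. The key arithmetic point is: $[m]_{q^{d_\gamma}} = \frac{q^{d_\gamma m} - q^{-d_\gamma m}}{q^{d_\gamma} - q^{-d_\gamma}} = 0$ exactly when $q^{2 d_\gamma m} = 1$, i.e. when $m$ is a multiple of $l_\gamma$ (using $d_\gamma l_\gamma \in l\mbb{N}$ minimally). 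Hence $[n_\gamma]^!_{q^{d_\gamma}} = \prod_{m=1}^{n_\gamma}[m]_{q^{d_\gamma}}$ contains the factor $[l_\gamma]_{q^{d_\gamma}} = 0$ as soon as $n_\gamma \geq l_\gamma$. Therefore $c_{n_\gamma}(q) = 0$, and a fortiori $c_n(q) = 0$ whenever $n_\gamma \geq l_\gamma$ for some $\gamma\in\Phi^+$.

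The only thing requiring a little care — and the main (mild) obstacle — is bookkeeping: one must make sure that in passing from the abstract product form of $\Theta_\mu$ to the specific monomial expansion $\sum_n c_n(q)\, E_{\gamma_1}^{(n_1)}\cdots E_{\gamma_w}^{(n_w)} \otimes F_{\gamma_1}^{(n_1)}\cdots F_{\gamma_w}^{(n_w)}$ used in Section~\ref{sect:R}, the coefficient attached to a given multi-index $n$ is genuinely the product $\prod_\gamma c_{n_\gamma}(q)$ times a unit (a sign and a power of $q$), with no cancellation or regrouping that could reintroduce a nonzero contribution. This is guaranteed because the $E_{\gamma_i}^{(n_i)}$ for a fixed normal ordering are linearly independent in $u_q^+$ (they form part of a PBW-type basis, cf.~\cite{lusztig93}), so the coefficient of each PBW monomial is unambiguous and equals the corresponding product of one-root coefficients. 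Alternatively — and this is the cleaner route I would actually write — one can avoid the explicit formula entirely: the divided-power integral form $U_{\mbb Z} $ specializes to $U_q$, the quasi-$R$-matrix $\Theta$ already lies in (a completion of) $U^-_{\mbb Z}\,\widehat\otimes\, U^+_{\mbb Z}$, and its image in $U_q \widehat\otimes U_q$ factors through the small quantum group $u_q^- \widehat\otimes u_q^+$ because $E_\gamma^{(n_\gamma)}$ and $F_\gamma^{(n_\gamma)}$ map to $0$ in $u_q^{\pm}$ for $n_\gamma \geq l_\gamma$ — this is exactly the statement that $u_q^{\pm}$ is spanned by the PBW monomials with all exponents $< l_\gamma$. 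Since the $c_n(q)$ are by definition the coefficients of the non-divided-power monomials $E_{\gamma_1}^{(n_1)}\cdots \otimes F_{\gamma_1}^{(n_1)}\cdots$ and these monomials vanish in $u_q^\pm$ precisely in the claimed range, the coefficient there must be $0$ (after expressing everything in a PBW basis of $u_q^\pm$). Either way the conclusion is immediate once the arithmetic of $[l_\gamma]_{q^{d_\gamma}} = 0$ is in hand.
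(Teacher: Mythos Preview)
The paper explicitly omits a proof, calling the result well-known, so there is nothing to compare against directly. Your main route --- identifying $c_n(q)$ as (a unit times) $\prod_\gamma [n_\gamma]!_{q^{d_\gamma}}(q^{d_\gamma}-q^{-d_\gamma})^{n_\gamma}$ via the product factorization of $\Theta$ over the positive roots, and then observing that $[l_\gamma]_{q^{d_\gamma}}=0$ kills the $q$-factorial once $n_\gamma\geq l_\gamma$ --- is correct and is precisely the standard argument (indeed the one in the paper's cited reference, Chari--Pressley \S 8.3C).

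Your proposed ``cleaner route'' at the end, however, is circular as written. You assert that $E_\gamma^{(n_\gamma)}$ and $F_\gamma^{(n_\gamma)}$ ``map to $0$ in $u_q^{\pm}$'' for $n_\gamma\geq l_\gamma$, but there is no algebra map $U_q^\pm\to u_q^\pm$ along which this could happen: in the Lusztig divided-power form, $u_q$ is a \emph{subalgebra} of $U_q$, not a quotient. (You may be conflating this with the De Concini--Kac form, where $E_\gamma^{l_\gamma}$ is central and $u_q$ genuinely is a quotient.) The higher divided powers $E_\gamma^{(n_\gamma)}$ do not vanish anywhere in $U_q$; they simply lie outside the subalgebra $u_q^+$. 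So the claim that $\Theta$ ``factors through'' $u_q^-\widehat\otimes u_q^+$ is exactly the content of the lemma and cannot be taken as input. If you want a structural argument in place of the explicit coefficient computation, the correct one (which the paper's source sketches in a commented-out block) goes via the Hopf pairing: interpret $R$ as a Hopf map $(DKU_q^{\leq 0})^{\mrm{cop}}\to \hatU_q^{\leq 0}$, note it is determined on the generators $K_\lambda,F_\alpha$, and observe those land in $\hatu_q^{\leq 0}$. But your first argument is already complete; just drop the alternative paragraph.
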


Lemma~\ref{lem:367} says that the $R$-matrix lives in a certain ``torus extended small quantum group" for $G$ at $q$ (denoted $\widehat{\mbf{u}}_q$ below).

\subsection{Algebras of global operators}
\label{sect:global_op}

\begin{definition}\label{def:global_op}
Let $\msc{C}$ be a locally finite $\mbb{C}$-linear category with fixed fiber functor $F:\msc{C}\to Vect$.  The algebra of global operators for $\msc{C}$ is the endomorphism ring $\End_{\Fun/\mbb{C}}(F)$.  For $\rep G_q$, we let $\hatU_q(G)$ denote the associated algebra of global operators (calculated with respect to the forgetful functor to $Vect$).
\end{definition}

By $\End_{\Fun/\mbb{C}}(F)$ we mean the algebra of natural endomorphisms of the $\mbb{C}$-linear functor $F$.  Elements of this algebra are families of linear maps $a_V:FV\to FV$, defined at all $V$ in $\msc{C}$, which satisfy $F(t)a_V=a_WF(t)$ for any map $t:V\to W$ in $\msc{C}$.  In this subsection we expand upon the the construction of the algebra $\hatU_q(G)$ for the quantum group.  We explain, in particular, that the algebra $\hatU_q(G)$ is identified with the completion of a familiar quantum group along a cofiltered system of ideals.
\par

For $\rep G_q$ we have Lusztig's modified algebra $\dotU_q(G)=\bigoplus_{\lambda\in X}U_q 1_\lambda$~\cite[Section 1.2]{kashiwara94} (see also \cite[Chapter 23 \& 31]{lusztig93}), which has $\rep\dotU_q(G)=\rep G_q$.  To be clear, $U_q1_\lambda$ is the cyclic module
\[
U_q(\mfk{g})/\left(\sum_\alpha U_q (K_\alpha-q^{(\alpha,\lambda)})+\sum_\alpha U_q(\binom{K_\alpha; 0}{l_\alpha}-\binom{\langle \alpha,\lambda\rangle}{l_\alpha}_{d_\alpha})\right),
\]
and we let $1_\lambda$ denote the corresponding cyclic generator.  For $a$ and $b$ in $U_q$ of respective $Q$-degrees $\mu$ and $\nu$, the multiplication on the modified algebra is given by
\[
(a\ \! 1_\lambda) (b\ \! 1_\tau)=a b\!\ 1_{\lambda-\nu}1_\tau=\delta_{\tau,\lambda-\nu}ab\ \! 1_\tau.
\]
We write $\dotU_q$ for the algebra $\dotU_q(G)$ when no confusion will arise.

\begin{remark}
Colloquially, the torus in $U_q(\mfk{g})$ is absorbed by the idempotent $1_\lambda$ in each $U_q1_\lambda$, and one is left only with the positive and negative subalgebras.  The modified algebra $\dotU_q$ can then be thought of as Lusztig's divided power algebra $U_q(\mfk{g})$, with the toral portion replaced by the algebra of idempotents $\oplus_{\lambda\in X} \mbb{C}1_\lambda$.  Note that the modified algebra is formally non-unital, as the unit element $\sum_{\lambda\in X}1_\lambda$ does not lie in $\dotU_q$.
\end{remark}

The algebra $\hatU_q$ is a pro-finite, linear topological Hopf algebra~\cite[\S 1.10]{egno15}, and we may identify $\hatU_q$ explicitly with the limit
\begin{equation}\label{eq:429}
\hatU_q={\varprojlim}_{cof}\dotU_q/I
\end{equation}
where $cof$ is the collection of cofinite ideals $I$ in $\dotU_q$, i.e.\ ideals for which the quotient $\dotU_q/I$ is finite-dimensional.
\par

For the moment, let us fix $\hatU_q$ to be the limit of \eqref{eq:429}, and denote the corresponding algebra of global operators by $\End_{\Fun/\mbb{C}}(F)$, were $F:\rep G_q\to Vect$ is the usual forgetful functor.  To understand the identification \eqref{eq:429}, note that any element $a$ in the completion $\hatU_q$ provides a natural endomorphism $a_?=a\cdot-\in \End_{\Fun/\mbb{C}}(F)$ given by left multiplication by $a$.  We therefore have a map of algebras $\hatU_q\to \End_{\Fun/\mbb{C}}(F)$, $a\mapsto a_?$, which one can check is an isomorphism, and so provides the claimed identification.
\par

Now, we have the global operators $E_\alpha$, $F_\alpha$, $E_\alpha^{(l_\alpha)}$, $F_\alpha^{(l_\alpha)}$, as well as the projection operators $1_\lambda$ for each $\lambda\in X$, and these operators topologically generate $\hatU_q$.  Furthermore, any (infinite) sum $\sum_{\lambda\in X}c_\lambda 1_\lambda$, $c_\lambda\in \mbb{C}$, provides a well-defined global operator on $\rep G_q$.  So, the product algebra $\prod_{\lambda\in X} \mbb{C}1_\lambda$, which is identified with the collection of arbitrary $\mbb{C}$-valued functions $\Fun(X,\mbb{C})$ on $X$, is naturally realized as a subalgebra in the algebra $\hatU_q$.

\begin{remark}
The completion $\hatU_q$ is the linear dual of the finite dual $(\dotU_q)^\circ$ \cite[Definition 1.2.3]{montgomery93}, which has $\rep G_q=\corep(\dotU_q)^\circ$.
\end{remark}

\subsection{Coherence of function algebras on groups}

Recall that an algebra $A$ is called {\it coherent} if the category of finitely presented $A$-modules is an abelian subcategory in the category of arbitrary $A$-modules.  We would like to work with general affine group schemes at some points, and so include the following result.

\begin{lemma}\label{lem:coherence}
The algebra of global functions $\O(\Pi)$ on any affine group scheme $\Pi$ is coherent.
\end{lemma}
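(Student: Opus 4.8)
The plan is to reduce to the Noetherian case by a standard limit argument. First I would recall the structure theory of affine group schemes: any affine group scheme $\Pi$ over $\mbb{C}$ is a filtered inverse limit $\Pi=\varprojlim_i \Pi_i$ of affine group schemes $\Pi_i$ of finite type over $\mbb{C}$, the transition maps being faithfully flat (this is a theorem of Grothendieck; see e.g.\ Waterhouse, \emph{Introduction to Affine Group Schemes}). Dually, $\O(\Pi)=\varinjlim_i \O(\Pi_i)$ is a filtered union of Hopf subalgebras, each of which is a finitely generated commutative $\mbb{C}$-algebra, hence Noetherian. Because the $\Pi_i\to\Pi_j$ are faithfully flat, the inclusions $\O(\Pi_j)\hookrightarrow\O(\Pi_i)$ are faithfully flat ring maps, and in particular the inclusion $\O(\Pi_i)\hookrightarrow\O(\Pi)$ is flat for every $i$.

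Next I would invoke the general fact that a filtered colimit of Noetherian rings along flat transition maps is coherent. Concretely: coherence of $A=\O(\Pi)$ means every finitely generated ideal of $A$ is finitely presented, equivalently every finitely generated submodule of a finitely presented module is finitely presented. Given a finitely generated ideal $\mfk{a}\subset A$, choose $i$ large enough that generators of $\mfk{a}$ lie in $\O(\Pi_i)$; let $\mfk{a}_i\subset\O(\Pi_i)$ be the ideal they generate, so $\mfk{a}=\mfk{a}_i\ot_{\O(\Pi_i)}A$ by flatness of $A$ over $\O(\Pi_i)$. Since $\O(\Pi_i)$ is Noetherian, $\mfk{a}_i$ is finitely presented over $\O(\Pi_i)$, and finite presentation is preserved under base change, so $\mfk{a}$ is finitely presented over $A$. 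This gives coherence.

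The only mild subtlety—and the place I'd be most careful—is the flatness claim for the transition maps, i.e.\ that $\Pi$ can be written as a cofiltered limit of finite-type group schemes with \emph{faithfully flat} bonding homomorphisms. This is where one genuinely uses that the base is a field (so every subalgebra inclusion of Hopf algebras is faithfully flat, by the freeness/faithful flatness theorems for Hopf algebras over a field, e.g.\ the Nichols–Zoeller type results or Takeuchi's theorem); over a general base this would fail. Over $\mbb{C}$, however, any Hopf algebra is the directed union of its finitely generated Hopf subalgebras, and a finitely generated commutative Hopf algebra over $\mbb{C}$ is Noetherian, and the inclusion of one Hopf subalgebra into another is faithfully flat; these three inputs together supply exactly the flat filtered system we need. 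Once those are in hand the argument above is routine.
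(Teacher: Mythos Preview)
Your proposal is correct and matches the paper's proof essentially line for line: write $\O(\Pi)$ as a filtered union of finitely generated (hence Noetherian) Hopf subalgebras, invoke faithful flatness of commutative Hopf subalgebra inclusions (the paper cites Takeuchi for this, as you anticipated), and conclude coherence from the standard fact that a filtered colimit of Noetherian rings along flat transition maps is coherent. The paper cites Glaz for this last step rather than spelling out the ideal-descent argument you gave, but the content is the same.
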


\begin{proof}
Since $\O=\O(\Pi)$ is locally finite, as a coalgebra, we have that $\O$ is the direct limit (union) of its finitely generated, and hence Noetherian, Hopf subalgebras $\O=\varinjlim_\alpha \O_\alpha$.  Since extensions of commutative Hopf algebras are (faithfully) flat~\cite[Theorem 5]{takeuchi79}, $\O_\beta$ is flat over $\O_\alpha$ when $\alpha\leq \beta$.  It follows that $\O=\varinjlim_\alpha \O_\alpha$ is coherent~\cite[Theorem 2.3.3]{glaz06}.
\end{proof}

\section{Additional structures on the character lattice}

We introduce some basic structures on the character lattice, of a given almost simple group, which are employed throughout this work.  Below we consider an almost simple algebraic group $G$ with character lattice $X$, root lattice $Q$, and weight lattice $P$.

\subsection{(Strongly) admissible lattices}
\label{sect:admissible}

Given an intermediate lattice $Q\subset X\subset P$ between the root lattice and weight lattice in a given Dynkin type, and $q$ a $2l$-th root of $1$, we define
\[
X^{\M}:=\{x\in X:(x,y)\in l\mbb{Z}\ \forall\ y\in X\}.
\]
This is a sublattice in $X$.  Note that the restriction $\Omega|_{X^{\M}\times X^{\M}}$ takes values $\{\pm 1\}$.

\begin{definition}
We say the lattice $X$ is admissible at $q$ if $\Omega(x,x)=1$ for all $x\in X^{\M}$.  We call $X$ strongly admissible at $q$ if the restriction $\Omega|_{X^{\M}\times X^{\M}}$ is of constant value $1$.
\end{definition}

This is a technical condition which, it turns out, determines the nature of the M\"uger center of the quantum group $\rep G_q$.  In particular, the character lattice for $G$ is admissible if and only if the M\"uger center of $\rep G_q$ is Tannakian, and strongly admissible if and only if the braiding on the M\"uger center in $\rep G_q$ is the trivial vector space symmetry.  Rather, the lattice is strongly admissible if and only if the given fiber functor $\rep G_q\to Vect$, which is not itself a braided tensor functor, restricts to a \emph{symmetric} fiber functor on the M\"uger center of $\rep G_q$.

\begin{lemma}\label{lem:417}
Fix a Dynkin type with corresponding root and weight lattices $Q$ and $P$ respectively.  The following hold:
\begin{enumerate}
\item The simply-connected lattice $X_{sc}=P$ is admissible at arbitrary (even order) $q$ in all Dynkin types.
\item The simply-connected lattice in types $A_1$, i.e.\ the lattice for $\SL_2$, is strongly admissible at arbitrary (even order) $q$.
\item In types $A_{>1}, B, D, E$, and $G_2$, the simply-connected lattice $X_{sc}$ is strongly admissible if and only if $4\mid \ord(q)$.
\item In type $C_{>2}$, $X_{sc}$ is strongly admissible if and only if $4\nmid\ord(q)$, i.e.\ $2$ appears with multiplicity one in the prime decomposition of $\ord(q)$, or $8\mid \ord(q)$.
\item In type $F_4$, $X_{sc}$ is strongly admissible if and only if $8\mid \ord(q)$.
\item When $2\exp(P/Q)\mid l$ and $q$ is of order $2l$, all intermediate lattices $Q\subset X\subset P$ are admissible.
\item The lattice for $\PSL_2$ is strongly admissible when $4\nmid\ord(q)$ or $8\mid \ord(q)$, and inadmissible otherwise.
\end{enumerate}
\end{lemma}

\begin{proof}
Take $2l=\ord(q)$.  (1) In this case $X^{\M}=\mbb{Z}\{l_\alpha\alpha:\alpha\in \Delta\}$, and we calculate for an arbitrary element
\[
\begin{array}{rl}
(\sum_i c_il_i\alpha_i,l\sum_i c_il_i\alpha_i)& =l_i^2c_i^2(\alpha_i,\alpha_i)+2l_il_j\sum_{i<j}c_ic_j(\alpha_i,\alpha_j)\\
&=2ll_ic_i^2+2ll_j\sum_{i<j}c_ic_j\langle\alpha_i,\alpha_j\rangle\in 2l\mbb{Z}.
\end{array}
\]
Whence we have admissibility.  (2) Here we have $X^{\M}=lQ=l\mbb{Z}\alpha$, and the computation $(l\alpha,l\alpha)=2l^2$ implies strong admissibility for $\SL_2$.
\par

(3) In the simply-laced case we have $X^{\M}=lQ$ and $(la,lb)\in l^2(a,b)$ for $a,b\in Q$.  When $2\mid l$ this implies strong admissibility.  When $2\nmid l$ if we take neighbors then $(l\alpha,l\beta)=l^2\notin 2l\mbb{Z}$, obstructing strong admissibility.  In type $B_n$ we find a similar obstruction to strong admissibility when $2$ does not divide $l$.  When $2\mid l$ and $\beta$ is short we have again $(l_\alpha\alpha,l\beta)=l^2\langle \alpha,\beta\rangle\in 2l\mbb{Z}$, and for the unique long $\gamma$,
\[
(l_\gamma\gamma,l_\gamma\gamma)=ll_\gamma\langle \gamma,\gamma\rangle =2ll_\gamma\in 2l\mbb{Z}.
\]
So $(X^{\M},X^{\M})\subset 2l\mbb{Z}$ and we have strong admissibility.  For $G_2$, with short root $\alpha$ and long root $\gamma$, 
\[
(l\alpha,l\alpha)=2l^2,\ (l_\gamma\gamma,l\alpha)=l^2\ \text{or}\ 3l^2,\ (l_\gamma\gamma,l_\gamma\gamma)=l^2\text{ or }3l^2,
\]
depending on if $3\mid l$ or not, implying failure of strong admissibility when $l$ is odd and establishing strong admissibility when $l$ is even.
\par

(4) The Killing form on $Q$ takes values in $2\mbb{Z}$ in type $C_n$.  When $l$ is odd $l_\alpha=l$ for all simple $\alpha$, and $X^{\M}=lQ$.  So $(X^{\M},X^{\M})=l^2(Q,Q)\in 2l\mbb{Z}$ in this case, and we have strong admissibility.  When $l$ is even $l_\alpha=l/2$ for all long roots and $l_\beta=l$ for the unique short root $\beta$.  When $4\mid l$ this is sufficient to establish strong admissibility, and in the remaining case when $2$ appears with multiplicity $1$ in $l$ we can take neighboring long roots $\alpha$ and $\beta$ to find $(l_{\alpha}\alpha,l_{\beta}\beta)=l_\beta l\notin 2l\mbb{Z}$.  (5) One basically combines the arguments for types $B$ and $C$ to observe the claim for $F_4$, as we have both short neighbors and long neighbors.  We leave (6) and (7) to the interested reader, as they are just illustrative examples.
\end{proof}

\subsection{Balancing functions}
\label{sect:balancing}

\begin{definition}
A balancing function on the character lattice $X$ for $G$, at a given parameter $q$, is a function $\omega:X\times X\to \mbb{C}^\times$ with the following properties:
\begin{enumerate}
\item[(a)] $\omega$ is $X$-linear in the first coordinate.
\item[(b)] In the second coordinate, $\omega$ satisfies the $X^{\M}$-semilinearity $\omega(a,a'+x)=q^{-(a,x)}\omega(a,a')$, for $x\in X^{\M}$.
\item[(c)] The restriction to $X^{\M}\times X$ is trivial, $\omega|_{X^{\M}\times X}\equiv 1$.
\end{enumerate}
\end{definition}

Note that we may view $\omega$ as a map from the quotient $(X/X^{\M})\times X$ satisfying the prescribed (semi)linearities.  Also, by strong admissibility, the function $q^{-(-,x)}$ is trivial on $X^{\M}$, so that the conditions (b) and (c) are not in conflict.

\begin{lemma}\label{lem:omega}
Every strongly admissible character lattice admits a balancing function.
\end{lemma}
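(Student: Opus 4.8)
The plan is to construct a balancing function explicitly by choosing coset representatives for $X^{\M}$ in $X$ and defining $\omega$ by prescribing its values on these representatives, then extending by the required semilinearity. First I would fix a set-theoretic section $s : Z = X/X^{\M} \to X$ of the quotient map, with $s(\bar 0) = 0$, so that every $a \in X$ is written uniquely as $a = \bar a_0 + x_a$ with $\bar a_0 = s(\text{class of }a)$ and $x_a \in X^{\M}$. Condition (b) forces $\omega(a, a') = q^{(a, x_{a'})}\,\omega(a, \bar a_0')$ once we know $\omega(a, -)$ on representatives, and condition (c) forces $\omega(a, -) \equiv 1$ whenever $a \in X^{\M}$; so the only genuine freedom is the choice of a function $\beta : (X/X^{\M}) \times (X/X^{\M}) \to \mbb{C}^\times$, namely $\beta(\bar a, \bar a') := \omega(s(\bar a), s(\bar a'))$, and we need $\beta$ to be multiplicative (equivalently $X$-linear) in the first coordinate so that condition (a) can be arranged. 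Concretely, define $\omega(a,a') := q^{(a, x_{a'})} \beta(\bar a, \bar a')$ where $\bar a, \bar a'$ denote the classes of $a, a'$ in $Z$; condition (b) is then immediate from $x_{a' + x} = x_{a'} + x$ for $x \in X^{\M}$, and condition (c) holds provided $\beta(\bar 0, \bar a') = 1$ and (using strong admissibility) $q^{(x, x_{a'})} = 1$ for $x \in X^{\M}$ — the latter is exactly the observation, already noted after the definition of balancing function, that $q^{-(-,x)}$ is trivial on $X^{\M}$ under strong admissibility.

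The remaining task is to produce $\beta$ which is a bicharacter-like object, multiplicative in the first slot, with $\beta(\bar 0, -) \equiv 1$, and simultaneously compatible with $X$-linearity of $\omega$ in the first coordinate. Here I would exploit that $Z = X/X^{\M}$ is a finite abelian group (this uses that $X^{\M}$ has finite index in $X$, which follows since the Killing form is nondegenerate over $\mbb{Q}$ and $X^{\M}$ is cut out by an integrality condition modulo $l$), so $\mbb{C}^\times$ being divisible, any partial character on a subgroup of $Z$ extends to all of $Z$. The cleanest route: pick generators $z_1, \dots, z_k$ of $Z$ of orders $m_1, \dots, m_k$; the $X$-linearity constraint in the first coordinate of $\omega$, combined with the requirement that $\omega(m_i s(z_i), a')$ be computed two ways (via linearity as $\omega(s(z_i), a')^{m_i}$, and via the relation $m_i s(z_i) \in X^{\M}$ forcing it to be $1$), imposes that $\omega(s(z_i), a')^{m_i} = 1$ for all $a'$; so I would just pick $\omega(s(z_i), s(z_j))$ to be arbitrary $m_i$-th roots of unity — for definiteness all equal to $1$ — and then show the resulting assignment extends consistently to an $X$-linear function in the first variable.

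The main obstacle I anticipate is checking that the first-coordinate $X$-linearity is genuinely consistent with conditions (b) and (c), i.e.\ that there is no hidden obstruction coming from the interaction between the lattice relations in $X$ and the twist factor $q^{(a, x_{a'})}$. Precisely: $\omega(a + b, a') = \omega(a,a')\omega(b,a')$ must hold, and with our formula the twist part contributes $q^{(a+b, x_{a'})} = q^{(a,x_{a'})} q^{(b,x_{a'})}$ which is automatically additive, so the burden falls entirely on $\beta$ being a genuine homomorphism $Z \to \mrm{Map}(Z, \mbb{C}^\times)$ in the first variable — and that is exactly what the "pick values on generators, extend by multiplicativity, consistency guaranteed by divisibility of $\mbb{C}^\times$ and the order conditions" argument delivers. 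So I expect the lemma to reduce, after the explicit formula is in place, to the standard fact that characters of finite abelian groups extend, together with the strong-admissibility input $q^{(X^{\M},X^{\M})} = 1$ that makes (b) and (c) compatible; the write-up is essentially bookkeeping once the section $s$ and the formula $\omega(a,a') = q^{(a,x_{a'})}\beta(\bar a,\bar a')$ are fixed.
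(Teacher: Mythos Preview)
Your approach is the paper's proof with extra scaffolding. The paper simply fixes a section $s:Z\to X$, writes each $a'\in X$ uniquely as $a'=x+s(z)$ with $x\in X^{\M}$, and sets $\omega(a,a'):=q^{-(a,x)}$; this is exactly your formula with $\beta\equiv 1$, and conditions (a)--(c) are then immediate (condition (c) via strong admissibility, as you note). The discussion of generators of $Z$, orders $m_i$, and character extension is unnecessary once you take $\beta\equiv 1$: there is no consistency obstruction left to check, since $q^{-(a,x_{a'})}$ is already $X$-linear in $a$ and trivial for $a\in X^{\M}$ by strong admissibility.

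One slip: your sign is wrong. Iterating (b) from $\bar a_0'$ to $a'=\bar a_0'+x_{a'}$ gives $\omega(a,a')=q^{-(a,x_{a'})}\omega(a,\bar a_0')$, not $q^{+(a,x_{a'})}$; with your sign the verification of (b) produces $q^{+(a,x)}$ instead of the required $q^{-(a,x)}$. This is a one-character fix and does not affect the argument.
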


\begin{proof}
Consider any set theoretic section $s:Z=(X/X^{\M})\to X$.  Then each element $a\in X$ admits a unique expression $a=x+sz$ with $x\in X^{\M}$ and $z\in Z$, and we may define the desired function $\omega$ by $\omega(a,a')=\omega(a,x+sz):=q^{-(a,x)}$.
\end{proof}

\section{The log-modular kernel as a quasi-Hopf algebra}
\label{sect:uM}

We provide explicit presentations of the quasi-Hopf kernels $u^{\M}_q(G)$, for almost simple $G$ with strongly admissible character lattice $X$.  We first introduce $u^{\M}_q(G)$ as an associative algebra, then provide its quasi-Hopf structure, $R$-matrix, and ribbon element when applicable.  We leave a proof of factorizability to Section~\ref{sect:fiber}.  As we will see, the quasi-Hopf structure on $u^{\M}_q(G)$ is not unique, but depends on a choice of balancing function on the character lattice for $G$.
\par

We note that the materials of this section are relatively independent of the materials of the sections that follow.  What we give here is a direct, algebraic, construction of the log-modular kernel.  In the remainder of the paper we provide both categorical and representations theoretic (re)productions of this same object, and investigate some consequences of these varying perspectives in Section~\ref{sect:(1,p)}.

\subsection{The log-modular kernel as an associative algebra~\cite{arkhipovgaitsgory03}}

Consider again the linear topological Hopf algebra $\hatU_q(G)={\varprojlim}_{cof} \dotU_q(G)/I$ of global operators for $\rep G_q$, as in Section~\ref{sect:global_op}.  We let $Z$ denote the quotient $Z=X/X^{\M}$.  As explained in Section~\ref{sect:global_op}, arbitrary $\mbb{C}$-valued functions on $X$ determine global operators on $\rep G$, so that characters $\chi$ on $Z$ in particular are identified with operators $\sum_{\lambda\in X}\chi(\lambda)1_\lambda\in \hatU_q$.  We employ the distinguished grouplikes $K_\alpha\in \Fun(X,\mbb{C})\subset \hatU_q$ below, which are precisely the functions $K_\alpha:X\to \mbb{C}^\ast$, $\lambda\mapsto q^{(\alpha,\lambda)}$.

\begin{definition}
Define $u^{\M}_q(G)$, as an associative algebra, to be the subalgebra in $\hatU_q(G)$ generated by the operators $\mathsf{E}_\alpha:=K_\alpha E_\alpha$ and $F_\alpha$, for $\alpha$ simple, as well as the functions $\mbb{C}[Z^\vee]$ on the quotient $Z$.
\end{definition}

One has relations between the characters $Z^\vee$ and the generators $\mathsf{E}_\alpha$, $F_\alpha$ as follows: for $\chi\in Z^\vee$ and any simple $\alpha$ we have
\[
\chi\mathsf{E}_\alpha\chi^{-1}=\chi(\alpha)\mathsf{E}_\alpha,\ \ \chi F_\alpha\chi^{-1}=\chi^{-1}(\alpha)F_\alpha,
\]
and we have those relations between the $\mathsf{E}$'s and $F$'s which are implied by the usual quantum Serre relations \cite[\S 1]{lusztig90II}.  One can check that the Serre relations for the usual positive elements $E_\alpha\in \dotU_q$ imply the exact same (Serre) relations for $\mathsf{E}_\alpha$.  Only the commutator relations for $\mathsf{E}_\alpha$ and $F_\beta$ are altered, due to the presence of $K_\alpha$ in the formula $\mathsf{E}_\alpha=K_\alpha E_\alpha$.  We claim, and prove in Lemma~\ref{lem:triangle} below, that these relations provide a complete list of relations for $u^{\M}_q(G)$.

\begin{remark}
Note that the distinguished grouplikes $K_\alpha$ do not lie in $Z^\vee\subset \Fun(X,\mbb{C})$ in general.  For example, for $\SL(N)$ at $N>2$ we have $X^M=lQ$ and $q^{(\alpha,l\beta)}=q^{-l}=-1$ whenever $\alpha$ and $\beta$ are neighbors, so that $K_\alpha$ does not satisfy the required vanishing on $X^M$.  However, the squares $K_\alpha^2$ always lie in $Z^\vee$.
\end{remark}

\begin{remark}
The algebra $u^M_q(G)$ is the same as the algebra of~\cite{arkhipovgaitsgory03}, given there as the algebra of coinvariants in $\hatU_q$ with respect to the quantum Frobenius (see Section~\ref{sect:qfrob1}), and $\rep u^{\M}_q(G)$ is the category $_k\msc{C}_{G_1}$ of~\cite[\S 3.11]{andersenparadowski95}.
\end{remark}

Let $\dotu_q$ denote the subalgebra in $\dotU_q$ generated by the idempotents $1_\lambda$ and the elements $E_\alpha 1_\lambda$, $F_\alpha 1_\lambda$, for arbitrary $\lambda\in X$ and simple $\alpha$.  This is the modified small quantum group, and its representations $\rep \dotu_q$ are $X$-graded vector spaces with operators $E_\alpha$ and $F_\alpha$, $\alpha\in \Delta$, which satisfy the quantum Serre relations.
\par

We may consider the cofinite completion $\hatu_q$, i.e.\ the algebra of endomorphisms of the fiber functor for $\rep\dotu_q$.  By considering the ideals $I_N$ in $\dotu_q$ generated by the idempotents $\{1_\lambda:|\lambda|\geq N\}$, $N\geq 0$, one can calculate directly that the completed algebra is simply the product
\[
\hatu_q=\varprojlim_N \dotu_q/I_N=\prod_{\lambda\in X} u_q1_\lambda.
\]
Here the $u_q1_\lambda$ are defined as in Section~\ref{sect:global_op}, with $u_q$ the subalgebra of $U_q$ generated by the $E_\alpha$, $F_\alpha$, and all toral elements.

\begin{lemma}\label{lem:365}
The restriction functors $\rep G_q\to \rep\dotu_q$ is surjective (in the sense of~\cite{egno15}).
\end{lemma}

We employ in the proof a certain basic understandings of dominant weights, and the lattice $X^M$, from Section~\ref{sect:qfrob}.  We have elected to reference the necessary results from Section~\ref{sect:qfrob} when appropriate, rather than delay the proof.

\begin{proof}
The surjective image of $\rep G_q$ in $\rep\dotu_q$ is the smallest subcategory in $\rep\dotu_q$ which contains the image of $\rep G_q$ and is closed under taking subobjects and quotients.  This subcategory is closed under duality in $\rep\dotu_q$ and, since the tensor product on $\rep\dotu_q$ is biexact, it is also closed under taking tensor products.  That is to say, the surjective image is an embedded tensor subcategory in $\rep\dotu_q$.  We have proposed that the surjective image of $\rep G_q$ is all of $\rep\dotu_q$.
\par

We let $L(\lambda)$ denote the simple in $\rep G_q$ of highest (dominant) weight $\lambda\in X^+$~\cite[Proposition 6.4]{lusztig89}.  We have the Steinberg module $St=L((l-1)\rho)$, which is simple, self-dual, and projective in $\rep G_q$.  The image of $St$ in $\rep \dotu_q$ remains projective~\cite[Theorem 4.3]{andersenpolowen92}.  We claim now that all simples in $\rep\dotu_q$ appear as subquotients of simples in $\rep G_q$.  Simples in $\rep\dotu_q$ are determined by their highest weights $\msc{L}(\nu)$, which are now associated to arbitrary elements $\nu\in X$, and so we see that $\msc{L}(\lambda)$ is a quotient $L(\lambda)$ for any dominant $\lambda$.  When $\mu\in X^M$, the simple $\msc{L}(\mu)$ is $1$-dimensional.
\par

The lattice $X^{\M}$ is itself the character lattice of a certain dual group to $G$, and $(X^{\M})^+=X^{\M}\cap X^+$ (see Section~\ref{sect:qfrob1}).  Since, $X^{\M}$ is generated by its dominat weights (see Proposition~\ref{prop:435}), we find that $\msc{L}(\mu)$ is in the surjective image of $\rep G_q$ whenever $\mu\in X^{\M}$.  Since $X^{\M}$ contains some positive multiple of all the fundamental weights we have that all $\lambda\in X$ are in the $X^{\M}$-orbit of the dominant weights $X^+$.  Rather, $X=X^{\M}+X^+$, and since each $1$-dimensional $\msc{L}(\mu)$ is a tensor unit we obtain
\[
\operatorname{Irrep}(\dotu_q)=\{\msc{L}(\nu):\nu\in X\}=\{\msc{L}(\mu)\ot \msc{L}(\lambda):\mu\in X^{\M},\ \lambda\in X^+\}.
\]
So all of the simples are in the surjective image of $\rep G_q$ in $\rep \dotu_q$.  By tensoring with the projective $St$, we find further that the surjective image contains a projective $\msc{P}(\nu)$ which surjects onto each simple $\msc{L}(\nu)$.  By considering composition series, it follows that each object $V$ in $\rep\dotu_q$ admits a surjection $\msc{P}\to V$ from a projective in the surjective image of $\rep G_q$.  Hence the surjective image is all of $\rep\dotu_q$.
\end{proof}

Lemma~\ref{lem:365} says, equivalently, that the completion $\hatu_q\to \hatU_q$ of the inclusion $\dotu_q\to \dotU_q$ is injective~\cite[Lemma 2.2.13]{schauenburg92}.  Since the subalgebra $u^{\M}_q\subset \hatU_q$ lies in $\hatu_q$, we may replace $\hatU_q$ with $\hatu_q$ in our analysis of the linear structure of $u^{\M}_q$.
\par

In the following Lemma we consider $u^+_q(G)$ as the subalgebra of $\hatu_q$ generated by the $\mathsf{E}_\alpha$, and let $u^-_q(G)$ denote the subalgebra generated by the $F_\alpha$.

\begin{lemma}\label{lem:triangle}
The subalgebra $u^+_q(G)$ (resp.\ $u^-_q(G)$) in $u^{\M}_q(G)$ has the expected presentation, with generators $\mathsf{E}_\alpha$ (resp.\ $F_\alpha$) and the quantum Serre relations of~\cite{lusztig90II}.  Furthermore, multiplication provides a triangular decomposition
\begin{equation}\label{eq:588}
u^-_q(G)\ot\mbb{C}[Z^\vee]\ot u^+_q(G)\overset{\cong}\to u^{\M}_q(G).
\end{equation}
\end{lemma}

\begin{proof}
The Serre relations for $u^+_q(G)$ imply that $u^+_q$ has a spanning set in terms of ordered monomials in the root vectors $\mathsf{E}_\gamma$~\cite{lusztig90II}.  The algebra $u^+_q$ has precisely these relations if and only if the root vector monomials provide a basis for this algebra.  However, this follows by the (topological) basis of $\hatu_q$ in terms of monomials in the root vectors~\cite[\S 31.1.2, 36.2.1]{lusztig93}.  A similar argument establishes the desired result for $u^-_q$.
\par

As for the triangular decomposition, the commutator relations between the $\mathsf{E}_\alpha$ and $F_\beta$ imply that the map~\eqref{eq:588} is surjective, and injectivity follows again by the basis of $\hatu_q$ in terms of monomials in root vectors.
\end{proof}

\subsection{The quasi-Hopf structure on $u^{\M}_q(G)$ via a balancing function}

We introduce a (family of) quasi-Hopf structure(s) on $u^{\M}_q(G)$, determined by a choice of balancing function for the character lattice $X$.  We refer the unfamiliar reader to~\cite{majid98} for details on quasi-Hopf algebras, or any other standard reference.
\par

Fix a balancing function $\omega$, with pointwise inverse $\omega^{-1}$.  We have $\omega(1,\ast)=\omega(\ast,1)=1$, and hence $\omega$ defines a (non-Drinfeld) twist
\[
\omega=\sum_{\lambda,\mu\in X}\omega(\lambda,\mu)1_\lambda\ot 1_\mu\in \Fun(X,\mbb{C})\hat{\ot}\Fun(X,\mbb{C})\subset \hatU_q\hat{\ot}\hatU_q.
\]
Whence we may twist in the usual fashion to obtain a new quasi-Hopf algebra $\hatU_q^\omega$ with the same (linear topological) algebra structure, comultiplication
\[
\nabla:=\omega^{-1}\Delta(-)\omega
\]
and associator
\[
\phi:=(1\ot\omega)^{-1}(1\ot\Delta)(\omega)^{-1}(\Delta\ot 1)(\omega)(\omega\ot 1).
\]
We have also the normalized antipode $(S^\omega,1,\beta)$, where
\[
S^\omega(x)=\tau^{-1}S(x)\tau,\ \ \beta=(\sum_{\lambda\in X}\omega^{-1}(\lambda,-\lambda)1_\lambda)\tau=\sum_\lambda\omega^{-1}(\lambda,-\lambda)\omega^{-1}(\lambda,\lambda)1_\lambda,
\]
and $\tau=\sum_{\lambda\in X}\omega(-\lambda,\lambda)1_\lambda=\sum_\lambda\omega^{-1}(\lambda,\lambda)1_\lambda$.  We will establish the following.

\begin{proposition}\label{prop:437}
The subalgebra $u^{\M}_q(G)$ is a quasi-Hopf subalgebra in $\hatU_q^\omega$, for any choice of $\omega$.  The formula for the comultiplication $\nabla$ on $u^{\M}_q(G)$ is as described in Lemma~\ref{lem:468} below.
\end{proposition}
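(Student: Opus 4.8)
The plan is to verify that $u^{\M}_q(G)$ is closed under the twisted comultiplication $\nabla$, the associator $\phi$, the twisted antipode $S^\omega$, and contains the elements $\beta$ and $\1$, working inside the completed algebra $\hatU^\omega_q$; since $\hatU^\omega_q$ is already known to be a quasi-Hopf algebra, closure of the subalgebra under these structure maps is all that remains. First I would observe that $\phi$ and $\beta$ (and $\tau$) lie in $\Fun(X,\mbb{C})\hat\ot\Fun(X,\mbb{C})$ and $\Fun(X,\mbb{C})$ respectively, built entirely from the idempotents $1_\lambda$ and values of $\omega$. Because $\omega$ factors through $Z\times X$ and, in the second variable, is $X^{\M}$-semilinear with $q^{-(-,x)}$-twist, its values depend only on $\lambda \bmod X^{\M}$ in the first slot; combined with condition (c), a short computation with the explicit formula for $\phi$ shows each scalar $\phi(\lambda,\mu,\nu)$ depends only on the classes in $Z$, so $\phi$ in fact lies in $\mbb{C}[Z^\vee]\hat\ot\mbb{C}[Z^\vee]\hat\ot\mbb{C}[Z^\vee]\subset u^{\M}_q(G)^{\ot 3}$. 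The same reasoning handles $\beta\in\mbb{C}[Z^\vee]$, and $\1\in\mbb{C}[Z^\vee]$ is immediate.

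Next I would check closure under $\nabla$. By the triangular decomposition of Lemma~\ref{lem:triangle} it suffices to check it on the three families of algebra generators: the group-ring $\mbb{C}[Z^\vee]$, the $\mathsf{E}_\alpha=K_\alpha E_\alpha$, and the $F_\alpha$. On $\mbb{C}[Z^\vee]$ the elements are grouplike for $\Delta$ and conjugation by the diagonal twist $\omega$ fixes them, so $\nabla$ agrees with $\Delta$ there and lands in $\mbb{C}[Z^\vee]\ot\mbb{C}[Z^\vee]$. For $\mathsf{E}_\alpha$ and $F_\alpha$ one computes $\nabla(\mathsf{E}_\alpha)=\omega^{-1}\Delta(K_\alpha E_\alpha)\omega$ explicitly: using that $E_\alpha$ is $(K_\alpha,1)$-skew-primitive and $K_\alpha$ grouplike, $\Delta(K_\alpha E_\alpha)=K_\alpha E_\alpha\ot K_\alpha + K_\alpha^2\ot K_\alpha E_\alpha$ — wait, more carefully $\Delta(\mathsf E_\alpha)=\mathsf E_\alpha\ot K_\alpha^2 + K_\alpha^2\ot \mathsf E_\alpha$ in the grading-twisted normalization; the key point is that conjugating by the idempotent twist $\omega$ multiplies each homogeneous term by a scalar of the form $\omega(\text{stuff})^{\pm1}$ against the shift of degree by $\alpha$. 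Here condition (a), $X$-linearity in the first slot, and condition (b), the semilinearity $\omega(a,a'+x)=q^{-(a,x)}\omega(a,a')$, ensure these scalars are again \emph{well-defined functions on} $X$ taking finitely many values, so $\nabla(\mathsf E_\alpha)$ and $\nabla(F_\alpha)$ are finite sums $\sum \mathsf E_\alpha 1_\lambda\ot(\text{scalar})K^{\pm2}_\alpha 1_\mu$ of products of generators of $u^{\M}_q(G)$; this is exactly the content packaged in the forthcoming Lemma~\ref{lem:468}, so I would cite that formula here. Closure under $S^\omega(x)=\tau^{-1}S(x)\tau$ follows similarly, since $\tau\in\mbb{C}[Z^\vee]$ and $S$ already preserves the subalgebra generated by the $\mathsf E_\alpha$, $F_\alpha$, $\mbb{C}[Z^\vee]$ (one checks $S(\mathsf E_\alpha)$, $S(F_\alpha)$ lie there, up to the grouplikes which live in $\mbb{C}[Z^\vee]$).

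The main obstacle I anticipate is \emph{bookkeeping rather than conceptual}: one must track the various $\omega$-scalars through the degree-shifts coming from $\Delta$ on the skew-primitives, and verify that the semilinearity/linearity axioms (a)–(c) for a balancing function are \emph{exactly} what is needed to keep every intermediate expression inside $u^{\M}_q(G)$ (as opposed to the larger $\hatU^\omega_q$) — in particular that the would-be ``$K_\alpha$'' appearing in $\Delta(E_\alpha)$ gets absorbed, via the $\mathsf E_\alpha=K_\alpha E_\alpha$ substitution, into a power of $K_\alpha$ that is visible on $\mbb{C}[Z^\vee]$, which is precisely why $X^{\M}=\{x:(x,-)\in l\mbb{Z}\}$ and $Z=X/X^{\M}$ are the right lattices. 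Since the explicit comultiplication formula is deferred to Lemma~\ref{lem:468}, the honest verification reduces to checking that formula is well-posed and has the claimed form; I would therefore structure the proof as: (i) $\phi,\beta,\1\in u^{\M}_q(G)^{\ot\bullet}$ via the factorization through $Z$; (ii) $\nabla$ preserves $u^{\M}_q(G)$ on generators, via Lemma~\ref{lem:468}; (iii) $S^\omega$ preserves it, via $\tau\in\mbb{C}[Z^\vee]$; (iv) conclude that $u^{\M}_q(G)$ is a quasi-Hopf subalgebra, independently of the chosen $\omega$.
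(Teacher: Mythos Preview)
Your plan is essentially the paper's own proof: the paper reduces Proposition~\ref{prop:437} entirely to Lemma~\ref{lem:461} (that $\phi\in\mbb{C}[Z^\vee]^{\ot 3}$ and $\beta\in\mbb{C}[Z^\vee]$) and Lemma~\ref{lem:468} (closure of $\nabla$ and $S^\omega$ on generators), which is exactly your steps (i)--(iii).

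One small correction to your step (iii): the element $\tau=\sum_\lambda\omega^{-1}(\lambda,\lambda)1_\lambda$ need \emph{not} lie in $\mbb{C}[Z^\vee]$. Using properties (a)--(c) one computes $\omega^{-1}(\lambda+x,\lambda+x)=q^{(\lambda,x)}\omega^{-1}(\lambda,\lambda)$ for $x\in X^{\M}$, and since $q$ has order $2l$ while $(\lambda,x)\in l\mbb{Z}$, this factor is only $\pm 1$ in general (the paper later notes that $\tau^{-2}\in\mbb{C}[Z^\vee]$, not $\tau$). So you cannot conclude stability under $S^\omega$ from ``$\tau\in\mbb{C}[Z^\vee]$ and $S$ preserves $u^{\M}_q$''; instead you must compute $S^\omega(\mathsf{E}_\alpha)$ and $S^\omega(F_\alpha)$ directly and verify that the resulting coefficient functions are constant on $X^{\M}$-cosets, which is what the paper does at the end of Lemma~\ref{lem:468}. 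This is a bookkeeping fix, not a change of strategy.
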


We choose a section $s:Z\to X$ and identify $Z$ with its image in $X$ in the formulas below.  We can understand $\phi$ and $\beta$ as functions from $X^3$ and $X$ respectively.  We have
\[
\phi:X^3\to \mbb{C},\ \ \begin{array}{rl}
\phi(a,b,c)&=\omega^{-1}(b,c)\omega(a+b,c)\omega^{-1}(a,b+c)\omega(a,b)\\
&=\omega(a,c)\omega^{-1}(a,b+c)\omega(a,b).
\end{array}
\]
By linearity of $\omega$ in the first component, and $X^{\M}$-semilinearity in the second component we see that
\[
\phi(a+x,b,c)=\phi(a,b+x,c)=\phi(a,b,c+x)=\phi(a,b,c)\ \ \text{for}\ x\in X^{\M}.
\]
So $\phi$ is constant on $X^{\M}$-cosets in each component, and thus is identified with a function from the quotient $Z^3$,
\[
\phi:Z^3\to \mbb{C}, \ \ \phi(z,z',z'')=\omega(z,z'')\omega^{-1}(z,z'+z'')\omega(z,z').
\]
One also observes directly that $\beta$ is constant on $X^{\M}$-cosets to find that it is identified with a function on $Z$, $\beta(z)=\omega^{-1}(z,-z)\omega^{-1}(z,z)$.  This information implies the following.

\begin{lemma}\label{lem:461}
Let $1_z\in \mbb{C}[Z^\vee]$ denote the idempotent associated to an element $z\in Z$.  We have $\phi\in \mbb{C}[Z^\vee]^{\ot 3}\subset u^{\M}_q(G)^{\ot 3}$ and $\beta\in \mbb{C}[Z^\vee]$.  Specifically,
\[
\phi=\sum_{z\in Z}\omega(z,z'')\omega^{-1}(z,z'+z'')\omega(z,z')1_z\ot 1_{z'}\ot 1_{z''},\ \ \beta=\sum_{z\in Z}\omega^{-1}(z,z)\omega^{-1}(z,-z)1_z.
\]
\end{lemma}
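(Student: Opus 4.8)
The plan is to verify Lemma~\ref{lem:461} by a direct unpacking of the formulas for $\phi$ and $\beta$ given just above the statement, combined with the $X^{\M}$-coset invariance already observed in the surrounding discussion. First I would recall that, by definition, $\phi$ and $\beta$ sit inside $\Fun(X,\mbb{C})^{\hat\ot 3}$ and $\Fun(X,\mbb{C})$ respectively, because $\omega$ is supported on the torus part $\Fun(X,\mbb{C})=(\oplus_\lambda \mbb{C}1_\lambda)^\wedge\subset \hatU_q$ and twisting by such an $\omega$ keeps the associator and the element $\beta$ in the (completed) tensor powers of this commutative subalgebra. Concretely $\phi=\sum_{a,b,c\in X}\phi(a,b,c)\,1_a\ot 1_b\ot 1_c$ and $\beta=\sum_{a\in X}\beta(a)1_a$ with $\phi(a,b,c)=\omega(a,c)\omega^{-1}(a,b+c)\omega(a,b)$ and $\beta(a)=\omega^{-1}(a,-a)\omega^{-1}(a,a)$, exactly the scalar functions displayed before the lemma.

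Next I would invoke the coset-invariance computed in the text: using $X$-linearity of $\omega$ in the first slot and $X^{\M}$-semilinearity in the second slot, one checks $\phi(a+x,b,c)=\phi(a,b+x,c)=\phi(a,b,c+x)=\phi(a,b,c)$ for every $x\in X^{\M}$, and similarly $\beta(a+x)=\beta(a)$. (The semilinearity contributions $q^{-(a,x)}$ cancel in pairs inside $\phi$, and for $\beta$ the contributions from the two factors are $q^{(a,x)}$ and $q^{-(a,x)}$, which again cancel; one also uses that $\omega|_{X^{\M}\times X}\equiv 1$ when the shift lands in the first argument. Strong admissibility guarantees these semilinear twists are consistent, as noted after the definition of balancing function.) Therefore $\phi$ factors through the quotient $X\to Z=X/X^{\M}$ in each tensor slot and $\beta$ factors through $X\to Z$, so after choosing the section $s:Z\to X$ and identifying $Z$ with its image we may index the sums by $z,z',z''\in Z$ rather than by $X$.

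The final point is to match this reindexed sum with the stated expression in terms of the idempotents $1_z\in \mbb{C}[Z^\vee]$. Here I would use that the surjectivity in Lemma~\ref{lem:365} identifies $\mbb{C}[Z^\vee]$ with the image of $\Fun(Z,\mbb{C})$ inside $\hatu_q\subset\hatU_q$, under which the group-like idempotent $1_z$ is precisely $\sum_{\lambda\in z}1_\lambda$ (the sum over the $X^{\M}$-coset $z$). Grouping the sum $\sum_{a\in X}$ according to these cosets and using the coset-invariance of the coefficients then gives $\phi=\sum_{z,z',z''\in Z}\phi(z,z',z'')\,1_z\ot 1_{z'}\ot 1_{z''}$ and $\beta=\sum_{z\in Z}\beta(z)1_z$, which are exactly the claimed formulas once one substitutes $\phi(z,z',z'')=\omega(z,z'')\omega^{-1}(z,z'+z'')\omega(z,z')$ and $\beta(z)=\omega^{-1}(z,z)\omega^{-1}(z,-z)$. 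The only mild subtlety, and the one I would be most careful about, is the bookkeeping that turns the a priori completed sum over $X$ into an honest finite sum over $Z$ (so that $\phi$ genuinely lands in $\mbb{C}[Z^\vee]^{\ot 3}$ rather than merely its completion) — this uses finiteness of $Z$ together with the coset-constancy just established, so there is no real obstacle, only care with the topological language.
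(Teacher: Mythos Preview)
Your proposal is correct and follows essentially the same approach as the paper: the lemma is deduced directly from the $X^{\M}$-coset invariance of the scalar functions $\phi(a,b,c)$ and $\beta(a)$ established in the discussion immediately preceding it, after which grouping the idempotents $1_\lambda$ by cosets identifies these elements with the displayed finite sums over $Z$. Your write-up is, if anything, slightly more explicit about the finiteness and topological bookkeeping than the paper, which simply states that the coset invariance ``implies the following'' and records the formulas.
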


Let us define for $\gamma\in X$ functions $\mathcal{L}_\gamma,L_\gamma:X\to \mbb{C}$ by
\[
\mathcal{L}_\gamma(\lambda):=q^{-(\gamma,\lambda)}\omega(\gamma,\lambda),\ \ L_\gamma(\lambda):=\omega(\lambda,\gamma).
\]
These functions are constant on $X^{\M}$-cosets and hence provide elements in $\mbb{C}[Z^\vee]\subset u^{\M}_q$.  We define also the interior product
\[
\iota_\gamma\phi:X^2\to \mbb{C},\ \ \iota_\gamma\phi(\lambda,\mu):=\phi(\lambda,\mu,\gamma).
\]
This function is also constant on $X^{\M}$-cosets so that $\iota_\gamma\phi\in\mbb{C}[Z^\vee]^{\ot 2}$.

\begin{lemma}\label{lem:468}
In $\hatU_q^\omega$ we have $\nabla(\xi)=\xi\ot \xi$ for all $\xi\in Z^\vee$,
\[
\nabla(\mathsf{E}_\alpha)=\mathsf{E}_\alpha\ot \mathcal{L}^{-1}_\alpha+\iota_{-\alpha}\phi^{-1} L_{-\alpha} K_\alpha^2\ot \mathsf{E}_\alpha,
\]
\[
\text{and}\ \ \nabla(F_\alpha)=F_\alpha\ot \mathcal{L}_\alpha+\iota_{\alpha}\phi^{-1} L_\alpha\ot F_\alpha.
\]
Furthermore, $u^{\M}_q(G)$ is stable under the application of the antipode $S^\omega$.
\end{lemma}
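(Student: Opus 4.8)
The whole statement follows from direct computation inside $\hatU_q^\omega$, whose underlying associative algebra is that of $\hatU_q$; since the coproducts of the generators $\xi\in Z^\vee$, $\mathsf{E}_\alpha$, $F_\alpha$ of $u^{\M}_q(G)$ are finite sums and $\omega$ acts diagonally, every manipulation takes place inside $u^{\M}_q(G)\subseteq\hatu_q\subseteq\hatU_q$. The inputs are: $\Delta(\xi)=\xi\ot\xi$ for $\xi$ group-like, $\Delta(E_\alpha)=E_\alpha\ot 1+K_\alpha\ot E_\alpha$, $\Delta(F_\alpha)=F_\alpha\ot K_\alpha^{-1}+1\ot F_\alpha$ (so $\Delta(\mathsf{E}_\alpha)=\mathsf{E}_\alpha\ot K_\alpha+K_\alpha^2\ot\mathsf{E}_\alpha$); the weight-shift relations $f\,\mathsf{E}_\alpha=\mathsf{E}_\alpha\,(\lambda\mapsto f(\lambda+\alpha))$ and $\mathsf{E}_\alpha\,f=(\lambda\mapsto f(\lambda-\alpha))\,\mathsf{E}_\alpha$ in $\hatU_q$ for $f\in\Fun(X,\mbb{C})$, with the analogues for $F_\alpha$ (shift by $-\alpha$), and $K_\beta E_\alpha K_\beta^{-1}=q^{(\beta,\alpha)}E_\alpha$; and properties (a)--(c) of $\omega$ together with the identities relating $\phi,\mathcal{L}_\gamma,L_\gamma,\iota_\gamma\phi$ to $\omega$ recorded just before the statement.

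For $\xi\in Z^\vee$, viewed as the $X^{\M}$-coset-constant character $\sum_\lambda\xi(\bar\lambda)1_\lambda$ of $X$, group-likeness in $\hatU_q$ and the fact that $\omega$ and $\xi\ot\xi$ lie in the commutative subalgebra spanned by the $1_\lambda\ot 1_\mu$ give $\nabla(\xi)=\omega^{-1}(\xi\ot\xi)\omega=\xi\ot\xi$. For $\mathsf{E}_\alpha$ I would conjugate $\Delta(\mathsf{E}_\alpha)$ by $\omega^{\pm 1}$ summand by summand, commuting $\omega^{-1}$ to the right of the weight-shifting $\mathsf{E}_\alpha$. In the $\mathsf{E}_\alpha\ot K_\alpha$-summand the shift is in the left slot, so linearity of $\omega$ in its first argument collapses $\omega^{-1}(\lambda+\alpha,\mu)\omega(\lambda,\mu)$ to $\omega^{-1}(\alpha,\mu)$, which after absorbing $K_\alpha$ is $\mathcal{L}_\alpha^{-1}$, giving the first term $\mathsf{E}_\alpha\ot\mathcal{L}_\alpha^{-1}$. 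In the $K_\alpha^2\ot\mathsf{E}_\alpha$-summand the shift is in the right slot; by the definition of $\phi$, $\omega^{-1}(\lambda,\mu+\alpha)\omega(\lambda,\mu)=\iota_\alpha\phi(\lambda,\mu)\,\omega^{-1}(\lambda,\alpha)$, and pulling $\mathsf{E}_\alpha$ back out to the right through the resulting element of $\mbb{C}[Z^\vee]^{\ot 2}$ introduces a $({-}\alpha)$-shift in the right slot that --- again by the definition of $\phi$ --- turns $\iota_\alpha\phi\cdot L_\alpha^{-1}$ into $\iota_{-\alpha}\phi^{-1}\cdot L_{-\alpha}$, yielding $\iota_{-\alpha}\phi^{-1}L_{-\alpha}K_\alpha^2\ot\mathsf{E}_\alpha$. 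The $F_\alpha$ computation is formally identical, starting from $\Delta(F_\alpha)=F_\alpha\ot K_\alpha^{-1}+1\ot F_\alpha$ with all shifts reversed. Since $\mathcal{L}_\gamma,L_\gamma,\iota_\gamma\phi$ and $K_\alpha^{\pm 2}$ all lie in $\mbb{C}[Z^\vee]$ (the last because $(\alpha,x)\in l\mbb{Z}$ for $x\in X^{\M}$), these formulas incidentally give $\nabla(u^{\M}_q)\subseteq u^{\M}_q\ot u^{\M}_q$.

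For stability under $S^\omega=\tau^{-1}S({-})\tau$: since $\tau\in\Fun(X,\mbb{C})$ is central there and $S(\xi)=\xi^{-1}\in\mbb{C}[Z^\vee]$, we have $S^\omega(\xi)=\xi^{-1}\in u^{\M}_q$ immediately. A short computation gives $S(\mathsf{E}_\alpha)=-q^{(\alpha,\alpha)}K_\alpha^{-3}\mathsf{E}_\alpha$ and $S(F_\alpha)=-F_\alpha K_\alpha$, and the obstruction is that these odd powers of $K_\alpha$ do not lie in $\mbb{C}[Z^\vee]$ (indeed $K_\alpha\notin u^{\M}_q$), so the honest antipode of $\hatU_q$ does not preserve $u^{\M}_q$ and the $\tau$-conjugation is essential. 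Using the weight shift, $\tau^{-1}\mathsf{E}_\alpha\tau=\mathsf{E}_\alpha\,g_\alpha$ with $g_\alpha(\mu)=\omega(\mu+\alpha,\mu+\alpha)\,\omega(\mu,\mu)^{-1}$, and the crucial point is that (a)--(c) force
\[
g_\alpha(\mu+x)=q^{-(\alpha,x)}g_\alpha(\mu)\qquad\text{for all }x\in X^{\M},
\]
where property (c) is exactly what lets one evaluate $\omega((\mu+\alpha)+x,(\mu+\alpha)+x)$ via linearity in the first slot and the $X^{\M}$-semilinearity (b) in the second. Collecting this factor with the leftover $K_\alpha^{-3}$ and the $({-}\alpha)$-shift incurred in moving $\mathsf{E}_\alpha$ back to the right, the total coefficient function $h_\alpha$ obeys $h_\alpha(\mu+x)=q^{-4(\alpha,x)}h_\alpha(\mu)=h_\alpha(\mu)$ since $(\alpha,x)\in l\mbb{Z}$ and $q$ has order $2l$; hence $h_\alpha\in\mbb{C}[Z^\vee]$ and $S^\omega(\mathsf{E}_\alpha)\in\mbb{C}[Z^\vee]\,\mathsf{E}_\alpha\subseteq u^{\M}_q$. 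The $F_\alpha$ case runs the same way, the relevant coefficient now transforming by $q^{2(\alpha,x)}=1$, giving $S^\omega(F_\alpha)\in\mbb{C}[Z^\vee]\,F_\alpha\subseteq u^{\M}_q$. As $u^{\M}_q$ is finite-dimensional (Lemma~\ref{lem:triangle}) and $S^\omega$ is injective ($S$ being bijective on $\hatU_q$), $S^\omega$ restricts to an anti-automorphism of $u^{\M}_q$.

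The comultiplication formulas I expect to be routine bookkeeping with the weight-shift relations and the $\omega$--$\phi$ dictionary. The main obstacle is the cancellation of the odd powers of $K_\alpha$ in the antipode: it depends on the interplay of the $X^{\M}$-semilinearity (b) with the triviality (c) --- hence ultimately on strong admissibility of $X$, which is precisely what renders (b) and (c) mutually consistent.
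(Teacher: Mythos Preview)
Your proof is correct and follows the same direct-computation approach as the paper: expand $\omega^{-1}\Delta(-)\omega$ using the weight-shift relations and identify the resulting coefficient functions with $\mathcal{L}_\gamma$, $L_\gamma$, $\iota_\gamma\phi$. For the antipode you in fact supply more detail than the paper does --- the paper simply writes down closed formulas for $S^\omega(\mathsf{E}_\alpha)$ and $S^\omega(F_\alpha)$ and asserts without argument that the coefficient functions are constant on $X^{\M}$-cosets, so your transformation-law check $g_\alpha(\mu+x)=q^{-(\alpha,x)}g_\alpha(\mu)$ and the resulting even-power count is exactly the verification the paper leaves to the reader.
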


\begin{proof}
The equality $\nabla(\xi)=\xi\ot \xi$ follows from the fact that $\omega$ commutes with elements in $Z^\vee$.  Now, once calculates directly
\[
\begin{array}{l}
\nabla(\mathsf{E}_\alpha)=\omega^{-1}\Delta(\mathsf{E}_\alpha)\omega\\
=\sum_{\lambda,\mu\in X} \omega^{-1}(\lambda+\alpha,\mu)\omega(\lambda,\mu)\mathsf{E}_\alpha 1_\lambda\ot K_\alpha 1_\mu+\omega^{-1}(\lambda,\mu)\omega(\lambda,\mu-\alpha) 1_\lambda K_\alpha^2\ot 1_\mu \mathsf{E}_\alpha\\
=\sum_{\lambda,\mu}q^{(\alpha,\mu)}\omega^{-1}(\alpha,\mu)\mathsf{E}_\alpha 1_\lambda\ot 1_\mu+\phi^{-1}(\lambda,\mu,-\alpha)L(\lambda) 1_\lambda K_\alpha^2\ot 1_\mu\mathsf{E}_\alpha\\
=\mathsf{E}_\alpha\ot \mathcal{L}^{-1}_\alpha+\iota_{-\alpha}\phi^{-1} L_{-\alpha} K_\alpha^2\ot \mathsf{E}_\alpha.
\end{array}
\]
Similarly,
\[
\begin{array}{l}
\nabla(F_\alpha)=\omega^{-1}\Delta(F_\alpha)\omega\\
=\sum_{\lambda,\mu}q^{-(\alpha,\mu)}\omega^{-1}(\lambda-\alpha,\mu)\omega(\lambda,\mu)F_\alpha 1_\lambda\ot 1_\mu+\omega^{-1}(\lambda,\mu)\omega(\lambda,\mu+\alpha) 1_\lambda\ot 1_\mu F_\alpha\\
=\sum_{\lambda,\mu}q^{-(\alpha,\mu)}\omega(\alpha,\mu)F_\alpha 1_\lambda\ot 1_\mu+\phi^{-1}(\lambda,\mu,\alpha)L_\alpha(\lambda) 1_\lambda\ot 1_\mu F_\alpha\\
=F_\alpha\ot \mathcal{L}_\alpha+\iota_{\alpha}\phi^{-1} L_\alpha\ot F_\alpha.
\end{array}
\]
For the antipose we have $S^\omega(\xi)=\xi$,
\[
\begin{array}{l}
S^\omega(\mathsf{E}_\alpha)=-\left(\sum_{\lambda\in X} q^{-(\lambda,\alpha)}\omega(\lambda,\lambda)\omega(\lambda-\alpha,\lambda-\alpha)\right)K_\alpha^{-2}\mathsf{E}_\alpha,\\\\
S^\omega(F_\alpha)=-\left(\sum_{\lambda\in X}q^{(\lambda,\alpha)}\omega(\lambda,\lambda)\omega^{-1}(\lambda+\alpha,\lambda+\alpha)\right) F_\alpha.
\end{array}
\]
One can check directly that these coefficients are constant on $X^{\M}$-cosets in $X$, and hence lie in $\mbb{C}[Z^\vee]$.
\end{proof}

We can now prove the proposition.

\begin{proof}[Proof of Proposition~\ref{prop:437}]
Follows from Lemmas~\ref{lem:461} and~\ref{lem:468}.
\end{proof}

\subsection{The ribbon structure on $u^{\M}_q(G)$}

Fix $\omega$ a balancing function, as above.  We have the standard $R$-matrix $R^\omega=\omega_{21}^{-1}R\omega$ for the twisted algebra $\hatU_q^\omega$.  The following lemma is verified by straightforward computation.

\begin{lemma}
The $R$-matrix $R^\omega$ lies in $u^{\M}_q(G)\ot u^{\M}_q(G)$, and hence provides $u^{\M}_q(G)$ with a quasitriangular structure.
\end{lemma}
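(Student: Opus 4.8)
The statement to prove is that the twisted $R$-matrix $R^\omega=\omega_{21}^{-1}R\omega$ for $\hatU_q^\omega$ actually lies inside the subalgebra $u^{\M}_q(G)\otimes u^{\M}_q(G)$, so that $u^{\M}_q(G)$ inherits a quasitriangular structure. The natural approach is to track where the three factors $\omega_{21}^{-1}$, $R$, $\omega$ live and to combine the triangular decomposition of Lemma~\ref{lem:triangle} with Lemma~\ref{lem:367}.

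First I would recall, via Lemma~\ref{lem:367} and the remark following it, that the untwisted $R$-matrix lives in the completed tensor square of the torus-extended small quantum group $\widehat{\mbf u}_q$: writing $R=R^+\Omega^{-1}$ with $R^+=\sum_{n} c_n(q)\,E_{\gamma_1}^{(n_1)}\cdots E_{\gamma_w}^{(n_w)}\ot F_{\gamma_1}^{(n_1)}\cdots F_{\gamma_w}^{(n_w)}$, Lemma~\ref{lem:367} says $c_n(q)=0$ unless every $n_\gamma<l_\gamma$, so the monomials that actually occur are genuine monomials in the \emph{non-divided} $E_\gamma$'s and $F_\gamma$'s, hence lie in $u^+_q(\mfk g)\ot u^-_q(\mfk g)\subset \widehat{\mbf u}_q\,\hat\ot\,\widehat{\mbf u}_q$. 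The scalar factor $\Omega^{-1}=\sum_{\lambda,\mu}q^{-(\lambda,\mu)}1_\lambda\ot 1_\mu$ and the twists $\omega^{\pm1},\omega_{21}^{-1}$ all live in $\Fun(X,\mbb C)\,\hat\ot\,\Fun(X,\mbb C)$. So the whole product $R^\omega$ already lies in $\widehat{\mbf u}_q\,\hat\ot\,\widehat{\mbf u}_q$; the content is to show it descends to the \emph{subalgebra} $u^{\M}_q\ot u^{\M}_q$, i.e.\ that the grouplike parts get absorbed into $\mbb C[Z^\vee]$ and that each $E_\gamma$ (resp.\ $F_\gamma$) appears dressed by the right torus element to become a power of $\mathsf E_\gamma:=K_\gamma E_\gamma$ (resp.\ stays $F_\gamma$).

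Concretely I would conjugate a general monomial of $R^+$: commuting the $E$-monomial past the scalar functions and collecting, the term $E_{\gamma_1}^{(n_1)}\cdots E_{\gamma_w}^{(n_w)}\Omega^{-1}$ becomes, after absorbing appropriate $1_\mu$'s, a sum over $\lambda$ of $\mathsf E_{\gamma_1}^{n_1}\cdots \mathsf E_{\gamma_w}^{n_w}1_\lambda\ot g_\lambda$ for suitable grouplike $g_\lambda$ — exactly as in the computation of $\nabla(\mathsf E_\alpha)$ in Lemma~\ref{lem:468}, where $\mathsf E_\alpha 1_\lambda\ot q^{(\alpha,\mu)}\omega^{-1}(\alpha,\mu)1_\mu$ appeared. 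The point is the same bookkeeping: conjugating $E_\gamma$ by $\omega^{-1}(\cdot,\mu)\omega(\cdot,\mu)$ in the first slot produces the linearity factor $\omega(\gamma,\mu)^{-1}$ times $q^{(\gamma,\mu)}$, which is precisely the scalar making $\mathcal L_\gamma^{-1}\in\mbb C[Z^\vee]$ appear in the second tensor factor; the same for $\omega_{21}$. Since $\omega$ is $X$-linear in its first argument, these scalar functions of $\lambda$ (resp.\ $\mu$) are constant on $X^{\M}$-cosets by the computations already performed after Lemma~\ref{lem:461} (the functions $\mathcal L_\gamma$, $L_\gamma$, $\iota_\gamma\phi$ were shown to lie in $\mbb C[Z^\vee]$), hence genuinely lie in $\mbb C[Z^\vee]\subset u^{\M}_q$. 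Using the triangular decomposition $u^-_q\ot\mbb C[Z^\vee]\ot u^+_q\overset\cong\to u^{\M}_q$ of Lemma~\ref{lem:triangle}, one concludes each term of $R^\omega$ has the form (element of $u^+_q$)·(element of $\mbb C[Z^\vee]$) $\ot$ (element of $u^-_q$)·(element of $\mbb C[Z^\vee]$), hence lies in $u^{\M}_q\ot u^{\M}_q$; summing over the finitely many surviving $n$ (Lemma~\ref{lem:367}) gives $R^\omega\in u^{\M}_q\ot u^{\M}_q$.

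The main obstacle is purely computational stamina rather than conceptual difficulty: one must verify that \emph{all} the scalar coefficients produced by dragging the $E$- and $F$-monomials through $\omega^{-1}(\cdot)\Omega^{-1}(\cdot)\omega(\cdot)$ and through $\omega_{21}^{-1}$ are constant on $X^{\M}$-cosets in the relevant variable, so that they assemble into honest elements of $\mbb C[Z^\vee]^{\ot 2}$ rather than just $\Fun(X,\mbb C)^{\ot 2}$. For the height-one part of $R$ this is the identical check already carried out in Lemma~\ref{lem:468} for $\nabla(\mathsf E_\alpha)$ and $\nabla(F_\alpha)$ — indeed $R^\omega$ up to first order is $\big(1-\sum_\alpha(q-q^{-1})\mathsf E_\alpha\ot F_\alpha\cdot(\text{torus corrections})+\cdots\big)$ with the same torus corrections — and for the higher monomials it follows formally from $X$-linearity of $\omega$ in the first slot together with strong admissibility (which guarantees $q^{-(-,x)}\equiv 1$ on $X^{\M}$, so $\Omega^{-1}$ contributes trivially on cosets). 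Hence "straightforward computation," as the lemma asserts. Finally, quasitriangularity of $(u^{\M}_q,\nabla,\phi,R^\omega)$ is inherited verbatim from the quasitriangular quasi-Hopf structure on $\hatU_q^\omega$, since the hexagon and compatibility axioms hold there and $u^{\M}_q$ is a quasi-Hopf subalgebra by Proposition~\ref{prop:437}.
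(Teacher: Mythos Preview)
Your approach is correct and essentially coincides with the paper's own (suppressed) argument: the paper states the lemma is ``verified by straightforward computation,'' and its commented-out proof does precisely what you outline---commute the $E$-monomials past the scalar factors to convert them to $\mathsf{E}$-monomials (up to a $q$-scalar), and then observe that the resulting grouplike factor $\omega_{21}^{-1}(\lambda-\sum n_i\gamma_i,\mu+\sum n_i\gamma_i)\,\omega(\lambda,\mu)\,q^{-(\lambda,\mu+\sum n_i\gamma_i)}$ is $X^{\M}$-invariant in each coordinate, hence lies in $\mbb{C}[Z^\vee]^{\ot 2}$. Your invocation of Lemma~\ref{lem:367} for finiteness and Lemma~\ref{lem:triangle} for the triangular decomposition makes the outline slightly more explicit than the paper's sketch, but the substance is identical.
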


By categorical considerations~\cite[\S 8.9]{egno15}, the Drinfeld elements for $\hatU^\omega_q$, and hence $u^{\M}_q$, is given by the formula $\tau^{-1}S(\tau)u$, where $u$ is the Drinfeld element for $\hatU_q$.  The pivotal structure on $\hatU_q$, which is given by multiplication by the grouplike $K_\rho$ where $\rho=\sum_{\gamma\in \Phi^+}\gamma$, provides a pivotal structure for the twist $\hatU_q^\omega$, which is given by multiplication by $\tau^{-1}S(\tau)K_\rho$.  Hence the ribbon element for $\hatU_q^\omega$ is
\[
v^\omega=\tau^{-1}S(\tau)K_\rho (\tau^{-1}S(\tau)u)^{-1}=K_\rho u^{-1}=v,
\]
where $v$ is untwisted ribbon element for the quantum group.  (We use the fact that $\tau$ is in $\Fun(X,\mbb{C})$ and hence commutes with $u$.)
\par

When $X$ is the simply-connected lattice, so that $X^{\M}=lQ$, it is easy to see that $K_\rho\in Z^\vee$.  More generally, $K_\rho$ is in $Z^\vee$ whenever $K_\rho|_{X^{\M}}\equiv 1$.  Since $\tau$ is a function on $X$, $S(\tau)=\tau^{-1}$ and $\tau^{-1} S(\tau)=\tau^{-2}$.  This element $\tau^{-2}$ is constant on $X^{\M}$-cosets and hence in $\mbb{C}[Z^\vee]$.  Thus the pivotalizing element $\tau^{-1}S(\tau)K_\rho$ for $\hatU_q^\omega$ lies in $u^{\M}_q$ whenever $K_\rho|_{X^{\M}}\equiv 1$.

\begin{proposition}\label{prop:682}
Suppose that $X$ is the simply-connected lattice, or that $K_\rho|_{X^{\M}}\equiv 1$.  Then for any choice of balancing function, the induced quasi-Hopf structure on $u^{\M}_q(G)$ naturally extends to a ribbon structure under which the ribbon element $v$ is just the standard ribbon element for the large quantum group $\hatU_q$.
\end{proposition}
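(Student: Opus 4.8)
The plan is to deduce everything from the structure already in place. We have established that $\hatU_q^\omega$ carries a ribbon element, namely $v^\omega = \tau^{-1}S(\tau)K_\rho\,(\tau^{-1}S(\tau)u)^{-1} = K_\rho u^{-1} = v$, the standard ribbon element of the large quantum group; and we have seen that, under the hypothesis $K_\rho|_{X^{\M}}\equiv 1$ (automatic in the simply-connected case, since then $X^{\M} = lQ$), the pivotalizing element $g^\omega := \tau^{-1}S(\tau)K_\rho$ of $\hatU_q^\omega$ lies in $u^{\M}_q(G)$. Granting in addition that the Drinfeld element $u^\omega$ of $\hatU_q^\omega$ lies in $u^{\M}_q(G)$, we get $v = v^\omega = g^\omega (u^\omega)^{-1} \in u^{\M}_q(G)$, and it only remains to check that the ribbon axioms for $v$ descend from $\hatU_q^\omega$ to the quasi-Hopf subalgebra $u^{\M}_q(G)$.

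First I would verify that $u^\omega \in u^{\M}_q(G)$. For a quasitriangular quasi-Hopf algebra the Drinfeld element is a fixed expression (Drinfeld's formula; see also the categorical description of~\cite[\S 8.9]{egno15}) in the $R$-matrix, the reassociator $\phi^{\pm 1}$, the normalized antipode data $(S^\omega,1,\beta)$, and $S^\omega$-images thereof. By the $R$-matrix lemma above we have $R^\omega \in u^{\M}_q(G)\ot u^{\M}_q(G)$; by Lemma~\ref{lem:461}, $\phi^{\pm 1} \in \mbb{C}[Z^\vee]^{\ot 3}$ and $\beta \in \mbb{C}[Z^\vee]$, both inside the relevant tensor power of $u^{\M}_q(G)$; and by Lemma~\ref{lem:468}, $u^{\M}_q(G)$ is stable under $S^\omega$. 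Hence the quasi-Hopf Drinfeld expression evaluates to a genuine (not merely completed) element of the finite-dimensional algebra $u^{\M}_q(G)$, and its inverse then lies there as well. Combined with $g^\omega \in u^{\M}_q(G)$ this gives $v \in u^{\M}_q(G)$, and incidentally reconfirms $v = K_\rho u^{-1}$.

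Next I would check the ribbon identities. The element $v$ is central in $\hatU_q^\omega$, hence central in the subalgebra $u^{\M}_q(G)$. The remaining identities characterizing a ribbon element of a quasitriangular quasi-Hopf algebra — $\epsilon(v) = 1$, $S^\omega(v) = v$, $v^2 = u^\omega S^\omega(u^\omega)$, and the compatibility of $\nabla(v)$ with the monodromy $R^\omega_{21}R^\omega$ — are equalities of elements of $u^{\M}_q(G)$, respectively $u^{\M}_q(G)^{\ot 2}$, since $\nabla(v)$, $R^\omega$ and $u^\omega S^\omega(u^\omega)$ all lie in the appropriate tensor power of $u^{\M}_q(G)$ by the above. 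As these identities hold in $\hatU_q^\omega$ they therefore hold in $u^{\M}_q(G)$. Thus the quasitriangular quasi-Hopf structure of Proposition~\ref{prop:437} on $u^{\M}_q(G)$ extends to a ribbon structure with ribbon element $v$, and $v$ is the standard ribbon element of $\hatU_q(G)$.

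The main obstacle is the membership $u^\omega \in u^{\M}_q(G)$: a priori the Drinfeld element of the linear-topological algebra $\hatU_q$ only lives in a completion, so one must know that after twisting and restricting nothing escapes the finite-dimensional subalgebra. This is precisely what the finiteness of the $R$-matrix (Lemma~\ref{lem:367}, which places $R$ in the torus-extended small quantum group $\hatu_q$, together with the $R$-matrix lemma above, which places $R^\omega$ in $u^{\M}_q(G)\ot u^{\M}_q(G)$) and the fact that $\phi$ and $\beta$ contribute only through $\mbb{C}[Z^\vee]$ (Lemma~\ref{lem:461}) are designed to ensure. Away from the simply-connected case, the only extra input needed is the hypothesis $K_\rho|_{X^{\M}}\equiv 1$, which is exactly what places the pivot $g^\omega$ — and hence, together with $u^\omega$, the ribbon element $v$ — in $u^{\M}_q(G)$.
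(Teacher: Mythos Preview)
Your proposal is correct and follows essentially the same approach as the paper: the argument there, given in the paragraph immediately preceding the proposition, shows that the pivotalizing element $\tau^{-1}S(\tau)K_\rho$ lies in $u^{\M}_q(G)$ under the hypothesis and deduces the ribbon structure (with ribbon element $v=K_\rho u^{-1}$) from this together with the quasitriangular structure already in place. You supply more explicit detail than the paper on why the Drinfeld element $u^\omega$ itself lies in $u^{\M}_q(G)$, but this is exactly what the paper is invoking when it writes that the Drinfeld element ``for $\hatU^\omega_q$, and hence $u^{\M}_q$'' is $\tau^{-1}S(\tau)u$.
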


If one considers the example $(\PSL_2)_q$, we see that $K_\rho|_{X^{\M}}\equiv 1$ when $l$ is odd, since $X^{\M}=lQ$ in this case, and $K_\rho|_{X^{\M}}$ is not identically $1$ when $4\mid l$, as $X^{\M}=\frac{l}{2}Q$ and $K_\rho(\frac{l}{2}\alpha)=-1$.  So the induced ribbon structure on $u^{\M}_q(G)$ is not exclusive to the simply-connected case, but fails to hold in general.  We continue our discussion of quantum $\PSL_2$ in Section~\ref{sect:remPSL2}.
\par

Of course, as a quasi-Hopf algebra, the definition of $u^{\M}_q(G)$ depends on a choice of balancing function $\omega$.  However, by Proposition~\ref{prop:qhuM_exists} below, the braided tensor category $\rep u^{\M}_q(G)$ is independent of choice of balancing function, up to braided equivalence and ribbon equivalence when applicable.  We find in Corollary~\ref{cor:nondegen} that $u^{\M}_q(G)$, with $R$-matrix as above, is in fact factorizable, and hence log-modular.

\subsection{The log-modular kernel for $\mfk{sl}_2$}
\label{sect:sl2}

Consider $u^{\M}_q(\mfk{sl}_2):=u^{\M}_q(\SL_2)$.  The character $K=K_\alpha:X_{sc}\to \mbb{C}$, $K(\lambda)=q^{(\lambda,\alpha)}$, is of constant value $1$ on $X^{\M}=l\mbb{Z}\alpha$.  Hence $K\in u^{\M}_q(\mfk{sl}_2)$, and therefore $E=K^{-1}\mathsf{E}$ is in $u^{\M}_q(\mfk{sl}_2)$.  Therefore
\[
u^{\M}_q(\mfk{sl}_2)=\left\{\begin{array}{c}
\text{the standard subalgebra in $U_q(\mfk{sl}_2)$ generated by}\\
\text{the $E$, $F$, and $K$, as an associative algebra}
\end{array}\right\}.
\]
So we see that $u^{\M}_q(\mfk{sl}_2)$ simply consists of a new choice of comultiplication, associator, and ribbon structure, on the usual small quantum group in $U_q(\mfk{sl}_2)$.

\section{Quantum Frobenius and the M\"uger center of $\rep G_q$}
\label{sect:qfrob}

We now turn our attention from the quasi-Hopf algebra $u^{\M}_q(G)$ to the canonical form $(\rep G_q)_{G^\vee}$ highlighted in the introduction.  In this section and all following section, $q$ is a root of unity of even order $2l$ and $G$ is an almost simple algebraic group with strongly admissible character lattice $X$ at $q$.

\subsection{The quantum Frobenius}
\label{sect:qfrob1}

Define the dual group $G^\vee$ to $G$ at $q$ to be the almost simple algebraic group with the following Cartan data:
\begin{itemize}
\item The character lattice for $G^\vee$ is $X^{\M}$.
\item The simple roots for $G^\vee$ are $\Delta^\vee:=\{l_i\alpha_i:\alpha_i\in \Delta\}$
\item The Cartan integers are given by $a_{ij}^\vee=a_{ij}\frac{l_i}{l_j}$.
\end{itemize}
When all $d_i$ divide $l$ the group $G^\vee$ is of Langlands dual type to $G$, and $G^\vee$ is exactly the Langlands dual when $G$ is additionally simply-connected.  When the $d_i$ do not divide $l$ the dual group $G^\vee$ is of the same Dynkin type as $G$.
\par

For the algebra $\dotU_q=\dotU_q(G)=\bigoplus_{\lambda\in X}U_q 1_\lambda$ of~\cite[Chapter 23 \& 31]{lusztig93}, which has $\rep\dotU_q=\rep G_q$, we have the quantum Frobenius map
\[
\operatorname{Fr}^\ast:\dotU_q(G)\to \dotU(G^\vee),\ \ \left\{\begin{array}{l}
E_\alpha\mapsto 0\\
F_\alpha\mapsto 0\\
E_\alpha^{(l_\alpha)}\mapsto e_\alpha\\
F_\alpha^{(l_\alpha)}\mapsto f_\alpha\\
1_\lambda\mapsto 1_\lambda\ \ \text{if }\lambda\in X^{\M},\ \ 0\ \text{else},
\end{array}\right.
\]
which is a surjective map of quasi-triangular Hopf algebras~\cite[Theorem 35.1.9]{lusztig93}.  We note that $\dotU^\vee=\dotU(G^\vee)$ recovers classical representations for the dual group $\rep \dotU^\vee=\rep G^\vee$.

\begin{remark}
For $\SL_2$ and $\Sp_{2n}$ at odd $l$ the quantum Frobenius actually lands in the quasi-classical algebra $\dotU_{-1}^\vee$.  However, one can rescale the generators to obtain an identification $\dotU^\vee_{-1}=\dotU^\vee$ in these particular cases.  The important point in the strongly admissible setting is the identical vanishing of the $R$-matrix for $\rep \dotU^\vee_{\pm 1}$ which implies that the forgetful functor $\rep \dotU^\vee_{\pm 1}\to Vect$ is symmetric, and hence $\rep\dotU^\vee_{\pm 1}$ is directly identified with representations of an algebraic group via Tannakian reconstruction~\cite{deligne07,milne17}.
\end{remark}

Restricting along the quantum Frobenius Hopf map yields a braided tensor embedding
\[
\Fr:\rep G^\vee\to \rep G_q,
\]
which we also call the quantum Frobenius.  There is a third form of the quantum Frobenius, which is that of a Hopf inclusion to the quantum function algebra $\Fr_\ast:\O(G^\vee)\to \O_q(G)$, where $\O_q(G)=\operatorname{coend}(\rep G_q\to Vect)=\Hom_{Cont}(\hatU_q,\mbb{C})$.  One then recovers the categorical Frobenius by corestriction $\corep \O(G^\vee)\to \corep \O_q(G)$.
\par

To ease notation we generally write $\O$ for $\O(G^\vee)$ and $\O_q$ for $\O_q(G)$.

\begin{remark}
The algebra $\O_q$ is presumably the quantum function algebra of~\cite{lusztig90II,lusztig09}.
\end{remark}

\subsection{The quantum Frobenius and the M\"uger center of $\rep G_q$}

We aim to prove the following result.

\begin{theorem}\label{thm:calculatingZ}
The quantum Frobenius $\Fr:\rep G^\vee\to \rep G_q$ is an equivalence onto the M\"uger center of $\rep G_q$.
\end{theorem}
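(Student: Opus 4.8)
The plan is to prove the two inclusions separately: first that the image of $\Fr$ lands in the M\"uger center $\msc{Z}_2(\rep G_q)$, and then that nothing outside this image is M\"uger central. For the first inclusion, recall that the braiding on $\rep G_q$ is given by $R = R^+\Omega^{-1}$ with $R^+$ built from the $E_\gamma^{(n)}\otimes F_\gamma^{(n)}$ terms. On an object $\Fr(M)$ pulled back from $\rep G^\vee$, all the $E_\alpha$ and $F_\alpha$ (hence all $E_\gamma$, $F_\gamma$ with $\gamma$ a positive root, at least those with some $n_\gamma < l_\gamma$) act by zero, since $\Fr^\ast$ sends $E_\alpha, F_\alpha \mapsto 0$; by Lemma~\ref{lem:367} the only surviving contributions to $R$ involve $n_\gamma < l_\gamma$, so on $\Fr(M)\otimes V$ (or $V \otimes \Fr(M)$) the operator $R$ reduces to the grouplike part $\Omega^{-1}$. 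Then the double braiding $c_{V,\Fr(M)}c_{\Fr(M),V}$ acts by $\Omega^{-1}\Omega^{-1}_{21}$-type scalars on weight spaces, which is $q^{-2(\deg,\,\deg)}$; since $M$ is pulled back along $\Fr$, its weights lie in $X^{\M}$, and by the definition of $X^{\M}$ together with \emph{strong} admissibility $\Omega$ is identically $1$ on $X^{\M}\times X$. Hence the double braiding is trivial and $\Fr(M)$ is M\"uger central.

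For the reverse inclusion, I would argue as follows. The M\"uger center $\msc{Z}_2(\rep G_q)$ is a symmetric tensor subcategory of a finite (or locally finite) tensor category, and by Deligne's theorem it is the representation category of an affine (super)group scheme; strong admissibility (together with the remark in Section~\ref{sect:qfrob1} on the vanishing of the $R$-matrix in the relevant quasi-classical cases) should force this to be an honest group scheme $H$, so $\msc{Z}_2(\rep G_q) \simeq \rep H$, with $\Fr$ realizing $G^\vee$ as a quotient $H \twoheadrightarrow G^\vee$, i.e.\ $\rep G^\vee \hookrightarrow \rep H$ as a full tensor subcategory closed under subobjects. To see this inclusion is an equality it suffices to see $H = G^\vee$, equivalently that the fusion-closed subcategory generated by $\Fr(\rep G^\vee)$ already exhausts $\msc{Z}_2$. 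Concretely, suppose $X$ is a simple object of $\rep G_q$ lying in the M\"uger center. Computing the double braiding $c_{X,L(\lambda)}c_{L(\lambda),X}$ against the simples $L(\lambda)$ of $\rep G_q$ and demanding it be the identity, one extracts constraints on the weights of $X$: the grouplike part forces $\Omega(\deg x, \lambda) = 1$ for all $\lambda \in X$, i.e.\ every weight of $X$ lies in $X^{\M}$; and the requirement that the nilpotent part $R^+$ of the braiding contribute trivially forces $X$ to be annihilated by all $E_\alpha$, $F_\alpha$ (a non-highest-weight vector would produce a genuine $E_\alpha\otimes F_\alpha$ cross-term that cannot be cancelled). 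An $X$ on which $E_\alpha$, $F_\alpha$ act by zero and all weights lie in $X^{\M}$ is exactly (the restriction along $\Fr^\ast$ of) a $\dotU(G^\vee)$-module, i.e.\ lies in $\Fr(\rep G^\vee)$.

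The step I expect to be the main obstacle is this reverse inclusion, and specifically making rigorous the claim that a M\"uger-central object must be killed by the $E_\alpha$ and $F_\alpha$. The clean way is probably not to work vector-by-vector but to use the global-operator / comodule picture: translate M\"uger centrality of $\rep G^\vee$-saturated subcategories into a statement about the Hopf inclusion $\Fr_\ast : \O(G^\vee) \to \O_q$, and show that the ``center'' corresponds precisely to the subalgebra of $\O_q$ on which the natural adjoint-type coaction (encoding the double braiding) is trivial, which one identifies with $\O(G^\vee)$ by a faithful-flatness / Takeuchi-type argument (cf.\ Lemma~\ref{lem:coherence} and the references there). One then invokes Lemma~\ref{lem:297}, which says the $X$-grading is recoverable from the torus action, to pin down that the degree constraint $\deg x \in X^{\M}$ together with the vanishing of the skew-primitive generators is not just necessary but sufficient to land in $\Fr(\rep G^\vee)$. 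I would also need strong admissibility in an essential way in the first inclusion (mere admissibility only gives $\Omega(x,x)=1$, not $\Omega|_{X^{\M}\times X^{\M}} \equiv 1$, let alone $\Omega|_{X^{\M}\times X}\equiv 1$ — I should double-check which of these is actually needed and whether the relevant statement is $\Omega(x,\lambda)=1$ for $x\in X^{\M}$, $\lambda\in X$, which follows since $(x,\lambda)\in l\mbb{Z}$ by definition of $X^{\M}$ so $\Omega(x,\lambda)=q^{(x,\lambda)}$ is an $l$-th power of $q$, hence $\pm 1$, and strong admissibility on the diagonal plus bilinearity pins down the sign).
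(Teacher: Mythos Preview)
Your first inclusion is fine, though you are overthinking the admissibility issue: for the \emph{double} braiding on $\Fr(M)\ot V$ you get $q^{-2(\deg m,\deg v)}$ on homogeneous vectors, and since $\deg m\in X^{\M}$ means $(\deg m,\deg v)\in l\mbb{Z}$, this is already $1$ (as $q^{2l}=1$). No sign ambiguity and no strong admissibility is needed at this step. The paper in fact treats this inclusion as known from Lusztig's construction and does not reprove it.

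For the reverse inclusion your strategy is workable but substantially more complicated than the paper's, and your acknowledged ``main obstacle'' is one the paper simply avoids. The paper does \emph{not} try to show directly that every weight of a M\"uger-central object lies in $X^{\M}$, nor that $E_\alpha,F_\alpha$ annihilate it. Instead it argues only on simples and only on \emph{highest weight vectors}: for $L(\lambda)$ central and $L(\mu)$ arbitrary, the double braiding applied to $v_\lambda\ot v_\mu$ equals $q^{-2(\lambda,\mu)}v_\lambda\ot v_\mu$ plus strictly lower-degree terms (the $R^+$ part only lowers the first factor), so triviality forces $(\lambda,\mu)\in l\mbb{Z}$ for all dominant $\mu$. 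The one ingredient you are missing is then the elementary Lemma~\ref{lem:401}: dominant weights span $X$, so $\lambda\in X^{\M}$. Since simples are classified by highest weight, $L(\lambda)$ is therefore already in the image of $\Fr$.

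The passage from simples to arbitrary central $V$ is handled not by Deligne/Tannakian reconstruction or faithful-flatness arguments, but by the much softer observation that the image of $\Fr$ is \emph{closed under extensions} in $\rep G_q$: any extension of objects supported on $X^{\M}$ is again supported on $X^{\M}$, and on such an object $E_\alpha,F_\alpha$ must vanish for degree reasons (with strong admissibility invoked only in a few small-$l$ edge cases). So the ``$E_\alpha,F_\alpha$ annihilation'' you worried about is a consequence, not an input; it is deduced from the grading constraint rather than from a direct analysis of $R^+$. Your proposed comodule/faithful-flatness route would presumably work but is considerable overkill here.
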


In order to prove the theorem we recall some basic representation theoretic facts.  Recall that a weight $\lambda\in X$ is called dominant if $\langle \alpha,\lambda\rangle\geq 0$ for all $\alpha\in \Delta$.  Equivalently, we may employ the Killing form to find that $\lambda$ is dominant if and only if $(\alpha,\lambda)\geq 0$ for all $\alpha$.  We let $X^+$ denote the set of dominant weights in $X$.
\par

By a standard analysis, the simples in $\rep G_q$ are classified up to isomorphism by their highest weights.  Given a weight $\lambda\in X$ which appears as a highest weight for some object in $\rep G_q$, and hence as the highest weight of some simple, we let $L(\lambda)$ denote the corresponding simple.

\begin{proposition}\label{prop:435}
For any simple $L(\lambda)$ in $\rep G_q$, the corresponding weight $\lambda$ is dominant.  Furthermore, the map $\operatorname{Irrep}G_q\to X^+$, $L(\lambda)\mapsto \lambda$, is a bijection.
\end{proposition}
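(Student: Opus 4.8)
The plan is to argue exactly as in the classical case, transporting the standard highest-weight theory of $U_q(\mfk{g})$-modules (in the form developed by Lusztig) to the $X$-graded setting of $\rep G_q$. First I would recall that any object $V$ of $\rep G_q$ is finite-dimensional and $X$-graded, so its set of weights is finite; choosing a weight $\lambda$ that is maximal with respect to the dominance order on $X$ among the weights of $V$, every such $V$ has a nonzero highest-weight vector $v_\lambda$, i.e.\ one annihilated by all $E_\alpha$ (and all $E_\alpha^{(l_\alpha)}$). For the simple $L(\lambda)$ this vector generates the whole module, so $L(\lambda)$ is a quotient of the corresponding (quantum) Verma-type module and is determined up to isomorphism by $\lambda$; hence $L(\lambda)\mapsto\lambda$ is well-defined and injective on $\operatorname{Irrep}G_q$.

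Next I would prove dominance of $\lambda$. Given a highest-weight vector $v_\lambda$ in a module of $\rep G_q$, for each simple $\alpha$ restrict along the Hopf embedding $U_{q^{d_\alpha}}(\mfk{sl}_2)\hookrightarrow U_q(\mfk{g})$ (the same embedding used in the proof of Lemma~\ref{lem:297}) to get a finite-dimensional $\rep(\SL_2)_{q^{d_\alpha}}$-module generated by a vector killed by $E$; by the $\mfk{sl}_2$ representation theory of Lusztig's divided power algebra the action of $\binom{K_\alpha;0}{l_\alpha}$ and of $K_\alpha$ on such a vector forces the integer $\langle\alpha,\lambda\rangle$ to be $\geq 0$ — otherwise repeated application of $F_\alpha^{(m)}$ would fail to terminate in a finite-dimensional module, or equivalently one reaches a contradiction between the eigenvalue data for $K_\alpha$ and $\binom{K_\alpha;0}{l_\alpha}$ recorded in the definition of $\rep G_q$. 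Thus $\lambda\in X^+$, giving the map $\operatorname{Irrep}G_q\to X^+$.

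Finally I would check surjectivity: for every $\lambda\in X^+$ I need an object of $\rep G_q$ with highest weight $\lambda$. Here I would invoke Lusztig's construction of the Weyl module $V(\lambda)$ over $\dotU_q$ for any dominant $\lambda\in X$ (\cite[Chapter 31]{lusztig93}): it is finite-dimensional, $X$-graded in the sense of (a)--(b) above, and has simple head $L(\lambda)$ of highest weight $\lambda$; hence $L(\lambda)$ lies in $\rep G_q=\rep\dotU_q$. Since $\dotU_q$-modules are by definition $X$-graded, no further compatibility needs checking. Combining the three steps — well-definedness/injectivity via the highest-weight quotient description, dominance via the rank-one $\mfk{sl}_2$ reduction, and surjectivity via Weyl modules — yields the bijection $\operatorname{Irrep}G_q\overset{\sim}\to X^+$. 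The only mildly delicate point, and the one I would spend the most care on, is the rank-one analysis guaranteeing dominance at a root of unity: one must use the specific eigenvalue normalizations in condition (b) for $\binom{K_\alpha;0}{l_\alpha}$ together with the structure of finite-dimensional $U_{q^{d_\alpha}}(\mfk{sl}_2)$-modules, rather than the naive integrality argument valid in characteristic zero.
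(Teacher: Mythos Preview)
Your proposal is correct and aligns with the paper's own proof, which consists of a single sentence: ``One proceeds exactly as in the proof of~\cite[Proposition 6.4]{lusztig89}.'' Your sketch is precisely a faithful unpacking of that reference---highest-weight vectors via maximality in the dominance order, dominance via restriction to the rank-one copies $U_{q^{d_\alpha}}(\mfk{sl}_2)$, and surjectivity via Lusztig's Weyl modules---so there is nothing to correct or contrast.
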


\begin{proof}
One proceeds exactly as in the proof of~\cite[Proposition 6.4]{lusztig89}. 
\end{proof}

The following lemma is, without doubt, well-known and classical.

\begin{lemma}\label{lem:401}
The dominant weights $X^+$ span $X$.
\end{lemma}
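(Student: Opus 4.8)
The statement is that the dominant weights $X^+$ span the lattice $X$ (over $\mbb{Z}$). The plan is to exhibit, for each simple root $\alpha_j$, an expression of $\alpha_j$ as a $\mbb{Z}$-linear combination of dominant weights, since the simple roots generate the root lattice $Q$, and then to handle the remaining cosets of $Q$ in $X$ separately using the fact that $X/Q$ is finite. Actually it is cleaner to argue directly: since $X^+\subset X$, the subgroup $\langle X^+\rangle$ generated by $X^+$ is a sublattice of $X$, and it suffices to show it has full rank and that $X/\langle X^+\rangle$ is trivial.

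\textbf{Key steps.} First, I would recall that $X$ acquires a real form $X_{\mbb{R}} = X\otimes_{\mbb{Z}}\mbb{R}$, and that the dominant cone $X^+_{\mbb{R}}$ (the closure of the fundamental Weyl chamber) has nonempty interior in $X_{\mbb{R}}$; hence $X^+$ contains $\mbb{R}$-linearly independent elements and $\langle X^+\rangle$ has full rank $n=\operatorname{rank}X$. Second, for the torsion: given any $\lambda\in X$, by finiteness of the Weyl group $W$ there is $w\in W$ with $w\lambda$ dominant, i.e. $w\lambda\in X^+$ (using that $W$ permutes the weight lattice and preserves $X$). Then $\lambda - w\lambda$ lies in $Q$ — indeed $\lambda - w\lambda$ is always in the root lattice for $w$ in the Weyl group. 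So it remains only to show $Q\subset\langle X^+\rangle$. For this, note that for each simple $\alpha_i$ one can write $2f_i$, or more usefully a suitable dominant weight whose difference from another dominant weight is $\alpha_i$: concretely, $\alpha_i = \sum_j a_{ij} f_j$ is generally not dominant, but for a sufficiently dominant weight $\mu$ (e.g. $\mu$ a large multiple of $\rho=\sum_\gamma\gamma$, or $\mu$ any regular dominant weight deep in the chamber), both $\mu$ and $\mu-\alpha_i$ can be arranged to be dominant — more simply, pick $\mu$ dominant with $\langle\alpha_j,\mu\rangle$ large for all $j$, so that $\langle\alpha_j,\mu-\alpha_i\rangle = \langle\alpha_j,\mu\rangle - a_{ji}\geq 0$. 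Such $\mu$ exists in $X$ since $X$ contains $Q$ and some multiple of $\rho$ lies in $X$ (indeed $2\rho\in Q\subset X$, and $\rho\in X$ when $X$ is the weight lattice; in general a multiple works). Then $\alpha_i = \mu - (\mu-\alpha_i)\in\langle X^+\rangle$, giving $Q\subset\langle X^+\rangle$, and combined with the previous step $X = \langle X^+\rangle$.

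\textbf{Main obstacle.} The only genuinely delicate point is ensuring that a sufficiently regular dominant weight $\mu$ actually lies in the sublattice $X$ (not just in the ambient $P$), so that the subtraction argument stays inside $X$; this is where one uses that $Q\subset X$ and that one may take $\mu$ to be a large integer multiple of an element of $Q$ plus a correction, or simply invoke that $X$ is a full-rank lattice containing $Q$ with $X/Q$ finite, so some $N\rho' \in X$ with $\rho'$ regular dominant and $N$ chosen divisible by $\exp(X/Q)$ — and then run the argument with $\alpha_i$ replaced by $\alpha_i$ still, since $\langle\alpha_j, N\rho' - \alpha_i\rangle = N\langle\alpha_j,\rho'\rangle - a_{ji}\geq 0$ for $N$ large. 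Everything else is routine finite-Weyl-group and cone bookkeeping. I expect the whole proof to be three or four lines in the paper, essentially: "$X$ contains a regular dominant weight $\mu$; then $\mu$ and $\mu-\alpha_i$ are both dominant for each simple $\alpha_i$, so $Q\subset\langle X^+\rangle$; since every $\lambda\in X$ is $W$-conjugate into $X^+$ and $\lambda - w\lambda\in Q$, we get $X=\langle X^+\rangle$."
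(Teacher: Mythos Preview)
Your proof is correct but takes a genuinely different route from the paper's. The paper constructs an explicit $\mbb{Z}$-basis of $X$ consisting of dominant weights: for each $j$ it picks $x_j\in X$ with $(\alpha_i,x_j)=0$ for $i<j$ and $(\alpha_j,x_j)>0$ minimal, then adjusts by later $x_k$'s to force dominance, and uses the resulting upper-triangular system together with non-degeneracy of the Killing form to write any $\lambda\in X$ as an integer combination of the $x_j$. Your argument instead uses the Weyl group: send $\lambda$ into the dominant chamber via some $w\in W$, note $\lambda-w\lambda\in Q$, and then express each simple root as a difference of two dominant weights in $X$. The paper's approach yields more---an actual dominant basis of $X$---while yours is shorter and invokes only standard Weyl-group facts; either establishes the lemma.

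One remark: the ``main obstacle'' you flag dissolves immediately. In the paper's convention $\rho=\sum_{\gamma\in\Phi^+}\gamma$ (twice the usual half-sum), which lies in $Q\subset X$ and satisfies $\langle\alpha_j,\rho\rangle=2$ for every simple $\alpha_j$. Hence $\langle\alpha_j,\rho-\alpha_i\rangle=2-a_{ji}\geq 0$ for all $i,j$, so $\rho$ and each $\rho-\alpha_i$ are already dominant in $X$. No large multiples or $\exp(X/Q)$ considerations are needed; you can state the argument in the three or four lines you anticipated.
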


\begin{proof}
Enumerate the simple roots $\Delta=\{\alpha_1,\dots,\alpha_n\}$ and define $S_j$ to be the set of $x\in X$ with $(\alpha_i,x)=0$ for all $i<j$, and $(\alpha_j,x)>0$.  Elements of $S_i$ are exactly those elements which have an expression in terms of fundamental weights in which the coefficients of $f_i$ are $0$, for all $i<j$, and the coefficient of $f_j$ is positive.  Note that $S_j\neq \emptyset$, since $P/X$ is finite, and hence some power of each fundamental weight lies in $X$.
\par

For each $1\leq j\leq n$ take $x_j\in S_j$ with minimal pairing with $\alpha_j$, $(\alpha_j,x_j)=\min\{(\alpha_j,x):x\in S_j\}$.  By replacing $x_j$ with a sum $x_j+\sum_{k>j}c_kx_k$ we may assume additionally that each $x_j$ is dominant.  Now, for arbitrary $\lambda\in X$ with $(\alpha_i,\lambda)=0$ for all $i<j$, our minimality assumption on $x_j$ implies that there is some $c_j(\lambda)\in \mbb{Z}$ with $(\alpha_j,\lambda-c_j(\lambda)x_j)=0$.  Whence we see, by induction, that for any $\lambda\in X$ one can take a difference $\lambda-\sum_i c_i(\lambda)x_i$ so that $(\alpha_j,\lambda-\sum_ic_i(\lambda)x_i)=0$ for all $j$.  By non-degeneracy of the Killing form on the rationalization $X_\mbb{Q}$ we see $\lambda=\sum_ic_i(\lambda)x_i$.  Hence $\{x_1,\dots,x_n\}$ provides a dominant spanning set for $X$.
\end{proof}

We can now prove our theorem.

\begin{proof}[Proof of Theorem~\ref{thm:calculatingZ}]
The image of the quantum Frobenius $\Fr:\rep G^\vee\to \rep G_q$ is the subcategory tensor generated by the simples $L(\lambda)$ with $\lambda\in (X^{\M})^+$.  One sees this directly from the definition of the associated surjection $\dotU_q\to \dotU^\vee$ and the classification of simples for $\dotU^\vee$.
\par

We note that for any extension $W$ of objects $V$ and $V'$ in the image of $\rep G^\vee$, the $X$-grading on $W$ is necessarily a grading by $X^{\M}$.  That is to say, $W_\lambda=0$ for all $\lambda\notin X^{\M}$.  This implies that $E_i,F_i:W\to W$ are trivial operators.  (One needs to use strong admissibility of $X$ here when $l=2$ in types $B$ and $C$, and $l=3$ in type $G_2$.)  Hence the action of $\dotU_q$ on $W$ factors through the Frobenius $\dotU_q\to \dotU^\vee$.  Rather, $W$ is in the image of $\rep G^\vee$, and we see that the image of $\rep G^\vee$ is closed under extension.  We can describe this image simply as the collection of $V$ in $\rep G_q$ with $X$-grading induced by a $X^{\M}$-grading.
\par

Now, take $L(\lambda)$ a simple in the M\"uger center of $\rep G_q$, and let $v_\lambda$ be a highest weight vector for $L(\lambda)$.  Then for all $\mu\in X^+$ we have for the double braiding
\[
R_{21}R:L(\lambda)\ot L(\mu)\to L(\lambda)\ot L(\mu),\ \ v_\lambda\ot v_\mu\mapsto q^{-2(\lambda,\mu)}v_\lambda\ot v_\mu+\text{lower degree terms}.
\]
Triviality of this operation demands $2(\lambda,\mu)\in 2l\mbb{Z}$, and hence that $(\lambda,\mu)\in l\mbb{Z}$.  Since this holds for all simples $L(\mu)$ in $\rep G_q$, we find $(\lambda,X^+)\subset l\mbb{Z}$.  Since $X$ is spanned by dominant weights, by Lemma~\ref{lem:401}, we conclude $\lambda\in X^{\M}$.  So we see that all simples in the M\"uger center lie in the image of the $\rep G^\vee$.
\par

Finally, for arbitrary $V$ in the M\"uger center we find that all of its simple composition factors lie in $\rep G^\vee$, since the M\"uger center is closed under subquotients.  As the image of $\rep G^\vee$ is closed under extension in $\rep G_q$, it follows that $V$ is in $\rep G^\vee$.
\end{proof}

\section{Tensor properties and finiteness of $(\rep G_q)_{G^\vee}$}

We begin by recalling the notion of de-equivariantization~\cite{arkhipovgaitsgory03,dgno10}.  We maintain our assumption that the base field is $\mbb{C}$ for consistency, although many of the results are characteristic independent.  By a corepresentation we always mean a {\it right} corepresentation.

\subsection{De-equivariantization and faithful flatness}

Let $\Pi$ be an affine group scheme and $F:\rep \Pi\to \msc{C}$ be a central embedding into a tensor category $\msc{C}$.  That is, $F$ is a pair of an embedding $F_0:\rep\Pi\to \msc{C}$ and a choice of lift to the Drinfeld center $F_1:\rep\Pi\to Z(\msc{C})$.  Such a lift $F_1$ simply specifies a family of half-braidings $\gamma_{V,W}:F_0(V)\ot W\to W\ot F_0(V)$ for objects $V$ in $\rep\Pi$.  This family is required to be natural in $V$.  We abuse notation throughout and write simply $F(V)$ for the image of an object $V$ in $\rep\Pi$ under a central embedding $F$.
\par

The central embeddings of interest to us come from braided tensor functors, in which case the central structure is implicit.  Namely, the braiding on $\msc{C}$ specifies a section $\msc{C}\to Z(\msc{C})$ of the forgetful functor $Z(\msc{C})\to \msc{C}$.  One uses this section to provide any functor into $\msc{C}$ with a canonical central structure.

For any central embedding $F:\rep\Pi\to \msc{C}$ we have the algebra object $F\O=F\O(\Pi)$ in the $\Ind$-category $\Ind\msc{C}$.  We can therefore consider $F\O$-modules in $\Ind\msc{C}$.  Each $F\O$-module becomes a bimodule via the half braiding $\gamma_{\O,-}$.

\begin{definition}
A module $M$ over an algebra object $\msc{A}$ in $\Ind\msc{C}$ is called finitely presented if there are objects $V_0$ and $V_1$ in $\msc{C}$ for which there is an exact sequence $\msc{A}\ot V_1\to \msc{A}\ot V_0\to M$, where the $\msc{A}\ot V_i$ are given the free left $\msc{A}$-action.
\end{definition}

Given a central embedding $F:\rep\Pi\to \msc{C}$, we define the de-equivariantization $\msc{C}_\Pi$ as
\[
\msc{C}_\Pi:=\left\{\text{The category of finitely presented $F\O$-modules in }\Ind\msc{C}\right\}.
\]
This category is naturally additive, enriched over $\mbb{C}$, and monoidal under the tensor product $\ot_{F\O}$ (cf.~\cite{dgno10}).

\begin{definition}
We say a central embedding $F$ is faithfully flat if the resulting de-equivariantization $\msc{C}_\Pi$ is rigid.  We call $F$ locally finite if the de-equivariantization $\msc{C}_\Pi$ is a locally finite category.
\end{definition}

Taken together, $F$ is faithfully flat and locally finite if and only if the de-equivariantization $(\msc{C}_\Pi,\ot_{F\O})$ is a tensor category.  Implicit in our locally finite definition is the proposal that $\msc{C}_\Pi$ is abelian.  Since the de-equivariantization functor $dE:\msc{C}\to \msc{C}_\Pi$, $V\mapsto \O\ot V$, is left adjoint to the forgetful functor $\msc{C}_\Pi\to \Ind \msc{C}$, we see that the forgetful functor is left exact.  It follows that the abelian structure on $\msc{C}_\Pi$ must be the one inherited from $\msc{C}$.  That is to say, $\msc{C}_\Pi$ is abelian if and only if $F\O$ is a coherent algebra in $\Ind \msc{C}$, and local finiteness of $F$ therefore implies coherence of $F\O$ (cf.\ Lemma~\ref{lem:coherence}).

\subsection{Faithful flatness for Hopf inclusions}

Let $\O$ be a commutative Hopf algebra and $\O\to A$ be a Hopf inclusion.  Suppose that this inclusion comes equipped with a function $R:\O\ot A\to \mbb{C}$ which is trivial on $\O\ot \O$ and induces a lift $\corep\O \to Z(\corep A)$ of the corestriction map $\corep\O\to \corep A$.  So, $R$ is a ``half $R$-matrix".  Take $\Pi=\Spec\O$.
\par

For $\corep A$, the $\Ind$-category is simply the category of arbitrary corepresentations $\Corep A$.  We consider the category $_\O\msc{M}^A$ of relative Hopf modules which are finitely presented over $\O$~\cite{montgomery93}.  We have directly $_\O\msc{M}^A=(\corep A)_\Pi$.  If this category is rigid, then the forgetful (monoidal) functor
\[
(\corep A)_\Pi\to (\O\text{-bimod},\ot_\O)
\]
necessarily preserves duals.  Since a bimodule over $\O$ is dualizable if and only if it is projective on the left and on the right, it follows that each object in the de-equivariantization $(\corep A)_\Pi$ is projective over $\O$ in this case.  Conversely, if each object in $(\corep A)_\Pi$ is projective over $\O$ then we have the duals
\begin{equation}\label{eq:duals}
M^\vee =\Hom_{\text{mod-}\O}(M,\O)\ \ \text{and}\ \ {^\vee M}=\Hom_{\O\text{-mod}}(M,\O)
\end{equation}
with actions of the topological Hopf algebra $A^\ast$, i.e.\ $A$-coactions, defined by
\[
f\cdot^l\chi:=\left(m\mapsto f_1\chi(S(f_2)m)\right)\ \ \text{and}\ \ f\cdot^r \chi:=\left(m\mapsto f_1\chi(S^{-1}(f_2)m)\right)
\]
respectively.  The following is basically a result of Masuoka and Wigner.

\begin{lemma}[{\cite[Corollary 2.9]{masuokawigner94}}]\label{lem:487}
Take $K$ to be the coalgebra $\mbb{C}\ot_\O A$ given by taking the fiber at the identity of $\Pi$.  In the above context, the following are equivalent:
\begin{enumerate}
\item[(a)] The category $(\corep A)_\Pi$ is rigid.
\item[(a$'$)] The embedding $F:\rep\Pi\to \corep A$ is faithfully flat.
\item[(b)] The extension $\O\to A$ is faithfully flat.
\item[(c)] Taking the fiber at the identity $\mbb{C}\ot_\O-:(\corep A)_\Pi\to \corep K$ is an equivalence of $\mbb{C}$-linear categories.
\end{enumerate}
In this case $F$ is also locally finite, $A$ is coflat over $K$, and $\O$ is equal to the $K$-coinvariants $\O=A^K$.
\end{lemma}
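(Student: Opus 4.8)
The plan is to read the lemma off from the faithfully flat descent theorem for relative Hopf modules of Masuoka and Wigner~\cite{masuokawigner94}, once the de-equivariantization has been matched with their setup. We have already identified $(\corep A)_\Pi$ with ${}_\O\msc{M}^A$, the category of right $A$-comodules carrying a compatible left $\O$-module structure and finitely presented over $\O$; the half $R$-matrix $R$ is exactly what endows each such object with the commuting left and right $\O$-actions (via the half braiding $\gamma_{\O,-}$) underlying the monoidal forgetful functor $(\corep A)_\Pi\to(\O\text{-bimod},\ot_\O)$, and likewise what realizes the potential duals~\eqref{eq:duals} as objects of ${}_\O\msc{M}^A$. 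Granting this structure, the content of~\cite[Corollary~2.9]{masuokawigner94} is precisely the equivalence (b)$\Leftrightarrow$(c), the two supplementary conclusions $\O=A^K$ and coflatness of $A$ over $K$, and the characterization of faithful flatness of $\O\to A$ by the condition that every object of ${}_\O\msc{M}^A$ be finitely generated projective over $\O$. This is the substantive input, and we import it wholesale; the remaining links are formal.

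It remains to fold in (a) and (a$'$). First, (a)$\Leftrightarrow$(a$'$) is nothing but the definition of a faithfully flat central embedding: $F$ was declared faithfully flat precisely when $\msc{C}_\Pi=(\corep A)_\Pi$ is rigid. Next, (a)$\Leftrightarrow$(b). If $(\corep A)_\Pi$ is rigid then every one of its objects is dualizable there, hence dualizable as an $\O$-bimodule (the forgetful functor preserves duals), hence finitely generated projective on each side over $\O$ — this is the discussion preceding~\eqref{eq:duals} — so by the Masuoka--Wigner characterization just quoted, $\O\to A$ is faithfully flat. Conversely, if $\O\to A$ is faithfully flat then, again by that characterization, every object $M$ of ${}_\O\msc{M}^A$ is finitely generated projective over $\O$, whence the formulas of~\eqref{eq:duals} furnish $M^\vee$ and ${}^\vee M$ inside ${}_\O\msc{M}^A$ together with the requisite evaluation and coevaluation, exhibiting ${}_\O\msc{M}^A$ as rigid. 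Finally, the one remaining ``in this case'' assertion, local finiteness of $F$, follows from (b)$\Rightarrow$(c): under the resulting equivalence ${}_\O\msc{M}^A\simeq\corep K$, local finiteness of ${}_\O\msc{M}^A$ is inherited from local finiteness of the comodule category of the coalgebra $K$.

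I expect essentially all of the difficulty to be concentrated in the imported result~\cite[Corollary~2.9]{masuokawigner94} — namely the descent step (b)$\Rightarrow$(c) and the identification $\O=A^{K}$ of $\O$ with the $K$-coinvariants of $A$, which is the hard part of faithfully flat Hopf--Galois descent. On our side the only thing to check is bookkeeping, principally that the half $R$-matrix $R$ genuinely turns every $F\O$-module into an $\O$-bimodule and carries the duals~\eqref{eq:duals} back into ${}_\O\msc{M}^A$, so that the Masuoka--Wigner machinery can be run in ${}_\O\msc{M}^A$ in the first place; coherence of $\O$ (Lemma~\ref{lem:coherence}) may be invoked if one needs to pass between ``flat'' and ``projective'' for finitely presented $\O$-modules in the course of quoting~\cite{masuokawigner94}.
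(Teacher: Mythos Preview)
Your approach is essentially the paper's, but you have glossed over the one point the paper singles out as the actual work. Masuoka--Wigner's Corollary~2.9 is stated for the category ${}_{\O}\mbb{M}^A$ of \emph{all} relative Hopf modules and the category $\Corep K$ of all $K$-comodules, not for the finitely presented subcategories ${}_{\O}\msc{M}^A$ and $\corep K$. So when you write that ``the content of~\cite[Corollary~2.9]{masuokawigner94} is precisely the equivalence (b)$\Leftrightarrow$(c)'' with (c) phrased for ${}_{\O}\msc{M}^A$, you are quoting a result that does not quite say this. The paper's proof makes the translation explicit: it identifies ${}_{\O}\mbb{M}^A=\Ind({}_{\O}\msc{M}^A)$ and recovers ${}_{\O}\msc{M}^A$ as the compact objects (Lemma~\ref{lem:1025}), so that the big-category equivalence of Masuoka--Wigner restricts to the finitely presented level. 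Your proposal needs this step; in particular, your claimed ``characterization of faithful flatness of $\O\to A$ by the condition that every object of ${}_{\O}\msc{M}^A$ be finitely generated projective'' is not what Masuoka--Wigner literally prove, and the implication (a)$\Rightarrow$(b) via projectivity of finitely presented objects is not immediate without some reduction (either the Ind/compact argument, or a reduction to finite-type Hopf subalgebras $\O_\alpha\subset\O$).

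A minor attribution point: the paper derives coflatness of $A$ over $K$ and the equality $\O=A^K$ from Takeuchi~\cite[Theorem~1]{takeuchi79}, not directly from~\cite{masuokawigner94}. Your handling of (a)$\Leftrightarrow$(a$'$), the rigidity-via-duals argument for (b)$\Rightarrow$(a), and the local finiteness deduction from (c) all match the paper.
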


\begin{proof}
First note that (a) and (a$'$) are equivalent, by definition.  In~\cite{masuokawigner94} the authors employ the category $_\O\mbb{M}^A$ of arbitrary Hopf modules, and prove an infinite analog of the proposed equivalence, with $(\corep A)_\Pi$ replaced with $_\O\mbb{M}^A$ and $\corep K$ replaced with $\Corep K$.  So we are left with the task of translating between the finite and infinite settings.

We have $_\O\mbb{M}^A=\Ind {_\O\msc{M}^A}$ and recover ${_\O\msc{M}^A}$ as the category of compact objects in $_\O\mbb{M}^A$ (cf. Lemma~\ref{lem:1025} below).  One can use this identification to equate (a)--(c) via~\cite[Corollary 2.9]{masuokawigner94}.  Supposing (a)--(c), coflatness of $A$ over $K$ follows by~\cite[Theorem 1]{takeuchi79}, as does the equality $\O=A^K$.  Additionally, $(\corep A)_\Pi$ is locally finite in this case as it is equivalent to the locally finite category $\corep K$, so that $F$ is locally finite by definition.
\end{proof}

\begin{remark}
It is proposed in~\cite[Proposition 3.12]{arkhipovgaitsgory03} that an arbitrary extension $\O\to A$ of a commutative Hopf algebra is faithfully flat.  While the result is correct~\cite{gaitsgory}, there are some problems with the proof given in~\cite{arkhipovgaitsgory03}.  So we have avoided direct reference to this result.
\end{remark}

\subsection{Faithful flatness of the quantum Frobenius}

One can argue exactly as in~\cite[\S 3.9]{arkhipovgaitsgory03}, where some slightly different restrictions on $q$ and $G$ are involved, to find that the linear dual of $u^{\M}_q(G)$ is the fiber $\mbb{C}\ot_\O\O_q$ of the quantum function algebra $\O_q$ at $1\in G^\vee$.  They show further that the quantum Frobenius $\Fr:\rep G^\vee\to \rep G_q$ is, in our language, faithfully flat.  

\begin{theorem}[{\cite[Theorem 2.4]{arkhipovgaitsgory03}}]\label{thm:ag}
The functor $\mbb{C}\ot_\O-:(\rep G_q)_{G^\vee}\to \rep u^{\M}_q(G)$ given by taking the fiber at the identity of $G^\vee$ is a $\mbb{C}$-linear equivalence.
\end{theorem}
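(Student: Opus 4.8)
The statement is quoted verbatim from Arkhipov--Gaitsgory, and the plan is to reorganize their argument so that it plugs into the Masuoka--Wigner criterion already recorded in Lemma~\ref{lem:487}. First I would pass to the Hopf-algebraic picture. Writing $\O_q=\O_q(G)$ for the quantum function algebra, one has $\rep G_q\cong \corep\O_q$, and the quantum Frobenius in its function-algebra incarnation is a Hopf inclusion $\Fr_\ast\colon \O=\O(G^\vee)\hookrightarrow \O_q$; the braiding on $\rep G_q$ restricted to the M\"uger-central subcategory $\Fr(\rep G^\vee)$ (Theorem~\ref{thm:calculatingZ}) supplies a half $R$-matrix $R\colon \O\ot \O_q\to \mbb{C}$, trivial on $\O\ot\O$ by strong admissibility. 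In this language $(\rep G_q)_{G^\vee}$ is exactly the category ${}_\O\msc{M}^{\O_q}$ of relative Hopf modules finitely presented over $\O$, i.e.\ the de-equivariantization $(\corep\O_q)_{\Spec\O}$, and the functor of the theorem is the fibre $\mbb{C}\ot_\O-$ at the identity of $G^\vee$.

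Second I would identify the target. Put $K:=\mbb{C}\ot_\O\O_q$, the fibre coalgebra at $1\in G^\vee$. Dualizing the Frobenius surjection $\dotU_q\twoheadrightarrow \dotU^\vee$ and using $\hatU_q=\O_q^{\,\ast}$, one checks that $K^\ast$ is precisely the Frobenius-coinvariant subalgebra of $\hatU_q$, which is the definition of $u^{\M}_q(G)$; since the triangular decomposition of Lemma~\ref{lem:triangle} makes $u^{\M}_q(G)$ finite-dimensional, $K$ is finite-dimensional and $\corep K=\rep K^\ast=\rep u^{\M}_q(G)$. Thus the functor in the theorem is exactly the reduction $\mbb{C}\ot_\O-\colon (\corep\O_q)_{\Spec\O}\to \corep K$ of condition (c) in Lemma~\ref{lem:487}, and by that lemma it is a $\mbb{C}$-linear equivalence as soon as the Hopf extension $\O\to \O_q$ is faithfully flat (condition (b)).

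The remaining, and \emph{main}, task is therefore to prove faithful flatness of $\O\hookrightarrow \O_q$ directly, bypassing the flagged Proposition~3.12 of~\cite{arkhipovgaitsgory03}. I would show more precisely that $\O_q$ is free as an $\O$-module. Dually this is freeness of $\hatU_q$ over the Frobenius image $\Fr(\widehat{\dotU}^\vee)$, which I would extract from Lusztig's divided-power PBW bases together with the (completed) triangular decomposition $\hatU_q\cong \hatU_q^-\,\widehat{\ot}\,\Fun(X,\mbb{C})\,\widehat{\ot}\,\hatU_q^+$: each tensor factor is free over its own Frobenius subalgebra, with basis the PBW monomials $F^{(n)}_\gamma$ resp.\ $E^{(n)}_\gamma$ with $n<l_\gamma$ and, for the torus factor, the group ring $\mbb{C}[Z^\vee]$ with $Z=X/X^{\M}$; multiplying and invoking Lemma~\ref{lem:triangle} produces a free $\Fr(\widehat{\dotU}^\vee)$-basis of $\hatU_q$ indexed by a PBW basis of $u^{\M}_q(G)$. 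Transporting this freeness through pro-finite duality, under which $\O$ and $\O_q$ are the honest duals of $\widehat{\dotU}^\vee$ and $\hatU_q$, gives that $\O_q$ is free, hence faithfully flat, over $\O$, and re-confirms $K\cong u^{\M}_q(G)^\ast$. The delicate points are exactly those handled in~\cite[\S 3.9]{arkhipovgaitsgory03}: bookkeeping for the completions, for the non-unital modified algebras $\dotU_q,\dotU^\vee$, and for the equivalence between finitely presented $\O$-modules and cofinite quotients; strong admissibility plays no role here, being used only to make the half $R$-matrix trivial on $\O\ot\O$. Granting faithful flatness, Lemma~\ref{lem:487} gives at once that $\mbb{C}\ot_\O-\colon (\rep G_q)_{G^\vee}\to \rep u^{\M}_q(G)$ is a $\mbb{C}$-linear equivalence, and moreover that $\Fr$ is locally finite and $\O_q$ is coflat over $K$, facts used later in the paper.
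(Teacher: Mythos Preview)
The paper does not supply its own proof of this theorem: it is stated as a direct citation of \cite[Theorem~2.4]{arkhipovgaitsgory03}, preceded only by the remark that one argues as in \cite[\S 3.9]{arkhipovgaitsgory03} to identify $K^\ast$ with $u^{\M}_q(G)$ and to obtain faithful flatness of the Frobenius. Your reconstruction is therefore not competing with a proof in the paper but rather supplying one, and it is the intended one: Lemma~\ref{lem:487} was set up precisely so that the Arkhipov--Gaitsgory argument could be phrased as ``verify (b), conclude (c).'' Your identification of the target with $\corep K$ via $K^\ast\cong u^{\M}_q(G)$ matches what the paper attributes to \cite[\S 3.9]{arkhipovgaitsgory03} (and the Remark after the definition of $u^{\M}_q(G)$), and your freeness-via-PBW sketch is the standard route.

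One minor point of logical orientation: the paper actually runs the implication in Lemma~\ref{lem:487} the other way. It first \emph{quotes} the equivalence (condition (c)) from Arkhipov--Gaitsgory as Theorem~\ref{thm:ag}, and only afterwards invokes Lemma~\ref{lem:487} to extract faithful flatness and local finiteness for Corollary~\ref{cor:ff}. You instead establish (b) directly and deduce (c). Both are fine since (a)--(c) are equivalent, but your order has the advantage of being self-contained rather than resting on the cited result; it also makes transparent why the paper flagged \cite[Proposition~3.12]{arkhipovgaitsgory03} as suspect and routed around it. Your parenthetical that strong admissibility is irrelevant to the faithful-flatness step is correct: it enters only in the braided/M\"uger-center statements, not in the $\mbb{C}$-linear equivalence.
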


We apply Lemma~\ref{lem:487} to obtain

\begin{corollary}\label{cor:ff}
The de-equivariantization $(\rep G_q)_{G^\vee}$, with its natural $\mbb{C}$-enriched monoidal structure $\ot_{\O(G^\vee)}$, is a finite tensor category.
\end{corollary}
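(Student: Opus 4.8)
The plan is to deduce Corollary~\ref{cor:ff} by combining Theorem~\ref{thm:ag} with Lemma~\ref{lem:487}, and then separately verifying the finiteness statement. First I would set up the hypotheses of Lemma~\ref{lem:487}: the relevant Hopf inclusion is the quantum Frobenius $\Fr_\ast:\O(G^\vee)\to \O_q(G)$ on function algebras, and the ``half $R$-matrix'' $R:\O(G^\vee)\ot\O_q(G)\to\mbb{C}$ comes from the braided central embedding $\Fr:\rep G^\vee\to\rep G_q$ established in Section~\ref{sect:qfrob} (this is exactly the data produced by Theorem~\ref{thm:calculatingZ}, which places $\rep G^\vee$ in the M\"uger center and hence gives the lift to the Drinfeld center with trivial half-braiding on $\O(G^\vee)\ot\O(G^\vee)$). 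With $\Pi=G^\vee=\Spec\O$ and $K=\mbb{C}\ot_\O\O_q$ the fiber coalgebra at the identity, Theorem~\ref{thm:ag} of Arkhipov--Gaitsgory says precisely that the reduction functor $\mbb{C}\ot_\O-:(\rep G_q)_{G^\vee}=(\corep\O_q)_\Pi\to\corep K=\rep u^{\M}_q(G)$ is a $\mbb{C}$-linear equivalence, which is condition (c) of Lemma~\ref{lem:487}. Therefore condition (a) holds as well: the de-equivariantization $(\rep G_q)_{G^\vee}$ is rigid, equivalently $\Fr$ is faithfully flat, and moreover (by the concluding sentence of the lemma) $F$ is locally finite, so $(\rep G_q)_{G^\vee}$ is a locally finite abelian category and $\O(G^\vee)$ is coherent as an algebra object in $\Ind(\rep G_q)$.

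It remains to upgrade ``locally finite abelian rigid monoidal'' to ``finite tensor category'' in the sense of~\cite{egno15}, i.e.\ to check that $(\rep G_q)_{G^\vee}$ has finitely many simple objects, enough projectives, finite-dimensional Hom-spaces, and finite-length objects, and that the unit $\O(G^\vee)$ is simple (or at least that the category is a finite tensor category in the adopted convention). The cleanest route is via the equivalence of Theorem~\ref{thm:ag}: since $u^{\M}_q(G)$ is a finite-dimensional algebra — indeed finite-dimensional by the triangular decomposition $u^-_q\ot\mbb{C}[Z^\vee]\ot u^+_q\xrightarrow{\cong}u^{\M}_q(G)$ of Lemma~\ref{lem:triangle}, all three factors being finite-dimensional — the category $\rep u^{\M}_q(G)=\corep K$ of its finite-dimensional modules is automatically a finite abelian $\mbb{C}$-linear category with enough projectives. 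Transporting this structure across the $\mbb{C}$-linear equivalence $\mbb{C}\ot_\O-$ shows $(\rep G_q)_{G^\vee}$ is finite as an abelian category. Combined with the rigid monoidal structure $\ot_{\O(G^\vee)}$ already in hand, and with the identification of the unit object $\O(G^\vee)$ under the equivalence with the trivial $u^{\M}_q(G)$-module (which is simple), this gives that $(\rep G_q)_{G^\vee}$ is a finite tensor category.

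The main obstacle, to my mind, is not the categorical bookkeeping but ensuring that Theorem~\ref{thm:ag} genuinely applies in the generality claimed here — Arkhipov and Gaitsgory work under ``slightly different restrictions on $q$ and $G$,'' so one must check that their argument identifying $u^{\M}_q(G)^\ast$ with the fiber $\mbb{C}\ot_\O\O_q$ survives under our ``strongly admissible lattice'' hypothesis (in particular the delicate cases $l=2$ in types $B,C$ and $l=3$ in type $G_2$, where strong admissibility was already invoked in the proof of Theorem~\ref{thm:calculatingZ}). A secondary subtlety is the monoidal compatibility: Lemma~\ref{lem:487} is stated for the reduction functor as a functor of $\mbb{C}$-linear categories, so one should observe that it is in fact monoidal — the fiber functor $\mbb{C}\ot_\O-$ is the ``change of base along $\O\to\mbb{C}$'' functor and is visibly compatible with $\ot_\O$ on the source and $\ot_{\mbb{C}}$ on the target — so that the rigidity and finiteness transported from $\rep u^{\M}_q(G)$ are compatible with the tensor structure $\ot_{\O(G^\vee)}$. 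Once these points are confirmed, the corollary follows formally.
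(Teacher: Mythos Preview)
Your proposal is correct and follows essentially the same route as the paper: apply Theorem~\ref{thm:ag} to verify condition (c) of Lemma~\ref{lem:487}, deduce rigidity and local finiteness of $(\rep G_q)_{G^\vee}$, and then read off finiteness from the $\mbb{C}$-linear equivalence with $\rep u^{\M}_q(G)$ (finite because $u^{\M}_q(G)$ is finite-dimensional). The paper's own proof is a two-line version of exactly this, treating everything but the finiteness as ``clear'' from the preceding results and dispatching finiteness via the equivalence; your additional remarks on monoidal compatibility and on the range of validity of the Arkhipov--Gaitsgory argument are reasonable caveats but are not addressed further in the paper's proof.
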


\begin{proof}
All is clear save for the finiteness of $(\rep G_q)_{G^\vee}$.  But this just follows from the fact that the equivalent category $\rep u^{\M}_q(G)$ is finite.
\end{proof}

\section{Quasi-fiber functors and the ribbon structure}

We note that the braiding on $\rep G_q$ induces a unique braiding on $(\rep G_q)_{G^\vee}$ so that the de-equivariantization functor $dE:\rep G_q\to (\rep G_q)_{G^\vee}$, $V\mapsto \O\ot V$, is a map of braided tensor categories~\cite[Proposition 4.22]{dgno10}.  This braiding is given simply by 
\[
c_{M,N}:M\ot_\O N\to N\ot_\O M,\ \ m\ot n\mapsto \operatorname{swap}(R\cdot m\ot n).
\]
We consider $(\rep G_q)_{G^\vee}$ as a braided tensor category with this induced braiding throughout the remainder of this document.

\subsection{The ribbon structure on $(\rep G_q)_{G^\vee}$}

We employ the duals~\eqref{eq:duals} to give $(\rep G_q)_{G^\vee}$ an explicit rigid structure.  For $\rho$ the sum of the positive roots, $\rho=\sum_{\gamma\in \Phi^+}\gamma\in X$, the global operator $K_\rho$ provides $\rep G_q$ with a canonical pivotal structure.  Specifically, the natural linear isomorphisms
\[
\operatorname{piv}_V:V\to {V^{\ast}}^\ast,\ \ v\mapsto K_\rho\cdot \mrm{ev}_v,
\]
provide an isomorphism of tensor functors $id\to {(-)^{\ast}}^\ast$.  The pivotal structure on $\rep G_q$ induces a canonical ribbon structure with ribbon element $v=K_\rho u^{-1}$, where $u\in\hatU_q$ is the Drinfeld element~\cite[Corollary 8.3.16]{charipressley95}.

\begin{lemma}\label{lem:ribbon}
When $G$ is simply-connected, or more generally when $K_\rho|_{X^{\M}}\equiv 1$, there is a unique ribbon structure on $(\rep G_q)_{G^\vee}$ so that the de-equivariantization functor from $\rep G_q$ is a map of ribbon categories.
\end{lemma}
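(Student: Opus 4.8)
The plan is to transport the twist of $\rep G_q$ along the de-equivariantization functor $dE\colon\rep G_q\to(\rep G_q)_{G^\vee}$, $V\mapsto\O\ot V$, which we already know to be a map of braided tensor categories. Recall that the ribbon structure on $\rep G_q$ is encoded by the natural automorphism $\theta$ of the identity functor given by the action of the inverse ribbon element $v^{-1}=u^{-1}K_\rho$, and that a map of ribbon categories is a braided tensor functor intertwining the twists. Every object $M$ of $(\rep G_q)_{G^\vee}$ admits, by definition, a presentation $dE(V_1)\to dE(V_0)\to M\to 0$ by free $F\O$-modules, so any twist $\bar\theta$ on the target with $\bar\theta_{dE(V)}=dE(\theta_V)$ is pinned down on $M$ by naturality against the epimorphism $dE(V_0)\twoheadrightarrow M$; this gives the uniqueness assertion, and the content is existence.

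For existence I would set $\bar\theta_M:=\theta_M$, the twist of the underlying object of $M$ in $\Ind(\rep G_q)$. The one point requiring proof is that $\theta_M$ is $F\O$-linear, hence an actual morphism of the de-equivariantization. Writing $a\colon F\O\ot M\to M$ for the module structure, naturality of $\theta$ gives $\theta_M\circ a=a\circ\theta_{F\O\ot M}$; and since $F\O=\Fr\O(G^\vee)$ lies in the M\"uger central image of $\Fr$, the double braiding $c_{M,F\O}c_{F\O,M}$ is the identity, whence the twist axiom in $\rep G_q$ gives $\theta_{F\O\ot M}=\theta_{F\O}\ot\theta_M$. Thus $\theta_M$ is $F\O$-linear as soon as $\theta_{F\O}=\mathrm{id}_{F\O}$. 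Granting this, the same computation $\theta_{\O\ot V}=\theta_{F\O}\ot\theta_V=\mathrm{id}\ot\theta_V=dE(\theta_V)$ shows that $dE$ intertwines $\theta$ and $\bar\theta$, and the twist axioms for $\bar\theta$ descend from those for $\theta$: for instance $\bar\theta_{M\ot_{F\O}N}$ composed with the canonical epimorphism $M\ot N\to M\ot_{F\O}N$ equals, by naturality of $\theta$, that epimorphism precomposed with $\theta_{M\ot N}=(\theta_M\ot\theta_N)c_{N,M}c_{M,N}$, which is exactly $(\bar\theta_M\ot_{F\O}\bar\theta_N)\bar c_{N,M}\bar c_{M,N}$ followed by the epimorphism, since $\bar c$ and $\ot_{F\O}$ are the structures descended from $\rep G_q$; compatibility with duals is inherited since $dE$ preserves duals. (One may alternatively invoke the ribbon analogue of~\cite[Proposition 4.22]{dgno10}.)

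The heart of the argument, and the step I expect to be the main obstacle, is therefore the vanishing $\theta_{F\O}=\mathrm{id}$. Here $F\O$ is $X^{\M}$-graded, and by construction of the quantum Frobenius the operators $E_\alpha,F_\alpha$ act as zero on it; together with Lemma~\ref{lem:367}, which confines the off-diagonal part $R^+$ of the $R$-matrix to monomials in the $E_\gamma$ and $F_\gamma$, this forces $R$ to act on $F\O\ot F\O$ simply as the diagonal operator $\Omega^{-1}$. A direct evaluation of the Drinfeld element $u=\sum_i S(b_i)a_i$, for $R=\sum_i a_i\ot b_i$, then shows that $u$ acts on the degree-$\mu$ component of $F\O$, $\mu\in X^{\M}$, by the scalar $\Omega(\mu,\mu)=q^{(\mu,\mu)}$, which equals $1$ by admissibility of $X$. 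Hence $\theta_{F\O}=(u^{-1}K_\rho)|_{F\O}$ is nothing but the action of $K_\rho$, i.e.\ of the character $\mu\mapsto q^{(\rho,\mu)}$ on $X^{\M}$, and this is the identity precisely when $K_\rho|_{X^{\M}}\equiv 1$. Finally, in the simply-connected case $X^{\M}=\mathbb{Z}\{l_\alpha\alpha:\alpha\in\Delta\}$, and since $(\rho,\alpha)=(\alpha,\alpha)=2d_\alpha$ we get $K_\rho(l_\alpha\alpha)=q^{l_\alpha(\rho,\alpha)}=q^{2d_\alpha l_\alpha}=1$, using $d_\alpha l_\alpha\in l\mathbb{N}$ and $\ord(q)=2l$; so the hypothesis is automatic when $G$ is simply-connected, which completes the proof.
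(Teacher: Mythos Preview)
Your argument is correct and takes a different route from the paper. The paper descends the \emph{pivotal} structure rather than the twist: it sets $\operatorname{piv}'_M:M\to{M^\vee}^\vee$, $m\mapsto K_\rho\cdot\mathrm{ev}_m$, observes this is $\O$-linear precisely because $K_\rho|_{X^{\M}}\equiv 1$ (so $K_\rho$ acts trivially on $\O$), and notes it is an isomorphism since objects of $(\rep G_q)_{G^\vee}$ are finite projective, hence reflexive, over $\O$; the ribbon structure then comes for free from pivotal plus braiding. Your approach transports the twist $\theta=v^{-1}$ directly: the key step $\theta_{F\O}=\mathrm{id}$ requires trivializing both $u$ (via admissibility) and $K_\rho$ (via the hypothesis), after which you must verify the balancing and duality axioms descend. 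The paper's route is shorter---a single $\O$-linearity check replaces the verification of all ribbon axioms on the quotient---while yours makes the role of admissibility explicit and verifies the simply-connected reduction in-line. One small point: your ``inherited since $dE$ preserves duals'' only yields $\bar\theta_{M^\vee}=(\bar\theta_M)^\vee$ on free modules; to get it for general $M$, dualize an epimorphism $dE(V)\twoheadrightarrow M$ to a monomorphism $M^\vee\hookrightarrow dE(V^*)$ and use naturality of $\bar\theta$ together with the free case to conclude.
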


\begin{proof}
Supposing such a ribbon structure exists, uniqueness follows from the fact that the de-equivariantization map is surjective.  So we must establish existence.  It suffices to provide a pivotal structure on $(\rep G_q)_{G^\vee}$ so that the de-equivariantization functor $dE$ preserves the pivotal structure.  Such a pivotal structure is given explicitly by
\[
\operatorname{piv}'_M:M\to {M^\vee}^\vee,\ \ m\mapsto K_\rho\cdot \mrm{ev}_m.
\]
The $\operatorname{piv}'_M$ are $\O$-linear as the image of $K_\rho$ in $\hatU^\vee$, which is just the restriction $K_\rho|_{X^{\M}}$, is identically $1$ in this case.  (Otherwise, $\operatorname{piv}'$ twists the $\O$-action by the translation $K_\rho\cdot-$.)  The $\operatorname{piv}'_M$ are isomorphisms because each $M$ is finite and projective over $\O$, and hence reflexive.
\end{proof}

\subsection{Quasi-fiber functors and the ribbon equivalence to $u^{\M}_q(G)$}
\label{sect:fiber}

For an $\O$-bimodule $M$ we let $M_{sym}$ denote the the symmetric $\O$-bimodule with action specified by the left $\O$-action on $M$.

\begin{lemma}
Fix a balancing function $\omega$ for the character lattice of $G$.  For $M$ and $N$ in $(\rep G_q)_{G^\vee}$, the maps
\[
\tilde{T}_{M,N}^\omega:M_{sym}\ot_\O N_{sym}\to M\ot_\O N,\ \ m\ot n\mapsto \omega(\deg m,\deg n) m\ot n,
\]
are well-defined $\O$-linear isomorphisms which are natural in each factor.  Taking the fiber at the identity gives a natural isomorphism
\[
T^\omega_{M,N}: (\mbb{C}\ot_\O M)\ot_\mbb{C}(\mbb{C}\ot_\O N)\to \mbb{C}\ot_\O(M\ot_\O N),\ \ \bar{m}\ot\bar{n}\mapsto \omega(\deg m,\deg n)\overline{m\ot n}.
\]
The natural isomorphism $T^\omega$ provide the reduction $\mbb{C}\ot_\O-:(\rep G_q)_{G^\vee}\to Vect$ with the structure of a quasi-fiber functor $\underline{fib}^\omega:(\rep G_q)_{G^\vee}\to Vect$.
\end{lemma}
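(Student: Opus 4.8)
The plan is to verify the assertions of the lemma in the order in which they are made: first that each $\tilde{T}^\omega_{M,N}$ is well defined and $\O$-linear, then that it is natural in each variable and invertible, then that applying the reduction functor $\mbb{C}\ot_\O-$ produces the stated isomorphism $T^\omega_{M,N}$, and finally that $(\mbb{C}\ot_\O-,T^\omega)$ satisfies the axioms of a quasi-fiber functor.

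The heart of the matter is well-definedness of $\tilde{T}^\omega_{M,N}$, and I expect this to be the only genuinely delicate point. It requires making the right $\O$-action on an object $M$ of $(\rep G_q)_{G^\vee}$ explicit: the left action is the given module structure, which I write $f\triangleright m$, and the right action is obtained by transporting $f$ past $m$ through the half-braiding $\gamma_{\O,-}$ lifting $\Fr$ to the Drinfeld center. Since $\Fr^\ast$ annihilates the $E_\alpha$ and $F_\alpha$, all operators $E_\gamma^{(n)}$, $F_\gamma^{(n)}$ with $n<l_\gamma$ act by zero on the $\Fr$-object $\O$, so by Lemma~\ref{lem:367} the $R$-matrix acts on $\O\ot M$ as $q^{-(\deg f,\deg m)}$ times the swap; hence, for homogeneous $f$ of degree $x\in X^{\M}$, one gets $m\cdot f=q^{(\deg m,x)}\,f\triangleright m$. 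Comparing the defining relation $(m\cdot f)\ot n=m\ot(f\triangleright n)$ of $M\ot_\O N$ with the relation $(f\triangleright m)\ot n=m\ot(f\triangleright n)$ of $M_{sym}\ot_\O N_{sym}$, one finds $(f\triangleright m)\ot n=q^{-(\deg m,x)}\,m\ot(f\triangleright n)$ in $M\ot_\O N$. Now the three balancing axioms conspire exactly as needed: linearity of $\omega$ in the first coordinate together with triviality of $\omega$ on $X^{\M}\times X$ absorb the shift of $\deg m$ by $x$, while the $X^{\M}$-semilinearity $\omega(a,a'+x)=q^{-(a,x)}\omega(a,a')$ in the second coordinate produces precisely the compensating factor $q^{-(\deg m,x)}$ matching the displaced relation. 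Thus $\tilde{T}^\omega_{M,N}$ descends to the relative tensor product.

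The remaining points are bookkeeping. For $\O$-linearity one notes that the scalar $\omega(\deg m,\deg n)$ depends only on degrees and that the $\O$-action shifts $\deg m$ by an element of $X^{\M}$, which leaves $\omega$ unchanged by axioms (a) and (c); naturality in each variable is immediate because morphisms of $\rep G_q$, and hence of $(\rep G_q)_{G^\vee}$, preserve the $X$-grading; and invertibility is clear since the inverse is $m\ot n\mapsto\omega(\deg m,\deg n)^{-1}\,m\ot n$. For the fiber statement, a symmetric $\O$-bimodule acquires the augmentation $\O$-action after applying $\mbb{C}\ot_\O-$, so $\mbb{C}\ot_\O(M_{sym}\ot_\O N_{sym})$ is canonically $(\mbb{C}\ot_\O M)\ot_\mbb{C}(\mbb{C}\ot_\O N)$, and applying $\mbb{C}\ot_\O-$ to $\tilde{T}^\omega_{M,N}$ then reads off as the stated formula for $T^\omega_{M,N}$, which is therefore a natural isomorphism. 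Finally, to see that $(\mbb{C}\ot_\O-,T^\omega)$ is a quasi-fiber functor $\underline{fib}^\omega$: by Theorem~\ref{thm:ag} the reduction is an equivalence $(\rep G_q)_{G^\vee}\overset{\sim}\to\rep u^{\M}_q(G)$, so composed with the forgetful functor to $Vect$ it is exact and faithful, and $\mbb{C}\ot_\O\O=\mbb{C}$ identifies the unit. Since a quasi-fiber functor is required only to carry functorial isomorphisms $F(M)\ot F(N)\cong F(M\ot_\O N)$ and $F(\1)\cong\mbb{C}$, with no compatibility imposed against the associativity constraint, this is all that is needed; the failure of $\underline{fib}^\omega$ to be an honest tensor functor is precisely what will later produce the associator $\phi$ of $u^{\M}_q(G)$ recorded in Lemma~\ref{lem:461}.
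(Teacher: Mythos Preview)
Your proposal is correct and follows essentially the same approach as the paper: the key step in both is the computation showing that the three balancing axioms (a)--(c) conspire to make $\tilde{T}^\omega$ descend to $M_{sym}\ot_\O N_{sym}$, with faithfulness of the reduction coming from Theorem~\ref{thm:ag}. You are somewhat more explicit than the paper in unpacking the right $\O$-action via the half-braiding and in invoking Lemma~\ref{lem:367} to justify why $R^+$ collapses on $\O\ot M$, but the underlying verification is identical.
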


\begin{proof}
Note that the reduction $\mbb{C}\ot_\O-:(\rep G_q)_{G^\vee}\to Vect$ is a faithful functor by Theorem~\ref{thm:ag}.  So we need only show that $T^\omega$ is a well-defined quasi-tensor functor to see that it is a quasi-fiber functor.  One simply checks, for $f\in \O$ and $m\ot n\in M\ot_\O N$, the formula
\[
\begin{array}{ll}
\omega(\deg m+\deg f,\deg n)fm\ot n\\
=\omega(\deg m,\deg n)fm\ot n & (\text{balancing property (c)})\\
=q^{-(\deg f,\deg m)}\omega(\deg m,\deg n)m\ot fn\\
=\omega(\deg m,\deg n+\deg f) m\ot fn &(\text{balancing property (b)})
\end{array}
\]
to see that $\tilde{T}^\omega$ provides well-defined, natural, morphisms from the tensor product $M_{sym}\ot_\O N_{sym}$.  The inverse is constructed by a similar use of $\omega$ to see that $\tilde{T}^\omega$ is a natural isomorphism.  The remaining claims of the lemma follow.
\end{proof}

The quasi-fiber functor $\underline{fib}^\omega$ is a linear equivalence onto the subcategory $\rep u^{\M}_q(G)\subset Vect$, by Theorem~\ref{thm:ag}, and hence induces a unique tensor structure on $\rep  u^{\M}_q(G)$ under which the product is the linear tensor product.  As one would expect, this tensor structure is the one introduced in Section~\ref{sect:uM}.

\begin{proposition}\label{prop:qhuM_exists}
Give $u^{\M}_q(G)$ the quasitriangular quasi-Hopf structure provided by a choice of balancing function $\omega$, and give $\rep u^{\M}_q(G)$ the corresponding braided tensor structure.  The functor
\[
fib^\omega:=\{\mbb{C}\ot_\O-,T^\omega\}:(\rep G_q)_{G^\vee}\to \rep u^{\M}_q(G)
\]
is an equivalence of braided tensor categories.  When $K_\rho|_{X^{\M}}\equiv 1$, $fib^\omega$ is additionally and equivalence of ribbon categories.
\end{proposition}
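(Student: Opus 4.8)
The plan is to use the surjectivity of the de-equivariantization functor $dE\colon\rep G_q\to(\rep G_q)_{G^\vee}$, $V\mapsto\O\ot V$, to reduce every required coherence to a tautological comparison on free $\O$-modules. By Theorem~\ref{thm:ag} the underlying functor $\mbb{C}\ot_\O-$ is already a $\mbb{C}$-linear equivalence onto $\rep u^{\M}_q(G)$, and by the preceding lemma the maps $T^\omega_{M,N}$ are natural isomorphisms. So it remains only to verify: (i) each $T^\omega_{M,N}$ is $u^{\M}_q(G)$-linear with respect to the comultiplication $\nabla$ of Lemma~\ref{lem:468}; (ii) the associativity coherence for $fib^\omega$ reproduces the associator $\phi$ of Lemma~\ref{lem:461}; (iii) the hexagon coherence reproduces $R^\omega=\omega_{21}^{-1}R\omega$; and (iv) when $K_\rho|_{X^{\M}}\equiv1$, that $fib^\omega$ matches the pivotal structures, i.e.\ reproduces the pivotal element $\tau^{-1}S(\tau)K_\rho$ and ribbon element $v^\omega=v$ from Section~\ref{sect:uM}.

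The central step is to identify the composite $fib^\omega\circ dE\colon\rep G_q\to\rep u^{\M}_q(G)$, as a ribbon tensor functor, with $\mrm{res}\circ\mrm{tw}$. Here $\mrm{tw}\colon\rep G_q\to\rep\hatU_q^\omega$ is the gauge-twist equivalence (identity on the underlying abelian category, tensor constraint given by the action of $\omega$, braiding matched to $R^\omega$, pivotal element $\tau^{-1}S(\tau)K_\rho$), and $\mrm{res}\colon\rep\hatU_q^\omega\to\rep u^{\M}_q(G)$ is restriction along the quasi-Hopf inclusion of Proposition~\ref{prop:437}, which is braided and ribbon because $R^\omega\in u^{\M}_q(G)\ot u^{\M}_q(G)$ and $\tau^{-1}S(\tau)K_\rho\in u^{\M}_q(G)$ (Section~\ref{sect:uM}). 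On underlying functors the identification $fib^\omega\circ dE\cong\mrm{res}\circ\mrm{tw}$ is Theorem~\ref{thm:ag}, since $\mbb{C}\ot_\O(\O\ot V)$ is canonically $V$ with its restricted $u^{\M}_q(G)$-action. For the tensor constraint one uses that $\O=\Fr\O(G^\vee)$ is M\"uger central in $\rep G_q$ (Theorem~\ref{thm:calculatingZ}) and, by strong admissibility, carries the trivial swap as its half-braiding; hence $dE(V)\ot_\O dE(W)\cong dE(V\ot W)$ canonically, and the tensor constraint of $fib^\omega\circ dE$ is exactly $T^\omega$ evaluated on free modules, namely $v\ot w\mapsto\omega(\lambda,\mu)(v\ot w)$ on homogeneous $v\in V_\lambda,\ w\in W_\mu$, which is precisely the constraint of $\mrm{tw}$. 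The braiding and ribbon comparisons on the image of $dE$ then reduce to the standard behaviour of quasitriangular ribbon structures under the gauge transformation by $\omega$, which is exactly what was used to write down $R^\omega$, $\phi$, and $v^\omega$ in Section~\ref{sect:uM}; here one also uses that $dE$ itself is braided by \cite[Proposition 4.22]{dgno10} (resp.\ ribbon, when $K_\rho|_{X^{\M}}\equiv1$, by Lemma~\ref{lem:ribbon}).

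It then remains to transfer (i)--(iv) from the image of $dE$ to all of $(\rep G_q)_{G^\vee}$. Every object $M$ is finitely presented, hence receives an epimorphism $p\colon dE(V)\twoheadrightarrow M$, and $fib^\omega$ is exact. Given $p\colon dE(V)\twoheadrightarrow M$ and $p'\colon dE(W)\twoheadrightarrow N$, naturality of $T^\omega$ produces a commuting square whose top edge $T^\omega_{dE V,dE W}$ is $u^{\M}_q(G)$-linear and whose left edge $fib^\omega(p)\ot fib^\omega(p')$ is a surjection of $u^{\M}_q(G)$-modules, forcing $T^\omega_{M,N}$ to be $u^{\M}_q(G)$-linear. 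Likewise the pentagon with associator $\phi$, the two hexagons with $R^\omega$, and the ribbon axiom for $fib^\omega$ propagate from triples (resp.\ pairs) of objects in the image of $dE$ to arbitrary objects, by precomposing with the relevant surjection and invoking naturality. Combined with the fact that $fib^\omega$ is a linear equivalence, this shows $fib^\omega$ is a braided tensor equivalence, and a ribbon equivalence when $K_\rho|_{X^{\M}}\equiv1$.

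The hard part will be the claim that the half-braiding of the central object $\O$ is the trivial swap: this is the one place strong admissibility is genuinely used (via Theorem~\ref{thm:calculatingZ} and the characterization of the M\"uger center braiding following the definition of strong admissibility), and it is what pins the tensor constraint of $fib^\omega\circ dE$ to be $\omega$ rather than $\omega$ corrected by a possibly nontrivial sign; a secondary point is the bookkeeping needed to match $fib^\omega\circ dE$ with $\mrm{res}\circ\mrm{tw}$ on the nose, which is essentially an unwinding of the Arkhipov--Gaitsgory identification. By contrast, the gauge-transformation formulas for $R^\omega$, $\phi$, and $v^\omega$ are routine and already recorded in Section~\ref{sect:uM}.
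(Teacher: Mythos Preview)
Your approach is correct and is exactly the paper's proof: the paper records the commutative square
\[
\xymatrix{
\rep G_q=\rep \hatU_q\ar[rr]^{dE}\ar[d]_{\{id,\omega\cdot-\}} & & (\rep G_q)_{G^\vee}\ar[d]^{fib^\omega}\\
\rep \hatU_q^\omega\ar[rr]^{\mrm{restrict}} & & \rep u^{\M}_q
}
\]
notes that the three sides other than $fib^\omega$ are already braided (ribbon) tensor functors, and invokes surjectivity of $dE$; you have simply unpacked this same argument in more detail, including the transfer from free objects to arbitrary finitely presented ones.

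One small correction to your diagnosis of the ``hard part'': the half-braiding of $\O$ with a general object $V$ in $\rep G_q$ is \emph{not} the trivial swap---for homogeneous $f\in\O$ and $v\in V$ it is the sign $q^{-(\deg f,\deg v)}\in\{\pm 1\}$, since $(\deg f,\deg v)\in l\mbb{Z}$ by definition of $X^{\M}$. Strong admissibility only forces this sign to be $+1$ when \emph{both} arguments lie in $X^{\M}$. The reason the tensor constraint of $fib^\omega\circ dE$ agrees with that of $\mrm{tw}$ (no sign correction) is simpler than you suggest: on $(1\ot v)\ot(1\ot w)$ the relevant braiding is of $v$ past $1\in\O$, which is trivial because $\deg 1=0$. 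Strong admissibility is genuinely used earlier---in the existence of a balancing function (Lemma~\ref{lem:omega}) and hence in the well-definedness of $\tilde T^\omega$ in the lemma preceding the proposition---rather than in the commutativity of the square itself.
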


\begin{proof}
We have the diagram
\[
\xymatrix{
\rep G_q=\rep \hatU_q\ar[rr]^{dE}\ar[d]_{\{id,\omega\cdot-\}} & & (\rep G_q)_{G^\vee}\ar[d]^{fib^\omega}\\
\rep \hatU_q^\omega\ar[rr]^{restrict} & & \rep u^{\M}_q,
}
\]
with all but $fib^\omega$ having been established to be braided tensor functors, and ribbon when applicable.  By surjectivity of $dE$ it follows that $fib^\omega$ is a braided tensor functor, and also a ribbon equivalence when applicable, by Theorem~\ref{thm:ag}.
\end{proof}

\section{Rational (de-)equivariantization and non-degeneracy}
\label{sect:actions}

We provide rational analogs of the results of~\cite[Proposition 4.30, Corollary 4.31]{dgno10}.  This section can be seen as an elaboration on the materials of~\cite[\S 2.2]{davydovetingofnikshych18} (cf.~\cite[\S 4.3]{arkhipovgaitsgory03}).  What we need is the following.

\begin{theorem}\label{thm:dgno}
Let $\Pi$ be an affine group scheme.  Suppose that $F:\rep\Pi\to \msc{C}$ is a braided tensor embedding, which is additionally faithfully flat, locally finite, and has M\"uger central image.  Then the de-equivariantization $\msc{C}_\Pi$ is non-degenerate if and only if $F$ is an equivalence onto the M\"uger center of $\msc{C}$.
\end{theorem}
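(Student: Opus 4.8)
The plan is to deduce both implications from the single identification
\[
Z_2(\msc{C}_\Pi)\ \simeq\ (Z_2\msc{C})_\Pi ,
\]
where $Z_2(-)$ denotes the M\"uger center; here $(Z_2\msc{C})_\Pi$ is formed using the M\"uger-central image hypothesis, which places $\msc{E}:=F(\rep\Pi)$ inside the symmetric category $Z_2\msc{C}$, and it is regarded as a full braided subcategory of $\msc{C}_\Pi$ via the evident functor $(Z_2\msc{C})_\Pi\to\msc{C}_\Pi$. This identity is a rational analogue of~\cite[Proposition 4.30]{dgno10}. Granting it, the theorem follows by a soft argument on the symmetric side: de-equivariantizing a symmetric tensor category by a Tannakian subcategory $\msc{E}$ yields $Vect$ if and only if $\msc{E}$ is the whole category --- if the M\"uger center were merely super-Tannakian a fermion would survive de-equivariantization, while $(\rep\Pi)_\Pi\simeq Vect$ and re-equivariantizing the trivial category recovers $\rep\Pi$. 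Hence $\msc{C}_\Pi$ is non-degenerate, that is $Z_2(\msc{C}_\Pi)\simeq Vect$, exactly when $\msc{E}=Z_2\msc{C}$, which is precisely the statement that $F$ is an equivalence onto the M\"uger center of $\msc{C}$.

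To prove the identification, I would first check that $(Z_2\msc{C})_\Pi\to\msc{C}_\Pi$ is a braided embedding whose image consists of exactly those finitely presented $F\O$-modules whose underlying ind-object lies in $\Ind(Z_2\msc{C})$, a subobject-closed condition (the forgetful functor to $\Ind\msc{C}$ is left exact). Full faithfulness on free modules $dE(V)=F\O\otimes V$, $V\in Z_2\msc{C}$, reads $\Hom(dE(V),dE(W))=\Hom_{\msc{C}}(V,F\O\otimes W)$, which is insensitive to whether it is computed in $Z_2\msc{C}$ or in $\msc{C}$ since $F\O\otimes W\in\Ind(Z_2\msc{C})$, and the general case follows by presenting objects and using coherence of $F\O$ (Lemma~\ref{lem:coherence}); that $(Z_2\msc{C})_\Pi$ is itself an honest symmetric rigid tensor category is inherited from $\msc{C}_\Pi$ via Lemma~\ref{lem:487}. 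For $(Z_2\msc{C})_\Pi\subseteq Z_2(\msc{C}_\Pi)$: since $F$ is central, $\msc{C}_\Pi$ is braided and $dE:\msc{C}\to\msc{C}_\Pi$ is a dominant braided tensor functor, so for $V\in Z_2\msc{C}$ and any $W$ the double braiding $c_{dE(W),dE(V)}c_{dE(V),dE(W)}=dE(c_{W,V}c_{V,W})=\mathrm{id}$; every object of $\msc{C}_\Pi$ is a quotient of some $dE(W)$, and the M\"uger center is closed under quotients, subquotients and tensor products, so the whole image of $(Z_2\msc{C})_\Pi$ lands in $Z_2(\msc{C}_\Pi)$. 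For the reverse inclusion one unwinds the induced braiding: under the canonical isomorphisms $M\otimes_{F\O}(F\O\otimes W)\cong M\otimes W$ and $(F\O\otimes W)\otimes_{F\O}M\cong W\otimes M$, the double braiding $c^{\msc{C}_\Pi}_{dE(W),M}c^{\msc{C}_\Pi}_{M,dE(W)}$ is identified with the double braiding $c_{W,M}c_{M,W}$ of $\Ind\msc{C}$, the $F\O$-contributions canceling precisely because $F\O$ is carried through the M\"uger-central half-braiding. Thus $M\in Z_2(\msc{C}_\Pi)$ forces its underlying ind-object to centralize every $W\in\msc{C}$, and reducing to finite-length $F\O$-modules --- whose simple subquotients are subquotients of $dE$ of simples of $\msc{C}$ --- one concludes $M\in\Ind(Z_2\msc{C})$, i.e.\ $M$ lies in the image of $(Z_2\msc{C})_\Pi$.

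The main obstacle is this reverse step: making the unwinding of the induced braiding on $\msc{C}_\Pi$ rigorous in the $\Ind$-categorical setting, so that centrality of an $F\O$-module in $\msc{C}_\Pi$ is genuinely equivalent to centrality of its underlying ind-object in $\Ind\msc{C}$, and organizing the faithfully flat descent along $F\O$ so that $Z_2(\msc{C}_\Pi)\simeq(Z_2\msc{C})_\Pi$ becomes an equivalence of braided categories rather than a mere bijection on isomorphism classes. M\"uger-centrality of $\msc{E}$ is exactly what makes the $F\O$-factors cancel in the double braiding, and faithful flatness together with local finiteness is what guarantees, via Lemma~\ref{lem:487}, that $(Z_2\msc{C})_\Pi$ is a bona fide symmetric tensor category, so that the comparison with $\rep\Pi$ in the first step is legitimate.
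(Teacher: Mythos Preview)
Your approach is correct and tracks the paper's strategy closely: both reduce to the identification $Z_2(\msc{C}_\Pi)\simeq (Z_2\msc{C})_\Pi$ and then invoke the inverse relationship between equivariantization and de-equivariantization to conclude. The difference is organizational. The paper does not unwind the induced braiding directly; instead it proves a general poset bijection (Proposition~\ref{prop:885}) between intermediate subcategories $\rep\Pi\subset\msc{K}\subset\msc{C}$ and $\Pi$-stable subcategories $\msc{W}\subset\msc{C}_\Pi$, built from the canonical equivalences $\can^!:\msc{C}\overset{\sim}\to(\msc{C}_\Pi)^\Pi$ and $\can_!:\msc{D}\overset{\sim}\to(\msc{D}^\Pi)_\Pi$ established in the appendix, and then observes that this bijection restricts to M\"uger central subcategories. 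Your ``main obstacle''---showing that $M\in Z_2(\msc{C}_\Pi)$ forces the underlying ind-object into $\Ind(Z_2\msc{C})$---is thereby absorbed into the statement that equivariantization of a M\"uger central $\Pi$-stable $\msc{W}\subset\msc{C}_\Pi$ yields a M\"uger central $\msc{W}^\Pi\subset(\msc{C}_\Pi)^\Pi\simeq\msc{C}$, which follows formally once one knows the braiding on the equivariantization is inherited from $\msc{C}_\Pi$.

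What your direct argument buys is concreteness: the computation $c^{\msc{C}_\Pi}_{dE(W),M}c^{\msc{C}_\Pi}_{M,dE(W)}\leftrightarrow c_{W,M}c_{M,W}$ makes explicit exactly where M\"uger centrality of $F\O$ is used. What the paper's route buys is that the braiding never has to be unwound by hand---the reverse inclusion falls out of the abstract inverse equivalences, and one gets the full poset bijection as a bonus. Note that your ``soft argument on the symmetric side'' already implicitly invokes $\can^!$ (re-equivariantizing $Vect$ recovers $\rep\Pi$), so you are using the same appendix machinery in the end; the paper simply front-loads it into Proposition~\ref{prop:885}.
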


Recall that a braided tensor category $\msc{D}$ is called non-degenerate if its M\"uger center is trivial.  Recall also that a log-modular tensor category is a finite, non-degenerate, ribbon category.  We call a ribbon quasi-Hopf algebra log-modular if its representation category is log-modular.  We observe our calculation of the M\"uger center of $\rep G_q$ at Theorem~\ref{thm:calculatingZ} to arrive at the following.

\begin{corollary}\label{cor:nondegen}
\begin{enumerate}
\item[(a)] The de-equivariantization $(\rep G_q)_{G^\vee}$, with its induced braiding, is non-degenerate.  If furthermore $G$ is simply-connected, then $(\rep G_q)_{G^\vee}$ is canonically log-modular.
\item[(b)] The quasitriangular quasi-Hopf algebra $u^{\M}_q(G)$ is factorizable, and log-modular when $G$ is simply-connected.
\end{enumerate}
\end{corollary}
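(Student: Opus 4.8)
The plan is to obtain both parts by assembling results already established, with Theorems~\ref{thm:dgno} and~\ref{thm:calculatingZ} supplying the essential input. First I would check that the quantum Frobenius $\Fr:\rep G^\vee\to\rep G_q$ meets every hypothesis of Theorem~\ref{thm:dgno}. It is a braided tensor embedding by the discussion of Section~\ref{sect:qfrob}; it is faithfully flat and locally finite because the Arkhipov--Gaitsgory equivalence of Theorem~\ref{thm:ag} exhibits $(\rep G_q)_{G^\vee}$ as a finite tensor category (Corollary~\ref{cor:ff}), which via Lemma~\ref{lem:487} is precisely the faithfully flat and locally finite conditions; and its image is M\"uger central since, by Theorem~\ref{thm:calculatingZ}, that image is exactly the M\"uger center of $\rep G_q$. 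Theorem~\ref{thm:dgno} then applies, and because $\Fr$ is moreover an equivalence onto the M\"uger center, its ``if'' direction yields that $(\rep G_q)_{G^\vee}$ is non-degenerate. This proves the first assertion of (a).

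For the log-modularity in (a) I would simply collect three facts: $(\rep G_q)_{G^\vee}$ is finite (Corollary~\ref{cor:ff}), non-degenerate (just shown), and---when $G$ is simply-connected---carries the canonical ribbon structure of Lemma~\ref{lem:ribbon}, transported along $dE$ from $\rep G_q$. A finite, non-degenerate, ribbon tensor category is by definition log-modular, so this is immediate; the word ``canonically'' records that the abelian, braided, and ribbon structures are all the ones inherited from $\rep G_q$.

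Part (b) I would deduce by transport of structure along the equivalence $fib^\omega:(\rep G_q)_{G^\vee}\overset{\sim}\to\rep u^{\M}_q(G)$ of Proposition~\ref{prop:qhuM_exists}, which is braided in general and ribbon when $G$ is simply-connected. Braided equivalences preserve M\"uger centers, so $\rep u^{\M}_q(G)$ is non-degenerate; since non-degeneracy of the representation category of a finite-dimensional quasitriangular quasi-Hopf algebra is what one means by factorizability (the various notions of non-degeneracy coinciding for finite tensor categories, cf.~\cite{shimizu}), $u^{\M}_q(G)$ is factorizable. When $G$ is simply-connected, $fib^\omega$ is in addition a ribbon equivalence, so $\rep u^{\M}_q(G)$ is a finite non-degenerate ribbon category, i.e.\ log-modular, and hence $u^{\M}_q(G)$ is a log-modular ribbon quasi-Hopf algebra by definition.

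By this stage the corollary is essentially bookkeeping; the real work lives in the two theorems it rests on, namely the computation of the M\"uger center of $\rep G_q$ (Theorem~\ref{thm:calculatingZ}) and the rational de-equivariantization criterion (Theorem~\ref{thm:dgno}). The one point I would be careful about in (b) is the translation between ``non-degenerate braided tensor category'' and ``factorizable quasi-Hopf algebra'': since $u^{\M}_q(G)$ is only quasi-Hopf, one should invoke the intrinsic categorical notion of non-degeneracy (triviality of the M\"uger center, equivalently non-degeneracy of the canonical pairing on the coend) rather than any formula-level map built from an honest $R$-matrix, and rely on the equivalence of these notions for finite tensor categories to justify the term ``factorizable''.
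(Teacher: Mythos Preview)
Your proposal is correct and follows essentially the same route as the paper: assemble finiteness (Corollary~\ref{cor:ff}), the ribbon structure in the simply-connected case (Lemma~\ref{lem:ribbon}), and non-degeneracy via Theorem~\ref{thm:calculatingZ} together with Theorem~\ref{thm:dgno}, then transport to $u^{\M}_q(G)$ through Proposition~\ref{prop:qhuM_exists}. Your explicit verification of the hypotheses of Theorem~\ref{thm:dgno} and the remark on translating non-degeneracy to factorizability via~\cite{shimizu} are welcome elaborations, but the underlying argument is identical to the paper's.
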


We are left to prove Theorem~\ref{thm:dgno}.  We have elected to give a completely general presentation of (de-)equivariantization for tensor categories, in order to make precise sense of the conjectural relations with vertex operator algebras discussed in Section~\ref{sect:(1,p)}.  However, to keep from distracting completely from our main program, we defer many of the details to Appendix~\ref{sect:A}.

\subsection{Rational actions on cocomplete categories}
\label{sect:cocomplete}

Let $\msc{D}$ be a {\it cocomplete} $\mbb{C}$-linear category.  For any commutative algebra $S$ we let $\msc{D}_S$ denote the $S$-linear category consisting of objects $X$ in $\msc{D}$ equipped with an $S$-action, $S\to \End_\msc{D}(X)$.  Maps in $\msc{D}_S$ are maps in $\msc{D}$ which commute with the $S$-action.  We note that this operation $(?)_S$ is functorial in $\mbb{C}$-linear morphisms, so that a $\mbb{C}$-linear morphism $\msc{D}\to \msc{D}'$ induces a $S$-linear morphism $\msc{D}_S\to \msc{D}'_S$.  If we have an algebra map $k:S\to T$ we restrict scalars to get a map of linear categories $k^\ast:\msc{D}_T\to \msc{D}_S$.

Restriction has a left adjoint $k_\ast:\msc{D}_S\to \msc{D}_T$ given by induction.  Here we use cocompleteness of $\msc{D}$ to construct the induction $T\ot_SX$ explicitly as the quotient of the sum $\oplus_{a\in T}Xa$ by the standard relations, where $Xa$ is just a copy of $X$ labeled by $a\in T$.
\par

Let $\Pi$ be an affine group scheme with algebra of functions $R=\O(\Pi)$.  A rational action of $\Pi$ on $\msc{D}$, or simply an ``action", consists of the following information:
\begin{enumerate}
\item[(a)] A functor $\psi_u:\msc{D}\to \msc{D}_R$ which is exact and commutes with colimits.
\item[(b)] A choice of coassociative isomorphism $\sigma:\Delta_\ast \psi_u\overset{\sim}\to \psi_u\psi_u$ of functors from $\msc{D}$ to $\msc{D}_{R\ot R}$.
\item[(c)] A choice of isomorphism $\eta:\epsilon_\ast\psi_u\overset{\sim}\to id_\msc{D}$ for the counit $\epsilon:R\to \mbb{C}$.
\end{enumerate}
Given $\msc{D}$ with an action of $\Pi$ we define the category of equivariant objects $\msc{D}^\Pi$ as the non-full subcategory of objects $X$ in $\msc{D}$ equipped with a coaction $\rho_X:X\to \psi_u X$ which is coassociative and counital, in the sense of the equalities
\[
\psi_u(\rho_X)\rho_X=\sigma_X \Delta_\ast\rho_X\ \ \mrm{and}\ \ \eta_X\epsilon_\ast\rho_X=id_X.
\]
Morphisms of equivariant objects are maps $f:X\to Y$ in $\msc{D}$ for which the diagram
\[
\xymatrix{
\psi_u X\ar[rr]^{\psi_u f} & & \psi_u Y\\
X\ar[rr]^f\ar[u]^{\rho_X} & & Y\ar[u]_{\rho_Y}
}
\]
commutes.
\par

Note that for $\msc{D}$ with a $\Pi$-action we can change base along $S$-points $t\in \Pi(S)$, $t:\Spec(S)\to \Pi$, to obtain a compatible collection of maps $\psi_t:\msc{D}\to \msc{D}_S$.  These maps have induced compatible isomorphisms $\psi_t\psi_{t'}\cong \psi_{t\cdot t'}$, where for points $t\in \Pi(S)$ and $t'\in\Pi(S')$ we let $t\cdot t'=(t\ot t')\Delta$ denote the product in $\Pi(S\ot S')$.  In particular each element in the discrete group $x\in \Pi(\mbb{C})$ acts via an equivalence $\psi_x:\msc{D}\to \msc{D}$, and we recover from the rational action of $\Pi$ an action of the discrete group $\Pi(\mbb{C})$ on $\msc{D}$, in the usual sense of~\cite{dgno10}.

\begin{remark}
Our presentation of rational group actions on categories is adapted from informal notes of D.\ Gaitsgory.
\end{remark}

\subsection{Rational group actions on tensor categories}

A locally finite category $\msc{D}$ is explicitly not cocomplete, as all objects are required to be of finite length.  In this case we define $\msc{D}_S$ only for {\it coherent} $S$, as the full subcategory of objects in $(\Ind\msc{D})_S$ with a finite presentation $unit_\ast V\to unit_\ast W\to X$, where the $V$ and $W$ are in $\msc{D}$ and $unit_\ast:\Ind\msc{D}\to (\Ind\msc{D})_S$ in induction by the unit $\mbb{C}\to S$.  As a more practical check for finite presentation we have

\begin{lemma}\label{lem:1025}
The subcategory $\msc{D}_S\subset (\Ind\msc{D})_S$ is exactly the subcategory of compact objects in $(\Ind\msc{D})_S$.
\end{lemma}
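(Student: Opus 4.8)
The plan is to realize $(\Ind\msc{D})_S$ as the category of modules over the colimit‑preserving monad $S\otimes_\mbb{C}-$ on the Grothendieck category $\Ind\msc{D}$. This makes $(\Ind\msc{D})_S$ a cocomplete abelian category whose colimits are computed underneath by the faithful exact forgetful functor $unit^\ast\colon(\Ind\msc{D})_S\to\Ind\msc{D}$ (which therefore preserves \emph{all} colimits), with left adjoint $unit_\ast=S\otimes_\mbb{C}-$. The one computation to record up front is that, for $V,W\in\msc{D}$, compactness of $V$ in $\Ind\msc{D}$ together with finite‑dimensionality of $\Hom_{\Ind\msc{D}}(V,W)$ yields a natural identification $\Hom_{(\Ind\msc{D})_S}(unit_\ast V,unit_\ast W)\cong S\otimes_\mbb{C}\Hom_{\Ind\msc{D}}(V,W)$, which is a finitely generated free $S$‑module. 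In particular $\Hom_{(\Ind\msc{D})_S}(unit_\ast V,-)=\Hom_{\Ind\msc{D}}(V,unit^\ast-)$ preserves filtered colimits, so every $unit_\ast V$ is compact; and if $X$ admits a presentation $unit_\ast V_1\to unit_\ast V_0\to X\to 0$ then $\Hom(X,-)$ sits in an exact sequence of functors beginning with the filtered‑colimit‑preserving functors $\Hom(unit_\ast V_0,-)$ and $\Hom(unit_\ast V_1,-)$, and since filtered colimits are exact in $\mathrm{Ab}$ the functor $\Hom(X,-)$ preserves them as well. This gives one inclusion: every object of $\msc{D}_S$ is compact.

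For the reverse inclusion, let $X$ be compact. First, $X$ is finitely generated: $unit^\ast X$ is a quotient in $\Ind\msc{D}$ of a direct sum $\bigoplus_j V_j$ of objects of $\msc{D}$, and applying the right exact functor $unit_\ast$ and composing with the split epimorphism counit $unit_\ast unit^\ast X\to X$ realizes $X$ as a quotient of $\bigoplus_j unit_\ast V_j=\varinjlim_{J'\ \mrm{finite}}\bigoplus_{j\in J'}unit_\ast V_j$; the partial images form a directed system of subobjects with union $X$, so compactness forces $\mrm{id}_X$ to factor through one of them, exhibiting $X$ as a quotient of some $unit_\ast W$ with $W\in\msc{D}$. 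Fix a short exact sequence $0\to K\to unit_\ast W\to X\to 0$. Writing $K$ as the directed union of its finitely generated subobjects $K_\alpha$ (images of maps $unit_\ast V_\alpha\to K$; directed since $\msc{D}$ has finite sums), we get $X=\varinjlim_\alpha unit_\ast W/K_\alpha$ with surjective transition maps, and compactness of $X$ produces a section of the (surjective) structure map $unit_\ast W/K_{\alpha_0}\to X$. Thus $X$ is a retract of the finitely presented object $unit_\ast W/K_{\alpha_0}$, and the proof reduces to showing that finitely presented objects are closed under retracts.

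Equivalently, since abelian categories are idempotent complete, it suffices to show that $\msc{D}_S$ is closed under kernels and cokernels inside $(\Ind\msc{D})_S$, i.e.\ is an abelian subcategory; this is exactly the step at which coherence of $S$ is used, and I expect it to be the main obstacle. The subtlety is that the free objects $unit_\ast V$ need \emph{not} be projective in $(\Ind\msc{D})_S$ (they are projective only when $V$ is projective in $\Ind\msc{D}$, which typically fails for locally finite $\msc{D}$), so the standard Schanuel‑lemma argument proving ``retracts of finitely presented modules are finitely presented'' is unavailable, and one must invoke coherence directly. The mechanism is the first‑paragraph computation: a morphism $unit_\ast V_1\to unit_\ast V_0$ is precisely a finite matrix over $S$, and any morphism $unit_\ast V\to Z$ with $Z\in\msc{D}_S$ is encoded by finitely much $S$‑linear data because all Hom‑spaces among objects of $\msc{D}$ are finite‑dimensional over $\mbb{C}$. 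One then transfers the classical fact that over a coherent ring the kernel of a map $S^{m}\to S^{n}$ (hence of any map from a finite free module to a finitely presented module) is finitely generated, to conclude that the kernel $K$ of $unit_\ast W\to X$ above is finitely generated, so that $X\in\msc{D}_S$. Combining the two inclusions proves that $\msc{D}_S$ is exactly the subcategory of compact objects of $(\Ind\msc{D})_S$.
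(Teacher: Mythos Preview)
Your overall strategy matches the paper's: show free objects $unit_\ast V$ are compact (hence so are finitely presented ones), then write an arbitrary compact $X$ as a filtered colimit of finitely presented objects and use compactness to factor $id_X$ through one of them. The divergence is in the last step. You correctly observe that factoring $id_X$ through $M_\alpha=unit_\ast W/K_\alpha$ only exhibits $X$ as a \emph{retract} of $M_\alpha$, and you then try to close the gap by invoking coherence of $S$ to show $\msc{D}_S$ is closed under retracts. That detour is unnecessary for this lemma, and your sketch of how to ``transfer'' coherence from $S$-modules to $(\Ind\msc{D})_S$ is incomplete: the free objects $unit_\ast V$ are not projective, so the usual Schanuel/lifting arguments do not port over, and it is not clear that kernels of maps between them are finitely generated just from coherence of $S$.

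The paper instead finishes with the standard compactness trick, which avoids the retract issue entirely. Once you have a section $s:X\to M_\alpha$ of the surjection $p_\alpha:M_\alpha\to X$, use that $M_\alpha$ is itself compact (you already proved finitely presented implies compact). Since $p_\alpha\circ(s p_\alpha-id_{M_\alpha})=0$ and $\Hom(M_\alpha,X)=\varinjlim_\beta\Hom(M_\alpha,M_\beta)$, there is $\beta\ge\alpha$ with $t_{\alpha\beta}\circ s p_\alpha=t_{\alpha\beta}$. Then $t_{\alpha\beta}s:X\to M_\beta$ is split injective (as $p_\beta t_{\alpha\beta}s=p_\alpha s=id_X$) and surjective (as $t_{\alpha\beta}=(t_{\alpha\beta}s)\circ p_\alpha$ with $t_{\alpha\beta}$ and $p_\alpha$ surjective), hence an isomorphism. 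So $X\cong M_\beta$ is finitely presented outright. Coherence of $S$ plays no role in this lemma; it enters elsewhere, when one wants $\msc{D}_S$ to be an abelian subcategory so that the action maps $\psi_u$ are well defined.
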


We provide a proof of the lemma in Appendix~\ref{sect:A}.  We employ these categories $\msc{D}_S$ and define a $\Pi$-action on $\msc{D}$ just as above, and also the category $\msc{D}^\Pi$ of equivariant objects.  (Recall that the algebra of functions on an affine group scheme is itself coherent, by Lemma~\ref{lem:coherence}.)
\par

When $\msc{D}$ is a finite tensor category each $\msc{D}_S$ is monoidal under the product $X\ot_S Y$, which is given as the quotient of the product $X\ot Y$ internal to $\msc{D}$ by the relations $s\ot 1-1\ot s:X\ot Y\to X\ot Y$, for each $s\in S$.  We say $\Pi$ acts on $\msc{D}$, as a tensor category, if the universal map $\psi_u:\msc{D}\to \msc{D}_R$ is equipped with a monoidal structure $\psi_u(V)\ot_R\psi_u(W)\cong \psi_u(V\ot W)$ which is compatible with the isomorphism $\sigma$, in the sense that the two paths from $\psi_u(V)\ot_R\psi_u(W)$ to $\psi_u\psi_u(V\ot W)$ agree.  This implies that for each $S$-point $t\in \Pi(S)$ the induced maps $\psi_t:\msc{D}\to \msc{D}_S$ will all be monoidal functors in a compatible manner.

\begin{lemma}
When $\msc{D}$ is a tensor category, any monoidal functor $\psi_u:\msc{D}\to (\Ind\msc{D})_R$ has image in $\msc{D}_R$, and hence $\psi_u$ defines a rational action $\Pi$, provided $\psi_u$ is exact and commutes with colimits.
\end{lemma}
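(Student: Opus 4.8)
The plan is to reduce everything to a single statement: the essential image of $\psi_u$ lies in the subcategory of \emph{compact} objects of $(\Ind\msc{D})_R$. By Lemma~\ref{lem:1025} the compact objects of $(\Ind\msc{D})_R$ are exactly $\msc{D}_R$, so this will give that $\psi_u$ corestricts to a functor $\msc{D}\to\msc{D}_R$; combined with its monoidal structure, the coassociativity isomorphism $\sigma$ and the counit isomorphism $\eta$, together with the hypothesized exactness and cocontinuity, this is precisely the data of a rational $\Pi$-action on $\msc{D}$ as a tensor category in the sense of Section~\ref{sect:cocomplete}. Thus the only substantive point is the compactness claim.

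First I would note that every object $V$ of $\msc{D}$ is rigid, hence dualizable, and that a strong monoidal functor carries dualizable objects to dualizable objects, transporting evaluation and coevaluation through its monoidal constraints: $\psi_u(V)$ is dualizable in $(\Ind\msc{D})_R$ with dual $\psi_u(V^\vee)$ for an appropriate dual $V^\vee$ of $V$. So it suffices to prove the general fact that every dualizable object $X$ of $(\Ind\msc{D})_R$ is compact.

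Next I would exploit the duality to rewrite the corepresentable functor: for $X$ dualizable with dual $X^\vee$ there is a natural isomorphism
\[
\Hom_{(\Ind\msc{D})_R}(X,-)\ \cong\ \Hom_{(\Ind\msc{D})_R}\bigl(\1,\ X^\vee\ot_R(-)\bigr),
\]
where $\1=unit_\ast(\1_\msc{D})=R\ot\1_\msc{D}$ is the monoidal unit of $(\Ind\msc{D})_R$. Now $\1$ is compact: it is the free $R$-module on the object $\1_\msc{D}$ of $\msc{D}$, hence lies in $\msc{D}_R$, hence is compact by Lemma~\ref{lem:1025}; so $\Hom_{(\Ind\msc{D})_R}(\1,-)$ commutes with filtered colimits. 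Moreover the relative tensor product $X^\vee\ot_R(-)$ is computed as a coequalizer of functors built from the internal tensor product of $\Ind\msc{D}$, each of which commutes with all colimits, so $X^\vee\ot_R(-)$ commutes with filtered colimits. Composing, $\Hom_{(\Ind\msc{D})_R}(X,-)$ commutes with filtered colimits, i.e.\ $X$ is compact, and we are done.

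The only real input here is Lemma~\ref{lem:1025} (the identification of $\msc{D}_R$ with the compacts) together with the observation that the monoidal unit of $(\Ind\msc{D})_R$ is compact; everything else is the routine transport of dualizability along a strong monoidal functor and the cocontinuity of $\ot_R$. The one place to be careful is that the dual of $\psi_u(V)$ is genuinely $\psi_u(V^\vee)$, not merely abstractly dualizable — but this is automatic for strong monoidal functors and, crucially, does not presuppose that $(\Ind\msc{D})_R$ itself is rigid.
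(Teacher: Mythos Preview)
Your proof is correct and follows exactly the same approach as the paper: the paper's proof is the one-liner ``Monoidal functors preserve dualizable objects, and dualizable objects are compact,'' and you have simply unpacked both clauses (the transport of duality along a strong monoidal functor, and the argument via $\Hom(X,-)\cong\Hom(\1,X^\vee\ot_R-)$ with $\1$ compact) together with the appeal to Lemma~\ref{lem:1025}. There is nothing to add.
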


\begin{proof}
Monoidal functors preserve dualizable objects, and dualizable objects are compact.
\end{proof}

When $\msc{D}$ is braided, the base change $\msc{D}_S$ additionally admits a unique braiding so that the induction functor $unit_\ast:\msc{D}\to \msc{D}_S$ is a braided tensor functor.  Whence $\Pi$ can act on $\msc{D}$ as a braided tensor category, in which case the action map $\psi_u:\msc{D}\to \msc{D}_S$ is assumed to be a braided monoidal functor. 

For a (braided) tensor category $\msc{D}$ equipped with a $\Pi$ action, which respects the (braided) tensor structure, the equivariantization $\msc{D}^\Pi$ is a non-full (braided) tensor subcategory in $\msc{D}$.  The coaction on a product $V\ot W$ of equivariant objects is simply given by the composite $V\ot W\overset{\rho_V\ot\rho_W}\to \psi_u V\ot_R \psi_u W\cong \psi_u(V\ot W)$.

\subsection{A summary of the details in Appendix~\ref{sect:A}}

Fix $\msc{C}$ a tensor category with a faithfully flat, locally finite, central embedding $F:\rep\Pi\to \msc{C}$.  Fix also a tensor category $\msc{D}$ with a rational action of $\Pi$.  There is a canonical $\Pi$-action on the de-equivariantization $\msc{C}_\Pi$, given by the formula $\psi_u(X):=R\ot X$, and an obvious functor
\[
\operatorname{can}^!:\msc{C}\overset{\sim}\to (\msc{C}_\Pi)^\Pi,\ \ V\mapsto \O\ot V,
\]
which is shown to be a tensor equivalence at Proposition~\ref{prop:EqdEq}.  Similarly, there is a canonical central embedding into the de-equivariantization $\rep\Pi\to \msc{D}^\Pi$ and an equivalence
\[
\operatorname{can}_!:\msc{D}\overset{\sim}\to (\msc{D}^\Pi)_\Pi,\ \ W\mapsto \psi_u(W),
\]
as verified in Proposition~\ref{prop:dEqEq}.

Suppose now that $\msc{C}$ is braided and that $\rep\Pi\to \msc{C}$ has M\"uger central image.  Suppose additionally that $\msc{D}$ is braided and that the action of $\Pi$ respects the braiding.  We say a tensor subcategory $\msc{W}\subset \msc{D}$ is $\Pi$-stable if the restriction of the action functor $\psi_u:\msc{W}\to \msc{D}_R$ has image in $\msc{W}_R$.  For such $\Pi$-stable $\msc{W}$ we have an induced inclusion of the equivariantizations $\msc{W}^\Pi\subset \msc{D}^\Pi$.
\par

Similarly, for any intermediate tensor subcategory $\rep\Pi\to \msc{K}\to \msc{C}$ we have an inclusion of the de-equivariantization $\msc{K}_\Pi\to \msc{C}_\Pi$.  Since $\msc{C}_\Pi$ is abelian $F\O$ is coherent in $\msc{C}$, and hence in $\msc{K}$ as well.  So $\msc{K}_\Pi$ is abelian.  Local finiteness of $\msc{C}_\Pi$ also implies local finiteness of $\msc{K}_\Pi$, and the fact that the duals of free objects in $\msc{K}_\Pi$ remain in $\msc{K}_\Pi$ implies, by considering presentations, that the duals of all object in $\msc{K}_\Pi$ remain in $\msc{K}_\Pi$.  So the intermediate inclusion $\rep\Pi\to \msc{K}$ is faithfully flat and locally finite as well, and $\msc{K}_\Pi$ is a tensor subcategory in $\msc{C}_\Pi$.
\par

One can deduce from obvious naturality properties of the equivalences $\operatorname{can}^!$ and $\operatorname{can}_!$ the following proposition, just as in~\cite{dgno10}.

\begin{proposition}[{cf.~\cite[Proposition 4.30]{dgno10}}]\label{prop:885}
De-/equivariantization provides a bijection between the poset of isomorphism-closed intermediate tensor subcategories $\rep\Pi\to \msc{K}\to\msc{C}$ and isomorphism-closed $\Pi$-stable tensor subcategories $\msc{W}\to \msc{C}_\Pi$.  This bijection restricts to a bijection for braided (resp.\ M\"uger central) intermediate categories in $\msc{C}$ and $\Pi$-stable braided (resp.\ M\"uger central) subcategories in $\msc{C}_\Pi$.
\end{proposition}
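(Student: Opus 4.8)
The plan is to exhibit the two maps of the asserted bijection and check that they are mutually inverse, using the equivalences $\operatorname{can}^!:\msc{C}\overset{\sim}\to(\msc{C}_\Pi)^\Pi$ and $\operatorname{can}_!:\msc{D}\overset{\sim}\to(\msc{D}^\Pi)_\Pi$ of Propositions~\ref{prop:EqdEq} and~\ref{prop:dEqEq}, the latter applied with $\msc{D}=\msc{C}_\Pi$ carrying its canonical $\Pi$-action $\psi_u(X)=R\ot X$. In one direction, an isomorphism-closed intermediate tensor subcategory $\rep\Pi\to\msc{K}\to\msc{C}$ is sent to its de-equivariantization $\msc{K}_\Pi$; as explained just before the statement this is a full abelian tensor subcategory of $\msc{C}_\Pi$, it is $\Pi$-stable because $\psi_u$ visibly preserves the finitely presented $F\O$-modules with terms among $dE(\msc{K})$, and it is isomorphism-closed since presentations transport along isomorphisms. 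In the other direction, a $\Pi$-stable isomorphism-closed tensor subcategory $\msc{W}\to\msc{C}_\Pi$ inherits a $\Pi$-action, so we may form $\msc{W}^\Pi\subseteq(\msc{C}_\Pi)^\Pi$ and transport it along $\operatorname{can}^!$ to a tensor subcategory of $\msc{C}$; it contains $\rep\Pi$ because the image of $\rep\Pi$ in $\msc{C}_\Pi$ under $V\mapsto\O\ot V$ consists of sums of the unit object $\mathbf 1$, which lies in every tensor subcategory.

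Next I would check that $\operatorname{can}^!$ restricts to an equivalence $\msc{K}\overset{\sim}\to(\msc{K}_\Pi)^\Pi$. The inclusion $\operatorname{can}^!(\msc{K})\subseteq(\msc{K}_\Pi)^\Pi$ is immediate. For the converse, suppose $W\in\msc{C}$ has $\operatorname{can}^!(W)=\O\ot W$ lying in $\msc{K}_\Pi$; choosing a presentation $F\O\ot V_1\to F\O\ot V_0\to\O\ot W\to 0$ with $V_0,V_1\in\msc{K}$ and applying the right exact fiber functor $\mbb{C}\ot_\O-$, which carries $\O\ot W$ to $W$, exhibits $W$ as a quotient of $V_0\in\msc{K}$; since a tensor subcategory of a finite tensor category is closed under quotients, $W\in\msc{K}$. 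The same presentation/fiber argument, run through $\operatorname{can}_!:\msc{C}_\Pi\overset{\sim}\to((\msc{C}_\Pi)^\Pi)_\Pi$, shows $\operatorname{can}_!$ restricts to an equivalence $\msc{W}\overset{\sim}\to(\msc{W}^\Pi)_\Pi$. Both operations preserve inclusions, so this yields the bijection of posets. For the braided refinement there is nothing further to do: a tensor subcategory of a braided category is automatically braided, and $\msc{K}_\Pi\subseteq\msc{C}_\Pi$, $\msc{W}^\Pi\subseteq\msc{C}$ carry the restricted braidings.

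The substantive point is the M\"uger-central refinement, i.e.\ $\msc{K}\subseteq\msc{C}'\iff\msc{K}_\Pi\subseteq(\msc{C}_\Pi)'$, and here I would use that $dE:\msc{C}\to\msc{C}_\Pi$ is a braided tensor functor which is dominant — every object of $\msc{C}_\Pi$ is a quotient of some $dE(V)$ — and faithful, the latter because $\mathbf 1\to\O$ is a split monomorphism in $\Ind\msc{C}$. If $\msc{K}\subseteq\msc{C}'$, then for $V\in\msc{K}$ and any $Y\in\msc{C}$ the double braiding $dE(c_{Y,V}c_{V,Y})$ of $dE(Y)$ with $dE(V)$ is the identity, hence by naturality and dominance $dE(V)$ has trivial double braiding against all of $\msc{C}_\Pi$; as every object of $\msc{K}_\Pi$ is a quotient of such a $dE(V)$ and $(\msc{C}_\Pi)'$ is closed under quotients, $\msc{K}_\Pi\subseteq(\msc{C}_\Pi)'$. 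Conversely, if $\msc{W}\subseteq(\msc{C}_\Pi)'$ and $W\in\msc{W}$ is equivariant with $\operatorname{can}^!(\tilde W)\cong W$, then for every $Y\in\msc{C}$ the morphism $dE(c_{Y,\tilde W}c_{\tilde W,Y})$ is the double braiding of $dE(Y)$ with $W\in(\msc{C}_\Pi)'$, hence the identity, so faithfulness of $dE$ forces $c_{Y,\tilde W}c_{\tilde W,Y}=\mathrm{id}$; thus $\tilde W\in\msc{C}'$ and $\msc{W}^\Pi\subseteq\msc{C}'$. I expect this last refinement to be the only real obstacle: one must track carefully how "trivial double braiding" passes along the dominant functor $dE$ and back along $\operatorname{can}^!$, which is exactly where faithfulness and dominance of $dE$ are used; everything else is transport of structure along $\operatorname{can}^!$, $\operatorname{can}_!$ plus presentation bookkeeping. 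As a byproduct, applying the refined bijection to the $\Pi$-stable subcategory $(\msc{C}_\Pi)'$ gives $(\msc{C}_\Pi)'=(\msc{C}')_\Pi$, which is the input needed for Theorem~\ref{thm:dgno}.
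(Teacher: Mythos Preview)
Your overall strategy matches the paper's: both arguments hinge on the naturality of $\can^!$ and $\can_!$ to show that $(-)_\Pi$ and $(-)^\Pi$ are mutually inverse on the two posets. The paper is terser—it simply applies Proposition~\ref{prop:EqdEq} to $\msc{K}$ itself (using that $\rep\Pi\to\msc{K}$ is again faithfully flat and locally finite, as noted just before the proposition) to obtain $\can^!:\msc{K}\overset{\sim}\to(\msc{K}_\Pi)^\Pi$, and then reads off the poset bijection from the resulting commuting square. Your presentation/fiber argument for essential surjectivity accomplishes the same thing by hand. For the M\"uger-central refinement the paper says only that ``de-/equivariantization \dots\ preserves \dots\ central subcategories''; your dominance/faithfulness argument supplies exactly what the paper leaves implicit.

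There is one genuine gap. Your justification for faithfulness of $dE$—that $\mathbf 1\to\O$ is a split monomorphism in $\Ind\msc{C}$—is false for general $\Pi$. Since $F:\rep\Pi\to\msc{C}$ is a fully faithful embedding, a splitting in $\Ind\msc{C}$ would yield a $\Pi$-equivariant retraction $\O\to\mathbf 1$, i.e.\ a $\Pi$-invariant functional on $\O(\Pi)$ with value $1$ on the unit. For $\Pi=\mbb{G}_a$ one checks directly that no such functional exists (translation invariance forces it to vanish on constants). The conclusion you need is still correct: $dE$ is an \emph{exact} tensor functor between tensor categories, hence faithful. Exactness holds because $\O\ot-:\msc{C}\to\Ind\msc{C}$ is a filtered colimit of the exact functors $V_\alpha\ot-$ (with $\O=\varinjlim V_\alpha$, $V_\alpha\in\msc{C}$), and filtered colimits in the Grothendieck category $\Ind\msc{C}$ are exact; faithfulness then follows since an exact tensor functor between tensor categories reflects zero objects. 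With this correction your argument goes through, and in fact spells out the central-subcategory step more carefully than the paper does. (For the paper's intended application $\Pi=G^\vee$ is reductive and your splitting claim \emph{is} valid there, but the proposition is stated for arbitrary affine $\Pi$.)
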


We prove Proposition~\ref{prop:885} in Section~\ref{sect:prop_proof}.

\subsection{Proof of Theorem~\ref{thm:dgno} and Corollary~\ref{cor:nondegen} from Proposition~\ref{prop:885}}

\begin{proof}[Proof of Theorem~\ref{thm:dgno}]
Suppose that $F:\rep\Pi\to \msc{C}$ is an equivalence onto the M\"uger center of $\msc{C}$.  Then for any intermediate M\"uger central category $\rep\Pi\to \msc{K}\to \msc{C}$ the map $\rep\Pi\to \msc{K}$ is an equivalence.  By Proposition~\ref{prop:885} it follows that for any M\"uger central subcategory $\msc{W}$ in $\msc{C}_\Pi$ the inclusion $Vect\subset \msc{W}$ is an equivalence.  So the M\"uger center of $\msc{C}_\Pi$ is trivial, and by definition $\msc{C}_\Pi$ is non-degenerate.
\par

Conversely, if the M\"uger center of $\msc{D}=\msc{C}_\Pi$ is trivial then we apply Proposition~\ref{prop:885} again to find that for any central intermediate category $\rep\Pi\to \msc{K}\to \msc{C}$ the inclusion from $\rep\Pi$ to $\msc{K}$ is an equivalence.  This holds in the particular case in which $\msc{K}$ is the M\"uger center of $\msc{C}$, so that $F$ is seen to be an equivalence onto the M\"uger center of $\msc{C}$.
\end{proof}

\begin{proof}[Proof of Corollary~\ref{cor:nondegen}]
(a) We already understand that $(\rep G_q)_{G^\vee}$ is finite, braided, and ribbon when $G$ is simply-connected, by Corollary~\ref{cor:ff} and Lemma~\ref{lem:ribbon}.  So we need only establish non-degeneracy.  But this follows immediately by Theorem~\ref{thm:calculatingZ} and Theorem~\ref{thm:dgno}.  Statement (b) follows from (a) and Proposition~\ref{prop:qhuM_exists}.
\end{proof}

\section{Revisiting the odd order case}
\label{sect:odd}

Let $\xi$ be an odd order root of unity, and take $\ell=\ord(\xi)$.  We return to the odd order case to clarify the appearance of adjoint type groups in certain constructions related to $u_\xi(\mfk{g})$ (e.g.~\cite{davydovetingofnikshych18}).  Here we have $u_\xi(\mfk{g})$ as the {\it Hopf} subalgebra in the usual divided power algebra $U_\xi(\mfk{g})$ generated by the $E_\alpha$, $F_\alpha$, and $K_\alpha$ (with $K_\alpha^\ell=1$).

\subsection{Construction of $\rep u_\xi(\mfk{g})$ from $\rep G_\xi$}

We only sketch the details, as the situation is actually quite a bit easier to deal with than in the even order case.
\par

Let $G$ be of {\it adjoint} type with Lie algebra $\mfk{g}$.  Suppose $\ell$ is coprime to the determinant of the Cartan matrix for $\mfk{g}$ and also the $d_i$ (as is a standard assumption).  This implies that the form on the quotient $Q/\ell Q=G(u_\xi)^\vee$ induced by the Killing form is non-degenerate.  So we see that $Q^{\M}=\ell Q$ in this case, and the quantum Frobenius $Fr:\rep G\to \rep G_\xi$, which in this case involves no duality for $G$, is an equivalence onto the M\"uger center.  (One verifies this just as in Theorem~\ref{thm:calculatingZ}.)  So the de-equivariantization $(\rep G_\xi)_G$ is non-degenerate, and in fact log-modular, by Theorem~\ref{thm:dgno}.
\par

Now, in this case, the quantum Frobenius is associated to a Hopf inclusion $Fr:\O(G)\to \O_\xi(G)$ with {\it central} image, and for which the restrictions of the $R$-matrix to $\O\ot \O_\xi$ and $\O_\xi\ot \O$ is identically $1$.  Taking the fiber then provides a linear equivalence
\[
\mbb{C}\ot_\O-:(\rep G_\xi)_G\to \rep u_\xi(\mfk{g}),
\]
which is furthermore seen to be a {\it braided tensor} equivalence, via the strong centrality properties of the quantum Frobenius.  So we see that the construction of the standard small quantum group at a root of unity of odd order is essentially an {\it adjoint} type construction, as opposed to a simply-connected construction.
\par

The above presentation is given in contrast to the original presentation of the quantum Frobenius~\cite{lusztig89,lusztig90,lusztig90II}, which suggests that the small quantum group is principally a simply-connected object.  (Indeed, one can construct the small quantum group from the simply-connected form of $G$, via the original quantum Frobenius~\cite[Theorem 7.2]{deconcinilyubashenko94}.)

\begin{remark}
Our comment here is specifically about the standard choice of grouplikes for $u_\xi(\mfk{g})$ at odd order parameter.  Namely, the choice of the grouplikes as the elementary abelian $\ell$-group generated by the $K_\alpha$.  One can, of course, construct $u_\xi(G)$ at arbitrary $G$ and $\xi$ in accordance to the processes outlined in the present work.  We would propose, however, that the grouplikes should vary in a meaningful way with the choice of $G$ and $\xi$.
\end{remark}

\section{Identifications with quantum groups of Creutzig et al.\ and Gainutdinov et al.}
\label{sect:id}

We clarify that all current means of producing log-modular quantum groups at even order roots of unity agree (at the ribbon categorical level).  In particular, we identify our quasi-Hopf algebras with those of~\cite{creutzigetal,gainutdinovlentnerohrmann}.  We also provide a brief discussion of the remarkable nature of small quantum $\PSL_2$, particularly at $q=e^{\pi i/4}$.

\subsection{Toral construction of the log-modular kernel}

Let $\dotu_q=\dotu_q(G)$ be the subalgebra in $\dotU_q$ generated by the idempotents $1_\lambda$, $\lambda\in X$, and the elements $E_\alpha$, $F_\alpha$.  The category $\rep \dotu_q$ is a tensor category and we have the restriction functor $\rep G_q=\rep \dotU_q\to \rep \dotu_q$.  The $R$-matrix for $\rep G_q$ restricts to a global operator for $\dotu_q$, as does the pivotal element $K_\rho$, and $\rep\dotu_q$ is therefore ribbon.
\par

The quantum Frobenius for $\dotU_q$ restricted to $\dotu_q$ has image equal to the (non-unital) subalgebra $\mbb{C}[1_\mu:\mu\in X^{\M}]$ in $\dotU^\vee$.  Hence the quantum Frobenius restricts to a M\"uger central tensor functor $\rep T^\vee\to \rep\dotu_q$.  We can consider now the de-equivariantization $(\rep\dotu_q)_{T^\vee}$, and the map $(\rep\dotu_q)_{T^\vee}\to \rep u^{\M}_q(G)$ given by taking the fiber at the identity of $T^\vee$.  Note that we have a diagram of $\mbb{C}$-linear functors
\[
\xymatrix{
(\rep G_q)_{G^\vee}\ar[dr]_{\O(T^\vee)\ot_{\O(G^\vee)}}\ar[rr]^{\mbb{C}\ot_{\O(G^\vee)}} & & \rep u^{\M}_q\\
 & (\rep\dotu_q)_{T^\vee}\ar[ur]_{\mbb{C}\ot_{\O(T^\vee)}}
}
\]

\begin{proposition}\label{prop:894}
The functor $\mbb{C}\ot_{\O(T^\vee)}-:(\rep \dotu_q)_{T^\vee}\to \rep u^{\M}_q(G)$ is a $\mbb{C}$-linear equivalence, and becomes a braided tensor equivalence with the tensor compatibility $T^\omega$ as in Proposition~\ref{prop:qhuM_exists}.  In the simply-connected case $\mbb{C}\ot_{\O(T^\vee)}-$ is furthermore a ribbon equivalence.
\end{proposition}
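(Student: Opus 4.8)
The plan is to prove Proposition~\ref{prop:894} in two stages: first, to show that the underlying $\mbb{C}$-linear functor $\mbb{C}\ot_{\O(T^\vee)}-$ is an equivalence, by a ``toral'' version of the Arkhipov--Gaitsgory computation behind Theorem~\ref{thm:ag} together with the Masuoka--Wigner criterion of Lemma~\ref{lem:487}; and second, to promote this to a braided tensor --- and, in the simply-connected case, ribbon --- equivalence by fitting it into the commuting triangle displayed above, using Proposition~\ref{prop:qhuM_exists} and two-out-of-three.

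For the first stage I would write $\rep\dotu_q=\Corep\O'$ with $\O'=\operatorname{coend}(\rep\dotu_q)=\Hom_{Cont}(\hatu_q,\mbb{C})$. Since $\hatu_q$ is a Hopf subalgebra of $\hatU_q$ (it is closed under $\Delta$, and the inclusion $\hatu_q\hookrightarrow\hatU_q$ is injective by Lemma~\ref{lem:365}), the restriction map $\O_q\twoheadrightarrow\O'$ is a surjection of Hopf algebras, and the quantum Frobenius restricts to a Hopf inclusion $\O(T^\vee)\hookrightarrow\O'$, dual to $\Fr|_{\dotu_q}\colon\dotu_q\twoheadrightarrow\mbb{C}[1_\mu:\mu\in X^{\M}]$. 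I would then run the argument of~\cite[\S 3.9]{arkhipovgaitsgory03} (equivalently, the proof of Theorem~\ref{thm:ag}) with $\dotu_q$ and $T^\vee$ in place of $\dotU_q$ and $G^\vee$: using the triangular decomposition $\hatu_q\cong u^-_q\ot\Fun(X,\mbb{C})\ot u^+_q$ of~\cite[\S 36.2.2]{lusztig93} and the explicit form of the quantum Frobenius on $\dotu_q$, one identifies the fiber $\mbb{C}\ot_{\O(T^\vee)}\O'$ at $1\in T^\vee$ with the linear dual of $u^-_q\ot\mbb{C}[Z^\vee]\ot u^+_q=u^{\M}_q(G)$, exactly as in Lemma~\ref{lem:triangle} (the point being that $\mbb{C}[Z^\vee]$ is precisely the fiber of $\Fun(X,\mbb{C})$ at $1\in T^\vee$, as $Z=X/X^{\M}$), and one finds that $\O(T^\vee)\hookrightarrow\O'$ is faithfully flat. (Alternatively, the square $\O(G^\vee)\hookrightarrow\O_q$, $\O(T^\vee)\hookrightarrow\O'$ is a pushout of Hopf algebras --- dual to $\hatu_q=\hatU_q\times_{\hatU^\vee}\hat{T}^\vee$, again a consequence of the triangular decompositions and the form of the Frobenius --- whence faithful flatness follows by base change from the already-established $G^\vee$ case.) Now Lemma~\ref{lem:487} applies to $\O(T^\vee)\hookrightarrow\O'$: the category $(\rep\dotu_q)_{T^\vee}={}_{\O(T^\vee)}\msc{M}^{\O'}$ is rigid and locally finite, the embedding $\rep T^\vee\to\rep\dotu_q$ is faithfully flat, and taking the fiber at the identity, i.e.\ $\mbb{C}\ot_{\O(T^\vee)}-$, is a $\mbb{C}$-linear equivalence onto $\corep(u^{\M}_q(G)^\ast)=\rep u^{\M}_q(G)$.

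For the second stage I would first note that the triangle already commutes at the level of underlying functors, via the canonical isomorphism $\mbb{C}\ot_{\O(T^\vee)}(\O(T^\vee)\ot_{\O(G^\vee)}M)\cong\mbb{C}\ot_{\O(G^\vee)}M$ (the identity of $T^\vee$ mapping to that of $G^\vee$). By Theorem~\ref{thm:ag} the top leg is a $\mbb{C}$-linear equivalence and by the first stage the right leg is one, so two-out-of-three shows the left leg $\O(T^\vee)\ot_{\O(G^\vee)}-\colon(\rep G_q)_{G^\vee}\to(\rep\dotu_q)_{T^\vee}$ is a $\mbb{C}$-linear equivalence. It is moreover a braided tensor functor: the base-change isomorphisms $\O(T^\vee)\ot_{\O(G^\vee)}(M\ot_{\O(G^\vee)}N)\cong(\O(T^\vee)\ot_{\O(G^\vee)}M)\ot_{\O(T^\vee)}(\O(T^\vee)\ot_{\O(G^\vee)}N)$ are $\dotu_q$-linear because the restriction functor $\rep G_q\to\rep\dotu_q$ is a tensor functor, and they intertwine the induced braidings because both braidings come from the same $R$-matrix, which restricts from $\hatU_q$ to $\hatu_q$ by Lemma~\ref{lem:367}. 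Hence the left leg is a braided tensor equivalence. Equipping $\mbb{C}\ot_{\O(T^\vee)}-$ with the compatibility $T^\omega$ of Proposition~\ref{prop:qhuM_exists} --- given by the same formula $\bar m\ot\bar n\mapsto\omega(\deg m,\deg n)\overline{m\ot n}$, now using the $X$-grading carried by objects of $\rep\dotu_q$ --- the triangle commutes as braided tensor functors, since the two structure maps into $\rep u^{\M}_q(G)$ are given by identical formulas and the left leg carries the canonical base-change structure; as the top leg $fib^\omega$ and the left leg are braided tensor equivalences, so is the right leg $\{\mbb{C}\ot_{\O(T^\vee)}-,T^\omega\}$.

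Finally, in the simply-connected case $K_\rho|_{X^{\M}}\equiv 1$, so $\rep\dotu_q$ is ribbon with element $v=K_\rho^{-1}u$, the de-equivariantization $(\rep\dotu_q)_{T^\vee}$ carries a ribbon structure for which $dE$ is ribbon (by the argument of Lemma~\ref{lem:ribbon}), the left leg is ribbon since restriction preserves $K_\rho$ and hence the pivotal structures, and $fib^\omega$ is ribbon by Proposition~\ref{prop:qhuM_exists}; the same two-out-of-three then gives that $\{\mbb{C}\ot_{\O(T^\vee)}-,T^\omega\}$ is a ribbon equivalence. I expect the main obstacle to be the toral Arkhipov--Gaitsgory computation of the first stage --- the identification of the fiber of $\hatu_q$ at $1\in T^\vee$ with $u^{\M}_q(G)$, and the faithful flatness of $\O(T^\vee)\hookrightarrow\O'$ --- which rests on the triangular decomposition of $\hatu_q$ and the precise form of the quantum Frobenius; the rest is formal given Theorem~\ref{thm:ag}, Lemma~\ref{lem:487}, and Proposition~\ref{prop:qhuM_exists}.
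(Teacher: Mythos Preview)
Your proposal is correct, and for the $\mbb{C}$-linear equivalence it matches the paper exactly: the paper simply cites \cite[Proof of Theorem~4.7]{arkhipovgaitsgory03} for the abelian statement, which is precisely the toral Arkhipov--Gaitsgory computation you outline (triangular decomposition of $\hatu_q$, identification of the fiber of $\O'$ at $1\in T^\vee$ with $(u^{\M}_q)^\ast$, and faithful flatness via Lemma~\ref{lem:487}).

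For the braided tensor and ribbon upgrades you take a genuinely different route from the paper. The paper says only that this ``is dealt with in exactly the same manner as in Proposition~\ref{prop:qhuM_exists}'', meaning one runs the \emph{same} square as in that proof with $\rep\dotu_q=\rep\hatu_q$ in place of $\rep G_q=\rep\hatU_q$:
\[
\xymatrix{
\rep\dotu_q\ar[rr]^{dE}\ar[d]_{\{id,\omega\cdot-\}} & & (\rep\dotu_q)_{T^\vee}\ar[d]^{fib^\omega}\\
\rep\hatu_q^\omega\ar[rr]^{restrict} & & \rep u^{\M}_q,
}
\]
and concludes by surjectivity of $dE$. You instead use the triangle displayed in the paper relating the $G^\vee$- and $T^\vee$-de-equivariantizations, prove the left leg $\O(T^\vee)\ot_{\O(G^\vee)}-$ is a braided tensor (and ribbon) equivalence via base change, and then invoke two-out-of-three together with Proposition~\ref{prop:qhuM_exists}. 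Both arguments are valid. The paper's is shorter and runs parallel to what was already written; yours has the virtue of making explicit that the two de-equivariantizations are compatibly braided tensor equivalent via base change along $\O(G^\vee)\to\O(T^\vee)$, which is implicit in the paper's presentation of the triangle but not actually proved there.
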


\begin{proof}
The result at the abelian level appears in~\cite[Proof of Theorem 4.7]{arkhipovgaitsgory03}.  The tensor structure, and ribbon structure, are dealt with in exactly the same manner as in Proposition~\ref{prop:qhuM_exists}.
\end{proof}

\subsection{Identification with the log-modular quantum group of Creutzig et al.~\cite{creutzigetal}}

Take $u^{\M}_q(\mfk{sl}_2)$ to be the simply-connected form $u_q^{\M}(\SL_2)$.  In~\cite{gainutdinovrunkel17,creutzigetal} the authors construct a log-modular quasi-Hopf algebra $u^{\phi}_q(\mfk{sl}_2)$ via local modules over an algebra $\Lambda$ in the braided tensor category of (weight graded) representations of the unrolled quantum group $\rep_{wt}u^H_q(\mfk{sl}_2)$.  The category $\rep_{wt}u^H_q(\mfk{g})$ is the category of $\mbb{C}=X_\mbb{C}$-graded vector spaces with actions of operators $E$ and $F$ which shift the grading appropriately and satisfying the usual relations of the quantum group.  Since $\rep \dotu_q(\mfk{sl}_2)$ is the category of $X=\mbb{Z}[\frac{1}{2}\alpha]$-graded vector spaces with corresponding actions of $E$ and $F$, we see that there is a tensor embedding
\begin{equation}\label{eq:1159}
\rep\dotu_q(\mfk{sl}_2)\to \rep_{wt}u^H_q(\mfk{sl}_2).
\end{equation}
The algebra $\Lambda$ of~\cite{creutzigetal} is the sum of all invertible representations supported on $X^{\M}=lQ$, and is therefore identified with $\O(T^\vee)$ under the map~\eqref{eq:1159}.  Furthermore, since all indecomposable components of $\Lambda=\O(T^\vee)$ are invertible, any local module over $\Lambda$ in $\rep_{wt}u^H_q(\mfk{sl}_2)$ must in fact centralize $\Lambda$.

\begin{proposition}[{\cite[Proposition 3.8]{creutzigetal}}]
The centralizer of $\Lambda=\O(T^\vee)$ in $\rep_{wt}u^H_q(\mfk{sl}_2)$ is equal to $\rep\dotu_q(\mfk{sl}_2)$.
\end{proposition}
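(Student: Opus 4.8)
The plan is to compute the M\"uger double braiding against $\Lambda$ directly and read off which objects of $\rep_{wt}u^H_q(\mfk{sl}_2)$ it annihilates.  First I would record that, as an object of $\rep_{wt}u^H_q(\mfk{sl}_2)$, the algebra $\Lambda=\O(T^\vee)$ decomposes as $\bigoplus_{\gamma\in Q}\mbb{C}_{l\gamma}$, where $\mbb{C}_{l\gamma}$ is the one-dimensional (invertible) object concentrated in degree $l\gamma\in X^{\M}=lQ$ on which $E$ and $F$ act by zero.  The centralizer of $\Lambda$ is by definition the full subcategory of those $V$ with $c_{\Lambda,V}\,c_{V,\Lambda}=\mrm{id}$; since $\Lambda$ is a sum of invertibles this is the same as requiring $c_{\mbb{C}_{l\gamma},V}\,c_{V,\mbb{C}_{l\gamma}}=\mrm{id}$ for every $\gamma\in Q$, and in any case the centralizer is automatically a ribbon tensor subcategory.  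So it suffices to evaluate this double braiding on homogeneous vectors.

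The key point is that the $E$- and $F$-nullity of $\mbb{C}_{l\gamma}$ collapses the braiding to its toral part.  Since $E^{(n)}$ and $F^{(n)}$ annihilate $\mbb{C}_{l\gamma}$ for all $n\geq 1$, and every term of $R^+-1$ (in the notation of Section~\ref{sect:R}) carries positive $E$-degree in the first tensorand and positive $F$-degree in the second, only the factor $\Omega^{-1}$ of the $R$-matrix contributes to the braidings involving $\mbb{C}_{l\gamma}$.  Hence for homogeneous $v\in V$ of degree $\mu$ and $w$ spanning $\mbb{C}_{l\gamma}$ one computes $c_{V,\mbb{C}_{l\gamma}}(v\ot w)=q^{-(\mu,l\gamma)}\,w\ot v$ and $c_{\mbb{C}_{l\gamma},V}(w\ot v)=q^{-(l\gamma,\mu)}\,v\ot w$, so the double braiding acts on the $\mu$-homogeneous component of $V$ as multiplication by $q^{-2(\mu,l\gamma)}$ (interpreted via the fixed branch of $q$ used to braid the unrolled category, which restricts to $\Omega$ on $X$).

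I would then finish as follows.  The object $V$ centralizes $\Lambda$ exactly when $q^{-2(\mu,l\gamma)}=1$ for all $\gamma\in Q=\mbb{Z}\alpha$ and all degrees $\mu$ occurring in $V$.  Writing $\mu=tf$ with $f=\frac{1}{2}\alpha$ the fundamental weight and $t\in\mbb{C}$, and using $(f,\alpha)=1$, the case $\gamma=\alpha$ gives $q^{-2lt}=1$; since $q$ is a primitive $2l$-th root of unity this forces $t\in\mbb{Z}$, i.e.\ $\mu\in\mbb{Z} f=X$.  Conversely, if $\mu\in X$ then $(\mu,l\gamma)\in l\mbb{Z}$ for all $\gamma\in Q$, so the double braiding is trivial.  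Hence the centralizer of $\Lambda$ is precisely the full subcategory of $\rep_{wt}u^H_q(\mfk{sl}_2)$ of objects supported on the sublattice $X\subset X_\mbb{C}$, which is exactly the image of the embedding~\eqref{eq:1159}, namely $\rep\dotu_q(\mfk{sl}_2)$.

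I do not expect a serious obstacle here: the content is the observation that $R^+$ contributes nothing to the braidings with $\mbb{C}_{l\gamma}$, so that the double braiding reduces to the pure toral scalar $q^{-2(\mu,l\gamma)}$; the only other thing requiring a moment's care is that $\alpha$ is primitive for the Killing pairing against $X$ --- equivalently $(f,\alpha)=1$ --- which is what pins the resulting support condition down to exactly the lattice $X$ rather than a coarser sublattice.
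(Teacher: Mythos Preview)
The paper does not prove this proposition; it is quoted verbatim as \cite[Proposition~3.8]{creutzigetal} and used as input for Corollary~\ref{cor:cea}.  Your argument is correct and is the expected direct computation: decompose $\Lambda$ into its invertible summands $\mbb{C}_{l\gamma}$, note that the nilpotent factor $R^+$ of the $R$-matrix contributes nothing against a one-dimensional object with trivial $E$ and $F$, and read off the double braiding as the toral scalar $q^{-2(\mu,l\gamma)}$.  The only point that deserves a word of caution is the interpretation of $q^{-2lt}$ for complex $t$: in the unrolled category one has fixed $q=e^{i\pi/p}$ (or an equivalent choice of logarithm) so that $q^{-2lt}=e^{-2\pi i t}$, and the condition $e^{-2\pi i t}=1$ indeed pins $t$ to $\mbb{Z}$ rather than merely to some sublattice.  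With that understood, your proof is complete and matches what one finds in the cited reference.
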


The authors show further that there is an equivalence of categories between local, finitely generated, modules over $\Lambda$ in $\rep_{wt}u^H_q(\mfk{sl}_2)$ and $\rep u^\phi_q(\mfk{sl}_2)$.  Since $\Lambda=\O(T^\vee)$ is Noetherian, this is the same as the category of finitely presented local $\Lambda$-modules in $\rep_{wt}u^H_q(\mfk{sl}_2)$, and by the above proposition we find

\begin{theorem}[{\cite[Theorem 4.1]{creutzigetal}}]
There is an equivalence of ribbon categories $(\rep \dotu_q(\mfk{sl}_2))_{T^\vee}\simeq \rep u^\phi_q(\mfk{sl}_2)$.
\end{theorem}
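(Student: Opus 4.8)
The strategy is to recognize the category of finitely generated local $\Lambda$-modules of~\cite{creutzigetal} as the de-equivariantization $(\rep\dotu_q(\mfk{sl}_2))_{T^\vee}$, using the identifications already assembled above, and then to promote the resulting $\mbb{C}$-linear equivalence to a ribbon equivalence by observing that every structure in sight descends from the single ambient braided category $\rep_{wt}u^H_q(\mfk{sl}_2)$.

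First I would unwind the definition: since $\O(T^\vee)$ is Noetherian, the de-equivariantization $(\rep\dotu_q(\mfk{sl}_2))_{T^\vee}$ is precisely the category of finitely generated $\Fr\O(T^\vee)$-modules internal to $\Rep\dotu_q(\mfk{sl}_2)$, equipped with the tensor product $\ot_{\O(T^\vee)}$, the descended braiding (the formula $m\ot n\mapsto \operatorname{swap}(R\cdot m\ot n)$), and the descended pivotal/ribbon structure coming from $K_\rho$ as in Lemma~\ref{lem:ribbon} and Proposition~\ref{prop:894}. Next, under the braided tensor embedding~\eqref{eq:1159} and the identification $\Lambda=\O(T^\vee)$, an $\O(T^\vee)$-module in $\rep\dotu_q(\mfk{sl}_2)$ is exactly a $\Lambda$-module in $\rep_{wt}u^H_q(\mfk{sl}_2)$ that lies in the centralizer of $\Lambda$. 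The crucial point is that, since every indecomposable summand of $\Lambda$ is invertible, the double braiding of $\Lambda$ against any $\Lambda$-module acts by a (grouplike-type) scalar on each graded piece, so a $\Lambda$-module is local if and only if it centralizes $\Lambda$; combined with the quoted computation \cite[Proposition~3.8]{creutzigetal} that this centralizer is $\rep\dotu_q(\mfk{sl}_2)$, one obtains an equality
\[
\{\text{local }\Lambda\text{-modules in }\rep_{wt}u^H_q(\mfk{sl}_2)\}\ =\ \{\O(T^\vee)\text{-modules in }\rep\dotu_q(\mfk{sl}_2)\}
\]
of braided tensor categories, with $\ot_\Lambda$ carried to $\ot_{\O(T^\vee)}$ by~\eqref{eq:1159}. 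Passing to finitely generated objects (equivalently, since $\Lambda$ is Noetherian, finitely presented ones) identifies the category of finitely generated local $\Lambda$-modules with $(\rep\dotu_q(\mfk{sl}_2))_{T^\vee}$, and composing with the equivalence of~\cite{creutzigetal} between that category and $\rep u^\phi_q(\mfk{sl}_2)$ yields the desired braided tensor equivalence. For the ribbon statement I would simply note that both sides inherit their twist from $\rep_{wt}u^H_q(\mfk{sl}_2)$ — on the left through $\rep\dotu_q(\mfk{sl}_2)\hookrightarrow\rep_{wt}u^H_q(\mfk{sl}_2)$ together with Proposition~\ref{prop:894}, on the right through the construction of $u^\phi_q(\mfk{sl}_2)$ — so the equivalence automatically intertwines the ribbon structures.

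The main obstacle I anticipate is not conceptual but a matter of carefully fixing the dictionary between the two constructions: one must check that the ad hoc local-module tensor product $\ot_\Lambda$ of~\cite{creutzigetal}, together with its associativity and unit constraints, coincides under~\eqref{eq:1159} with the de-equivariantization product $\ot_{\O(T^\vee)}$ and its constraints, and that ``finitely generated local $\Lambda$-module'' in their sense matches ``finitely presented $\Fr\O(T^\vee)$-module in $\rep\dotu_q(\mfk{sl}_2)$'' in ours. Once that bookkeeping is in place, the braided and ribbon assertions are formal, since all the relevant braidings, dualities, pivotal elements, and twists descend from the ambient category $\rep_{wt}u^H_q(\mfk{sl}_2)$ along functors that have already been shown to respect this structure.
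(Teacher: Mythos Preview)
Your proposal is correct and follows essentially the same route as the paper. The paper does not give a formal proof environment for this theorem (it is cited from \cite{creutzigetal}), but the paragraphs immediately preceding the statement assemble exactly the identifications you describe: $\Lambda=\O(T^\vee)$ under the embedding~\eqref{eq:1159}, invertibility of the summands of $\Lambda$ forcing local modules to centralize $\Lambda$, the cited computation \cite[Proposition~3.8]{creutzigetal} that this centralizer is $\rep\dotu_q(\mfk{sl}_2)$, and Noetherianity of $\Lambda$ to pass from finitely generated to finitely presented; the equivalence with $\rep u^\phi_q(\mfk{sl}_2)$ is then the content of \cite[Theorem~4.1]{creutzigetal}.
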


Whence we have the following.

\begin{corollary}\label{cor:cea}
There is an equivalence of ribbon categories $\rep u^{\M}_q(\mfk{sl}_2)\simeq\rep u^\phi_q(\mfk{sl}_2)$.
\end{corollary}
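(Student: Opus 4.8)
The plan is to concatenate two ribbon equivalences that have already been recorded above. First I would note that $\SL_2$ is simply-connected, so that Proposition~\ref{prop:894} applies in its strongest form: the fiber functor
\[
\mbb{C}\ot_{\O(T^\vee)}-:(\rep\dotu_q(\mfk{sl}_2))_{T^\vee}\overset{\sim}\to\rep u^{\M}_q(\mfk{sl}_2)
\]
is an equivalence of \emph{ribbon} categories, not merely of braided tensor categories. Here the intermediate category $(\rep\dotu_q(\mfk{sl}_2))_{T^\vee}$ carries the ribbon structure induced by the de-equivariantization functor from $\rep\dotu_q(\mfk{sl}_2)$, which in turn inherits the canonical ribbon structure of $\rep(\SL_2)_q$ along the restriction $\rep(\SL_2)_q\to\rep\dotu_q(\mfk{sl}_2)$; this structure is available precisely because $K_\rho|_{X^{\M}}\equiv 1$ in the simply-connected case.

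Next I would invoke the theorem of Creutzig, Gainutdinov, and Runkel \cite[Theorem 4.1]{creutzigetal} reproduced just above, which provides a ribbon equivalence $(\rep\dotu_q(\mfk{sl}_2))_{T^\vee}\overset{\sim}\to\rep u^\phi_q(\mfk{sl}_2)$. Composing the inverse of the equivalence of Proposition~\ref{prop:894} with this one then yields the desired ribbon equivalence $\rep u^{\M}_q(\mfk{sl}_2)\overset{\sim}\to\rep u^\phi_q(\mfk{sl}_2)$.

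The only point requiring genuine attention --- hence the main (and admittedly minor) obstacle --- is to confirm that the two equivalences are taken relative to the \emph{same} ribbon structure on the intermediate category $(\rep\dotu_q(\mfk{sl}_2))_{T^\vee}$. In both Proposition~\ref{prop:894} and \cite[Theorem 4.1]{creutzigetal} this category is realized via the same construction: finitely presented local modules over $\Lambda=\O(T^\vee)$ inside $\rep\dotu_q(\mfk{sl}_2)$ (equivalently inside $\rep_{wt}u^H_q(\mfk{sl}_2)$ via the embedding~\eqref{eq:1159}), carrying the ribbon structure transported from $\rep\dotu_q(\mfk{sl}_2)$; so the identification of ribbon structures is immediate. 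Should one prefer to bypass this comparison, one can instead run the whole argument on the $\SL_2$-side: the equivalences $fib^\omega$ of Proposition~\ref{prop:qhuM_exists}, the functors $\mbb{C}\ot_{\O(T^\vee)}-$ and $\O(T^\vee)\ot_{\O(\PSL_2)}-$ of Proposition~\ref{prop:894}, and \cite[Theorem 4.1]{creutzigetal} all carry ribbon structures induced from that of $\rep(\SL_2)_q$, and composing them (inverting where necessary) produces the stated ribbon equivalence.
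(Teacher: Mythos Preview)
Your proposal is correct and follows essentially the same route as the paper: the paper's proof is the single line ``Apply Proposition~\ref{prop:894} and~\cite[Theorem 4.1]{creutzigetal}'', which is exactly the composition of ribbon equivalences you describe. Your additional discussion verifying that both equivalences are taken relative to the same ribbon structure on $(\rep\dotu_q(\mfk{sl}_2))_{T^\vee}$ is a reasonable sanity check, though the paper leaves it implicit.
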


\begin{proof}
Apply Proposition~\ref{prop:894} and~\cite[Theorem 4.1]{creutzigetal}.
\end{proof}

\begin{remark}\label{rem:1}
To be precise, Creutzig, Gainutdinov, and Runkel employ an $R$-matrix of the form $\Omega R^+$, as opposed to $R^+\Omega^{-1}$.  This distinction is, however, utterly unimportant.  Specifically, the choice does no change the M\"uger center of $\rep G_q$, the definition of $(\rep G_q)_{G^\vee}$ as a tensor category, or the definition of $u^{\M}_q(G)$ as a quasi-Hopf algebra.  One simply has to change the $R$-matrix for $u^{\M}_q(G)$ by replacing our $R$ for $\dotu_q$ with the $R$-matrix from~\cite{creutzigetal}, in the most na\"ive manner.
\end{remark}

\subsection{Identification of the log-modular quantum groups of Gainutdinov et al.~\cite{gainutdinovlentnerohrmann}}

In~\cite{gainutdinovlentnerohrmann}, Gainutdinov, Lentner, and Ohrmann construct factorizable quantum groups $u_q(\mfk{g},X)$ for pairs of a simple Lie algebra $\mfk{g}$ and choice of character lattice $X$.  (This is the same as a choice of almost simple algebraic group $G$.)  The $u_q(\mfk{g},X)$ generalize the quantum groups $u^\phi_q(\mfk{sl}_2)$ of~\cite{gainutdinovrunkel17,creutzigetal}.  Their construction is actually more general, and allows for $\mfk{g}$ to be a Lie super-algebra for example.
\par

Let $Y\subset X$ be the Kernel of the killing form $\Omega:X\times X\to \mbb{C}^\times$.  We have $Y\subset X^{\M}$, and the inclusion is generally not an equality.  For example, for $\SL_2$ (or any simply-connected group), $Y=2lQ$ while $X^{\M}=lQ$.  We take $\mbb{T}:=\Spec(\mbb{C}[Y])$, and have the corresponding finite covering $T^\vee\to \mbb{T}$.  Take also $\doto_q$ the finite dual $(\dotu_q)^\circ$.  It follows by Proposition~\ref{prop:894} and Lemma~\ref{lem:487} that $\doto_q$ is faithfully flat over $\O(T)$, and $\O(T)$ is faithfully flat over $\O(\mbb{T})$~\cite[Theorem 3.1]{takeuchi72}, so that $\doto_q$ is faithfully flat over $\O(\mbb{T})$ via the quantum Frobenius.  Subsequently, taking the fiber at the identity provides a braided {\it tensor} equivalence
\begin{equation}\label{eq:943}
\mbb{C}\ot_{\O(\mbb{T})}-:(\rep\dotu_q)_{\mbb{T}}\overset{\sim}\to \rep \dotu_q(\mfk{g},X/Y),
\end{equation}
where $\dotu_q(\mfk{g},X/Y)$ is the finite dimensional quasitriangular Hopf subalgebra in the cofinite completion $\hatu_q$ generated by the character group $\mbb{C}[(X/Y)^\vee]\subset \Fun(X,\mbb{C})\subset \hatu_q$ and the operators $E_\alpha$ and $F_\alpha$.  (See e.g.~\cite[Proposition 4.1]{angionogalindopereira14}.)  This Hopf algebra is furthermore ribbon when $K_\rho|_Y\equiv 1$.
\par

The equivalence~\eqref{eq:943} sends the algebra $\O(T^\vee)$ in $\rep\dotu_q$ to $\mbb{C}[X^{\M}/Y]$, the algebra of functions on the kernel of the projection $T^\vee\to \mbb{T}$.  So the equivalence~\eqref{eq:943} restricts to a braided equivalence
\begin{equation}\label{eq:950}
\mbb{C}\ot_{\O(\mbb{T})}-:(\rep\dotu_q)_{T^\vee}\overset{\sim}\to (\rep\dotu_q(\mfk{g},X/Y))_{X^{\M}/Y}.
\end{equation}
By direct considerations of the definitions, both equivalences~\eqref{eq:943} and~\eqref{eq:950} are equivalences of ribbon categories in the simply-connected case.

\begin{proposition}\label{prop:gea}
There is an equivalence of braided categories $\rep u^{\M}_q(G)\overset{\sim}\to \rep u_q(\mfk{g},X)$, which is additionally a ribbon equivalence at the simply-connected lattice.
\end{proposition}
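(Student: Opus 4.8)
The plan is to deduce the proposition by concatenating equivalences that are already at hand, thereby reducing it to a single comparison with the construction of~\cite{gainutdinovlentnerohrmann}. First, Proposition~\ref{prop:qhuM_exists} furnishes a braided equivalence $\rep u^{\M}_q(G)\simeq(\rep G_q)_{G^\vee}$, which is ribbon when $X$ is the simply-connected lattice. Next, Proposition~\ref{prop:894} identifies $(\rep G_q)_{G^\vee}$ braided-tensorially — and ribbon-ly in the simply-connected case — with $(\rep\dotu_q)_{T^\vee}$. Finally, the equivalence~\eqref{eq:950} identifies the latter, again braided-tensorially and ribbon-ly in the simply-connected case, with $(\rep\dotu_q(\mfk{g},X/Y))_{X^{\M}/Y}$. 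So everything comes down to producing a braided equivalence
\[
(\rep\dotu_q(\mfk{g},X/Y))_{X^{\M}/Y}\overset{\sim}\to\rep u_q(\mfk{g},X)
\]
that is moreover a ribbon equivalence when $X$ is the simply-connected lattice.

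To obtain this last equivalence I would unwind the construction of $u_q(\mfk{g},X)$ in~\cite{gainutdinovlentnerohrmann}. At the algebra level the comparison is fairly direct: $\dotu_q(\mfk{g},X/Y)$ is a genuine finite-dimensional quasitriangular Hopf algebra, and since $(X/Y)/(X^{\M}/Y)=X/X^{\M}$, the de-equivariantization $(\rep\dotu_q(\mfk{g},X/Y))_{X^{\M}/Y}$ has underlying algebra with triangular decomposition $u^-_q(\mfk{g})\ot\mbb{C}[(X/X^{\M})^\vee]\ot u^+_q(\mfk{g})$, which matches $u^{\M}_q(G)$ by Lemma~\ref{lem:triangle}; here strong admissibility of $X$ guarantees that $\rep(X^{\M}/Y)$ lands in the M\"uger center of $\rep\dotu_q(\mfk{g},X/Y)$ with trivial braiding, so that this de-equivariantization makes sense in the first place. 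At the categorical level, \cite{gainutdinovlentnerohrmann} realize $u_q(\mfk{g},X)$ via a Nichols algebra $B(V)$ inside the pointed braided category $\msc{B}$ attached to $(X,\Omega)$, which in general admits no fiber functor; the point is to recognize $\msc{B}$ as the de-equivariantization, along precisely the finite group $X^{\M}/Y$, of a fiber-functor-admitting model of the same braiding data, and then to see that the double-bosonization/Drinfeld-double quasi-Hopf algebra extracted from $B(V)$ is carried to the de-equivariantization above. The only checks that then remain are that the two braidings agree — up to the $\Omega R^+$ versus $R^+\Omega^{-1}$ convention, which is immaterial exactly as in Remark~\ref{rem:1} — and that, when $X$ is simply-connected so that $K_\rho|_{X^{\M}}\equiv1$, the ribbon elements coincide.

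The principal obstacle is precisely this translation. Since~\cite{gainutdinovlentnerohrmann} build $u_q(\mfk{g},X)$ by an Andruskiewitsch--Schneider-type argument inside a braided category that does not admit a fiber functor, rather than as an explicit de-equivariantization, one must match their braided vector space $V$ with the Chevalley generators $E_\alpha$, $F_\alpha$ of $\dotu_q$, identify their braided base category with the appropriate de-equivariantization of graded vector spaces, and then verify that the associator, $R$-matrix, and — in the simply-connected case — ribbon element produced by their bosonization procedure agree, up to gauge, with those induced on $(\rep\dotu_q(\mfk{g},X/Y))_{X^{\M}/Y}$ by de-equivariantizing $\rep\dotu_q(\mfk{g},X/Y)$. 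One must also confirm that the restrictions imposed in~\cite{gainutdinovlentnerohrmann} are implied by, or at least compatible with, strong admissibility of $X$ at $q$. Once the two categorical pictures have been aligned, uniqueness of the quasi-Hopf structure up to twist — as used in Proposition~\ref{prop:qhuM_exists} and Theorem~\ref{thm:158} — renders the final comparison formal.
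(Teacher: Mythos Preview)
Your chain of equivalences is exactly the one the paper uses: $\rep u^{\M}_q(G)\simeq(\rep\dotu_q)_{T^\vee}$ via Proposition~\ref{prop:894}, then $(\rep\dotu_q)_{T^\vee}\simeq(\rep\dotu_q(\mfk{g},X/Y))_{X^{\M}/Y}$ via~\eqref{eq:950}. (Your detour through $(\rep G_q)_{G^\vee}$ and Proposition~\ref{prop:qhuM_exists} is harmless but unnecessary, since Proposition~\ref{prop:894} already lands directly in $\rep u^{\M}_q(G)$.)

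Where you diverge is in the final step. You identify as the ``principal obstacle'' the task of unwinding the Andruskiewitsch--Schneider-style construction in~\cite{gainutdinovlentnerohrmann}---matching Nichols algebras, braided base categories, associators, and so on---in order to identify $(\rep\dotu_q(\mfk{g},X/Y))_{X^{\M}/Y}$ with $\rep u_q(\mfk{g},X)$. But this identification is already a theorem in the very paper you are comparing with: \cite[Theorem~6.7]{gainutdinovlentnerohrmann} shows that $\rep u_q(\mfk{g},X)$ is recovered as the modularization $(\rep\dotu_q(\mfk{g},X/Y))_{X^{\M}/Y}$. The paper's proof is therefore a one-line citation of that result together with~\eqref{eq:950} and Proposition~\ref{prop:894}. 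Your outline of how one \emph{would} carry out the translation by hand is reasonable, but the work has already been done by Gainutdinov--Lentner--Ohrmann themselves.
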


\begin{proof}
It is shown in~\cite[Theorem 6.7]{gainutdinovlentnerohrmann} that $\rep u_q(\mfk{g},X)$ can be recovered as the de-equivariantization (modularization) $(\rep\dotu_q(\mfk{g},X/Y))_{X^{\M}/Y}$.  So the result follows by the equivalence~\eqref{eq:950} and Proposition~\ref{prop:894}.
\end{proof}

\begin{remark}
As was the case in Remark~\ref{rem:1}, there is an inconsequential difference in the $R$-matrices employed in~\cite{gainutdinovlentnerohrmann} and in the present study.
\end{remark}

\subsection{Some remarks on small quantum $\PSL_2$}
\label{sect:remPSL2}

Recall, from Lemma~\ref{lem:417}, that we have a non-degenerate kernel for $(\PSL_2)_q$ exactly when $q$ is a $2l$-th root of $1$ with $l$ odd or divisible by $4$.  Let us consider the case $4\mid l$.  As usual, take $P$ and $Q$ to be the weight and root lattices for $\mfk{sl}_2$ respectively, and recall $P=\frac{1}{2}Q$.
\par

We can consider the torus forms $\dotu_q(\SL_2)$ and $\dotu_q(\PSL_2)$, and the braided embedding $\rep\dotu_q(\PSL_2)\to \rep\dotu_q(\SL_2)$.  The M\"uger center of $\rep\dotu_q(\SL_2)$ is the subcategory $Vect_{lQ}$ of $lQ$-graded vector spaces, while that of $\rep\dotu_q(\PSL_2)$ is $Vect_{lP}$.  So we have the invertible simple $L(l\alpha/2)$ in $\rep \dotu_q(\SL_2)$ which descends to a simple $\chi=\bar{L}(l\alpha/2)$ in the log-modular kernel $\rep u^{\M}_q(\SL_2)$.  This simple squares to the identity and has centralizer equal to the image of $\rep(\PSL_2)_q$ in $\rep u^{\M}_q(\SL_2)$.  Indeed, the subcategory generated by $\chi$ in $\rep u^{\M}_q(\SL_2)$ is exactly the image of $\rep\SL_2$ in $\rep u^{\M}_q(\SL_2)$.  Hence small quantum $\PSL_2$ is identified with the de-equivariantization of the centralizer of $\chi$ in $\rep u^{\M}_q(\SL_2)$ by the copy of $\rep \mbb{Z}/2\mbb{Z}$ generated by $\chi$,
\[
\rep u_q^{\M}(\PSL_2)\cong (\langle \chi\rangle')_{\langle \chi\rangle}.
\]
By the remarks following Proposition~\ref{prop:682}, we see that the ribbon structure on $\rep u^{\M}_q(\SL_2)$ does not induce a ribbon structure on $\rep u^{\M}_q(\PSL_2)$.

In addition to this relationship with quantum $\SL_2$, $\rep u^{\M}_q(\PSL_2)$ has another remarkable property.  As is explained in Section~\ref{sect:tensor_gen} below, simples in $\rep u^{\M}_q(\PSL_2)$ are in bijection with characters of the group $Q/lP$.  When $l=4$, $Q/4P=Q/2Q$ and we see that $\rep u^{\M}_{e^{\pi i/4}}(\PSL_2)$ has exactly two simples.  One can see directly that that the unique non-trivial simple in $\rep u^{\M}_{e^{\pi i/4}}(\PSL_2)$ is of dimension $2$, and hence non-invertible.  As far as we understand, $\rep u^{\M}_{e^{\pi i/4}}(\PSL_2)$ is the only known non-degenerate finite tensor category with two simples, one of which is non-invertible.

\section{Relations between quantum groups and $(1,p)$ vertex operator algebras}
\label{sect:(1,p)}

For historical reasons we replace $l$ with $p$ in our notation, and take $q$ to be a root of unit of even order $2p$.

\subsection{Tensor generation of $\rep u^{\M}_q(G)$ and $\rep G_q$}
\label{sect:tensor_gen}

Note that any $u^{\M}_q(G)$-representation $V$ decomposes into character spaces $\oplus_{z\in Z} V_z$ for the action of the grouplikes $\mbb{C}[Z^\vee]$.  Since $V$ contains a simple representation for the non-negative subalgebra $u^{\M}_{\geq 0}$, and the Jacobson radical of $u^{\M}_{\geq 0}$ is generated by the $\mathsf{E}_i$, we see that any representation $V$ contains a highest weight vector.
\par

For any element $z\in Z=(Z^\vee)^\vee$ we have the Verma module $M(z)$, and the unique simple quotient $L(z)$, constructed in the standard manner.  Hence we have a bijection between characters for the grouplikes and simples for $u^{\M}_q$, $z\mapsto L(z)$.  The simple $L(z)$ has unique highest weight $z$.

\begin{lemma}\label{lem:926}
The category $\rep u^{\M}_q(G)$ is tensor generated by the simples $\{L(z):z\in Z\}$.
\end{lemma}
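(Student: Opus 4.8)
The plan is to exploit the triangular decomposition $u^-_q \ot \mbb{C}[Z^\vee]\ot u^+_q \overset{\cong}\to u^{\M}_q(G)$ from Lemma~\ref{lem:triangle}, together with the coradical filtration on $u^{\M}_q(G)$ coming from the fact that $\mbb{C}[Z^\vee]$ is the coradical (the nilpotent parts $u^\pm_q$ are conilpotent extensions of the torus). First I would record, using the formulas for $\nabla$ in Lemma~\ref{lem:468}, that the coproduct of each generator $\mathsf{E}_\alpha$ (resp.\ $F_\alpha$) has the form $\mathsf{E}_\alpha \ot (\text{grouplike}) + (\text{grouplike})\ot \mathsf{E}_\alpha$ up to the $\phi$-twisting factors which lie in $\mbb{C}[Z^\vee]^{\ot 2}$; in particular each generator is skew-primitive modulo the group algebra. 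Consequently $u^{\M}_q(G)$ is generated as an algebra by $\mbb{C}[Z^\vee]$ together with elements that sit in the first step of the coradical filtration, so $u^{\M}_q(G)$ is a pointed coalgebra whose coradical is $\mbb{C}[Z^\vee]$.

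Next I would translate this into a statement about $\rep u^{\M}_q(G)$. The standard dictionary (see e.g.\ the argument sketched in the proof of Lemma~\ref{lem:365}) says that when a finite-dimensional (quasi-)Hopf algebra $A$ has coradical $H=\mbb{C}[Z^\vee]$ and is generated as an algebra by $H$ and skew-primitive-type elements, the tensor subcategory of $\rep A$ generated by the simple $H$-isotypic pieces --- equivalently the simples $L(z)$ --- is all of $\rep A$. Concretely: the simples of $u^{\M}_q(G)$ are exactly the $L(z)$, $z\in Z$, by the Verma module construction recalled just before the lemma statement, and their duals are again of this form; it then suffices to show every indecomposable projective, hence every object, is a subquotient of a tensor power of $\bigoplus_z L(z)$. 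For this I would use that the grouplikes $\xi\in Z^\vee$ act on the generators $\mathsf{E}_\alpha$, $F_\alpha$ by nontrivial characters (these are the characters $\pm\alpha$ restricted to $Z$, which are nonzero in $Z=X/X^{\M}$ since $\alpha\notin X^{\M}$ — here strong admissibility and the structure of $X^{\M}$ enter), so that the $\mathsf{E}_\alpha$, $F_\alpha$ appear inside $L(z)\ot L(z')^\ast$-type objects; iterating produces the regular representation inside a tensor power of $\bigoplus_z L(z)$.

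The main obstacle I anticipate is making the coradical-filtration argument precise in the quasi-Hopf setting, where there is an associator $\phi$ and one cannot quite speak of "the coradical filtration of a coalgebra" in the naive way. I would handle this by working with the underlying coalgebra of $u^{\M}_q(G)$ (forgetting the associator, which does not affect which objects are tensor-generated — tensor generation is about the existence of surjections/injections among iterated tensor products, and the associator only reparenthesizes these), so that the honest coradical filtration of the coalgebra $u^{\M}_q(G)$ is available and the formulas of Lemma~\ref{lem:468} show $\mathsf{E}_\alpha, F_\alpha \in (u^{\M}_q(G))_1$. Alternatively, and perhaps more cleanly, I would transport the question through the equivalence $fib^\omega$ of Proposition~\ref{prop:qhuM_exists} to $(\rep G_q)_{G^\vee}$, or through Proposition~\ref{prop:894} to $(\rep\dotu_q)_{T^\vee}$, and there use that $\rep G_q$ (resp.\ $\rep\dotu_q$) is tensor-generated by simples $L(\lambda)$, $\lambda\in X^+$ (Proposition~\ref{prop:435} and the discussion around Lemma~\ref{lem:365}), combined with the fact that de-equivariantization is a surjective tensor functor and sends $L(\lambda)$ to an object whose composition factors are among the $L(z)$. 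Either route reduces the claim to bookkeeping already essentially contained in the cited results.
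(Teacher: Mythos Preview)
Your main approach---coradical filtration with $\mbb{C}[Z^\vee]$ as the coradical and the generators $\mathsf{E}_\alpha,F_\alpha$ skew-primitive modulo it---is exactly the paper's strategy. Two points of comparison and one warning.

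First, rather than arguing directly that the regular representation sits inside a tensor power of $\bigoplus_z L(z)$, the paper runs the argument dually via Tannakian reconstruction: the tensor subcategory $\msc{D}$ generated by the simples is $\rep K$ for a quasi-Hopf quotient $u^{\M}_q\to K$, and one shows this quotient is injective. Injectivity on $(u^{\M}_q)_1$ follows because each $E_\alpha$ acts nontrivially on some simple (by restricting to the copy of $u_q(\mfk{sl}_2)$), and then an induction up the coradical filtration using $\nabla((u^{\M}_q)_n)\subset\sum_{i+j=n}(u^{\M}_q)_i\ot (u^{\M}_q)_j$ finishes. This is cleaner than building the regular representation by hand, and your ``iterating'' step is exactly where you would need to reproduce this bookkeeping anyway.

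Second, your fix for the quasi-Hopf issue---``forget the associator and use the underlying coalgebra''---does not work as stated, since $\nabla$ is not coassociative and there is no underlying coalgebra. The paper's resolution is more careful: because $\phi\in\mbb{C}[Z^\vee]^{\ot 3}=(u^{\M}_q)_0^{\ot 3}$, the wedge construction $(u^{\M}_q)_{n+1}=\ker\big(u^{\M}_q\to (u^{\M}_q/(u^{\M}_q)_0)\ot(u^{\M}_q/(u^{\M}_q)_n)\big)$ is still well-defined and exhaustive, and the needed compatibility $\nabla((u^{\M}_q)_n)\subset\sum_{i+j=n}(u^{\M}_q)_i\ot(u^{\M}_q)_j$ holds despite non-coassociativity.

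Finally, your alternative route---transport through $fib^\omega$ and use that $\rep G_q$ is tensor-generated by its simples---is circular: the paper's proof that $\rep G_q$ is tensor-generated by simples (the lemma immediately following this one) \emph{invokes} Lemma~\ref{lem:926}. So you must establish the present lemma first.
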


\begin{proof}
Note that since the associator $\phi$ for $u^{\M}_q$ lies in the coradical $(u^{\M}_q)_0=\mbb{C}[Z^\vee]$, we can define a coradical filtration for $u^{\M}_q$ recursively via the wedge construction
\[
(u^{\M}_q)_{n+1}:=\ker\left(u^{\M}_q\overset{\nabla}\to u^{\M}_q\ot u^{\M}_q\to \frac{u^{\M}_q}{(u^{\M}_q)_0}\ot\frac{u^{\M}_q}{(u^{\M}_q)_n} \right).  
\]
This resulting filtration is exhaustive and $\nabla(u_n^{\M})=\sum_{i+j=n}u_i^{\M}\ot u_j^{\M}$.
\par

Let $\msc{D}\subset \rep u^{\M}_q$ be the subcategory tensor generated by the simples.  By Tannakian reconstruction $\msc{D}$ is representations of a quotient quasi-Hopf algebra $K$ of $u^{\M}_q$, and the inclusion $\msc{D}\to \rep u^{\M}_q$ is given by restricting along the quotient $u^{\M}_q\to K$.  Indeed, $K$ is the quotient of $u^{\M}_q$ by the collective annihilators of arbitrary products of simples $L(z_1)\ot\dots\ot L(z_r)$.

By considering the simples of $u_q(\mfk{sl}_2)$ we see that for each $\alpha$ there is a simple $L(z_i)$ on which $E_\alpha$ acts non-trivially.  Hence the space of primitives maps injectiviely into the endomorphism ring of the sum of simples $\End_\mbb{C}(\oplus_{z\in Z} L(z))$, via the representation map $u_q^{\M}\to \End_\mbb{C}(\oplus_z L(z))$.  Indeed, the representation map restricts to an injection on the $1$-st component of the coradical filtration $(u^{\M}_q)_1\to \End_\mbb{C}(\oplus_z L(z))$.  So we see that the quasi-Hopf quotient $u^{\M}_q\to K$ is injective on $(u^{\M}_q)_1$.  It follows by induction, and by considering the composite $u^{\M}_q\overset{\nabla}\to u^{\M}_q\ot u^{\M}_q\to K/K_0\ot K/K_0$, that the quotient $u^{\M}_q\to K$ is injective and therefore an isomorphism~\cite[Theorem 5.3.1]{montgomery93}.
\end{proof}

One can alternatively prove Lemma~\ref{lem:926} in the simply-connected setting by noting that $\rep u^{\M}_q(G)$ admits a simple projective object~\cite{gainutdinovrunkelP}.

\begin{lemma}
The category $\rep G_q$ is tensor generated by the simples $\{L(\lambda):\lambda\in X^+\}$.
\end{lemma}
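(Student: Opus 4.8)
The plan is to identify the tensor subcategory $\msc{D}\subseteq\rep G_q$ generated by the simples $\{L(\lambda):\lambda\in X^+\}$ with an intermediate category over the quantum Frobenius, and then to invoke the de-/equivariantization correspondence of Proposition~\ref{prop:885}. (One could instead try to mimic the coradical-filtration argument of Lemma~\ref{lem:926}, but the divided power algebra $\dotU_q$ is infinite-dimensional, so the route through de-equivariantization is cleaner.)

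First I would check that $\msc{D}$ contains the Frobenius image $\Fr(\rep G^\vee)$. Since $G^\vee$ is an almost simple algebraic group over $\mathbb{C}$, the category $\rep G^\vee$ is semisimple and hence tensor generated by its simple objects; by the description of the Frobenius image in the proof of Theorem~\ref{thm:calculatingZ}, these simples are sent to the subfamily $\{L(\lambda):\lambda\in(X^{\M})^+\}$ of our generating set $\{L(\lambda):\lambda\in X^+\}$. Thus $\msc{D}$ is an isomorphism-closed intermediate tensor subcategory $\rep G^\vee\to\msc{D}\to\rep G_q$, and Proposition~\ref{prop:885} attaches to it a $G^\vee$-stable tensor subcategory $\msc{D}_{G^\vee}\subseteq(\rep G_q)_{G^\vee}\simeq\rep u^{\M}_q(G)$, this assignment being a bijection onto such subcategories.

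The core step is to show $\msc{D}_{G^\vee}$ is all of $\rep u^{\M}_q(G)$; by Lemma~\ref{lem:926} it suffices to check it contains every simple $L(z)$, $z\in Z$. The restriction map $X\to Z=X/X^{\M}$ carries the dominant cone $X^+$ onto $Z$, because $X^+$ generates $X$ as a group (Lemma~\ref{lem:401}) and a submonoid of the finite group $Z$ is a subgroup; choose $\lambda_z\in X^+$ mapping to $z$. Composing the de-equivariantization functor $dE:\rep G_q\to(\rep G_q)_{G^\vee}$ with the fiber equivalence of Theorem~\ref{thm:ag}, the object $L(\lambda_z)$ is carried to $L(\lambda_z)$ regarded as a $u^{\M}_q(G)$-module by restriction of the $\hatU_q$-action; its highest weight vector is annihilated by each $\mathsf{E}_\alpha=K_\alpha E_\alpha$ and is a $Z^\vee$-eigenvector with character $z$, so it generates a quotient of the Verma module $M(z)$, which in particular has $L(z)$ as a quotient. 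Hence $L(z)$ is a subquotient of $dE(L(\lambda_z))$, which lies in $\msc{D}_{G^\vee}$ (being a free $\Fr\O$-module on $L(\lambda_z)\in\msc{D}$); as $\msc{D}_{G^\vee}$ is closed under subquotients, $L(z)\in\msc{D}_{G^\vee}$ for all $z$, and therefore $\msc{D}_{G^\vee}=\rep u^{\M}_q(G)=(\rep G_q)_{G^\vee}$.

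To conclude, the full category $(\rep G_q)_{G^\vee}$ corresponds under Proposition~\ref{prop:885} to the full category $\rep G_q$, while $\msc{D}_{G^\vee}=(\rep G_q)_{G^\vee}$ corresponds to $\msc{D}$; injectivity of the correspondence forces $\msc{D}=\rep G_q$. The steps demanding genuine care are the inclusion $\Fr(\rep G^\vee)\subseteq\msc{D}$, which rests on semisimplicity of $\rep G^\vee$ together with the identification of the Frobenius-image simples, and the highest weight computation identifying a composition factor of $dE(L(\lambda_z))$; I expect the latter to be the main technical obstacle, since one must verify that restricting the $\dotU_q$-action along $u^{\M}_q(G)\subseteq\hatU_q$ still yields a cyclic module generated by a highest weight vector of the prescribed weight $z$.
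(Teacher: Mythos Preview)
Your proposal is correct and follows essentially the same route as the paper: both show that the subcategory $\msc{D}$ generated by simples contains $\Fr(\rep G^\vee)$, then argue that $\msc{D}_{G^\vee}$ contains all simples of $\rep u^{\M}_q(G)$ and hence equals $(\rep G_q)_{G^\vee}$ by Lemma~\ref{lem:926}, and conclude via the bijection of Proposition~\ref{prop:885}. The only difference is in the middle step: the paper observes abstractly that every simple of $\rep u^{\M}_q(G)$ is a quotient of some object from $\rep G_q$ (by finite presentation in $(\rep G_q)_{G^\vee}$), hence of some simple from $\rep G_q$, while you carry out the explicit highest-weight computation to exhibit $L(z)$ as a subquotient of the restriction of a specific $L(\lambda_z)$; both verifications are valid and short.
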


\begin{proof}
Let $\msc{K}$ be the tensor subcategory generated by the simples in $\rep G_q$.  Since the M\"uger center $\rep G^\vee$ is generated by its simples we see that the quantum Frobenius has image in $\msc{K}\subset \rep G_q$.  Since every object in $\rep u^{\M}_q$ is seen to be the quotient of an object from $\rep G_q$, via finite presentation of objects in the equivalent category $(\rep G_q)_{G^\vee}$, for example, it follows that every simple in $\rep u^{\M}_q$ is the quotient of a simple from $\rep G_q$.  Hence the functor $\msc{K}\to \rep u^{\M}_q$ has all of the simples for $u^{\M}_q$ in its image, and by Lemma~\ref{lem:926} this map is therefore surjective.  It follows that the de-equivariantization $\msc{K}_{G^\vee}$, which is an embedded tensor subcategory in $(\rep G_q)_{G^\vee}$, is mapped isomorphically to $\rep u^{\M}_q$ under the fiber $\mbb{C}\ot_\O-:\msc{K}_{G^\vee}\to \rep u^{\M}_q$.  So we see that the inclusion $\msc{K}\to \rep G_q$ is an isomorphism, by Proposition~\ref{prop:885}.
\end{proof}

\subsection{Rephrasing a conjecture of Bushlanov et al.: representations of the $(1,p)$-log minimal model}
\label{sect:(1,p)2} 

Let $\msc{C}_p$ denote the subcategory of $\rep U_q(\mfk{sl}_2)$ generated by the simples.  In~\cite{bfgt09} the authors explain that the category of representations for the divided power algebra $\msc{C}_p$ admits a $\mbb{Z}/2\mbb{Z}$-grading
\[
\msc{C}_p=\msc{C}^+_p\oplus \msc{C}^-_p,
\]
and they conjecture a tensor equivalence between $\msc{C}^+_p$ and the $(1,p)$-Virasoro logarithmic minimal model.  More specifically, if we let $\mcl{L}_p=L(c_p,0)$ denote the (simple but non-rational) Virasoro vertex operator algebra at central charge $c_p=1-6(p-1)^2/p$, they conjecture an equivalence between $\msc{C}^+_p$ and the full subcategory $\rep \mcl{LM}(1,p)$ of $\rep \mcl{L}_p$ additively generated by the indecomposable representations appearing in the $(1,p)$-logarithmic minimal model $\mcl{LM}(1,p)$~\cite{pearceetal06,rasmussenpearce07,rasmussen11}\cite[Eq.\ 1.1]{bfgt09}.

\begin{remark}
The inclusion $\msc{C}_p\to \rep U_q(\mfk{sl}_2)$ is presumably an equality, by the classification of indecomposables for $U_q(\mfk{sl}_2)$~\cite{bgt12}.  The analogous result should hold outside of type $A_1$ by an analysis similar to~\cite[Theorem 9.12]{andersenpolowen91}.
\end{remark}

There is a distinguished invertible simple $\chi=\mbb{C}v$ for $U_q(\mfk{sl}_2)$, on which $K\cdot v=-v$ and $Ev=E^{(p)} v=F v=F^{(p)} v=0$.  This special simple does not appear in $\rep (\SL_2)_q\subset \rep U_q(\mfk{sl}_2)$, as it is not graded by the character lattice.  Furthermore, we have
\[
\operatorname{Irrep}(\rep (\SL_2)_q)\cap \operatorname{Irrep}(\chi\ot \rep (\SL_2)_q)=\emptyset.
\]
One directly compares actions on highest weight vectors of simples, elaborated on in~\cite[Section 3.1]{bfgt09}, and employs the precise definition of $\msc{C}^+_p$ in~\cite[Section 3.4]{bfgt09}, to see that $\rep G_q=\msc{C}^+_p$ and $\chi\ot\rep G_q=\msc{C}^-_p$.  So we rephrase the conjecture of Bushlanov et al.

\begin{conjecture}[Bushlanov et al.~\cite{bfgt09}]
There is an equivalence of tensor categories $\rep (\SL_2)_q\overset{\sim}\to \rep \mcl{LM}(1,p)$.
\end{conjecture}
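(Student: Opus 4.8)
The plan is to deduce this equivalence from the conjectural logarithmic Kazhdan--Lusztig equivalence for the triplet algebra by passing to $\PSL_2$-equivariant objects on both sides, within the rational-action framework of Section~\ref{sect:actions}. On the quantum-group side the necessary input is already in place: the quantum Frobenius $\Fr\colon\rep\PSL_2\to\rep(\SL_2)_q$ is a faithfully flat, locally finite, central embedding onto the M\"uger center (Theorem~\ref{thm:calculatingZ}), so by Proposition~\ref{prop:EqdEq} the comparison $\operatorname{can}^!$ is a tensor equivalence from $\rep(\SL_2)_q$ onto the $\PSL_2$-equivariantization of the de-equivariantization $(\rep(\SL_2)_q)_{\PSL_2}$; combining this with the braided (ribbon) equivalence $(\rep(\SL_2)_q)_{\PSL_2}\simeq\rep u^{\M}_q(\SL_2)$ of Proposition~\ref{prop:qhuM_exists} exhibits $\rep(\SL_2)_q$ as the category of $\PSL_2$-equivariant objects in the log-modular kernel, for the rational $\PSL_2$-action coming from the dual group $\SL_2^\vee=\PSL_2$. (Under Section~\ref{sect:(1,p)2} this equally rephrases the target $\msc{C}^+_p$ in the original formulation of Bushlanov et al.)

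On the vertex-operator-algebra side one needs the mirror statement: that $\rep\mcl{LM}(1,p)$ is the $\PSL_2$-equivariantization of $\rep\mcl{W}_p$ for the $\PSL_2$-action of Adamovic--Lin--Milas~\cite{adamoviclinmilas13}, the $(1,p)$-Virasoro being, in the appropriate logarithmic sense, the $\PSL_2$-orbifold of the triplet (cf.\ Conjecture~\ref{conj:AB}). Granting this, granting Conjecture~\ref{conj:1064} --- that the known $\mbb{C}$-linear equivalence $f_p\colon\rep u^{\M}_q(\mfk{sl}_2)\overset{\sim}\to\rep\mcl{W}_p$ can be chosen $\PSL_2$-equivariant --- and granting the conjectural upgrade of $f_p$ to a braided (ribbon) tensor equivalence, i.e.\ the logarithmic Kazhdan--Lusztig equivalence for the triplet with target carrying the braided structure of Huang--Lepowsky~\cite{huanglepowsky94}, one applies the equivariantization functor $(-)^{\PSL_2}$, which takes (braided, ribbon) tensor equivalences to (braided, ribbon) tensor equivalences. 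Composing the resulting equivalence with the identifications above yields a braided equivalence $\rep(\SL_2)_q\overset{\sim}\to\rep\mcl{LM}(1,p)$ --- ribbon, and compatible with the quasi-fiber functor $fib^\omega$ on the quantum side --- which is the assertion.

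The genuinely open steps are the two conjectural ones, and I expect the main obstacle to be matching the two $\PSL_2$-actions. On the triplet side the symmetry acts on the generating currents and originates from the free-field/screening realization; on the quantum-group side it appears only abstractly, as the group divided out in $(\rep(\SL_2)_q)_{\PSL_2}$, and moreover as a \emph{rational} action of the affine group scheme $\PSL_2$ rather than a discrete action of $\PSL_2(\mbb{C})$. There is no formal reason for these to intertwine under $f_p$, and confirming Conjecture~\ref{conj:1064} seems to require a considerably more explicit description of $f_p$ than the current one, which essentially matches simple objects and their characters and so carries no visible information about group actions; reconciling the discrete conformal-field-theory symmetry with the rational quantum-group action, in the sense of Section~\ref{sect:actions}, is exactly the point at issue. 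A second obstacle is that $f_p$ is presently known only $\mbb{C}$-linearly, so even the underlying ungraded equivalence must first be promoted to the triplet's braided tensor structure --- the still-unresolved logarithmic Kazhdan--Lusztig equivalence in type $A_1$. A third, more technical, point is that the orbifold identification on the vertex side must itself be carried out at the braided-tensor-categorical level, requiring an orbifold theory for non-rational vertex algebras refined enough to detect the $\PSL_2$-action; a purely computational route through classifying indecomposables and fusion rules on both sides would face the same bottleneck, since the requisite braided tensor structure on $\rep\mcl{LM}(1,p)$ is itself an output of logarithmic tensor theory.
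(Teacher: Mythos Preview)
The statement is a \emph{conjecture}; the paper does not prove it and offers no proof to compare against. What the paper does supply is the conditional Proposition following Conjecture~\ref{conj:1064}: assuming Conjectures~\ref{conj:AB} and~\ref{conj:1064}, one obtains a $\mbb{C}$-linear equivalence $\rep(\SL_2)_q\overset{\sim}\to\rep\mcl{LM}(1,p)$ by transporting the invariants functor through the equivariantization identification $\rep(\SL_2)_q\simeq(\rep u^{\M}_q(\mfk{sl}_2))^{\PSL_2}$.

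Your proposal follows exactly this strategy, but pushed to the tensor level. You correctly observe that to upgrade the conclusion from $\mbb{C}$-linear to tensor one must strengthen the hypotheses: $f_p$ must be a braided tensor equivalence (the full logarithmic Kazhdan--Lusztig statement), the $\PSL_2$-action on $\rep\mcl{W}_p$ must respect the tensor structure, and the orbifold identification $\rep\mcl{LM}(1,p)\simeq(\rep\mcl{W}_p)^{\PSL_2}$ must hold as tensor categories rather than merely additively. These are precisely the points the paper itself flags as open in Section~\ref{sect:(1,p)}. Your discussion of the obstacles --- matching the rational $\PSL_2$-action on the quantum side with the conformal-field-theory symmetry, and the lack of an adequate non-rational orbifold tensor theory --- is accurate and well-aligned with the paper's own caveats. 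There is no gap in your conditional argument beyond the conjectural inputs you explicitly identify.
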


\subsection{Connecting some conjectures at $(1,p)$-central charge}

We consider the triplet vertex operator algebra $\mcl{W}_p$ and related singlet algebra $\mcl{M}_p$, with central charge $c_p$~\cite{kausch91,gaberdielkausch96,adamovicmilas07,adamovicmilas08}.  We have the sequence of vertex operator algebra extensions
\[
\mcl{L}_p\subset \mcl{M}_p\subset \mcl{W}_p.
\]
There is an integrable $\mfk{sl}_2$-action on $\mcl{W}_p$ by vertex derivations, and the $\mfk{h}$-weight spaces appearing in $\mcl{W}_p$ for this action are all even~\cite{adamoviclinmilas13,fgst06}.  Rather, we have a $\PSL_2=\SL_2^\vee$-action on $\mcl{W}_p$.  Under this $\PSL_2$-action we have
\[
\mcl{M}_p=\mcl{W}_p^{T^\vee}\ \ \text{and}\ \ \mcl{L}_p=\mcl{W}_p^{\PSL_2},
\]
where $T^\vee$ is the $1$-dimensional torus in $\PSL_2$~\cite[Eq.\ 5.8]{creutzigetal}.  Via this $\PSL_2$-action on $\mcl{W}_p$, we obtain a $\PSL_2$-action on $\rep \mcl{W}_p$ and may consider the equivariantizations $(\rep \mcl{W}_p)^{\PSL_2}$ and $(\rep\mcl{W})^{T^\vee}$, which are simply the categories of $\mcl{W}_p$-representations with compatible actions of $\PSL_2$ and $T^\vee$-respectively (or the associated Lie algebras if one prefers).  From this information we deduce the following.

\begin{lemma}\label{lem:1076}
Taking invariants provides $\mbb{C}$-linear functors
\[
\operatorname{A}:(\rep \mcl{W}_p)^{T^\vee}\to \rep \mcl{M}_p,\ V\mapsto V^{T^\vee},
\]
\[
\operatorname{B}:(\rep \mcl{W}_p)^{\PSL_2}\to \rep \mcl{L}_p,\ V\mapsto V^{\PSL_2}.
\]
\end{lemma}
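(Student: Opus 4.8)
The plan is to invoke the standard construction of invariants of an equivariant module over an equivariant vertex operator algebra. Let $G$ be an algebraic group acting rationally on $\mcl{W}_p$ by vertex operator algebra automorphisms which fix the conformal vector $\omega$; both $G=T^\vee$ and $G=\PSL_2$ are of this form, since $\omega$ is the conformal vector of the Virasoro subalgebra $\mcl{L}_p=\mcl{W}_p^{\PSL_2}$ and hence is $\PSL_2$-fixed. By hypothesis an object of $(\rep\mcl{W}_p)^G$ is a module $V$ in $\rep\mcl{W}_p$ together with a rational $G$-action for which the module vertex operation $Y_V\colon\mcl{W}_p\otimes V\to V((z))$ is $G$-equivariant, i.e.\ $g\cdot\bigl(Y_V(a,z)v\bigr)=Y_V(g\cdot a,z)(g\cdot v)$ for all $g\in G$. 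The first step is to observe that this equivariance forces $Y_V(a,z)v\in V^G((z))$ whenever $a\in\mcl{W}_p^G$ and $v\in V^G$: in that case $g\cdot a=a$ and $g\cdot v=v$, so $g\cdot(Y_V(a,z)v)=Y_V(a,z)v$. Thus $Y_V$ restricts to a map $Y_{V^G}\colon\mcl{W}_p^G\otimes V^G\to V^G((z))$.

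Next I would verify that $(V^G,Y_{V^G})$ is a module over the vertex operator algebra $\mcl{W}_p^G$. The vacuum axiom, the $L(-1)$-derivative property, and the Jacobi identity (equivalently, weak commutativity and associativity) for $(V^G,Y_{V^G})$ are simply the restrictions of the corresponding identities for $(V,Y_V)$; here one uses that $\mathbf{1},\omega\in\mcl{W}_p^G$, so that the operators $L(-1)$ and $L(0)$ preserve both $\mcl{W}_p^G$ and $V^G$, and that $V^G$ inherits its conformal weight grading from $V$. Since $T^\vee$ and $\PSL_2$ are linearly reductive over $\mbb{C}$, the space $V^G$ is a direct summand of $V$ (the multiplicity space of the trivial $G$-representation), and so it inherits the grading-restriction and finiteness conditions defining $\rep$; hence $V^G$ lies in $\rep\mcl{W}_p^G$. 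Taking $G=T^\vee$ with $\mcl{W}_p^{T^\vee}=\mcl{M}_p$, and $G=\PSL_2$ with $\mcl{W}_p^{\PSL_2}=\mcl{L}_p$, produces the assignments $V\mapsto V^{T^\vee}$ and $V\mapsto V^{\PSL_2}$ on objects.

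For functoriality, a morphism $V\to W$ in $(\rep\mcl{W}_p)^G$ is a $\mcl{W}_p$-module map $f\colon V\to W$ that is $G$-equivariant; it therefore restricts to $f^G\colon V^G\to W^G$, which intertwines $Y_{V^G}$ and $Y_{W^G}$ because $f$ intertwines $Y_V$ and $Y_W$, so $f^G$ is a morphism in $\rep\mcl{W}_p^G$. This defines the functors $\operatorname{A}$ and $\operatorname{B}$. The one point I would flag as requiring genuine care, rather than formal manipulation, is the assertion that $V^G$ remains inside the prescribed category of representations; this rests on linear reductivity of $G$ together with the decomposition of $V$ into $G$-isotypic components with finite-dimensional multiplicity spaces, which in turn uses that $V$ lies in $\rep\mcl{W}_p$. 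Apart from this bookkeeping, the argument is the routine ``take $G$-invariants'' construction and presents no serious obstacle.
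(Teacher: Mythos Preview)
Your proposal is correct and is essentially the argument the paper has in mind; the paper itself gives no formal proof, stating the lemma as an immediate consequence of the preceding setup (``From this information we deduce the following''), where the equivariantizations $(\rep\mcl{W}_p)^{\PSL_2}$ and $(\rep\mcl{W}_p)^{T^\vee}$ are described concretely as $\mcl{W}_p$-representations with compatible group actions. You have simply written out the routine verification that taking $G$-invariants of an equivariant module over an equivariant VOA yields a module over the invariant subalgebra, which is exactly what the paper leaves implicit.
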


In considering the following conjecture, one should compare the maps of Lemma~\ref{lem:1076} to the equivalence $(-)^R$ of Section~\ref{sect:eq_deq}.

\begin{conjecture}\label{conj:AB}
The functors $\operatorname{A}$ and $\operatorname{B}$ are fully faithful, $\operatorname{A}$ is an embedding, and $\operatorname{B}$ is an equivalence onto $\rep \mcl{LM}(1,p)\subset \rep \mcl{L}_p$.
\end{conjecture}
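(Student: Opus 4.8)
The plan is to recognize Conjecture~\ref{conj:AB} as, in essence, a statement about the vertex operator algebra extensions $\mcl{L}_p\subset\mcl{W}_p$ and $\mcl{M}_p\subset\mcl{W}_p$, and to reduce it to the de-equivariantization calculus of Section~\ref{sect:actions}. Concretely, I would first establish the structural input already anticipated in the conformal field theory literature: as an $\mcl{L}_p$-module $\mcl{W}_p\cong\bigoplus_{n\geq 0}(2n+1)L(c_p,h_n)$, matching the $\PSL_2$-module $\O(\PSL_2)=\bigoplus_n(2n+1)V_n$ under $V_n\mapsto L(c_p,h_n)$, and the vertex-algebra product on $\mcl{W}_p$ realizes it as the algebra object $F\O(\PSL_2)$ for a symmetric central functor $F:\rep\PSL_2\to\rep\mcl{L}_p$; dually, the $T^\vee$-action on $\mcl{W}_p$ exhibits it as a lattice-type (simple current) extension of the singlet, so $\mcl{W}_p\cong F'\O(T^\vee)$ for a central functor $F':\rep T^\vee\to\rep\mcl{M}_p$. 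Writing $\msc{K}'\subset\rep\mcl{L}_p$ and $\msc{K}\subset\rep\mcl{M}_p$ for the iso-closed tensor subcategories generated by the images of $F$ and $F'$, the tensor theory of VOA extensions (Kirillov--Ostrik, Huang--Kirillov--Lepowsky, Creutzig--Kanade--McRae) would then give braided tensor equivalences $\rep\mcl{W}_p\simeq\msc{K}'_{\PSL_2}$ and $\rep\mcl{W}_p\simeq\msc{K}_{T^\vee}$ identifying $\mcl{W}_p$-modules with de-equivariantizations, and these equivalences intertwine the geometric $\PSL_2$- and $T^\vee$-actions on $\rep\mcl{W}_p$ with the canonical actions on the de-equivariantizations.

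Granting this, Proposition~\ref{prop:EqdEq} applies directly: the canonical equivalence $\operatorname{can}^!$ identifies $(\rep\mcl{W}_p)^{\PSL_2}\simeq\msc{K}'$ and $(\rep\mcl{W}_p)^{T^\vee}\simeq\msc{K}$, and under these identifications the invariants functors $V\mapsto V^{\PSL_2}$ and $V\mapsto V^{T^\vee}$ become the inclusions $\msc{K}'\hookrightarrow\rep\mcl{L}_p$ and $\msc{K}\hookrightarrow\rep\mcl{M}_p$ --- here one uses $\mcl{L}_p=\mcl{W}_p^{\PSL_2}$ and $\mcl{M}_p=\mcl{W}_p^{T^\vee}$, so that the invariants of a $\mcl{W}_p$-module with group action really is a module over the fixed-point subalgebra, compatibly with the abstract $\operatorname{can}^!$. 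This yields at once that $\operatorname{B}$ is fully faithful and an equivalence onto the subcategory $\msc{K}'\subset\rep\mcl{L}_p$, and that $\operatorname{A}$ is an equivalence onto the iso-closed tensor subcategory $\msc{K}\subset\rep\mcl{M}_p$ and hence an embedding. It then remains to identify $\msc{K}'$ with the category $\rep\mcl{LM}(1,p)$ of Section~\ref{sect:(1,p)2}: this I would do either directly, by checking that the $\mcl{L}_p$-indecomposable summands occurring in $\mcl{W}_p$-modules are precisely the indecomposables of the $(1,p)$-logarithmic minimal model, or, as a cross-check, by transporting along $f_p$ --- under Conjecture~\ref{conj:1064} the equivalence $f_p$ is $\PSL_2$-equivariant, so $(\rep\mcl{W}_p)^{\PSL_2}\simeq(\rep u^{\M}_q(\mfk{sl}_2))^{\PSL_2}\simeq\rep(\SL_2)_q$ and $(\rep\mcl{W}_p)^{T^\vee}\simeq(\rep u^{\M}_q(\mfk{sl}_2))^{T^\vee}\simeq\rep_\mbb{Z}u^H_q(\mfk{sl}_2)$ by Proposition~\ref{prop:EqdEq} and Section~\ref{sect:id}, matching the rephrased Bushlanov et al.\ conjecture $\rep(\SL_2)_q\simeq\rep\mcl{LM}(1,p)$ and its singlet analogue.

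The main obstacle is entirely in the first step, and is the open part of the logarithmic Kazhdan--Lusztig program at $(1,p)$ central charge: one must know that $\rep\mcl{L}_p$ and $\rep\mcl{M}_p$ are braided tensor categories in a sufficiently robust sense (logarithmic Huang--Lepowsky--Zhang tensor theory, with the required convergence and associativity for these non-$C_2$-cofinite algebras), that the Virasoro modules $L(c_p,h_n)$ are rigid and generate a symmetric subcategory equivalent to $\rep\PSL_2$ with the stated fusion rules, and that the relevant module categories satisfy the finite-presentability and coherence hypotheses needed to invoke the de-equivariantization formalism of Section~\ref{sect:actions} and Appendix~\ref{sect:A}. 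Once these inputs are in place, the passage through Proposition~\ref{prop:EqdEq} and the identification of the subcategories $\msc{K}$ and $\msc{K}'$ is essentially formal.
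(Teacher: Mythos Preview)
The statement you are attempting to prove is labeled a \emph{conjecture} in the paper, and the paper offers no proof of it whatsoever; it is presented as an open problem motivating the surrounding discussion, not as a theorem to be established. So there is no ``paper's own proof'' against which to compare your proposal.

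That said, your outline is a coherent and well-informed sketch of what a proof \emph{would} require, and you are entirely candid about where the real difficulty lies. Your reduction to the de-equivariantization formalism of Section~\ref{sect:actions} and Proposition~\ref{prop:EqdEq} is exactly the kind of argument the paper is gesturing toward in Lemma~\ref{lem:1076} and the remarks following it. The paper itself says, just after Conjecture~\ref{conj:AB}, that one ``should compare the maps of Lemma~\ref{lem:1076} to the equivalence $(-)^R$ of Section~\ref{sect:eq_deq}'', which is precisely the mechanism you invoke. Your identification of the obstacles---existence of the Huang--Lepowsky--Zhang tensor structure on $\rep\mcl{L}_p$ and $\rep\mcl{M}_p$, rigidity and the $\rep\PSL_2$ fusion rules for the relevant Virasoro simples, and the coherence/finite-presentation hypotheses needed for the appendix---is accurate, and these are genuinely open. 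In short: your strategy is not wrong, but it is conditional on inputs that are themselves conjectural, which is exactly why the paper states Conjecture~\ref{conj:AB} rather than proving it.
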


There is a rather vast network of conjectures regarding the algebras $\mcl{L}_p$, $\mcl{W}_p$, and $\mcl{M}_p$~\cite{gainutdinovetal06,bgt12,creutzigmilas14,cgp15}, of which we only recall a few.  For $\mcl{M}_p$, it is conjectured that some distinguished subcategory in $\rep \mcl{M}_p$ is a braided tensor category~\cite{creutzigmilas14,creutzigetal}.  It is also known that the category $\mcl{W}_p$ is a braided tensor category~\cite{adamovicmilas08,tsuchiyawood13}.  Furthermore, the $\PSL_2$-action on $\rep \mcl{W}_p$ should respect the braided tensor structure, so that the equivariantizations are also braided tensor categories.  So we conjecture further that map $\operatorname{A}$ is a braided tensor functors.  Furthermore, the image of $\operatorname{A}$ should be the centralizer of $\mcl{W}_p$ in the tensor subcategory $\rep_{\langle s\rangle} \mcl{M}_p$ generated by the simples~\cite[Conjecture 1.4]{creutzigetal}.
\par

We have a final conjecture which concerns the $\mbb{C}$-linear equivalences $f_p:\rep u_q^{\M}(\mfk{sl}_2)\to \rep \mcl{W}_p$ of~\cite{gainutdinovetal06,nagatomotsuchiya11}.

\begin{conjecture}\label{conj:1064}
The $\mbb{C}$-linear equivalence $f_p:\rep u_q^{\M}(\mfk{sl}_2)\to \rep \mcl{W}_p$ is $\PSL_2$-equivariant, or can be made to be so.
\end{conjecture}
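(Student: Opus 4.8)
The plan is to translate the asserted equivariance into the language of Section~\ref{sect:actions} and reduce it to an equivalence of equivariantizations. By construction $\rep u^{\M}_q(\mfk{sl}_2)=(\rep(\SL_2)_q)_{\PSL_2}$, carrying the canonical $\PSL_2$-action $\psi_u(M)=\O(\PSL_2)\ot M$, and by Proposition~\ref{prop:EqdEq} its equivariantization is $\rep(\SL_2)_q$ together with the quantum Frobenius $\Fr:\rep\PSL_2\to\rep(\SL_2)_q$ as the central embedding. On the vertex algebra side, the integrable $\mfk{sl}_2$-action on $\mcl{W}_p$ gives a $\PSL_2$-action on $\rep\mcl{W}_p$ whose equivariantization is $(\rep\mcl{W}_p)^{\PSL_2}$ with its tautological central $\rep\PSL_2$. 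Transporting this action along the known linear equivalence $f_p$ produces a second $\PSL_2$-action on $\rep u^{\M}_q(\mfk{sl}_2)$; since $f_p$ may be precomposed with any $\mbb{C}$-linear autoequivalence, ``$f_p$ can be made $\PSL_2$-equivariant'' means precisely that the transported action and the canonical action are equivalent. De-equivariantizing via Proposition~\ref{prop:dEqEq}, this becomes the existence of a $\mbb{C}$-linear equivalence
\[
\Phi:\rep(\SL_2)_q\overset{\sim}\to(\rep\mcl{W}_p)^{\PSL_2}
\]
carrying $\Fr$ to the tautological embedding of $\rep\PSL_2$.

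The first real step, then, is to produce such a $\Phi$. The natural construction uses the tower $\mcl{L}_p\subset\mcl{M}_p\subset\mcl{W}_p$: since $\mcl{W}_p^{\PSL_2}=\mcl{L}_p$, restriction of $\mcl{W}_p$-modules along $\mcl{L}_p\hookrightarrow\mcl{W}_p$ should identify $(\rep\mcl{W}_p)^{\PSL_2}$ with $\rep\mcl{LM}(1,p)$, which is exactly the (linear) content of the functor $\operatorname{B}$ of Conjecture~\ref{conj:AB}. Composing an inverse of $\operatorname{B}$ with the Bushlanov et al.\ equivalence $\rep(\SL_2)_q\simeq\rep\mcl{LM}(1,p)$ of Section~\ref{sect:(1,p)2} yields the candidate $\Phi$; that $\Phi$ matches $\Fr$ with the tautological embedding should follow from the fact that both realize $\rep\PSL_2$ as the M\"uger center, respectively the $\PSL_2$-neutral part, of the ambient category. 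De-equivariantizing $\Phi$ then gives a linear equivalence $\rep u^{\M}_q(\mfk{sl}_2)\simeq\rep\mcl{W}_p$ that is $\PSL_2$-equivariant by construction and that differs from $f_p$ by an autoequivalence of $\rep u^{\M}_q(\mfk{sl}_2)$; one controls this difference by pinning down $f_p$ on the simple projective object and the invertible simples, where all functors involved are essentially forced.

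A second, more hands-on route bypasses the appendix. One checks that $f_p$ intertwines the two actions of the diagonal torus $T^\vee\subset\PSL_2$ -- on $\rep u^{\M}_q(\mfk{sl}_2)$ this is the character-space decomposition $V=\oplus_{z\in Z}V_z$ for the grouplikes $\mbb{C}[Z^\vee]$, on $\rep\mcl{W}_p$ it is the $\mfk{h}$-weight (equivalently, conformal weight modulo $2$) decomposition -- and then that it is compatible with the two remaining raising and lowering derivations spanning $\mfk{sl}_2$, which on the quantum side come from the ``non-toral'' part of the $\O(\PSL_2)$-action on the de-equivariantization and on the vertex algebra side from the vertex derivations generating the $\mfk{sl}_2$-symmetry of $\mcl{W}_p$. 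A compatibility respecting $T^\vee$ and these two derivations respects all of $\mfk{sl}_2$ and, by connectedness of $\PSL_2$ and rationality of the actions, integrates to a $\PSL_2$-equivariant structure; the freedom to twist $f_p$ by the relevant invertible (``spectral flow'') object of $\rep\mcl{W}_p$ is what one uses to arrange the $T^\vee$-matching in the first place, and this is the meaning of ``can be made to be $\PSL_2$-equivariant''.

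The hard part is that the inputs to either route are themselves open and genuinely difficult. There is at present no workable tensor-categorical, or even module-categorical, description of $\mcl{L}_p$ or $\mcl{M}_p$, so identifying $(\rep\mcl{W}_p)^{\PSL_2}$ with $\rep\mcl{LM}(1,p)$ and establishing the Bushlanov et al.\ equivalence are themselves conjectural. Moreover, even granting those, the equivalence $f_p$ of~\cite{gainutdinovetal06,nagatomotsuchiya11} is obtained by Fock-space and screening-operator comparisons together with Kazhdan--Lusztig-type arguments rather than by an explicit functor, so tracing the $\mfk{sl}_2$-derivation action through it -- and hence checking compatibility on the nose -- is delicate. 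In short, the equivariance statement is tightly interlocked with Conjecture~\ref{conj:AB} and the Bushlanov et al.\ conjecture, and a full proof would most plausibly come bundled with proofs of those.
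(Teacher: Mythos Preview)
The statement you were asked to prove is presented in the paper as a \emph{conjecture}, not a theorem; the paper gives no proof and indeed comments that while it ``can seemingly `just be checked','' the $\PSL_2$-action on $\rep u^{\M}_q(\mfk{sl}_2)$ is not straightforward. So there is no proof in the paper to compare your proposal against, and your closing paragraph correctly diagnoses the situation: the inputs to both of your routes are themselves open.

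A structural remark on your first route: you propose to deduce the $\PSL_2$-equivariance from Conjecture~\ref{conj:AB} together with the Bushlanov et al.\ equivalence $\rep(\SL_2)_q\simeq\rep\mcl{LM}(1,p)$. The paper's logic runs in the opposite direction: the Proposition immediately following Conjecture~\ref{conj:1064} assumes Conjecture~\ref{conj:1064} (and then Conjecture~\ref{conj:AB}) in order to \emph{produce} the functor $\tilde{\operatorname{B}}:\rep(\SL_2)_q\to\rep\mcl{LM}(1,p)$. So your first route, as written, is not an independent attack but a repackaging of the same circle of conjectures; at best it shows these statements are tightly interrelated, which you already say in your final paragraph. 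Your second route --- matching the $T^\vee$-action and then the two remaining $\mfk{sl}_2$-derivations --- is closer in spirit to what the paper hints at, but the paper explicitly flags that the non-toral part of the $\PSL_2$-action on the de-equivariantization is subtle, and you have not supplied the missing ingredient there either.

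In short: your proposal is not a proof and does not claim to be one, and that is consistent with the paper, which offers none.
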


This conjecture can seemingly ``just be checked".  However, the $\PSL_2$-action on $\rep u_q^{\M}(\mfk{sl}_2)$ is not so straightforward (see~\cite[\S 9.1]{negron}).  So, it may be preferable to first lift the equivalence $f_p$ to an equivalence from the canonical form
\[
F_p:(\rep (\SL_2)_q)_{\PSL_2}\overset{\sim}\to \rep \mcl{W}_p.
\]
At this level, the $\PSL_2$ action is fairly transparent on both sides.

\begin{proposition}[{cf.~\cite[Conjecture 1.4]{creutzigetal},~\cite{bfgt09}}]
Supposing Conjecture~\ref{conj:1064} is correct, then we have natural $\mbb{C}$-linear functors 
\[
\tilde{\operatorname{A}}:\rep\dotu_q(\mfk{sl}_2)\to \rep \mcl{M}_p\ \ \text{and}\ \  \tilde{\operatorname{B}}:\rep(\SL_2)_q\to \rep \mcl{L}_p.
\]
If furthermore Conjecture~\ref{conj:AB} holds, $\tilde{\operatorname{A}}$ is an embedding and $\tilde{\operatorname{B}}$ is an equivalence onto $\rep \mcl{LM}(1,p)$
\end{proposition}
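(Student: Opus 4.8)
The plan is to exhibit both functors as three-step composites: an identification of $\rep(\SL_2)_q$ (respectively $\rep\dotu_q(\mfk{sl}_2)$) with an equivariantization of the log-modular kernel $(\rep(\SL_2)_q)_{\PSL_2}$, transport along the equivalence of Conjecture~\ref{conj:1064}, and finally the invariants functors of Lemma~\ref{lem:1076}.

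First I would lift the $\mbb{C}$-linear equivalence $f_p$ to the canonical form. Composing with the ribbon equivalence $fib^\omega$ of Proposition~\ref{prop:qhuM_exists} turns Conjecture~\ref{conj:1064} into a $\PSL_2$-equivariant $\mbb{C}$-linear equivalence
\[
F_p:(\rep(\SL_2)_q)_{\PSL_2}\overset{\sim}\to \rep \mcl{W}_p,
\]
where the left-hand side carries the canonical $\PSL_2$-action on the de-equivariantization (Section~\ref{sect:cocomplete}) and the right-hand side carries the action induced by the integrable $\mfk{sl}_2$-action on $\mcl{W}_p$, for which $\mcl{M}_p=\mcl{W}_p^{T^\vee}$ and $\mcl{L}_p=\mcl{W}_p^{\PSL_2}$. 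A $\Pi$-equivariant equivalence restricts to a $\Pi'$-equivariant equivalence for every closed subgroup $\Pi'\subseteq\Pi$, and passing to equivariantizations is $2$-functorial, so $F_p$ induces $\mbb{C}$-linear equivalences
\[
\big((\rep(\SL_2)_q)_{\PSL_2}\big)^{\PSL_2}\overset{\sim}\to (\rep\mcl{W}_p)^{\PSL_2},\qquad \big((\rep(\SL_2)_q)_{\PSL_2}\big)^{T^\vee}\overset{\sim}\to (\rep\mcl{W}_p)^{T^\vee}.
\]

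Next I would identify the two left-hand sides. By Proposition~\ref{prop:EqdEq} the canonical functor $\operatorname{can}^!$ realizes $\big((\rep(\SL_2)_q)_{\PSL_2}\big)^{\PSL_2}$ as $\rep(\SL_2)_q$. For the torus, I would invoke Proposition~\ref{prop:894}, which supplies a braided equivalence $(\rep\dotu_q(\mfk{sl}_2))_{T^\vee}\simeq \rep u^{\M}_q(\mfk{sl}_2)\simeq (\rep(\SL_2)_q)_{\PSL_2}$ (in particular showing, via Lemma~\ref{lem:487}, that $\rep T^\vee\to\rep\dotu_q(\mfk{sl}_2)$ is a faithfully flat, locally finite central embedding), and then apply $\operatorname{can}^!$ to this embedding to realize $\big((\rep(\SL_2)_q)_{\PSL_2}\big)^{T^\vee}$ as $\rep\dotu_q(\mfk{sl}_2)$. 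Composing these identifications with the invariants functors $\operatorname{A}$ and $\operatorname{B}$ of Lemma~\ref{lem:1076} produces the desired functors
\[
\tilde{\operatorname{A}}:\rep\dotu_q(\mfk{sl}_2)\to \rep\mcl{M}_p,\qquad \tilde{\operatorname{B}}:\rep(\SL_2)_q\to \rep\mcl{L}_p.
\]
The remaining claim is then formal: if Conjecture~\ref{conj:AB} holds then $\operatorname{A}$ is an embedding and $\operatorname{B}$ is an equivalence onto $\rep\mcl{LM}(1,p)$, and since every other functor in each composite is an equivalence, $\tilde{\operatorname{A}}$ is an embedding and $\tilde{\operatorname{B}}$ an equivalence onto $\rep\mcl{LM}(1,p)$.

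The point requiring genuine care is the identification $\big((\rep(\SL_2)_q)_{\PSL_2}\big)^{T^\vee}\simeq \rep\dotu_q(\mfk{sl}_2)$: one must verify that, under the equivalence of Proposition~\ref{prop:894}, the canonical $T^\vee$-action on $(\rep\dotu_q(\mfk{sl}_2))_{T^\vee}$ agrees with the restriction to $T^\vee\subset\PSL_2$ of the canonical $\PSL_2$-action on $(\rep(\SL_2)_q)_{\PSL_2}$. This reduces to tracing the $G^\vee$-coaction on $\Fr\O(G^\vee)$-modules through the base-change functor $\O(T^\vee)\ot_{\O(G^\vee)}-$ appearing in the diagram before Proposition~\ref{prop:894}, and is a routine but not entirely automatic compatibility; everything else is bookkeeping, including the observation that, because $f_p$ is only known $\mbb{C}$-linearly, all the equivalences above, and hence the conclusion, are asserted only at the $\mbb{C}$-linear level.
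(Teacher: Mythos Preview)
Your proposal is correct and follows essentially the same route as the paper: identify $\rep(\SL_2)_q$ and $\rep\dotu_q(\mfk{sl}_2)$ with the $\PSL_2$- and $T^\vee$-equivariantizations of the log-modular kernel via Proposition~\ref{prop:EqdEq}, Proposition~\ref{prop:894}, and Theorem~\ref{thm:ag}, transport along the (equivariantized) equivalence of Conjecture~\ref{conj:1064}, and compose with the invariants functors of Lemma~\ref{lem:1076}. You are in fact more explicit than the paper about the one nontrivial compatibility, namely that the restricted $T^\vee$-action on $(\rep(\SL_2)_q)_{\PSL_2}$ matches the canonical one on $(\rep\dotu_q(\mfk{sl}_2))_{T^\vee}$ under the equivalence of Proposition~\ref{prop:894}; the paper's proof simply asserts the composite equivalences without isolating this point.
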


\begin{proof}
One simply transports the invariants functors through the equivalences
\[
\rep\dotu_q(\mfk{sl}_2)\overset{\sim}\to (\rep u^{\M}_q(\mfk{sl}_2))^{T^\vee}\underset{\ref{conj:1064}}\cong(\rep \mcl{W}_p)^{T^\vee}
\]
\[
\text{and}\ \rep (\SL_2)_q\overset{\sim}\to (\rep u^{\M}_q(\mfk{sl}_2))^{\PSL_2}\underset{\ref{conj:1064}}\cong (\rep \mcl{W}_p)^{\PSL_2}
\]
of Proposition~\ref{prop:EqdEq}, Proposition~\ref{prop:894}, and Theorem~\ref{thm:ag}. 
\end{proof}

\appendix

\section{Details on rational (de-)equivariantization}
\label{sect:A}

We cover the details needed to prove Proposition~\ref{prop:885}.  As a first order of business let us provide the proof of Lemma~\ref{lem:1025}.

\begin{proof}[Proof of Lemma~\ref{lem:1025}]
The fact that any finitely presented object is compact follows from the fact that free objects $unit_\ast V$, for $V$ in $\msc{D}$, are compact, and left exactness of the $\Hom$ functor.  Now, for arbitrary $M$ in $\msc{D}_S$ we may write $M$ as the union $M=\varinjlim_\alpha M'_\alpha$ of its finitely generated submodules $M'_\alpha$.  For any finitely generated $M'$ we may write the kernel $N$ of a projection $unit_\ast V'=S\ot_\mbb{C} V'\to M'$ as a direct limit of finitely generated modules $N=\varinjlim_\beta N_\beta$ and hence write $M'$ as a direct limit of finitely presented modules $M'=\varinjlim_\beta M_\beta$, with $M_\beta=S\ot_\mbb{C} V'/N_\beta$.  Thus we may write arbitrary $M$ as a direct limit $M=\varinjlim_\kappa M_\kappa$ of finitely presented modules.  Compactness of $M$ implies that the identity factors through some finitely presented $M_\kappa$, and hence $M=M_\kappa$.
\end{proof}

\subsection{Equivariantization and the de-equivariantization}
\label{sect:eq_deq}

Suppose $F:\rep \Pi\to \msc{C}$ is a central embedding which is faithfully flat and locally finite.  Take
\[
R:=\O\text{ considered as a algebra object in $\rep\Pi$ with trivial $\Pi$-action}.
\]
We omit the prefix $F$ and write simply write $\O$ and $R$ for the images of these algebras in $\msc{C}$.  We define the functor on the de-equivariantization
\[
\psi_u:\msc{C}_\Pi\to (\msc{C}_\Pi)_R,\ \ \psi_u M:=R\ot M,
\]
where $\O$ acts diagonally on each $\psi_uM$ and $R$ acts via the first component.  More precisely, we have the algebra map $\Delta:\O\to R\ot\O$ in $\rep\Pi$ given by comultiplication and act naturally on $\psi_uM$ via $\Delta$.  For finite presentation, one observes on free modules $\O\ot V$ an easy isomorphism $\psi_u(\O\ot V)\cong unit_\ast(\O\ot V)$ in $(\msc{C}_\Pi)_R$, so that applying $\psi_u$ to a finite presentation for $M$, as an $\O$-module, yields a finite presentation for $\psi_uM$ over $R$.
\par

We have the natural iosmorphism
\[
\psi_u\psi_u(V)=R\ot (R\ot V)\cong(R\ot R)\ot V=\Delta_\ast \psi_u(V)
\]
given by the associativity in $\msc{C}$ and the natural isomorphism $\psi_u V\ot_{(R\ot \O)} \psi_u W\cong \psi_u(V\ot_\O W)$ given by multiplication from $R$.  Whence we have a canonical rational action of $\Pi$ on the de-equivariantization $\msc{C}_\Pi$, and can consider the corresponding equivariantization $(\msc{C}_\Pi)^\Pi$.  Objects in this category are simply $\O$-modules in $\msc{C}$ with a compatible $R$-coaction.
\par

Note that the $R$-coinvariants $X^R$ of an equivariant object $X$ is a $\msc{C}$-subobject in $X$, as it is the preimage of $\1\ot X\subset R\ot X$ under the $R$-coaction.  Whence we have the functor
\[
(-)^R:(\msc{C}_\Pi)^\Pi\to \Ind \msc{C},\ \ X\mapsto X^R.
\]
In addition, for any $V$ in $\msc{C}$ the object $\can^!(V)=\O\ot V$ can be given the $\O$-action and $R$-coaction from $\O$.  The coinvariants of $\can^!(V)$ is the subobject $\1\ot V$, and the unital structure on $\msc{C}$ provides a natural ismorphism $\zeta:(-)^R\circ\can^!\overset{\sim}\to id_\msc{C}$.  We also have the natural transformation $\gamma:\can^!\circ (-)^R\to id_{(\msc{C}_\Pi)^\Pi}$ given by the $\O$-action
\[
\gamma_X:\can^!(X^R)=\O\ot X^R\to X.
\]

\begin{lemma}
The transformation $\gamma$ is a natural isomorphism, and the coinvariants functor $(-)^R$ has image in $\msc{C}$.
\end{lemma}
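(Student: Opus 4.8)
The plan is to recognize $(\msc{C}_\Pi)^\Pi$ as a category of relative Hopf modules for a faithfully flat Hopf--Galois extension in $\msc{C}$, and then to invoke the internal form of the fundamental theorem of Hopf modules, with $\gamma_X$ playing the role of the canonical trivialization.

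First I would fix the ambient algebra structures. Since $R=\O$ carries the trivial $\Pi$-action, every structure map of $\O$ is $\Pi$-equivariant for it, so $R$ is an (ind-)Hopf algebra object in $\msc{C}$; the coproduct of $\O$ restricts to an algebra map $\Delta_R\colon F\O\to R\ot F\O$ exhibiting $F\O$ as an $R$-comodule algebra in $\msc{C}$ with coinvariants $(F\O)^{\mrm{co}R}=\1$. In this language an object $X$ of $(\msc{C}_\Pi)^\Pi$ is exactly an $F\O$-module in $\Ind\msc{C}$ together with a counital, coassociative coaction $\rho_X\colon X\to R\ot X$ satisfying the module--comodule compatibility forced by $F\O$-linearity of $\rho_X$; that is, $X$ is a relative $(F\O,R)$-Hopf module, and $X^R$ is the equalizer of $\rho_X$ and $\eta_R\ot\mrm{id}_X$. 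In these terms $\gamma_X\colon F\O\ot X^R\to X$ is the action map, and the already-established fact that $\zeta$ is an isomorphism records that $(F\O\ot V)^R=\1\ot V$ for the free modules $\operatorname{can}^!(V)=\O\ot V$.

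The heart of the argument is to show that $\1\to F\O$ is an $R$-Galois extension: the shear morphism $F\O\ot F\O\to R\ot F\O$ assembled from $\Delta_R$ and the multiplication of $F\O$ is the image under $F$ of a standard automorphism of $\O(\Pi)^{\ot 2}$ (dual to the isomorphism $\Pi\times\Pi\cong\Pi\times\Pi$, $(x,y)\mapsto(y,xy)$), hence is an isomorphism in $\msc{C}$; and $F\O$ is faithfully flat over $\1$, which is precisely our standing hypothesis that $F$ is faithfully flat, the exact analogue in $\msc{C}$ of the Hopf-comodule situation of Lemma~\ref{lem:487}. Granting these, the internal fundamental theorem of Hopf modules yields a two-sided inverse to $\gamma_X$: one composes $\rho_X$ with the antipode of $R$ and the inverse shear morphism to obtain a morphism $X\to F\O\ot X$, and checks, using counitality, coassociativity, and the module--comodule compatibility, that it factors through $F\O\ot X^R$ and inverts $\gamma_X$; naturality in $X$ is automatic. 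Once $\gamma_X$ is an isomorphism one has $X\cong F\O\ot X^R$, so $X^R\cong\1\ot_{F\O}X=\mbb{C}\ot_\O X$, and since $X$ is finitely presented over $F\O$ with $F\O$ faithfully flat over $\1$, this coinvariant object lies in $\msc{C}$ rather than merely in $\Ind\msc{C}$; hence $(-)^R$ has image in $\msc{C}$.

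The step I expect to be the main obstacle is the internal fundamental theorem itself: making precise that, beyond the Hopf--Galois formalism, the only input is faithful flatness of $F$, and in particular verifying that the candidate inverse of $\gamma_X$ is built entirely from genuine morphisms of $\msc{C}$. Here one must use that, although the full coproduct of $\O(\Pi)$ is not $\Pi$-equivariant for the regular action carried by $F\O$, its demoted form $\Delta_R$ together with the entire Hopf structure of $R$ (antipode included) consists of morphisms in $\msc{C}$, so the construction never secretly invokes an antipode for $F\O$. The attendant compatibility identities are routine bookkeeping once this framework is in place, and the finiteness assertion is a short faithful-flatness argument.
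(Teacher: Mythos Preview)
Your approach is correct and shares the same core idea as the paper's: both construct the inverse to $\gamma_X$ via the antipode, which is the classical fundamental theorem of Hopf modules. The paper simply writes down the inverse directly as the composite $X\xrightarrow{\rho}R\ot X\xrightarrow{\Delta^S\ot 1}\O\ot\O\ot X\to\O\ot X$, where $\Delta^S(f)=f_1\ot S(f_2)$ and the last arrow is the $\O$-action on $X$, then cites Montgomery for the routine verification that this lands in $\O\ot X^R$ and inverts $\gamma_X$.

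Two points are worth flagging. First, your Hopf--Galois packaging is more machinery than the situation requires: the extension here is $\1\to F\O$, which is the classical case of the Hopf module theorem where no flatness hypothesis is needed at all---the antipode alone produces the inverse. Relatedly, your identification ``$F\O$ is faithfully flat over $\1$, which is precisely our standing hypothesis that $F$ is faithfully flat'' is not right: the hypothesis that $F$ is faithfully flat means, by definition, that $\msc{C}_\Pi$ is rigid (equivalently, in the Hopf setting of Lemma~\ref{lem:487}, that $A$ is faithfully flat over $\O$), not that $F\O$ is flat over $\1$. Fortunately this confusion is harmless, since neither statement is actually used. Second, for the finiteness claim the paper argues differently and more directly: once $X\cong\O\ot X^R$, the fact that $X$ has finite length in the locally finite category $\msc{C}_\Pi$ together with exactness of $\O\ot-$ forces $X^R$ to have finite length in $\Ind\msc{C}$, hence to lie in $\msc{C}$. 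Your reduction $X^R\cong\1\ot_{F\O}X$ also works, via finite presentation, but the faithful flatness you invoke there is again unnecessary.
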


\begin{proof}
We have the twisted comultiplication $\Delta^S:R\to \O\ot \O$, $f\mapsto f_1\ot S(f_2)$, and can define the inverse $\gamma_X^{-1}:X\to \O\ot X^R$ as the composite
\[
X\overset{\rho}\to R\ot X\overset{\Delta^S\ot 1}\to \O\ot \O\ot X\to \O\ot X,
\]
which one can check has image in $\O\ot X^R$ and does in fact provide the inverse to $\gamma$, just as in the Hopf case~\cite{montgomery93}.  To see that $X^R$ is in $\msc{C}$, and not in $\Ind\msc{C}\setminus\msc{C}$, we note that $X\cong \O\ot X^R$ is of finite length in $\msc{C}_\Pi$ and that $\O\ot-$ is exact, which forces $X^R$ to be of finite length.  Hence $X^R$ is in $\msc{C}$.
\end{proof}

Since both $\zeta$ and $\gamma$ are isomorphisms we have directly

\begin{proposition}[{cf.~\cite{arkhipovgaitsgory03,dgno10}}]\label{prop:EqdEq}
The functor $\can^!:\msc{C}\to (\msc{C}_\Pi)^\Pi$ is an equivalence of monoidal (and hence tensor) categories.
\end{proposition}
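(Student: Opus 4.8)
The plan is to exhibit $\can^!$ as a monoidal functor whose underlying $\mbb{C}$-linear functor is already known to be an equivalence, whence the statement follows by general nonsense.

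First, the $\mbb{C}$-linear equivalence requires nothing new. The lemma immediately preceding the proposition provides the natural isomorphism $\gamma:\can^!\circ(-)^R\overset{\sim}\to id_{(\msc{C}_\Pi)^\Pi}$ and shows that $(-)^R$ takes values in $\msc{C}$, while the unital structure on $\msc{C}$ supplies the natural isomorphism $\zeta:(-)^R\circ\can^!\overset{\sim}\to id_\msc{C}$. Thus $\can^!$ and $(-)^R$ are mutually quasi-inverse $\mbb{C}$-linear equivalences, and the only content left is the monoidal refinement.

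Next I would equip $\can^!$ with its monoidal structure. Regarded as a functor into $\msc{C}_\Pi$ (forgetting the equivariant data), $\can^!$ is exactly the de-equivariantization functor $dE\colon V\mapsto \O\ot V$, which is well known to carry a canonical monoidal structure $\Theta_{V,W}\colon(\O\ot V)\ot_{F\O}(\O\ot W)\overset{\sim}\to \O\ot(V\ot W)$ coming from the multiplication on $\O$ (cf.\ \cite{dgno10}). The point to check is that each $\Theta_{V,W}$ is a morphism in $(\msc{C}_\Pi)^\Pi$, i.e.\ that it intertwines the $R$-coaction on the tensor product of equivariant objects --- the composite $\rho\ot\rho$ followed by the identification $\psi_u(\can^! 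V)\ot_R\psi_u(\can^! W)\cong\psi_u(\can^! V\ot_{F\O}\can^! W)$ --- with the coaction on $\can^!(V\ot W)$. Writing out both coactions, one sees that each is induced by the comultiplication $\Delta\colon\O\to R\ot\O$ applied to the distinguished $\O$-factor, so compatibility with $\Theta$ is precisely the statement that $\Delta$ is an algebra homomorphism. The associativity and unit coherence axioms for $(\can^!,\Theta)$ then follow from those for $(dE,\Theta)$ together with coassociativity and counitality of $\Delta$; this is entirely routine bookkeeping. (Finiteness of presentation of $\can^!(V)$ over $F\O$ and the existence of the equivariant structure were already recorded in the construction of $\psi_u$.)

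Finally, a monoidal functor that is an equivalence of underlying categories is automatically an equivalence of monoidal categories: the quasi-inverse $(-)^R$ inherits a (strong) monoidal structure, and $\zeta,\gamma$ promote to monoidal natural isomorphisms. Since $\msc{C}$ is a tensor category, transporting the structure along $\can^!$ shows $(\msc{C}_\Pi)^\Pi$ is one as well, so $\can^!$ is a tensor equivalence. The only place any real care is needed is the coaction-compatibility check for $\Theta$ in the second step, and even this is purely formal once the $R$-coactions are spelled out in terms of $\Delta$.
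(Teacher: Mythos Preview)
Your proposal is correct and follows essentially the same approach as the paper: the paper's proof consists of a single sentence invoking the natural isomorphisms $\zeta$ and $\gamma$ established in the preceding lemma, treating the monoidal structure on $\can^!$ as self-evident. You are simply more explicit than the paper in verifying that the monoidal structure $\Theta$ on $dE$ is compatible with the $R$-coactions, which the paper suppresses entirely.
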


\begin{remark}
One can avoid all finiteness concerns by employing the $\Ind$-category $\Ind\msc{C}$ and the category of arbitrary modules $\O\text{-Mod}_{\Ind{\msc{C}}}$.  Then, with the cocomplete theory of Section~\ref{sect:cocomplete}, one can argue exactly as above to find that the functor $\can^!:\Ind\msc{C}\to (\O\text{-Mod}_{\Ind\msc{C}})^\Pi$ is again an equivalence.
\end{remark}

\subsection{De-equivariantizing the equivariantization}
\label{sect:deq_eq}

Let $\msc{D}$ be a tensor category equipped with a rational action of $\Pi$.  There is a canonical embedding $\rep\Pi\to \msc{D}^\Pi$ into the equivariantization which identifies $\rep\Pi$ with the preimage of $Vect\subset \msc{D}$ in $\msc{D}^\Pi$, under the forgetful functor.  Indeed, the fact that the action map $\psi_u:\msc{D}\to \msc{D}_R$ is monoidal implies that $\psi_u(\1)=R$, so that the restriction of $\psi_u$ to the trivial subcategory $Vect\subset \msc{D}$ is equated with the usual action of $\Pi$ on $Vect$, and hence $Vect^\Pi=\rep\Pi$.
\par

We have the two algebras $\O$ and $R$ in $\rep\Pi$, the latter one being trivial, which are equated under the composite $\rep\Pi\to \msc{D}^\Pi\to \msc{D}$, i.e.\ which are indistinguishable as objects in $\msc{D}$.  Hence the counit $\O\to \1$, which is not a map in $\rep\Pi$, is a map in $\msc{D}$, and for any $\O$-module in the equivariantization $\msc{D}^\Pi$ the reduction $X_\O:=\1\ot_\O X$ is a well-defined object in $\msc{D}$.
\par

Since $\O$ is trivial in $\msc{D}$, and $\psi_u$ is a tensor map, we have $\psi_u(\O)=R\ot\O$.  By the definition of $\O$ in $\rep\Pi$ the equivariant structure is given by the comultiplication $\Delta:\O\to R\ot\O$.  Hence $\O$ acts naturally on each $\psi_u(X)$ via the comultiplication, for any $\O=R$-module $X$ in $\msc{D}$.  So we can consider $\O$-modules in $\msc{D}^\Pi$ as $\msc{O}=R$-modules in $\msc{D}$ for which the coaction $X\to \psi_u(X)$ is $\O$-linear.
\par

For any object $V$ in $\msc{D}$ we consider $V$ as a trivial $\O$-module, and let $\O$ act on $\psi_u(V)$ diagonally.  Each $\psi_u(V)$ then becomes an object in $(\msc{D}^\Pi)_\Pi$ via the ``free" coaction, $\psi_u(V)\to \psi_u\psi_u(V)$ given by the unit of the $(\Delta_\ast,\Delta^\ast)$-adjunction
\[
\psi_u\overset{unit}\to \Delta^\ast\Delta_\ast\psi_u\overset{\Delta^\ast\sigma}\to \psi_u\psi_u.
\]
We have the reduction functor $1^\ast:(\msc{D}^\Pi)_\Pi\to \msc{D}$, $X\mapsto X_\O$, and the free functor $\can_!:\msc{D}\to (\msc{D}^\Pi)_\Pi$, $V\mapsto \psi_u(V)$.  There are natural transformations
\[
\eta_V:\psi_u(V)_\O=1^\ast\psi_u(V)\overset{\sim}\to V,\ \ \eta:1^\ast\circ \can_!\overset{\sim}\to id_{\msc{D}},
\]
and
\[
\vartheta_X:X\to \psi_u(X_\O),\ \ \vartheta:id_{(\msc{D}^\Pi)_\Pi}\to \can_!\circ 1^\ast,
\]
the former of which is simply given by the counit for $\psi_u$ and the latter is given as the composite $X\to \psi_u(X)\to \psi_u(X_\O)$ of the comultiplication and the application of $\psi_u$ to the reduction $X\to X_\O$ in $\msc{D}$.  The following is a consequence of the fact that each object in $(\msc{D}^\Pi)_\Pi$ is finitely presented over $\O$.

\begin{lemma}\label{lem:774}
The transformation $\vartheta$ is a natural isomorphism if and only if it is a natural isomorphism when applied to free modules $\O\ot W$, for $W$ in $\msc{D}^\Pi$.
\end{lemma}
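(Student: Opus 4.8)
The plan is to reduce the assertion to a routine cokernel comparison, exploiting that every object of $(\msc{D}^\Pi)_\Pi$ is finitely presented over $\O$. The "only if" direction needs nothing, since each free module $\O\ot W$ with $W$ in $\msc{D}^\Pi$ is already an object of $(\msc{D}^\Pi)_\Pi$. All the content is in the "if" direction.

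First I would record the exactness of the functors in play. The identity endofunctor of $(\msc{D}^\Pi)_\Pi$ is exact; the reduction $1^\ast=(-)_\O$ is right exact, being given by the tensor product $\1\ot_\O-$ (it is in fact a left adjoint, with right adjoint restriction of scalars along the counit $\O\to\1$); and $\can_!=\psi_u$ is exact by condition (a) in the definition of a rational action. Hence $\can_!\circ 1^\ast$ is right exact, i.e.\ it carries cokernels to cokernels. Now fix an arbitrary $X$ in $(\msc{D}^\Pi)_\Pi$ and, appealing to finite presentation, choose an exact sequence
\[
\O\ot W_1\longrightarrow \O\ot W_0\longrightarrow X\longrightarrow 0
\]
with $W_0,W_1$ in $\msc{D}^\Pi$.

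Applying the functor $\can_!\circ 1^\ast$ to this presentation and invoking naturality of $\vartheta$, I would obtain a commutative diagram with exact rows
\[
\begin{array}{ccccccc}
\O\ot W_1 & \longrightarrow & \O\ot W_0 & \longrightarrow & X & \longrightarrow & 0\\
\downarrow & & \downarrow & & \downarrow & & \\
\can_!1^\ast(\O\ot W_1) & \longrightarrow & \can_!1^\ast(\O\ot W_0) & \longrightarrow & \can_!1^\ast(X) & \longrightarrow & 0
\end{array}
\]
whose vertical maps are the corresponding components of $\vartheta$; exactness of the bottom row is precisely the right exactness of $\can_!\circ 1^\ast$ established above. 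By hypothesis the two leftmost verticals are isomorphisms, so identifying the right-hand terms as cokernels (the five lemma) forces $\vartheta_X$ to be an isomorphism. As $X$ was arbitrary, $\vartheta$ is a natural isomorphism, completing the "if" direction.

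The argument is essentially formal and I do not expect a genuine obstacle. The only points asking for a little care are the right exactness of $1^\ast$ and the availability of the free presentation of $X$ — the latter being exactly the finite-presentation hypothesis built into the definition of $(\msc{D}^\Pi)_\Pi$, together with the abelianness of that category inherited through the de-equivariantization setup.
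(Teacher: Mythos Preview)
Your proof is correct and is precisely the argument the paper has in mind: the paper does not write out a proof at all, merely remarking that the lemma ``is a consequence of the fact that each object in $(\msc{D}^\Pi)_\Pi$ is finitely presented over $\O$.'' Your right-exactness/five-lemma argument is exactly how one unpacks that sentence.
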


\begin{lemma}\label{lem:778}
An object $X$ is $0$ in $(\msc{D}^\Pi)_\Pi$ if and only if the fiber $1^\ast X$ is $0$.
\end{lemma}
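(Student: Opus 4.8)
The plan is to regard $X$ as an $\O$-module in $\msc{D}$ carrying a compatible $R$-coaction $\rho_X\colon X\to\psi_u X$ --- i.e.\ as a ``$\Pi$-equivariant coherent sheaf on $\Pi=\Spec\O$'' internal to $\msc{D}$ --- and to run a support/Nakayama argument. The forward implication is immediate, since $1^\ast=\1\otimes_\O-$ is additive and hence sends the zero object to the zero object.

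For the substantive direction, assume $1^\ast X=X_\O=\1\otimes_\O X=0$. First I would set up a notion of support: working in $\Ind\msc{D}$, put $X_{\mathfrak p}:=\O_{\mathfrak p}\otimes_\O X$ for a prime $\mathfrak p\subset\O$ and let $\operatorname{supp}(X)=\{\mathfrak p: X_{\mathfrak p}\neq 0\}$. Because every object of $(\msc{D}^\Pi)_\Pi$ is finitely presented over $\O$, forming $\operatorname{Hom}$ out of $X$ commutes with localization, so $X_{\mathfrak p}=0$ forces some $f\notin\mathfrak p$ to kill $\operatorname{id}_X$ in $\End_\msc{D}(X)$; thus $\operatorname{supp}(X)=V(\operatorname{Ann}_\O X)$ is closed, and $\operatorname{supp}(X)=\varnothing$ already implies $\operatorname{Ann}_\O X=\O$, hence $X=0$ (the ring map $\O\to\End_\msc{D}(X)$ sends $1$ to $\operatorname{id}_X$). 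The hypothesis $X_\O=0$ says exactly that the identity $e$ of $\Pi$ is not in $\operatorname{supp}(X)$.

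The heart of the argument is that $\operatorname{supp}(X)$ is stable under the translation action of $\Pi$ on itself, and this is where equivariance enters. For a point $g\in\Pi(\mathbb{C})$, base-changing $\rho_X$ along $g\colon R\to\mathbb{C}$ gives a morphism $g^\ast\rho_X\colon X\to X^{[g]}$ in $\msc{D}$, where $X^{[g]}$ is $X$ with its $\O$-action precomposed by the translation automorphism $t_g^\sharp\colon\O\to\O$; coassociativity and counitality of $\rho_X$ make $g^\ast\rho_X$ an isomorphism (with inverse the transport by $g^{-1}$), so that $X$ and $X^{[g]}$ are isomorphic $\O$-modules and $\operatorname{supp}(X)$ is carried to itself under translation. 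Since $\Pi(\mathbb{C})$ acts transitively on the closed points of $\Pi$, a closed translation-stable subset that omits $e$ must be empty; hence $\operatorname{supp}(X)=\varnothing$ and $X=0$.

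The step I expect to cause the most trouble is the bookkeeping required when $\Pi$ is merely affine rather than of finite type, where ``closed points'', the Nullstellensatz, and the density arguments above are not directly available: here one writes $\O=\varinjlim_\alpha\O_\alpha$ as a filtered union of finitely generated Hopf subalgebras, uses that $\O$ is faithfully flat over each $\O_\alpha$ (as in the proof of Lemma~\ref{lem:coherence}), descends the finitely presented equivariant module $X$ to some $\O_\alpha$, and argues there. A secondary nuisance is making the commutative-algebra manipulations (``localization'', ``support'', Nakayama) legitimate inside the abelian tensor category $\msc{D}$ rather than in $Vect$; this uses only that $\O\to\End_\msc{D}(X)$ is a ring map and that finitely presented $\O$-modules in $\msc{D}$ behave like coherent sheaves, which is already built into the definition of $(\msc{D}^\Pi)_\Pi$. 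An alternative, purely Hopf-theoretic route would be to invoke the fundamental theorem of Hopf modules internally to $\msc{D}$ to get a natural isomorphism $X\cong\O\otimes_{\mathbb{C}}(1^\ast X)$, but I would avoid this so that the lemma stays visibly independent of Proposition~\ref{prop:dEqEq}.
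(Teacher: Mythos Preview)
Your approach is essentially the paper's: both arguments use the equivariant structure to show that vanishing of the fiber at the identity forces vanishing at every point, and then conclude $X=0$ from a Nakayama/support statement for finitely presented $\O$-modules. The paper packages this slightly differently---it invokes Takeuchi reconstruction to view $\msc{D}=\corep C$ so that objects of $\msc{D}_R$ are literally finitely presented $R$-modules, and it works with closed points $x\in\Pi(K)$ over arbitrary residue fields rather than restricting to $\Pi(\mbb{C})$ and descending to finite-type Hopf subalgebras as you propose---but the translation argument is the same.

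One imprecision worth tightening: your description of $X^{[g]}$ as ``$X$ with its $\O$-action precomposed by $t_g^\sharp$'' is not literally correct, since $g^\ast\psi_u X=t_gX$ is the image of $X$ under the autoequivalence $t_g$ of $\msc{D}$, not $X$ itself with a new module structure. The paper handles this by noting that $t_g$ induces a ring isomorphism $\End_\msc{D}(X)\cong\End_\msc{D}(t_gX)$, so $t_gX$ inherits a canonical $\O$-action with the same annihilator as $X$, and the coaction map $X\to t_gX$ is $\O$-linear after twisting the target by the translation $f_g$; this gives $1^\ast X\cong g^\ast(t_gX)$ and hence $g^\ast X=0$. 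Your support conclusion is correct, but the justification should go through $t_gX$ rather than a relabelled $X$.
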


\begin{proof}
We may write $\msc{D}=\corep C$ for a coalgebra $C$, by Takeuchi reconstruction~\cite{takeuchi77}.  Then $\msc{D}_R$ is just the category of corepresentations of the $R$-coalgebra $C_R$ which are finitely presented over $R$.  Now, for a finitely presented $R$-module $M$ we understand that $M$ vanishes if and only if its fiber $x^\ast M$ vanishes for each closed point $x:\Spec(K)\to \Pi$.  Let $p(x):\O_K\to K$ be the corresponding ring map.  Note that the reduction simply takes the fiber at the identity.
\par

Take $M$ in $(\msc{D}^\Pi)_\Pi$ and suppose that $1^\ast M$ vanishes.  Consider a closed point $x\in \Pi(K)$.  By changing base to $\msc{D}_K$ and $\Pi_K$ we may assume that $K$ is our base field, so that $x^{-1}\cdot x=\epsilon$.  Via the the coaction we find an isomorphism
\begin{equation}\label{eq:787}
M\overset{\rho_M}\to \psi_u M\to p(x)_\ast \psi_u M=t_x M,
\end{equation}
where the last map is the counit of the $(p(x)_\ast,p(x)^\ast)$-adjunction, and $t:\Pi(K)\to \Aut(\msc{D})$ is the discrete action of $\Pi(K)$.
\par

Now, $t_x M$ has a canonical $\O=R$-action via the functorial identification $\End_\msc{D}(M)\cong \End_\msc{D}(t_xM)$, and the fiber $y^\ast M$ at a given $K$-point $y$ vanishes if and only if the fiber $y^\ast(t_x M)$ vanishes.  If we let $f_x:R\to R$ denote the automorphism given by left translation by $x$ then we see that~\eqref{eq:787} is an $R$-linear isomorphism from $M$ to the restriction of $t_xM$ along $f_x$.  In particular, we have
\[
0=1^\ast M\cong 1^\ast(\operatorname{res}_{f_x}t_xM)=x^\ast(t_x M),
\]
which implies $x^\ast M=0$.  Since $x$ was arbitrary, we see $M=0$ if $1^\ast M=0$.  Conversely, the fiber at the identity obviously vanishes if $M$ vanishes.
\end{proof}

\begin{proposition}\label{prop:dEqEq}
The functor $\can_!:\msc{D}\to (\msc{D}^\Pi)_\Pi$ is an equivalence of monoidal (and hence tensor) categories.  Furthermore, the embedding $F:\rep\Pi\to \msc{D}^\Pi$ is faithfully flat and locally finite
\end{proposition}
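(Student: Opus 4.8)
\emph{Strategy.} The plan is to show that $\can_!$ and the reduction functor $1^\ast:(\msc{D}^\Pi)_\Pi\to\msc{D}$, $X\mapsto X_\O$, are mutually quasi-inverse, and then to transport the rigid, locally finite structure from $\msc{D}$ across this equivalence; both assertions of the proposition then follow at once. First I would record that $\can_!$ is a well-defined monoidal functor: each $\psi_u(V)$ lies in $\msc{D}_R$ and so is finitely presented over $R$, and under the identification of $R$ with $\O$ as objects of $\msc{D}$ (they differ only in their now forgotten $\Pi$-equivariant structure), $\psi_u(V)$ together with its free coaction is a finitely presented $\O$-module in $\msc{D}^\Pi$, hence an object of $(\msc{D}^\Pi)_\Pi$; and the monoidal constraint $\psi_u(V)\ot_R\psi_u(W)\cong\psi_u(V\ot W)$ of the action functor is a morphism of coactions precisely because it was assumed compatible with $\sigma$, so it equips $\can_!$ with a monoidal structure for $\ot_\O$.

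\emph{The equivalence.} The transformation $\eta:1^\ast\circ\can_!\overset{\sim}\to id_\msc{D}$ is already an isomorphism, being exactly the counital datum $\epsilon_\ast\psi_u\cong id_\msc{D}$ of the rational action. The substantive point is that $\vartheta:id_{(\msc{D}^\Pi)_\Pi}\to\can_!\circ 1^\ast$ is a natural isomorphism; together with $\eta$ and the routine triangle identities this exhibits an adjoint equivalence $1^\ast\dashv\can_!$. To prove $\vartheta$ is an isomorphism one cannot simply apply $1^\ast$ and invoke exactness, since exactness of $1^\ast=\1\ot_\O(-)$ is essentially the faithful flatness being proved. Instead I would use Lemma~\ref{lem:774} to reduce to free modules $X=\O\ot W$ with $W\in\msc{D}^\Pi$; there $1^\ast X=\1\ot_\O(\O\ot W)$ is the underlying object of $W$ in $\msc{D}$, so $\can_!(1^\ast X)=\psi_u(1^\ast X)$ with its free coaction, and unwinding the definition of $\vartheta_X$ (the coaction $\rho_X$ followed by $\psi_u$ applied to the reduction $X\to X_\O$) and chasing the coassociativity of $\sigma$ against the counit $\eta$ identifies $\vartheta_{\O\ot W}$ with an isomorphism. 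Lemma~\ref{lem:778}, asserting that $1^\ast$ is conservative, serves as a consistency check here, and furnishes the cleaner \emph{a posteriori} argument that $\vartheta$ is an isomorphism on all of $(\msc{D}^\Pi)_\Pi$ once faithful flatness, hence exactness of $1^\ast$, is in hand.

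\emph{Conclusion and the main obstacle.} With $\eta$ and $\vartheta$ both isomorphisms, $\can_!:\msc{D}\to((\msc{D}^\Pi)_\Pi,\ot_\O)$ is a monoidal equivalence; since $\msc{D}$ is a tensor category, $(\msc{D}^\Pi)_\Pi$ is then rigid and locally finite, so $\can_!$ is an equivalence of tensor categories. By the definitions of faithful flatness (the de-equivariantization is rigid) and local finiteness (the de-equivariantization is a locally finite category), the central embedding $F:\rep\Pi\to\msc{D}^\Pi$ is thus faithfully flat and locally finite, which is the remaining assertion. The step I expect to be the main obstacle is the verification that $\vartheta$ is an isomorphism on free modules: this is where the coherence bookkeeping concentrates, since one must match the free coaction on $\O\ot W$ against the coaction on $\psi_u(1^\ast(\O\ot W))$ using only the coassociativity and counit constraints of the action — exactly the regime in which one may not invoke the flatness being proved.
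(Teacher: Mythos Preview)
Your overall framework is correct and matches the paper: both of you reduce to showing that $\vartheta$ is an isomorphism on free modules $T=\O\ot W$ via Lemma~\ref{lem:774}, and then deduce faithful flatness and local finiteness of $F$ from the resulting equivalence. The difference lies entirely in how the free-module case is handled.

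You propose a direct coherence verification --- ``chasing the coassociativity of $\sigma$ against the counit $\eta$'' --- and explicitly relegate Lemma~\ref{lem:778} to a mere consistency check. The paper does the opposite: it uses Lemma~\ref{lem:778} \emph{essentially}. First, since $1^\ast\vartheta_T$ is the identity on the underlying object, right exactness of $1^\ast$ kills the cokernel of $\vartheta_T$ after reduction, and conservativity (Lemma~\ref{lem:778}) forces the cokernel itself to vanish, giving surjectivity. For injectivity the paper does not attempt any coherence chase at all; instead it observes that $\psi_u(V)$ and free modules are dualizable (because $\psi_u$ is monoidal and hence preserves duals), that $1^\ast$ also preserves duals, and therefore the dual map $\vartheta_T^\vee$ is surjective by the same argument --- which then forces any presenting map $p$ with $\vartheta_T\circ p=0$ to satisfy $p^\vee=0$, hence $p=0$. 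This duality trick is the idea you are missing.

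Your direct approach is not obviously wrong, but it is underspecified at exactly the point you flag as the main obstacle: the $\O$-action on $\O\ot W$ is by left multiplication while the $\O$-action on $\psi_u(W)$ is diagonal, and matching these through $\vartheta_T$ is not a matter of counit/coassociativity alone --- one would expect to need the antipode, as in the inverse $\gamma_X^{-1}$ constructed in Section~\ref{sect:eq_deq}. The paper's conservativity-plus-duality argument sidesteps this bookkeeping entirely, which is what it buys.
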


\begin{proof}
We prove that $\vartheta$ is an isomorphism on free modules.  Take $T=\O\ot V$ consider $\vartheta_T:T\to \psi_u(V)$.  We extend to a right exact sequence $T\to \psi_u(V)\to M\to 0$.  The counital property for $\psi_u$ implies that the fiber $1^\ast\vartheta$ is identified with the identity on $V$.  By right exactness of the reduction we have $1^\ast M=0$, and hence the cokernel vanishes by Lemma~\ref{lem:778}.
\par

We now extent $\vartheta_T$ to a left exact sequence $T'\overset{p}\to T\overset{\vartheta_T}\to \psi_u(V)\to 0$, with $p$ a map from a finite free module.  (We need to use the fact that $\psi_u(V)$ is finitely presented to verify that such an extension exists.)  Since $\psi_u$ is a monoidal functor it preserves duals~\cite[Exercise 2.10.6]{egno15}, it follows that $\psi_u(V)$ is dualizable in $\msc{D}_R$ with dual $\psi_u(V)^\vee\cong \psi_u(V^\ast)$.  Free modules $R\ot W$ are also dualizable with dual $R\ot W^\ast$.
\par

Note that $1^\ast:(\msc{D}_\Pi)^\Pi\to \msc{D}$ is a monoidal functor, and hence preserves duality as well, so that $1^\ast(\vartheta_T^\vee)$ is identified with the isomorphism $(1^\ast\vartheta_T)^\ast$.  So by the same arguments employed above the dual $\vartheta_T^\vee:\psi_u(V)^\vee\to T^\vee$ is also surjective.  Since the dual composite
\[
\psi_u(V)^\vee\to T^\vee\overset{p^\vee}\to (T')^\vee
\]
is $0$ we find that $p^\vee$ is $0$.  Since duality $(-)^\vee$ is an equivalence on the category of (left and right) dualizable objects in $(\msc{D}^\Pi)_\Pi$, it follows that $p=0$.  So $\vartheta_T$ is an isomorphism for each free $T$.  We now employ Lemma~\ref{lem:774} to find that $\can_!$ is an equivalence.  The fact that $\msc{D}$ is a tensor category and that $\can_!$ is an equivalence implies that $F$ is both faithfully flat and locally finite.
\end{proof}

\subsection{Proof of Proposition~\ref{prop:885}}
\label{sect:prop_proof}

\begin{proof}[Proof of Proposition~\ref{prop:885}]
Take $\msc{D}=\msc{C}_\Pi$.  We have the de-equivariantization functor $\msc{C}\to \msc{D}$.  For a sequence $\rep\Pi\to \msc{K}\overset{i}\to \msc{K}'\to \msc{C}$ we have the de-equivariantization $\msc{K}_\Pi\overset{i_\Pi}\to \msc{K}'_\Pi\to \msc{D}$, with $\msc{K}_\Pi$ and $\msc{K}'_\Pi$ stable under the action of $\Pi$.  By the definition of the equivalence of $\can^!$, in Section~\ref{sect:eq_deq}, we find that there is a diagram
\[
\xymatrix{
(\msc{K}_\Pi)^\Pi\ar[rr]^{(i_\Pi)^\Pi} & & (\msc{K}'_\Pi)^\Pi\\
\msc{K}\ar[u]^{\can^!}_\sim\ar[rr]^i & & \msc{K}'\ar[u]^{\can^!}_\sim.
}
\]
Hence $i$ is an equivalence if and only if $i_\Pi$ is an equivalence, and thus de-equivariantization $(-)_\Pi$ defines an inclusion of the poset of (isomorphism-closed) intermediate categories $\Pi\text{-Int}(\msc{C})=\{\rep\Pi\subset \msc{K}\subset \msc{C}\}$ to the poset $\Pi\text{-Stab}(\msc{D})=\{\msc{W}\subset \msc{D}\}$ of (isomorphism-closed) $\Pi$-stable categories.  A completely similar argument, using $\can_!$, shows that equivariantization $\msc{W}\subset \msc{D}\rightsquigarrow \msc{W}^\Pi\subset \msc{C}$ defines an inclusion of posets $\Pi\text{-Stab}(\msc{D})\to \Pi\text{-Int}(\msc{C})$ which is inverse to $(-)_\Pi$.
\par

Since de-/equivariantization under a central inclusion/braided action preserves braided subcategories, and central subcategories, the above argument shows that this bijection of posets restricts to a bijection for both braided and central subcategories as well.
\end{proof}

\bibliographystyle{abbrv}

\end{document}